 \newtheorem{ittheorem}{Theorem}
 \newtheorem{itlemma}{Lemma}
 \newtheorem{itproposition}{Proposition}
 \newtheorem{itdefinition}{Definition}
 \newtheorem{itremark}{Remark}
 \newtheorem{itclaim}{Claim}
 \newtheorem{itcorollary}{\bf Corollary}
 \newenvironment{theorem}{\addtocounter{equation}{1}
 \begin{ittheorem}}{\end{ittheorem}}
 \newenvironment{lemma}{\addtocounter{equation}{1}
 \begin{itlemma}}{\end{itlemma}}
 \newenvironment{proposition}{\addtocounter{equation}{1}
 \begin{itproposition}}{\end{itproposition}}
 \newenvironment{remark}{\addtocounter{equation}{1}
 \begin{itremark}}{\end{itremark}}
 \newenvironment{proof}{\noindent {\bf Proof.\,}
 }{\hspace*{\fill}$\qed$\medskip}
 \newenvironment{corollary}{\addtocounter{equation}{1}
 \begin{itcorollary}}{\end{itcorollary}}
\newcommand{\din}{\partial^{in}}
\newcommand{\dout}{\partial^{\, out}}
\newcommand{\doutie}{\partial^{\, out,ext}_{\infty}}
\newcommand{\dini}{\partial^{\, in}_\infty}
\newcommand{\douti}{\partial^{\, out}_\infty}
\newcommand{\doute}{\partial^{\, out,ext}}
\newcommand{\ex}{{\exists\, }}
\newcommand{\set}[2]{\big\{\, #1 :  #2\, \big\}}
\newcommand{\tatop}[2]{\genfrac{}{}{0pt}{1}{#1}{#2}}
\newcommand{\XX}{{\underline{X}}}
\newcommand{\xx}{{\underline{x}}}
\newcommand{\yy}{{\underline{y}}}
\def\T{\mathcal{T}}
\def\S{\mathcal{S}}
\def\lsim{\mathop{\sim}^{\ln}_{n\to\infty}}
\def\bD{\overline{D}}
\def\supp{\text{supp\,}}
\def\afm{\overline{|f|\,}^{\lower2pt\hbox{$\scriptstyle M$}}}
\def\fm{\overline{f\,}^{\lower2pt\hbox{$\scriptstyle M$}}}
\def\fmn{\overline{f_N\,}^{\lower2pt\hbox{$\scriptstyle M$}}}
\def\fmne{^{\displaystyle \eta}\overline{f_N\,}^{\lower2pt\hbox{$\scriptstyle M$}}}
\def\pfm{\overline{\psi f\,}^{\lower2pt\hbox{$\scriptstyle M$}}}
\def\phfm{\overline{\phi_X f_N\,}^{\lower2pt\hbox{$\scriptstyle M$}}}
\def\tD{\widetilde{D}}
\def\tD{{D}}
\def\uro{\smash{{U}^{\!\!\!\!\raise5pt\hbox{$\scriptstyle o$}}}}
 \def \ba {\begin{array}}
 \def \ea {\end{array}}
 \def \qed {{\heartsuit\hfill}}
 \def \Z {{\mathbb Z}}
 \def \Zd {{\mathbb Z^d}}
 \def \Rd {{\mathbb R^d}}
 \def \Znd {{\mathbb Z_n^d}}
 \def \R {{\mathbb R}}
 \def \N {{\mathbb N}}
 \def \P {{P}}
 \def \E {{E}}
 \def \cE {{\cal E}}
 \def \cF {{\cal F}}
 \def \cA {{\cal A}}
 \def \cR {{\cal R}}
\def\wP{\smash{\widetilde P}}
\def\wh{{\tilde h}}
\def\hX{\smash{\widehat X}}
\def\hp{{\widehat \phi}}
 \def \cG {{\cal G}}
 \def \cX {{\cal X}}
 \def \cB {{\cal B}}
 \def \cL {{\cal L}}
 \def \baf {{\overline f}}
 \def \bag {{\overline g}}
\def \qed {{\square\hfill}}
\def\ft{\widetilde f}
\def\hht{\smash{\widetilde h}}
\def\gt{\widetilde g}
 \def \La {{\Lambda}}
\newcommand{\Ld}{{\mathbb{L}}^d}
\newcommand{\Ldi}{\smash{{\mathbb{L}}^{d,\infty}}}
\newcommand{\Edi}{{\mathbb{E}}^{d,\infty}}
\newcommand{\Ed}{{\mathbb{E}}^d}
\newcommand{\dinf}{\, {\rm d}_{\infty}\, }
\def\cA{{\cal A}} \def\cB{{\cal B}}  
\def\cE{{\cal E}} \def\cF{{\cal F}} \def\cG{{\cal G}} 
   \def\cL{{\cal L}}
 \def\cR{{\cal R}}  
   \def\cX{{\cal X}}
\def \qed {{\square\hfill}}
\def\R{{\mathbb R}}
\def\Q{{Q}}
\def\N{{\mathbb N}}
\def\eqref#1{(\ref{#1})}
\DeclareMathOperator{\dist}{dist}
\def\cX{\mathcal{X}}
\def\cR{\mathcal{R}}
\def\cL{\mathcal{L}}
\def\cG{\mathcal{G}}
\def\cF{\mathcal{F}}
\def\cE{\mathcal{E}}
\def\cB{\mathcal{B}}
\def\cA{\mathcal{A}}
\def\eps{\varepsilon}
\begin{document}
%\allowdisplaybreaks[4]
%%%%%%%%%%%%%

\title{The random walk penalised by its range\\
in dimensions $d\geq 3$}

 \author{
\begin{tabular}{c c }
\text{Nathana\"el Berestycki} &
\text{\qquad Rapha\"el Cerf\footnote{
\noindent
DMA, Ecole Normale Sup\'erieure,
CNRS, PSL Research University, 75005 Paris.}
\footnote{
\noindent Laboratoire de Math\'ematiques d'Orsay, Universit\'e Paris-Sud, CNRS, Universit\'e
	Paris--Saclay, 91405 Orsay.}}\\
\text{Universit\"at Wien\footnote{On leave from the University of Cambridge}} &
\text{ {\'E}cole Normale Sup\'erieure }
\end{tabular}
%\vbox{
%\hbox{University of Cambridge}
%\fbox{\hbox{Nathana\"el Berestycki}}}
%\vbox{
%\hbox{University of Cambridge}
%\framebox{\hbox{Nathana\"el Berestycki}}}
%\\
%and\\
%\footnote{{\it E-mail:} rcerf@math.u-psud.fr}
%\texttt{rcerf@math.u-psud.fr}
%DMA,
%ENS, 45 rue d'Ulm, 75005 Paris
%Rapha\"el Cerf\\
%{\'E}cole Normale Sup\'erieure
%{{\it E-mail:} rcerf@math.u-psud.fr}
%\footnote{
%Universit\'e Paris Sud and
%Ecole Normale Sup\'erieure de Paris
%}
%Universit\'e Paris Sud and IUF
%Math\'ematique, B\^atiment~$425$\\
%\small
%91405 Orsay Cedex--France\\
%\small
}

\maketitle

%\centerline{Universit\'e Paris Sud and Institut Universitaire de France}

%\renewcommand{\abstractname}{}
%\vskip-57pt

\begin{abstract}
\noindent We study a self-attractive random walk such that each trajectory of length $N$ is penalised by a factor proportional to $\exp ( - |R_N|)$, where $R_N$ is the set of sites visited by the walk. We show that the range of such a walk is close to a solid Euclidean ball of radius approximately $\rho_d N^{1/(d+2)}$, for some explicit constant $\rho_d >0$. This proves a conjecture of Bolthausen \cite{BO} who obtained this result in the case $d=2$.
\end{abstract}

\tableofcontents

\section{Introduction}

\subsection{Main results}

Let $P$ be the law of the
discrete-time simple random walk $(S_n)_{n\in\N}$ on $\mathbb{Z}^d$, $d\geq 1$, starting from the origin.
Let $N$ be a positive integer. In \cite{BO}, Bolthausen proposed the following model for a self-attractive random walk: let us denote by $R_N$ the set of the points visited by the random walk until time $N$
and by $|R_N|$ its cardinality.
We define a new probability on the set of the $N$--steps trajectories
by setting
\begin{equation}
\label{defPN}\frac{d\wP_N}{dP}\,=\,\frac{1}{Z_N}\exp\big(-|R_N|\big)\,,
\end{equation}
where the normalization factor (or partition function) $Z_N$ is given by
\begin{equation}\label{partitionfunction}
Z_N\,=\,E\big(
\exp\big(-|R_N|\big)\big).
\end{equation}
Clearly, $\wP_N$ favours configurations where the trajectory is localised on a small number of points. Bolthausen asked what can be said about a typical realisation of $\wP_N$. The question is particularly natural from the point of view of large deviations theory. Indeed one of the early successes of the theory, due to Donsker and Varadhan \cite{DV1}, was a determination of the first order asymptotics of the partition function $Z_N$:
\begin{equation}\label{DV}
  %Z_N = \exp \left( - (1+ o(1)) \chi_d n^d \right); \quad n = N^{1/{d+2}}
  Z_N = \exp \left( - (1+ o(1)) \chi_d
	N^{d/{d+2}}
	\right),
\end{equation}
for some $\chi_d>0$ depending only on the dimension.
 Bolthausen was able to show that in dimension $d=2$, under $\wP_N$ a typical trajectory localises on a Euclidean ball of radius approximately $\rho_2 N^{1/4}$ for some constant $\rho_2>0$. His analysis strongly suggests that in general dimensions $d \ge 3$, a similar result holds except that the walk now localises on a ball of radius approximately $\rho_d N^{1/ (d+2)}$, where $\rho_d>0$ is a specific constant depending only on the ambient dimension $d$.

 \medskip The main goal of this paper is to verify Bolthausen's conjecture. Bolthausen actually provided support for his conjecture by showing that two (admittedly crucial) estimates implied the conjecture in general dimension $d \ge 2$; these two estimates were in turn proved for $d = 2$. The two theorems below provide a proof of these two estimates in the general case $d \ge 3$, thereby proving as a corollary of Bolthausen's paper \cite{BO} that his conjecture is true and hence completing his programme. We now state below these two results from which Bolthausen's conjecture follows.

\medskip We suppose without loss of generality that
$$n\, =\, N^{1/(d+2)}$$ is an integer. We define the
local time $L_N$ as
\begin{equation*}
\forall x\in\Zd\qquad L_N(x)\,=\,
\sum_{k=0}^{N-1}1_{\{\,S_k=x\,\}}\,.
\end{equation*}
We define the continuous rescaled version $\ell_N$ of $L_N$ by
 \begin{equation}\label{D:rescaledLN:intro}
\forall x\in\Rd\qquad \ell_N(x)\,=\,
\frac{n^d}{N}{L_N\big(\lfloor nx\rfloor\big)}\,.
\end{equation}
For $x \in \R^d$, we let $\phi_x$ be the principal eigenfunction (normalised so that $\|\phi_x\|_2^2 := \int_{\R^d} \phi_x^2 = 1$) of $- \Delta$ in $B(x, \rho_d)$, with Dirichlet boundary conditions.
We denote by
$L^1(\R^d) $ the set of the integrable Borel functions on $\R^d$ and we use the standard norm:
$$\forall f \in L^1(\R^d)\qquad \|f\|_1\, =\, \int_{\R^d} |f(x)|dx\,.$$
The first result below is a quantitative shape theorem in the $L^1$ sense for the local time profile at time $N$.

\begin{theorem}\label{T:Prop31_intro}
Let $\cL_n$ be the set of
functions
	%from~$\Rd$ to $\R^+$
	defined by
$$\cL_n\,=\,\big\{\,
%\ell:\Rd\to\R^+: %g\text{ is }C^\infty,\,
\ell\in L^1(\Rd):
||\ell||_1=1,\, \ell\geq 0,\,
\inf_{x\in\Rd}\,||\ell-(\phi_x)^2||_1\,\geq 1/n^{1/800}\,\big\}\,.$$
%$$\cL(\varepsilon)\,=\,\big\{\,
%g:\Rd\to\R^+: g\text{ is }C^\infty,\,
%||g||_2=1,\, g\geq 0,\,
%\inf_{x\in\Rd}\,||g-\phi_x||_2\,\geq\varepsilon\,\big\}\,.$$
For $n$ large enough, we have
\begin{equation}
\label{ouf:intro}
E\big(e^{-|R_N|};
\ell_N\in\cL_n)\,\leq\,
\exp\Big(-
n^d\,
	\chi_d -  n^{d-\frac{1}{17}}\Big)
\end{equation}
where $\chi_d$ is as in \eqref{DV}.
\end{theorem}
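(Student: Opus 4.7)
The plan is to combine a quantitative Donsker--Varadhan upper bound for the distribution of $\ell_N$ with a quantitative stability estimate for the variational problem underlying $\chi_d$. By Donsker--Varadhan, $\chi_d$ is the infimum of the functional
$$J(\ell) := \tfrac{1}{2} \int_{\R^d} |\nabla \sqrt{\ell}|^2 \,dx + \bigl|\supp(\ell)\bigr|$$
over probability densities on $\R^d$, and the minimum is attained precisely at translates $\ell = \phi_x^2$: the support must be a ball (by Faber--Krahn), its radius must equal $\rho_d$ (by strict convexity of $r \mapsto \tfrac{1}{2}\lambda_1(B_r) + |B_r|$ at $\rho_d$), and the density on this ball must be the squared normalised principal eigenfunction (by the Rayleigh quotient characterisation).

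\textbf{Step 1: Quantitative stability.} The first task is to establish that there exist $c_1,\alpha>0$ such that
$$\inf_{x\in\R^d}\|\ell-\phi_x^2\|_1 \ge \delta \;\Longrightarrow\; J(\ell)-\chi_d \ge c_1\delta^\alpha.$$
This follows by chaining quantitative versions of the three rigidity statements above: a stability Faber--Krahn inequality (so that the excess principal eigenvalue controls the Fraenkel asymmetry of $\supp(\ell)$), the local quadratic behaviour of $r\mapsto \tfrac{1}{2}\lambda_1(B_r)+|B_r|$ near $r=\rho_d$, and the spectral gap of $-\Delta$ on $B(x,\rho_d)$ (so that the excess Dirichlet energy controls the $L^2$-distance of $\sqrt{\ell}$ from the principal eigenfunction, and hence the $L^1$-distance of $\ell$ from $\phi_x^2$ by Cauchy--Schwarz).

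\textbf{Step 2: Quantitative Donsker--Varadhan upper bound.} The second task is to show that for any admissible prototype $\ell$ and small $\delta_0$,
$$E\bigl(e^{-|R_N|};\,\|\ell_N-\ell\|_1\le \delta_0\bigr) \le \exp\bigl(-n^d J(\ell) + o(n^{d-1/17})\bigr).$$
The natural approach is coarse--graining: partition $\R^d$ into mesoscopic boxes of side $n^{1-\beta}$ for some $\beta\in(0,1)$, truncate the walk to a macroscopic box of side $O(n)$ (paying a negligible exponential cost by classical tail estimates), estimate the local time profile on each mesoscopic box using Dirichlet principal eigenvalue bounds on the relevant union of boxes approximating $\supp(\ell)$, and finally optimise $\beta$ so that both the discretisation error (from replacing $\ell_N$ by its piecewise average) and the spectral error (from approximating $\lambda_1$ on a Lipschitz shape by its discrete analogue) are smaller than $n^{d-1/17}$.

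\textbf{Step 3: Covering and main obstacle.} Cover $\cL_n$ by a polynomial-in-$n$ number of $L^1$-balls of radius $\delta_0 := n^{-1/800}/3$ centred at prototypes $\ell^{(i)}\in\cL_n$; such a covering exists because the relevant densities are supported in a box of side $O(n)$ by Step 2's truncation. By Step 1, each prototype satisfies $J(\ell^{(i)})\ge \chi_d + c_1 n^{-\alpha/800}$, and by Step 2 its contribution is at most $\exp(-n^d\chi_d - c_1 n^{d-\alpha/800} + o(n^{d-1/17}))$. Choosing $\alpha$ so that $\alpha/800 < 1/17$ and taking a union bound (with the polynomial count absorbed into the subexponential correction) yields \eqref{ouf:intro} for $n$ large enough. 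The main obstacle is clearly Step 2: classical proofs of \eqref{DV} (via Feynman--Kac semigroups, exponential tilts, or the LDP for occupation measures) are fundamentally qualitative, and extracting an explicit polynomial rate better than $n^{d-1/17}$ demands delicate bookkeeping in the coarse-graining, robust spectral comparison estimates between continuous and discrete Laplacians on irregular unions of boxes, and careful control of the random support $\{\ell_N>0\}$, whose noisy boundary can a priori contribute an order-$n^{d-1}$ fluctuation to the penalty $|R_N|$.
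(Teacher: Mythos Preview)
Your high-level architecture is reasonable and Step~1 is essentially the paper's Lemma~7.2 (the quantitative stability statement, derived from the Brasco--De Philippis--Velichkov Faber--Krahn inequality combined with a spectral gap argument). But Step~3 contains a genuine error, and Step~2 glosses over the paper's central technical contribution.

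\textbf{The covering argument fails.} You cannot cover $\cL_n$ by a polynomial-in-$n$ number of $L^1$-balls of radius $n^{-1/800}/3$. Even after restricting to densities supported in a fixed box, the relevant function space is effectively $n^d$-dimensional (since $\ell_N$ is piecewise constant on cubes of side $1/n$), so any $\delta_0$-covering has cardinality at least $\exp(c\,n^{d}\log(1/\delta_0))$. With that many terms the union bound swamps the gain $c_1 n^{d-\alpha/800}$ from Step~1. The paper avoids this trap by performing the coarse-graining \emph{inside} the large-deviation estimate rather than as a post-hoc covering: one first restricts to the event $\{|R_N|\le cn^d,\ \cE(\sqrt{L_N})\le \kappa n^d\log n\}$, then partitions space into blocks of side $n$, keeps only the high-density blocks (polylogarithmically many), locally averages at a mesoscopic scale $M=n^\beta$, and discretises the values. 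The number of resulting coarse-grained profiles is $\exp(n^{d-1/8})$, not polynomial, and this is precisely what sets the error exponent.

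\textbf{The compactification step is the wrong move.} Your ``truncate the walk to a macroscopic box of side $O(n)$'' is exactly the approach the paper explains does \emph{not} work in $d\ge 3$: projecting to a torus would let you invoke classical Donsker--Varadhan, but quantitative Faber--Krahn inequalities are currently unavailable on the torus, so Step~1 would break. Moreover, a priori the range is only contained in a box of side $O(n^d)$, not $O(n)$. The paper's resolution is a new large-deviation inequality for the random walk on the full lattice $\mathbb{Z}^d$ (Theorem~1.4), proved via a minimax argument over superharmonic functions, together with a correction-for-the-origin device. This is the substance you are missing when you flag Step~2 as ``the main obstacle'' without resolving it.
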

\noindent
The upper bound obtained in formula~\eqref{ouf:intro} is in fact negligible compared to the partition function~$Z_N$. This is a consequence
of another result of \cite{BO} which is recalled in proposition~\ref{P:lbZ}.

The second main result says that
if $x$ is such that $\ell_N$ is close to $(\phi_x)^2$ in the $L^1$ sense above, then actually almost all of the ball of radius $\rho_d n$ around $x$ has been filled by the range of the walk. More precisely, set
$$
\cG_{loc,x}\,=\,\Big\{\, \|\ell_N - (\phi_x)^2 \|_1 \le \frac{1}{n^{1/800}}\,\Big\}\,.
$$
For $\kappa>0$ and $x \in \R^d$, we define
$$
\cR_{\kappa,x} = \big\{\forall  z \in B(x, \rho_d n (1 - n^{- \kappa}) ),\quad \ell_N(z)>0 \big \}.
$$
This is the event that the ball of radius $\rho_d n (1 - n^{-\kappa})$ around $x$ is filled.
%The main result of this section, which is the analogue of Proposition 4.1 in \cite{BO}, is the following.

  \begin{theorem}\label{T:filledball_intro} There exists $\kappa>0$ such that, for any $a>0$,
  $$
  \frac1{Z_N}\E( e^{ - |R_N|} ; \cG_{loc,x} ; (\cR_{\kappa,x})^c) = o(N^{-a}).
  $$
\end{theorem}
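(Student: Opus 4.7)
The plan is to handle each lattice site $z \in B(x, \rho_d n(1-n^{-\kappa}))$ separately and then take a union bound over the $\sim n^d$ sites in the ball. Conceptually, on $\cG_{loc,x}$ the walk spends a large amount of time in every neighbourhood inside $B(x, \rho_d n)$, and standard potential theory for simple random walk in $d \geq 3$ forces a walk that stays a long time near $z$ to actually hit $z$ with overwhelming probability.

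The first input is a lower bound on the occupation time near $z$. Because $\phi_x$ vanishes linearly on $\partial B(x, \rho_d)$ (as the Dirichlet principal eigenfunction), one has $\phi_x(y)^2 \geq c n^{-2\kappa}$ for $|y-x| \leq \rho_d(1-n^{-\kappa})$. Choose a mesoscopic scale $r = n^\alpha$ with $\alpha$ close to $1$. Integrating the $L^1$ bound $\|\ell_N - \phi_x^2\|_1 \leq n^{-1/800}$ over the rescaled ball of radius $r/n$ around $z/n$, together with the continuity of $\phi_x^2$, gives on $\cG_{loc,x}$ the estimate
\[
T_z \;:=\; \sum_{y \in B(z,r)} L_N(y) \;\geq\; c \, N \, (r/n)^d \, n^{-2\kappa} \;=\; c \, n^{2-2\kappa}\, r^d,
\]
valid as soon as $\kappa$ and $1-\alpha$ are small enough that $(r/n)^d n^{-2\kappa} \gg n^{-1/800}$; concretely $\kappa < 1/1600$ suffices, with $r = n^{1-\eps}$ for appropriate small $\eps$.

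The heart of the argument is a local hitting estimate. Decompose the walk according to its excursions between the concentric spheres $\partial B(z, r/2)$ and $\partial B(z, r)$. By standard Green-function/capacity estimates on $\Z^d$ with $d \geq 3$, each excursion into $B(z,r/2)$ has probability $\geq c\, r^{2-d}$ of reaching $z$ before exiting $B(z,r)$; the number of such excursions is at least of order $T_z / r^2 \geq c\, n^{2-2\kappa}\, r^{d-2}$. Applying the hitting estimate Markovianly excursion by excursion,
\[
\P\big(\,z \notin R_N \,\big|\, \text{excursion structure}\big) \;\leq\; (1 - c\, r^{2-d})^{c\, n^{2-2\kappa}\, r^{d-2}} \;\leq\; e^{-c'\, n^{2-2\kappa}},
\]
the key cancellation being that the $r$-dependence disappears because $T_z/r^d \sim n^{2-2\kappa}$. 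Bounding $e^{-|R_N|} \leq 1$ and summing over the $O(n^d)$ candidate holes yields
\[
\frac{1}{Z_N}\, \E\big( e^{-|R_N|}; \cG_{loc,x}; (\cR_{\kappa,x})^c \big) \;\leq\; C\, n^d\, e^{-c'\, n^{2-2\kappa}} \cdot \frac{1}{Z_N} \cdot \E\big(e^{-|R_N|}; \cG_{loc,x}\big) \;\leq\; C\, n^d\, e^{-c'\, n^{2-2\kappa}},
\]
which is $o(N^{-a})$ for every $a > 0$ as soon as $\kappa < 1$.

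The main technical obstacle is justifying rigorously the use of unconditional hitting estimates despite the nontrivial global conditioning on $\cG_{loc,x}$ and the twisting by $e^{-|R_N|}$. The standard remedy is to introduce a mesoscopic time skeleton --- the walk's positions at a regularly spaced sequence of times, spaced by some $m$ larger than $r^2$ --- and to observe that (i) conditionally on the skeleton the pieces of the walk are unconditional simple random walk bridges, to which the excursion/capacity argument applies directly, and (ii) the event $\cG_{loc,x}$, which only sees the coarse-grained local time, is essentially determined by the skeleton alone, so it effectively factors out of the bridge-level computation. The small value of $\kappa$ in the statement is forced by needing enough slack in the exponent $n^{2-2\kappa}$ to absorb both the entropy of the skeleton and the $n^d$ factor from the union bound.
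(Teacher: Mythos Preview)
Your occupation-time lower bound and the heuristic excursion/capacity argument for hitting a single point are both sound, and they mirror the paper's Lemma~\ref{L:det_time}. The gap is in the displayed inequality
\[
\frac{1}{Z_N}\,\E\bigl(e^{-|R_N|};\cG_{loc,x};(\cR_{\kappa,x})^c\bigr)\;\le\;C n^d e^{-c' n^{2-2\kappa}}\cdot\frac{1}{Z_N}\,\E\bigl(e^{-|R_N|};\cG_{loc,x}\bigr),
\]
which is what the whole proof hangs on. Conditioning on a skeleton makes the bridge pieces unconditional, and one can argue that $\cG_{loc,x}$ is (approximately) skeleton-measurable; but $e^{-|R_N|}$ is \emph{not} skeleton-measurable. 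Given the skeleton, the range $R_N$ still depends on the fine-scale behaviour of every bridge, and it is \emph{positively} correlated with $\{z\notin R_N\}$ under that conditioning (smaller range means fewer points visited). So one cannot pull the hitting estimate out of the weighted expectation. Nor can one simply bound $e^{-|R_N|}\le 1$: dividing by $Z_N\asymp e^{-\chi_d n^d}$ then leaves a factor $e^{\chi_d n^d}$ that $e^{-c' n^{2-2\kappa}}$ cannot beat. Your ``remedy'' paragraph identifies the obstacle but does not actually resolve it.

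The paper closes this gap with two ideas that are absent from your sketch. First, on $\cG_{loc,x}$ one has the deterministic lower bound $|R_N|\ge\omega_d(\rho_d n)^d-O(n^{d-\eps})$, which accounts for the \emph{energetic} part of $Z_N$; the remaining \emph{entropic} part $e^{-(\lambda/2\rho_d^2)n^d}$ is absorbed by a Girsanov-type change of measure to a process with drift toward the ball (Lemma~\ref{L:numexc}, Step~1). Under the new measure~$\tilde\Q$ one can then run honest bridge arguments. Second, and crucially, rather than fixing a single missed point $z$, the paper conditions on the \emph{exact} set $\cX$ of missed points inside the mesoscopic ball~$B$ (Section~\ref{S:sum}). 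On the event $\cB_\cX=\{R_N\cap B=B\setminus\cX\}$ the local range $|R_N\cap B|=|B|-k$ is deterministic, so $e^{-|R_N\cap B|}$ factors out cleanly as $e^{-|B|+k}$. One then sums over $k$ and over all $\binom{|B|}{k}$ configurations; the energy gain $e^k$ and the entropy $\binom{|B|}{k}\le e^{Ck\log m}$ are beaten by the probability that all bridges avoid $\cX$, which the paper shows is $\le e^{-ck\,n^{2\kappa}/(\log m)^{O(1)}}$ per unit of $k$ (Lemmas~\ref{L:exchit} and~\ref{L:badmiss}). This combinatorial decomposition is exactly what converts the $e^{-|R_N|}$ weighting into something tractable, and it is the missing ingredient in your argument.
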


\noindent
Together with Bolthausen's results \cite{BO} (see p.877, immediately below Conjecture 1.3), Theorems \ref{T:Prop31_intro} and \ref{T:filledball_intro} immediately imply the following, which is the main conclusion of our paper.

\begin{corollary}\label{T:main}
Let us denote
%by $M_N$ the mass center of the range $R_N$ and
	by $B(x,r)$ the $d$ dimensional Euclidean ball centered at $x$ of radius $r$.
There exists a positive constant $\rho_d$, which depends only on the dimension $d$, such that,
for any $\varepsilon>0$, as $N \to \infty$,
$$
\wP_N\Big(\exists x\in\Rd\quad
	B\big(x,\rho_d(1-\varepsilon)N^{\frac{1}{d+2}}\big)
\cap\Zd
\subset R_N
\subset
B\big(x,\rho_d(1+\varepsilon)N^{\frac{1}{d+2}}\big)\Big)\longrightarrow 1\,.$$
%$$\lim_{N\to\infty}\,
%\wP_N\Big(B\big(M_N,\rho_d(1-\varepsilon)N^{\frac{1}{d+2}}\big)
%\cap\Zd
%\subset R_N
%\subset
%B\big(M_N,\rho_d(1+\varepsilon)N^{\frac{1}{d+2}}\big)\Big)=1\,.$$
%$$\lim_{N\to\infty}\,\,
%\wP_N\Big(B\big(0,\rho_d(1-\varepsilon)\big)
%\,\subset\,\frac{R_N-M_N}{N^{\frac{1}{d+2}}}
%\,\subset\,B\big(0,\rho_d(1+\varepsilon)\big)
%\Big)\,=\,1\,.$$
\end{corollary}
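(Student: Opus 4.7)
The plan is to stitch together Theorems~\ref{T:Prop31_intro} and~\ref{T:filledball_intro} with the Donsker--Varadhan lower bound on $Z_N$, following the reduction sketched by Bolthausen in~\cite{BO}. Throughout write $n = N^{1/(d+2)}$ and work under $\wP_N$.

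The first step is to show that with $\wP_N$--probability tending to one there exists $y \in \Rd$ such that the event $\cG_{loc,y}$ holds, i.e.\ $\|\ell_N - \phi_y^2\|_1 < n^{-1/800}$. Theorem~\ref{T:Prop31_intro} provides
$$E\bigl(e^{-|R_N|};\, \ell_N \in \cL_n\bigr) \le \exp\bigl(-n^d \chi_d - n^{d - 1/17}\bigr),$$
while the Donsker--Varadhan asymptotic~\eqref{DV} (recalled as Proposition~\ref{P:lbZ}) supplies $Z_N \ge \exp\bigl(-(1+o(1))n^d \chi_d\bigr)$. Dividing yields
$$\wP_N(\ell_N \in \cL_n) \le \exp\bigl(-n^{d - 1/17}(1+o(1))\bigr) \longrightarrow 0,$$
so such a $y$ indeed exists with $\wP_N$--probability tending to $1$. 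A second step then invokes Theorem~\ref{T:filledball_intro}: conditionally on $\cG_{loc,y}$, the event $\cR_{\kappa,y}$ also holds with $\wP_N$--probability $1 - o(N^{-a})$ for every $a>0$. On $\cR_{\kappa,y}$ every lattice point of a ball of radius essentially $\rho_d n$, with multiplicative defect $n^{-\kappa} \to 0$, around the natural integer-scale centre $x$ attached to $y$ lies in $R_N$; for any fixed $\varepsilon > 0$ we have $n^{-\kappa} < \varepsilon$ for $n$ large, and the lower inclusion $B(x, \rho_d(1-\varepsilon) n) \cap \Zd \subset R_N$ follows.

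The matching upper inclusion $R_N \subset B(x, \rho_d(1+\varepsilon) n)$ is precisely the content of Bolthausen's argument in~\cite{BO} (the discussion immediately below Conjecture~1.3, p.~877). Heuristically, the $L^1$ closeness $\|\ell_N - \phi_y^2\|_1 < n^{-1/800}$ forces the walk to spend all but a $o(1)$ fraction of its time inside the (rescaled) support of $\phi_y^2$; any excursion enlarging the range by an extra shell of thickness $\varepsilon n$ around $B(x, \rho_d n)$ would contribute $\Theta(\varepsilon n^d)$ new vertices to $R_N$, which is prohibitively suppressed by the factor $e^{-|R_N|}$ against the Donsker--Varadhan baseline. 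The crucial input for this trade-off is exactly the $L^1$ shape information of the first step, so the main obstacle in the whole programme is Theorem~\ref{T:Prop31_intro} itself, to which the bulk of the paper is devoted; the ball-filling statement of Theorem~\ref{T:filledball_intro} and Bolthausen's entropy-versus-energy comparison are comparatively soft once the quantitative $L^1$ shape theorem is in hand.
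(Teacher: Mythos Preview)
Your argument matches the paper's: the corollary is obtained by combining Theorems~\ref{T:Prop31_intro} and~\ref{T:filledball_intro} with Bolthausen's dimension-free reduction in~\cite{BO}, exactly as stated in the sentence preceding Corollary~\ref{T:main}. Two minor tightenings are worth recording: for $\wP_N(\ell_N\in\cL_n)\to 0$ you must invoke the quantitative bound $Z_N\ge\exp(-\chi_d n^d - cn^{d-1})$ of Proposition~\ref{P:lbZ} rather than the bare $o(n^d)$ error of~\eqref{DV} (otherwise the $o(n^d)$ term could swamp $n^{d-1/17}$); and since Theorem~\ref{T:filledball_intro} is stated for a \emph{fixed} centre $x$ while the $y$ you extract from $\cL_n^c$ is random, one discretises $y$ to a polynomial-size grid and union-bounds, which the $o(N^{-a})$ estimate readily absorbs.
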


\medskip We mention here a result obtained along the way, which we feel is interesting in its own right. This is a Donsker--Varadhan large deviation estimate which is valid for the random walk
in the full space $\mathbb{Z}^d$, and so bypasses the assumption of compactness for the state space which underlies \cite{DV1}. Let $D$ be an arbitrary finite subset of $\Zd$. For $t\in \N$, we define
$$\tau(D,t)\,=\,\inf\,\big\{\,k\geq 1:L_k(D)=t\,\big\}\,$$
and we set
$$\forall x\in D\qquad
L_t^D(x)\,=\,L_{\tau(D,t)}\,=\,\sum_{k=0}^{\tau(D,t)-1}1_{\{\,S_k=x\,\}}\,.$$
The function $L_t^D$ is a function from $D$ to $\N$. We shall
work in the functional space $\ell^1(D)$ equipped with the norm
$$\forall f\in\ell^1(D)\qquad
||f||_{1,D}\,=\,\sum_{x\in  D}|f(x)|\,.$$
%\note{Changed $\|f\|_1$ to $\| f\|_{1,D}$.}
%We define $\bD$ as
%$$\bD\,=\,D\cup\{\,x\in\Zd:\exists\, y\in D\quad |x-y|=1\,\}\,.$$
For a function $f: \Z^d \to \R$, we define
\begin{equation}
\label{defE}
\cE(f,D) = \frac1{2d} \sum_{y, z \in D: | y -z| = 1} (f(y) - f(z))^2.
\end{equation}
We define $\bD$ as
$$\bD\,=\,D\cup\{\,x\in\Zd:\exists\, y\in D\quad |x-y|=1\,\}\,.$$
When $D = \Z^d$ we will simply write $\|f\|_1$ and $\cE(f)$.
%\note{Added def of restricted Dirichlet energy}
\begin{theorem}
\label{GD}
Let $C$ be a closed convex subset of $\ell^1(D)$.
For any $t\geq 1$, we have
$$\inf_{x\in\bD}\,P_x\Big(\frac{1}{t}L_t^D\in C,\,
\tau(D,t)<\infty\Big)\,\leq\,
\exp\Big(
-t\,\inf_{h\in C}\,\frac{1}{2}\cE(\sqrt{h},D)
%-t\,\inf\Big\{\,\frac{1}{4d}\cE(\sqrt{h},D):
%{h\in C}\,\Big\}
\Big)
\,.$$
\end{theorem}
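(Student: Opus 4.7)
For any $u\colon\Zd\to(0,\infty)$, the process $M_n=u(S_n)\prod_{k=0}^{n-1}u(S_k)/Pu(S_k)$ (with $Pu(x)=(2d)^{-1}\sum_{|z-x|=1}u(z)$) is a nonnegative martingale under every $P_y$. If moreover $u$ is harmonic on $\Zd\setminus D$, then $\log(u/Pu)\equiv 0$ off $D$, and at the stopping time $\tau=\tau(D,t)$ the telescoping product reduces to
$$M_\tau \,=\, u(S_\tau)\exp\!\bigl(t\,\langle h,V\rangle\bigr),\qquad h:=L_t^D/t,\quad V(x):=\log\frac{u(x)}{Pu(x)}\text{ on }D.$$
Optional stopping combined with Fatou gives $E_y[M_\tau;\tau<\infty]\le u(y)$; bounding $u(S_\tau)\ge\min_D u$ on $\{\tau<\infty\}$ therefore yields
$$P_y\bigl(h\in C,\,\tau<\infty\bigr)\,\le\,\frac{u(y)}{\min_D u}\exp\!\Bigl(-t\inf_{h\in C}\langle h,V\rangle\Bigr),$$
and taking $y\in\bD$ that realizes $\min_{\bD}u$ makes the prefactor at most $1$ (since $D\subset\bD$). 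The task reduces to choosing $u$ so that $\inf_{h\in C}\langle h,V\rangle\ge\inf_{h\in C}\tfrac12\cE(\sqrt h,D)$.

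The right $u$ is dictated by a Donsker--Varadhan variational principle for the induced chain on $D$. Let $Q$ be the symmetric sub-stochastic transition kernel on $D$ obtained from $S$ by recording its successive visits to $D$, and set $I_Q(h):=\sum_{x\in D}h(x)-\sum_{x\in D}\sqrt{h(x)}\,(Q\sqrt h)(x)$. Then $I_Q$ is convex on $\ell^1(D)$ and homogeneous of degree one (the convexity uses concavity of $(a,b)\mapsto\sqrt{ab}$). Pick a probability measure $h_*\in C$ minimizing $I_Q$ on $C$, put $f:=\sqrt{h_*}$, and let $u$ be the harmonic extension of $f$: $u(y):=E_y[f(S_{\sigma_D});\sigma_D<\infty]$ for $y\notin D$, with $\sigma_D:=\inf\{k\ge 0:S_k\in D\}$. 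This $u$ is strictly positive on $\Zd$, and a one-step conditioning argument shows $Pu(x)=(Qf)(x)$ for $x\in D$; combined with $\log(a/b)\ge 1-b/a$, this gives $V(x)\ge V_*(x):=1-Qf(x)/f(x)$, whence $\langle h,V\rangle\ge\langle h,V_*\rangle$ for every $h\ge 0$.

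Direct differentiation shows that $V_*$ is the gradient of $I_Q$ at $h_*$, and Euler's identity (for a degree-one homogeneous function) gives $\langle h_*,V_*\rangle=I_Q(h_*)$; the supporting-hyperplane inequality at the constrained minimizer $h_*\in C$ reads $\langle V_*,h-h_*\rangle\ge 0$, so $\langle h,V_*\rangle\ge I_Q(h_*)=\inf_C I_Q$ for all $h\in C$. Finally, $I_Q$ is the \emph{trace} of the SRW Dirichlet form at $D$: letting $\tilde f$ denote the harmonic extension of $\sqrt h$, a short computation using the harmonicity of $\tilde f$ outside $D$ yields $I_Q(h)=(4d)^{-1}\sum_{|x-y|=1,\,x,y\in\Zd}(\tilde f(x)-\tilde f(y))^2$, and dropping the edges with at least one endpoint outside $D$ leaves exactly $\tfrac12\cE(\sqrt h,D)$. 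Combining the inequalities gives $\inf_{y\in\bD}P_y(h\in C,\tau<\infty)\le\exp(-t\inf_C\tfrac12\cE(\sqrt h,D))$.

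The main obstacle is the correct identification of $I_Q$ as the trace of the full SRW Dirichlet form on $\Zd$; it is this identification (and the ensuing edge-discarding inequality) that produces $\cE(\cdot,D)$, rather than the Dirichlet form of the walk killed on exit from $D$, in the exponent. A minor technicality is strict positivity of $h_*$ on $D$, needed for $f$, $u$, and $V$ to be well defined; it is handled by a routine perturbation, replacing $h_*$ by $(1-\varepsilon)h_*+\varepsilon\pi$ for a strictly positive probability measure $\pi$ on $D$ and letting $\varepsilon\downarrow 0$ at the end.
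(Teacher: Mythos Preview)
Your overall architecture matches the paper's: the same Donsker--Varadhan martingale, optional stopping at $\tau(D,t)$, the choice of $u$ as the harmonic extension of $\sqrt{h}$ off $D$, the identification of the resulting functional with the trace Dirichlet form of the induced chain on $D$, and the final domination $I_Q(h)\ge\tfrac12\cE(\sqrt h,D)$ by dropping the off-$D$ edges. Two small slips: at time $\tau$ the walk is at $S_\tau\in\bD$, not in $D$, so the correct lower bound is $u(S_\tau)\ge\min_{\bD}u$; this is harmless since taking $y=\arg\min_{\bD}u$ still kills the prefactor. Also, you should explicitly replace $C$ by $C\cap M_1(D)$ (compact, so the minimizer exists), exactly as the paper does.

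The genuine gap is the positivity issue, which you describe as a ``routine perturbation'' but is not. If the minimizer $h_*$ of $I_Q$ on $C\cap M_1(D)$ has a zero at some $x_0$, your $u$ vanishes there and the martingale argument collapses. Your proposed fix, replacing $h_*$ by $h_*^\varepsilon=(1-\varepsilon)h_*+\varepsilon\pi$, does \emph{not} work: $h_*^\varepsilon$ is no longer the constrained minimizer, so the supporting-hyperplane inequality $\langle V_*^\varepsilon,h-h_*^\varepsilon\rangle\ge 0$ for $h\in C$ fails. Worse, $V_*^\varepsilon(x_0)=1-(Q\sqrt{h_*^\varepsilon})(x_0)/\sqrt{h_*^\varepsilon(x_0)}\to-\infty$ as $\varepsilon\downarrow 0$, so $\inf_{h\in C}\langle h,V_*^\varepsilon\rangle\to-\infty$ whenever $C$ contains any $h$ with $h(x_0)>0$. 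This is exactly why the paper inserts the minimax theorem: after swapping $\sup_u$ and $\inf_g$, one fixes $g$ \emph{first} and then chooses $u=u_\varepsilon$ adapted to that $g$; sending $\varepsilon\downarrow 0$ \emph{for that fixed $g$} yields $\sup_u\Phi(u,g)\ge\tfrac12\cE(\sqrt g,D)$ cleanly, with no constrained-minimizer optimality needed. Your attempt to bypass minimax by constructing the saddle point directly runs aground precisely at the boundary of the positivity cone; to repair it you would need either Sion's theorem or an equivalent convex-duality statement, which is what you were trying to avoid.
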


\subsection{Heuristics}

We begin a discussion of the above results (such as Corollary \ref{T:main}) with a rough heuristics explaining where the limit shape comes from.
Note that a random walk will stay in a box of diameter $n$ for duration $N$ with probability approximately $\exp( - O( N/n^2))$, since it may leave this box with positive probability every $n^2$ units of time. On the other hand, the energetic contribution to \eqref{partitionfunction} of such configurations is of order $\exp ( - n^d)$. Balancing entropy and energy, we expect that the trajectories that contribute most to \eqref{partitionfunction} are such that $n \asymp N^{1/(d+2)}$, which explains the scaling in Corollary \ref{T:main}. We now discuss this in a bit more detail, but still ignoring many technical details.

If $U\subset \R^d$ is an open bounded subset, and $n = N^{1/ (d+2)}$ is as above, then the probability for the random walk to remain in $nU$ for a long time $N$ is approximately $\exp ( - \lambda_U N/(2n^2))$, where $\lambda_U$ is the principal eigenvalue of $- \Delta$ in $U$ with Dirichlet boundary conditions on $\partial U$. Hence the contribution to \eqref{partitionfunction} coming from trajectories staying in $nU$ should be well approximated by $\exp ( - n^d (\lambda_U/2 + |U| ))$, where $|U|$ is the Lebesgue measure of $U$. Using the \textbf{Faber--Krahn inequality}, it can be seen that $\inf_U \{\lambda_U/2 + |U|\}$ is attained when $U$ is a Euclidean ball of radius $r$, say. The radius $r$ can then be determined by noting that principal eigenvalues obey diffusive scaling, i.e., $\lambda_{B(r) } = \lambda / r^2$, where $\lambda$ is the principal eigenvalue in the unit ball. Hence, if $\omega_d $ is the volume of the unit ball, we deduce that $r = \rho_d$ is obtained as the minimiser of the following functional:
$$
\rho_d \,=\,\arg \min \Big\{\,\frac{\lambda}{2r^2} + \omega_d r^d:r>0\,\Big\}\,  = \,\left(\frac{\lambda}{d \omega_d} \right)^{1/ (d+2)}\,.
$$
The constant $\rho_d$ is the one which appears in the theorem.
As already mentioned, Bolthausen proved the
corresponding result in two dimensions \cite{BO}.
The main problem to extend Bolthausen's proof to dimensions~$3$ and higher was to extend
Lemma~$3.1$ and Proposition 4.1 from his paper \cite{BO}. The rest of his proof is written for arbitrary dimensions $d\ge 3$ and it is solely the statement of these two lemmas which depend on $d$ being equal to 2 in his proof.

Lemma 3.1 in \cite{BO} can be seen as a quantitative
Faber--Krahn inequality on the torus, saying that if a set $U$ is not far from minimising $\lambda_U/2 + |U|$ then $U$ itself is not far from a Euclidean ball.
Unfortunately, such an inequality is not available yet in dimensions three and higher.
Even an analogue of the quantitative isoperimetric inequality on the torus
has not been proved so far.
Therefore we cannot use the standard projection of the random walk on the torus,
as in Bolthausen's proof.
This creates a serious difficulty. Indeed, as far as the probabilistic estimates
on the local time
are concerned, it is very convenient to work on the torus: the state space of the
walk becomes
compact and one can readily use the classical large deviations estimates of Donsker
and Varadhan \cite{DV1}.
The good news is that a quantitative Faber-Krahn inequality has been proved recently
in $\Rd$ by Brasco, De Philippis and Velichkov \cite{BLGV}.
Ultimately, we are to use this inequality. Therefore we have to deal with the
random walk
in the full space and we cannot afford the luxury of projecting its trajectories
on the torus.
A key point to carry out this program
is to develop the relevant large deviations estimates. Indeed,
the random walk being transient, the classical Donsker--Varadhan theory
cannot be applied directly.

\medskip {\bf Warning.} In the probability literature, one usually works with the half--Laplacian
$\Delta/2$, which is the infinitesimal generator of the Brownian motion. In Bolthausen's paper,
the notation
$\lambda(G)$ corresponds to half of the quantity defined above. We choose here to stick
to the convention employed in the papers on the Faber--Krahn inequality.
\medskip

\subsection{Relation with other works}

A Brownian analogue of Corollary \ref{T:main} was proved in dimension $d=2$ by Sznitman \cite{Sznitman} using the method of enlargement of obstacles. Very briefly, the starting point of this method (adapted to the discrete setting of this paper) consists in viewing the weighted probability measure $\tilde P_N$ as the annealed probability measure corresponding to a random walk in a random medium in which there is an obstacle at every site with probability $1-e^{-1}$. Note that $e^{- |R_N|}$ then corresponds to the annealed probability that the walk has not encountered any obstacle for time $N$.

A refinement of this method enabled Povel \cite{Povel} to establish the same result in dimension $d \ge 3$. It is important to note however that in the continuum, one cannot of course hope that the range of Brownian motion will fill a ball completely -- there will always be small holes. For this reason,
both results in \cite{Sznitman} and \cite{Povel} are restricted to a statement of the so-called \emph{confinement property}, i.e., a statement that the range is contained in a ball of the appropriate radius (corresponding to $R = \rho_d n (1+ \eps)$ in our setup). The question of whether the range will visit any macroscopic ball within this ball of radius $R$ is only addressed tangentially, see e.g. Theorem 4.3 in \cite{Sznitman} and the discussion at the end of Section 1 in \cite{Povel}. Needless to say, the method of enlargement of obstacles is very different from the strategy employed by Bolthausen in \cite{BO}. Curiously, neither \cite{Sznitman} nor \cite{Povel} discuss what their results imply for the discrete case, though both briefly mention the paper \cite{BO}.

\medskip
At the time we were finishing this paper, we learnt of the independent and nearly simultaneous work of Ding, Fukushima, Sun and Xu \cite{DFSX}, who obtained an alternative proof of Corollary \ref{T:main}. In fact, their result implies a more precise control on the size of the boundary $\partial R_N$ under $\tilde P_N$, showing that with high probability, $|\partial R_N| \le (\log n)^c n^{d-1}$ for some $c>0$ and all for all $N$ large enough. Their starting point is the paper of Povel \cite{Povel}, whose results are used freely in the discrete setting rather than in the continuum. (As pointed out in \cite{DFSX}, a translation of Sznitman's method of enlargement of obstacles to a discrete setup was undertaken previously in \cite{Antal} -- interestingly this predates \cite{Povel}). Given this, what remains to be proved is that the range of the random walk covers all of the ball of radius $\rho_d n (1- \eps)$, i.e., our Theorem \ref{T:filledball_intro}. Hence the overlap with our paper is reduced to the proof of this theorem, which occupies Section \ref{S:filling} of this paper. The major differences with the approach of \cite{DFSX} can be summarised as follows:

-- once we have proved Theorem \ref{T:Prop31_intro}, we know a bit more than the confinement property, since we know that the local time profile is close in the $L^1$ sense to the eigenfunction. This implies in particular that mesoscopic balls are visited frequently, a step which is therefore easy for us (Lemma \ref{L:det_time}) but which requires an argument in \cite{DFSX} (more precisely, the authors of \cite{DFSX} argue that if a ball is not frequently visited it must be close to the boundary).

-- We have found a way to control uniformly the probability that a given set of $k$ points is avoided by the random walk in such a mesoscopic ball. Surprisingly, the control we get here is good enough that it works for \emph{any} configuration of points, no matter what its geometry, and depends only on its cardinality.
This is a major technical difference with \cite{DFSX} where the bound given depends on how many points in the set are far from one another. The key additional idea which allows us to do this here is to partition the points in the set according to their distance to the boundary and only consider points at a given distance from the boundary, where this distance is chosen to maximise the number of such points. This results in an arguably simpler line of reasoning from the conceptual point of view. %though the bound in \cite{DFSX} can be sharper in certain contexts (e.g. if the number of points $k$ in the set is small).

\medskip We also mention the related work \cite{BY} which was the initial motivation of our investigation. In this paper, the penalisation by the range $e^{ - |R_N|}$ is replaced by the size of the boundary of the range $e^{ - | \partial R_N|}$. This turns the random walk into a polymer interface model. A conjecture in \cite{BY} states that a shape theorem takes place on the scale $n = N^{ 1/ (d+1)}$ instead of $n = N^{ 1/ (d+2)}$. This shape could then be thought of as a Wulff crystal shape for the random walk. Despite partial results in \cite{BY}, this conjecture remains wide open at positive  temperature. However, in the limit of zero temperature (for a more general model where the boundary size is measured through i.i.d. random variables attached to edges), Biskup and Procaccia \cite{BP1, BP2} were able to prove this conjecture. Observe that the random media representation which is the starting point of the method of enlargement of obstacles is not available for such a model.

\medskip Finally, for another approach to large deviations without compactness, see \cite{MV}.

\paragraph{Acknowledgements.} We thank the authors of \cite{DFSX} for useful discussions regarding their work. Part of this research was carried out when NB was a guest at Ecole Normale Sup\'erieure, Paris, whose support and hospitality is gratefully acknowledged. This project started in 2013, while R. Cerf was visiting Cambridge,
thanks to the support of the IUF.
NB's research was partly supported by EPSRC grants EP/L018896/1 and EP/I03372X/1.

\section{Further results and organisation of the paper}

\subsection{Preliminary lower bound on the partition function}

To explain some further details about our approach (including intermediate theorems of interest in their own right, see below), it will be useful to start by recalling the following lower bound due to Bolthausen on the partition function which is a quantitative improvement on the result of Donsker and Varadhan.

Let $n$ be an integer such that $n^{d+2}=N$.
Without loss of generality, we can assume that $N$ is such an integer power,
and we do so throughout the paper.
\begin{proposition}\label{P:lbZ}
There exists a constant $c \in \R$, which depends on the dimension $d$ only, such that,
for $N$ large enough, we have
$$Z_N\,\geq\,\exp\Big(-\chi_d n^{{d}}-cn^{{d-1}}\Big)\,  $$
where
$$
\chi_d = \frac{\lambda}{\rho_d^2} + \omega_d \rho_d^d
$$
is the same constant which appears in \eqref{DV}.
%$$Z_N\,\geq\,\exp\Big(-\chi_d N^{\frac{d}{d+2}}-cN^{\frac{d-1}{d+2}}\Big)\,.$$
\end{proposition}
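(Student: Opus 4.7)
For such a lower bound, the natural strategy is to restrict the expectation in \eqref{partitionfunction} to a single near--optimal scenario: walks that remain confined in a Euclidean ball of radius $\rho_d n$, matching the optimiser of the Faber--Krahn-type functional highlighted in the heuristics. Set $D = B(0, \rho_d n) \cap \Z^d$ and let $\tau = \inf\{k \geq 0 : S_k \notin D\}$. On the event $\{\tau > N\}$ one has $R_N \subset D$, so a routine lattice-point count gives $|R_N| \leq |D| = \omega_d \rho_d^d n^d + O(n^{d-1})$, and hence
\[
Z_N \;\geq\; E\big[e^{-|R_N|}\mathbf{1}_{\tau > N}\big] \;\geq\; \exp\big(-\omega_d \rho_d^d n^d - O(n^{d-1})\big)\, P(\tau > N).
\]
This reproduces the volume term $\omega_d \rho_d^d n^d$ of $\chi_d n^d$; it remains to show $P(\tau > N) \geq \exp(-(\lambda/\rho_d^2) n^d - O(n^{d-1}))$, which will supply the spectral contribution.

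For the survival probability I would appeal to a spectral analysis of the substochastic kernel $P^D$ killed on exit from $D$. Denote by $\gamma_D \in (0,1)$ its Perron eigenvalue and by $\psi_D > 0$ the corresponding $\ell^2$-normalised eigenfunction. A spectral expansion combined with Perron--Frobenius positivity (plus, if needed, a standard laziness modification to dispose of the bipartite periodicity of simple random walk, which does not affect the leading exponential rate) yields a bound of the form
\[
P(\tau > N) \;\geq\; \psi_D(0)\, \gamma_D^N \sum_{y \in D} \psi_D(y),
\]
where the prefactor is only polynomial in $n$ (indeed $\psi_D(0) \sim n^{-d/2}\phi_0(0)$ and $\sum_y \psi_D(y) \sim n^{d/2} \int \phi_0$ by the discrete-to-continuous correspondence of the eigenfunctions) and is therefore absorbed in the $O(n^{d-1})$ error of the exponent. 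The analytic core is then the quantitative convergence of the discrete principal Dirichlet eigenvalue of $D$ to its continuum counterpart, with error of relative order $1/n$. The upper bound on $1 - \gamma_D$ follows from the Rayleigh--Ritz variational principle applied to a suitable discretisation of the continuous principal eigenfunction of $-\Delta$ on $B(0, \rho_d n)$, controlling the discretisation error in the boundary layer of width $O(1)$ where this eigenfunction vanishes linearly. Because $\partial D$ is smooth at scale $n$, this boundary discrepancy accumulates over only $|\partial D| = O(n^{d-1})$ sites, so that after multiplication by $N = n^{d+2}$ one loses precisely $O(n^{d-1})$ in the exponent.

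The main obstacle is therefore obtaining the sharp $O(n^{-3})$ error in the discrete-to-continuous eigenvalue comparison; this boundary-layer analysis is the one place where the smoothness of the limiting ball $B(0, \rho_d)$ is essentially used, the rest of the argument being routine. Once the eigenvalue asymptotic is in hand, multiplying the volume bound by the spectral bound yields the stated $Z_N \geq \exp(-\chi_d n^d - c n^{d-1})$.
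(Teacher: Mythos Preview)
Your proposal is correct and follows essentially the standard approach: the paper itself does not give a proof but simply cites Bolthausen's Proposition~2.1 in \cite{BO}, whose argument is exactly the confinement-plus-spectral strategy you describe (restrict to $\{\tau_D>N\}$ for $D=B(0,\rho_d n)\cap\Zd$, bound $|R_N|$ by $|D|=\omega_d\rho_d^d n^d+O(n^{d-1})$, and control $P(\tau_D>N)$ via the principal Dirichlet eigenpair with an $O(n^{-3})$ discrete--continuous eigenvalue comparison). One small simplification for the bipartite issue: rather than introducing laziness, for $N$ even you can use
\[
P_0(\tau_D>N)\ \ge\ P_0(S_N=0,\ \tau_D>N)\ =\ \sum_i \gamma_i^{\,N}\,\psi_i(0)^2\ \ge\ \gamma_D^{\,N}\,\psi_D(0)^2,
\]
since every term in the spectral sum is nonnegative; the factor $\psi_D(0)^2\gtrsim n^{-d}$ is then absorbed harmlessly into the $O(n^{d-1})$ error.
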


%\note{NB $c$ is not necessarily positive.}

\begin{proof} See Proposition 2.1 in \cite{BO} (note that the proof is valid in any dimension).
%We recall that the first order, $Z_N = \exp (- \chi_d n^d (1 + o(1)))$ was already obtained by Donsker and Varadhan \cite{DV1} as the first order asymptotics of the partition function.
\end{proof}

\subsection{Upper bound on the numerator}
Once we have a lower bound on the normalizing constant, the main problem is to
obtain an adequate upper bound on the integral of
$\exp\big(-|R_N|\big)$ over an arbitrary event $\cA$.
We can then rule out
those events $\cA$ for which we can obtain an upper bound which is negligible compared to
the previous lower bound.
Throughout the computations, we use the following convention. For $\cA$ an event, we write
\begin{equation}\label{Druleout}
\,E\Big( \exp\big(-|R_N|\big); \cA\Big)\,=\,
\,E\Big( \exp\big(-|R_N|\big)1_\cA\Big)
\,.
\end{equation}
The central object in our study is the local time of the random walk, defined as
\begin{equation}\label{D:localtimes}
\forall x\in\Zd\qquad L_N(x)\,=\,
\sum_{k=0}^{N-1}1_{\{\,S_k=x\,\}}\,.
\end{equation}
Our estimates will involve its square root, which we denote by $f_N$:
\begin{equation}\label{D:fN}
\forall x\in\Zd\qquad f_N(x)\,=\,\sqrt{L_N(x)}\,.
\end{equation}
\begin{comment}
The (discrete) Dirichlet energy $\cE(f)$
of a function
$f$
from $\Zd$ to $\R$
is
%The discrete Dirichlet energy $\cE(f)$
%of a function $f$ from $\Zd$ to $\R$ is defined by
\begin{equation}\label{D:dirichlet}
\cE(f)\,=\,
\frac{1}{2d}
\sum_{\tatop{y,z\in\Zd}{|y-z|=1}}
\big(f(y)-f(z)\big)^2\,.
\end{equation}
\end{comment}
In order to compare functions to one another, we will make use of various $\ell^p$ norms, which, unlike Bolthausen \cite{BO}, we take to be unscaled. Thus we define, for a function $f : \Z^d \to \R$,
\begin{equation}\label{D:lp}
\|f \|_p = \Big( \sum_{x \in \Z^d} | f(x)|^p \Big)^{1/p}.
\end{equation}
A fundamental step in the proof of Theorem \ref{T:Prop31_intro}, which takes up a substantial portion of this paper, is the following quantitative result which allows us to get an upper bound on \eqref{Druleout}.

\begin{theorem}
\label{upbo}
Let $\cF$ be a collection of functions from~$\Zd$ to $\R^+$.
%There exist $a,b,c$ positive real numbers such that,
For any $\kappa\geq 1$,
we have, for $n$ large enough,
$$\displaylines{
E\big(e^{-|R_N|};
f_N\in\cF)\,\leq\,
%\hfill
\exp(-\kappa n^d)+
%\hfill\cr
\hfill\cr
\exp\Bigg(n^{d-1/8}-
\inf\,\Big\{\,
\Big|\,\Big\{\,x\in \Zd:
h(x)>0
%\geq\frac{n^\rho}{N}
\,\Big\}\Big|+
%\cr
\frac{N}
{2}
(1-n^{-1/4})
\bigg(\max\Big(
\sqrt{\cE\big(\sqrt{h}\big)}-
\frac{1}{n^{9/8}} ,0\Big) \bigg)^2
%\Big( \sqrt{\cE(h)}-
%\frac{1}{n^{1+c}} \Big)^2
\cr
\hfill
:h\in \ell^1(\Zd),\,h\geq 0,\,
\exists\,f\in\cF\quad
\Big|\Big|h-\frac{1}{N}f^2\Big|\Big|_{1}
\,\leq\,\frac{1}{n^{1/16}}
\,\Big\}\Bigg)\,.
}$$
\end{theorem}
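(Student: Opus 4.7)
The plan is to combine three ingredients: (i) a trivial bound on the event where $|R_N|$ is very large; (ii) an $\ell^1$-net argument to discretize the set of local time profiles; (iii) the transient Donsker--Varadhan upper bound (Theorem~\ref{GD}) applied to each net cell. Throughout we work with the normalized local time $h \approx L_N / N = f_N^2 / N$.

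\textbf{Step 1 (split off the overshoot event).} Write
\begin{equation*}
E\bigl(e^{-|R_N|}; f_N\in\cF\bigr) \,\leq\, e^{-\kappa n^d} \,+\, E\bigl(e^{-|R_N|}; f_N\in\cF,\, |R_N|<\kappa n^d\bigr).
\end{equation*}
The first term is the first contribution in the statement. On the complementary event $R_N$ has at most $\kappa n^d$ sites, so we only need to consider profiles $h$ whose support has cardinality at most $\kappa n^d$.

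\textbf{Step 2 ($\ell^1$-net of profiles).} Build a set $\cH$ of nonnegative functions in $\ell^1(\Z^d)$ with supports of size $\leq \kappa n^d$, such that for every $f\in\cF$ arising from a trajectory with $|R_N|<\kappa n^d$, there exists $h\in\cH$ with $\|h - f^2/N\|_1 \leq n^{-1/16}$. A mesoscopic coarse-graining — partition $\Z^d$ into cells of side $n^\alpha$ and discretize mass per cell geometrically — reduces the degrees of freedom drastically. The combinatorics must be carried out so that $\log|\cH| \leq n^{d-1/8}/2$, providing a chunk of the prefactor $\exp(n^{d-1/8})$. The net can also be designed so that every $h\in\cH$ has $\{h>0\}\subset\supp f$ for the approximated $f$, so that $|\{h>0\}|\leq|R_N|$ on the corresponding event.

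\textbf{Step 3 (per-profile energetic bound).} For fixed $h\in\cH$, on the event $\{\|f_N^2/N - h\|_1 \leq n^{-1/16}\}$ and with the inclusion of supports from Step 2,
\begin{equation*}
e^{-|R_N|}\,\leq\, \exp\bigl(-|\{h>0\}|\bigr),
\end{equation*}
which is the second ingredient in the stated bound.

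\textbf{Step 4 (Donsker--Varadhan bound per profile).} The remaining probability is estimated via Theorem~\ref{GD}, applied to a finite set $D$ containing $\supp h$ together with a neighbourhood, and to the closed convex set $C = \{h'\in\ell^1(D):\ h'\geq 0,\ \|h'-h\|_{1,D}\leq 2 n^{-1/16}\}$. The choice $t = \lfloor N(1-n^{-1/4})\rfloor$ (applying the estimate on a slightly shorter time scale, and using the strong Markov property for the remaining time) produces the prefactor $(1-n^{-1/4})$ and gives
\begin{equation*}
P\bigl(\|L_t^D/t - h\|_{1,D}\leq 2 n^{-1/16},\,\tau(D,t)<\infty\bigr) \,\leq\, \exp\Bigl(-\tfrac{t}{2}\inf_{h'\in C}\cE(\sqrt{h'},D)\Bigr).
\end{equation*}
The key quantitative lemma to be proved separately is the reverse-triangle estimate
\begin{equation*}
\inf_{h'\in C}\sqrt{\cE(\sqrt{h'},D)} \,\geq\, \sqrt{\cE(\sqrt{h},D)} - n^{-9/8},
\end{equation*}
which combined with taking the positive part yields exactly the $\bigl(\max(\sqrt{\cE(\sqrt{h})}-n^{-9/8},0)\bigr)^2$ factor.

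\textbf{Step 5 (union bound).} Sum the per-profile estimates over $h\in\cH$; the factor $|\cH|$ is absorbed into $\exp(n^{d-1/8})$.

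\textbf{Main obstacle.} The most delicate step is the quantitative reverse triangle inequality in Step~4. Since $\cE(\sqrt{\cdot})$ is \emph{not} continuous in $\ell^1$ — a small $L^1$ perturbation can introduce sharp oscillations and raise the Dirichlet energy sharply — the inequality cannot hold for arbitrary $h'$ close to $h$ in $\ell^1$. The exponents $n^{-9/8}$ and $n^{-1/4}$ must be tuned to exactly match the mesoscopic scale of the coarse-graining from Step~2: the coarse-graining forces the approximating $h$ to be "smooth" on the mesoscopic scale, which in turn controls the Dirichlet energy of nearby $h'$. Getting the constants, scales and exponents to align consistently across Steps 2 and 4 — in particular the cardinality budget $n^{d-1/8}$, the $\ell^1$-tolerance $n^{-1/16}$, the gradient correction $n^{-9/8}$ and the time truncation $n^{-1/4}$ — is the bookkeeping core of the proof.
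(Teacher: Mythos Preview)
Your overall architecture (overshoot split, coarse-graining, Donsker--Varadhan per cell, union bound) matches the paper's, but two of your steps contain genuine gaps that the paper resolves by quite different means.

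\medskip
\textbf{Step 4 is the real issue.} The ``reverse-triangle'' inequality
\[
\inf_{h'\in C}\sqrt{\cE(\sqrt{h'},D)}\ \ge\ \sqrt{\cE(\sqrt{h},D)}-n^{-9/8}
\]
is simply false, and the smoothness of the net element $h$ does not help: if $h$ is piecewise constant on mesoscopic cells, a competitor $h'$ that is $\ell^1$-close but \emph{smoother} across cell boundaries will have \emph{smaller} Dirichlet energy, not larger. The paper never attempts such an inequality. Instead, after applying Theorem~\ref{GD} it takes \emph{any} $h'$ in the Donsker--Varadhan cell and constructs from it a new function
\[
\widetilde h(x)=\Bigl(\max\bigl(\phi_X\sqrt{h'}(x)-\sqrt{2\lambda/N},\,0\bigr)\Bigr)^2,
\]
where $\phi_X$ is a smooth cutoff supported on the high-density block region (Proposition~\ref{depa}). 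One then checks $\cE(\sqrt{\widetilde h})\le\cE(\phi_X\sqrt{h'})\le\cE(\sqrt{h'},D)+O(n^{-1}\|\sqrt{h'}\|_{2,D\setminus E})$ and $\widetilde h\in V(\cF,\Gamma_4,N)$. So the logic is: for every $h'$ in the DV cell there exists $\widetilde h$ in the target class with \emph{lower} functional value, whence the infimum over the cell dominates the infimum over $V(\cF,\cdot)$. This is the opposite direction from what you try to prove, and it is the one that actually works.

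\medskip
\textbf{Step 3 does not work either.} Averaging $f^2/N$ over mesoscopic cells \emph{inflates} supports: a single visited site makes the whole cell positive, so you cannot arrange $\{h>0\}\subset\supp f$ while keeping $\|h-f^2/N\|_1$ small. The paper instead bounds $|R_N|$ from below by a level set $|\{x:g^2(x)\ge\lambda\}|$ with threshold $\lambda=n^{\rho}$, paying a Markov-inequality correction $2\Gamma_1/\lambda$; it is precisely the level-shift by $\sqrt{2\lambda/N}$ in the definition of $\widetilde h$ above that converts this level set into $|\{\widetilde h>0\}|$.

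\medskip
\textbf{A missing ingredient you will need regardless.} Your net size estimate ``$\log|\cH|\le n^{d-1/8}/2$'' cannot be obtained from $|R_N|\le\kappa n^d$ alone: there are $\sim N^{cn^d}$ profiles supported on a given set of size $cn^d$. The paper first proves the a~priori bound $\cE(f_N)\le\kappa n^d\ln n$ (Proposition~\ref{goode}, via the martingale of Section~\ref{clama}), then uses the discrete Poincar\'e--Sobolev inequality to control $\|f_N\|_{2^*}$, which in turn bounds the number of ``high-density'' $n$-blocks by a \emph{polylog}. Only after localising to this polylog-sized region and averaging on scale $M=n^{3/4}$ does the net become small enough. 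This Dirichlet-energy a~priori bound is also what makes the truncation correction $\|\sqrt{h'}\|_{2,D\setminus E}$ small and is not optional. Finally, note that Theorem~\ref{GD} has an infimum over starting points in $\bar D$; the paper removes it via Proposition~\ref{depe} and Lemma~\ref{afaire}, at the cost of a factor $\exp(c'n^{2d}/\Gamma_1)$, another correction you must budget for.
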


\subsection{Organisation of paper}

Sections \ref{S:DV}, \ref{S:apriori}, and \ref{S:proof_upbo} are devoted to the proof of Theorem \ref{upbo}. Section \ref{S:continuous} and \ref{S:FK} explain how to deduce Theorem \ref{T:Prop31_intro} from Theorem \ref{upbo}. Section \ref{S:filling} deals with a proof of Theorem \ref{T:filledball_intro}.

As the proofs are rather lengthy, let
us sum up the main steps of the proofs.

\paragraph{Main steps in proof of Theorem \ref{upbo}.} \hfill %which as explained takes up the most of this paper.
%
% and how this is used to prove Theorem \ref{T:Prop31_intro} as well

\medskip \noindent
(i)
(Section \ref{S:DV})
We estimate probabilities of the form
%$$\cB\,=\,\Big\{\,
%f_N\in\cF,\,\cA,\,\XX(f_N,\delta)=X,
%\,||L_N-g^2||_{1,D}<\Gamma_1\,\Big\}
%\,.
%$$
$P(\|L_N-g^2\|_{1,D}<\Gamma)$
for some small $\Gamma$, where $\| f \|_{1,D}$ is the norm of the function $f$ restricted to $D$.
To this end, we
develop a new type of large deviations estimates for the random walk (Theorem \ref{GD}, see Section \ref{devi} for its proof).
In fact, that result involves the infimum over a
set of admissible starting points. In order to apply it to the random walk starting from the origin,
we have to introduce a correcting factor, which however does not destroy the
leading term in the estimates (Section \ref{corr}).

\medskip

\noindent
(ii) (Section~\ref{conran})
We show that, up to events whose $\wP_N$ probability is negligible,
for some $c>0$, we have
$|R_N|\,\leq\, cn^d$.
This is a direct consequence of the definition of $\wP_N$.
\medskip

\noindent
(iii) (Section~\ref{codir})
We show that, up to events whose $\wP_N$ probability is negligible,
for some $\kappa\geq 1$, we have
$\cE(f_N)\leq\kappa \,n^d\ln n$, where $\cE$ is the Dirichlet energy.
First we estimate the probability that $f_N$ is equal to a fixed function $f$. This estimate
relies on the classical martingale used by Donsker and Varadhan. We then bound the number
of functions satisfying the constraint
$\supp f\,\leq\, cn^d$.
\medskip

\medskip \noindent From now onwards, we need only to consider trajectories satisfying the points (ii) and (iii).
Having a control on the Dirichlet energy
yields
automatically a control
on the norm in $\ell^{2^*}(\Zd)$, where $2^*=2d/(d-2)$, via
a discrete Sobolev--Poincar\'e inequality. We will use the three bounds
$$\cE(f_N)\leq\kappa \,n^d\ln n\,,\quad
|R_N|\leq cn^d\,,\quad
||f_N||_{2^*}
\,\leq\,
c_{PS}
\sqrt{\kappa \,n^d\ln n}\,,
$$
to develop an adequate coarse--grained image
of the local time.
\medskip

\noindent
(iv) (Section~\ref{deco}) We partition the space into blocks of side length~$n$.
We focus on the blocks $B$ such that
$||f_N||_{2^*,B}\geq \delta n^{d/2^*+1}$.
The exponent $d/2^*+1$ corresponds to the typical situation for a block actively
visited by the random walk until time $N$: the number of visits per site should
be of order $n^2$, so $f_N$ is of order $n$ throughout the block.
We keep record of the indices of these blocks.
More precisely, we denote by $X$ the set of
the centers of these blocks; of course the
set $X$ depends on $f_N$ and is random. We denote by $E$ the union of these
blocks and by
$D$ the region~$E$
 enlarged with all the blocks on the frontier.
%\medskip
%
%\noindent
%vi)
We control the norm of $f_N$ outside the region~$E$.
This is done with the help of
a discrete Poincar\'e--Sobolev inequality and the control of the $\ell^{2^*}$
norm:
\begin{equation*}
\sum_{x\in
\Zd\setminus E}
\big(f_N(x)\big)^{2^*}\,\leq\,
c_d\big(
\delta^{2^*} n^{d+2^*}
\big)^{1-2/2^*}
%2(c_{PS})^2
\big(
n^d+ {\kappa \,n^d\ln n}
\big)\,.
\end{equation*}
%We truncate adequately $f_N$ into $\phi_Xf_N$, in such a way that the support
%of $\phi_Xf_N$ is included in
%$\hD(\XX)$. We control the norm
%$||f_N-\phi_Xf_N||_2$ with the help of
%a discrete Poincar\'e--Sobolev inequality and the control of the $\ell^{2^*}$
%norm.
%\medskip

\noindent
(v)  (Section~\ref{loca}) % and~\ref{coars})
We introduce a length scale $M$. We partition the space into blocks $B'(\xx)$
of side length $M$. We perform a local average
of $f_N$ on each such block and we get a function
$\fmn$. We control the norm of the difference $f_N-\fmn$
with the help of
a discrete Poincar\'e--Wirtinger inequality.
\medskip

\noindent
(vi)  (Section~\ref{coars})
%Collecting the previous estimates, we conclude that $f_N$ is polynomially
%close to a function $g$ which is constant over the blocks $B'(\xx)$ of size $M$.
%We introduce an adequate covering of these functions by a finite collection $\cG$.
We discretise next the values of the functions $\fmn$ over the blocks $B(\xx)$
which are included in $E$ with a discretisation step $\eta>0$.
This way we obtain a function $\fmne$, which is the coarse grained profile.
\medskip

At this point, we can write
%\begin{align*}
$$E\big(e^{-|R_N|};f_N\in\cF%\,\cA,\,\XX(f_N,\delta)=&X
\big)
\,\leq\,
%\\
%\sum_{X\in \cX(N,\delta)}
%\sum_{g\in \cG(X,M,\eta)}
\sum_{X,g}
E\big(e^{-|R_N|};
f_N\in\cF,\,
%\cA,\,&\XX(f_N,\delta)=X,\,
\fmne=g
\big)\,,
\nonumber
$$
%\end{align*}
where the summation extends over the admissible set of blocks~$X$
and profiles~$g$. Whenever
$\fmne=g$, we can show that
$||f_N-g||_{2,D}<\Gamma_0$,
and $||L_N-g^2||_{1,D}<\Gamma_1$, where $\Gamma_0$, $\Gamma_1$ are explicit functions
of $N$.
\medskip

\noindent
(viii) (Section~\ref{contin}) We apply our large deviations inequality to bound the expectation
$$E\big(e^{-|R_N|};
\|L_N-g^2\|<\Gamma_1
 \big)\,.
$$
To do so, we
approximate the cardinality of the range $|R_N|$ by the
cardinality of the points where the coarse grained profile is quite large.
The resulting upper bound depends on an infimum over a set of functions~$h$
defined on the
domain~$D$, and more specifically on their Dirichlet energy inside~$D$.
\medskip

\noindent
(ix) (Section~\ref{trun})
With the help of a truncation and the control of the $\ell^{2^*}$ norm
outside~$E$, we relate the Dirichlet energy in the full space to the Dirichlet
energy restricted to~$D$.
This involves essentially a
discrete integration by parts.
\medskip

\noindent
(x) (Section~\ref{conc})
We use the inequality proved in step (ix). This way we get an upper bound involving
the Dirichlet energy in the full space, however we have to work further to get rid
of the truncation operator. After some tedious computations,
we obtain an upper bound depending only on the collection~$\cF$.
We plug this upper bound in the previous sums. It remains only to count
the number of terms in the sums.
We choose finally the parameters $M,\delta,\eta$ adequately to get the desired upper bound.

\paragraph{Main steps in the proof of Theorem \ref{T:filledball_intro}.} We fix a mesoscopic scale $m = n^{1- 2\kappa}$ where $\kappa>0$ is a small parameter. Suppose $\cG_{loc,x}$ holds and take $x = 0$ without loss of generality. We aim to show that the ball of radius $m$ around a point $z$ in the desired range $B(0, \rho_d n (1- n^{ - \kappa}))$ is entirely visited.
\medskip

\noindent
(i) (Section \ref{S:time_meso}) We first note that deterministically on $\cG_{loc,0}$, the walk spends a lot of time in $B(z, m)$. This is because otherwise there would be a polynomial error in the $L^1$ distance between $\ell_N$ and $(\phi_x)^2$.% This shows that the time spent in this ball is at least the time predicted by the target profile $(\phi_x)^2$ up to a small polynomial error.
\medskip

\noindent
(ii) (Section \ref{S:bridge_meso}) We show that under $\tilde P_N$ there are many disjoint portions of the walk of duration $m^2$ where the walk starts and ends inside the bulk of the ball (say, $B(z, m/2)$) and never leaves $B(z,m)$ throughout this interval of time. We call such a portion a bridge. To do so we use a change of measure argument whose cost (i.e., the value of the Radon--Nikodym derivative of this change of measure) is comparable to the entropic term in the partition function $Z_N$: that is, of order $\exp( - (\lambda /2 \rho_d^2) n^d)$. Since on $\cG_{loc, 0}$ the size of the range $|R_N|$ is also essentially deterministically lower bounded up to a small error, the energetic term $e^{- |R_N|}$ together with the cost of the change of measure is of order at most $Z_N$, which allows us to compare effectively $\tilde P_N$ to this new measure. A technical difficulty is that the change of measure technique is better implemented in continuous time rather than discrete time. Once we work under this change of measure, it is easy to check that the number of bridges is as desired: indeed, every time the walk is in the bulk of the ball, there is a decent chance that the next $m^2$ units of time will result in a bridge. Moreover, by Step 1 we control the number of trials, so ultimately the desired result follows from standard large deviations for Binomial random variables.
\medskip

\noindent
(iii) (Section \ref{S:sum}) We fix $k \ge 1$ and a set $\cX$ of $k$ points in $B(z, m)$, and try to estimate the $\tilde P_N$ probability that $\cX$ is avoided by the walk. We can condition on everything that happens outside of $B(z,m)$; on the event that the range avoids exactly the set $\cX$ the size of the range is then deterministic. Furthermore, if we condition on the number and endpoints of the bridges the probability that all the bridges avoid $\cX$ is exactly the product for individual bridges to avoid $\cX$. Hence the probability that the range restricted to $B(z,m)$ is strictly smaller than this ball can be written as a sum over $k\ge 1$, and over all subsets $\cX$ of size $k$ of the product of probabilities that bridges avoid $\cX$.
\medskip

\noindent
(iv) (Section \ref{S:bridge_trans}) To estimate the latter we need to control transition probabilities for bridges that are uniform in the starting and end points of the bridge. It is here that it is useful to have taken the starting and end points of the bridge in the bulk of the ball $B(z, m/2)$ and not near the boundary. This shows in particular that the probability for a bridge to find itself at a specific point at distance $r$ from the boundary at some specific time which is neither close to the start or the end of the bridge, is proportional to $(r/m)^2$. This follows essentially from a gambler's ruin probability argument.

\medskip\noindent
(v) (Section \ref{S:moment}) By the previous step it suffices to estimate the probability that a given bridge avoids $\cX$. We aim to find a bound that is uniform on the geometry of $\cX$ and depends only on the number of points $k$ in $\cX$. Intuitively, the easiest configuration to avoid is when $\cX$ is clumped together as a solid ball of radius $R = k^{1/d}$, so this should provide a lower bound on the desired probability. In that case our estimates on the transition probabilities from the previous step show via a moment computation that the probability for a bridge to hit $\cX$ should be at least $km^{2-d}/R^2 = k^{1- 2/d} m^{2-d}$, independently of the geometry of $\cX$ (ignoring boundary effects). This turns out to be true and can be deduced relatively easily from the fact that the Green function of the random walk in $\mathbb{Z}^d$, $d \ge 3$, is essentially monotone in the distance. (Such arguments can be used to prove isoperimetric inequalities for the capacity of a set, but we did not include this here for the sake of brevity).

In order to deal with boundary effects, in a way that is still uniform in the geometry of $\cX$, we divide the ball $B(z, m)$ into concentric annuli $A_j$ at distance $2^j$ from the boundary of the ball ($j \ge 1$). If all the points of $\cX$ were in the annulus $A_j$ it would be possible to control the boundary effects in a uniform way. Indeed the expected time spent in $\cX$ would be, by the gambler's ruin estimate from Step (iv),  proportional to $r^2$ where $r = 2^j$ is the distance to the boundary. However the expected time spent in $\cX $ starting from a point in $\cX$ would also be bounded by a factor proportional to $r^2$ as well using Step (iv) again. Hence the $r^2$ terms cancel in the moment computation, and we could use the above bound.
When $\cX$ is not contained in a single annulus $A_j$, we can instead consider $\cX_j = \cX \cap A_j$, where $A_j$ is chosen so that it contains most points of $\cX$. Then since it suffices to hit $\cX_j$, we can apply the above lower bound with $k$ replaced by $|\cX_j|$.
By the choice of $j$, this is at least $k /\log m$ instead of $k$. It turns out that plugging this extra logarithmic factor does not substantially alter the conclusion.

%\note{To be completed: add DV, filling the ball. }

\section{Martingale estimates}
\label{S:DV}

\subsection{The classical martingale}
\label{clama}
The crucial ingredient to derive the relevant probabilistic estimates
on the random walk is the family of martingales used by Donsker and Varadhan,
which we define thereafter.
Let $u$ be a positive function defined on $\Zd$.
To the function $u$ we associate the function $V$ defined on $\Zd$ by
$$\forall x\in\Zd\qquad V(x)\,=\,\frac{1}{2d}
\sum_{{y\in\Zd},{|x-y|=1}}u(y)\,.$$
%$$\forall x\in\Zd\qquad V(x)\,=\,\sum_{\tatop{y\in\Zd}{|x-y|=1}}u(y)\,.$$
For $n\geq 0$, we set
$$M_n\,=\,\Big(
\prod_{k=0}^{n-1}\frac{u(S_k)}{V(S_k)}\Big)\,u(S_n)\,.$$
We claim that the process
$(M_n)_{n\in\N}$ is a martingale.
Indeed, for any $n\geq 0$,
\begin{align*}
E\big(M_n\,\big|\,S_0,\dots,S_{n-1}\big)\,&=\,
\Big(\prod_{k=0}^{n-1}\frac{u(S_k)}{V(S_k)}\Big)\,
E\big(u(S_n)\,|\,S_{n-1}\big)\cr
\,&=\,\Big(\prod_{k=0}^{n-1}\frac{u(S_k)}{V(S_k)}\Big)\,
V(S_{n-1})\,=\,M_{n-1}\,.
\end{align*}
In the same way, if the random walk
$(S_n)_{n\in\N}$
starts from an arbitrary point $x\in\Zd$, and if we denote by $P_x$
and $E_x$ the associated probability and expectation, then, under $P_x$,
the process
$(M_n)_{n\in\N}$ is again a martingale.

\subsection{The fundamental inequality}
Since $(M_n)_{n\in\N}$ is a martingale, then
\begin{equation}
\label{martineq}
E_x(M_N)\,=\,E_x(M_0)\,=\,u(x)\,.
\end{equation}
Let us express $M_N$ with the help of the local time $L_N$:
\begin{align*}
M_N\,
&=\,\exp\Big(\sum_{0\leq k<N}\ln
\frac{u(S_k)}{V(S_k)}\Big)\,u(S_N)\cr
&=\,\exp\Big(\sum_{y\in\Zd}\ln
\frac{u(y)}{V(y)}L_N(y)\Big)\,u(S_N)
\,.
\end{align*}
Since $|S_N-S_0|\leq N$, then
$$u(S_N)\,\geq\, \inf\,\{\,u(y):|y-x|\leq N\,\} \,.$$
Reporting this inequality in the martingale equality~(\ref{martineq}),
we get the following fundamental inequality.
\begin{lemma}
\label{funine}
For any $N\geq 1$, any $x\in\Zd$ and any positive function $u$ defined on $\Zd$,
we have
$$E_x\Big(\exp\Big(\sum_{y\in\Zd}\ln
\frac{u(y)}{V(y)}L_N(y)\Big)\Big)\,\leq\,
\frac{u(x)}
{\inf\,\{\,u(y):|y-x|\leq N\,\}}\,.$$
\end{lemma}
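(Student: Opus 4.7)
The proof is essentially a two-line consequence of the two observations already recorded just above the statement. The plan is as follows. First, invoke the martingale property verified in Section~\ref{clama}: for any starting point $x \in \Zd$, the optional identity $E_x(M_N) = E_x(M_0) = u(x)$ holds, and this is legitimate because at each step $u(S_{k+1})/V(S_k)$ takes values in a finite set (the walk is supported on at most $(2N+1)^d$ sites up to time $N$), so $M_N$ is integrable. Second, plug in the explicit representation
$$M_N \,=\, \exp\Big(\sum_{y \in \Zd} \ln\frac{u(y)}{V(y)}\, L_N(y)\Big)\, u(S_N),$$
which is obtained by converting $\prod_{k=0}^{N-1} u(S_k)/V(S_k)$ into an exponential and then collecting the contributions of each site $y$ using the definition of $L_N$ as an occupation time.

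It remains only to get rid of the extra factor $u(S_N)$. Since the simple random walk takes unit steps, under $P_x$ one has $|S_N - x| \leq N$ almost surely, so $u(S_N) \geq I := \inf\{u(y) : |y-x| \leq N\}$ pointwise. Note that $I > 0$, because $u$ is positive on the finite set $\{y : |y-x| \leq N\}$, which justifies the division performed below. Substituting the lower bound, pulling the deterministic constant $I$ out of the expectation in the martingale identity, and dividing through by $I$ yields
$$E_x\Big(\exp\Big(\sum_{y\in\Zd} \ln\frac{u(y)}{V(y)}\, L_N(y)\Big)\Big) \,\leq\, \frac{u(x)}{I},$$
which is exactly the claim. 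There is no genuine obstacle here; the lemma should really be viewed as a clean repackaging of the martingale computation exploited by Donsker and Varadhan, with the only subtlety being that one keeps careful track of the distance the walk can travel in $N$ steps so that the terminal factor $u(S_N)$ can be handled uniformly over sample paths.
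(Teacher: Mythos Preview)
Your proof is correct and follows exactly the same approach as the paper: use the martingale identity $E_x(M_N)=u(x)$, rewrite $M_N$ via the local time, and bound the terminal factor $u(S_N)$ from below using $|S_N-x|\le N$. The only addition is your explicit remark on integrability and the positivity of the infimum, which are harmless clarifications.
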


\subsection{Estimate for a fixed profile}
For $f$ a function from $\Zd$ to $\R$,
we define its discrete Dirichlet energy $\cE(f)$ by
%The discrete Dirichlet energy $\cE(f)$
%of a function $f$ from $\Zd$ to $\R$ is defined by
$$\cE(f)\,=\,
\frac{1}{2d}
\sum_{\tatop{y,z\in\Zd}{|y-z|=1}}
\big(f(y)-f(z)\big)^2\,.$$
In this section, we shall prove the following estimate for the probability that
the square root of the local time is equal to a fixed profile.
\begin{proposition}
\label{profix}
Let $\phi$ be a function from $\Zd$ to $[0,+\infty[$ such that
$$\sum_{y\in\Zd}\phi(y)^2\,=\,N\,.$$
For any $N\geq 1$, any $\alpha\in ]0,1[$,
we have
$$P(f_N=\phi)\,\leq\,
\frac{
%\max\big(\phi(0),\alpha\big)
\phi(0)
}{\alpha}
\exp\Big(
-\frac{1}{2}\cE(\phi)
%-\frac{1}{4d}
%\sum_{\tatop{y,z\in\supp\phi}{|y-z|=1}}
%\big(\phi(y)-\phi(z)\big)^2
+\alpha
\sqrt{N\big|\supp\phi\big|}
\Big)\,.
$$
\end{proposition}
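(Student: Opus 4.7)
The plan is to apply the fundamental inequality of Lemma~\ref{funine} with a carefully chosen positive test function $u$. On the event $\{f_N = \phi\}$ one has $L_N(y) = \phi(y)^2$ for every $y \in \Z^d$, so
$$P(f_N = \phi)\exp\Big(\sum_{y} \phi(y)^2 \ln\frac{u(y)}{V(y)}\Big) \le E_0\Big(\exp\Big(\sum_y L_N(y)\ln\frac{u(y)}{V(y)}\Big)\Big) \le \frac{u(0)}{\inf_{|y|\le N}u(y)}.$$
The task therefore reduces to producing a lower bound on $\sum_y \phi(y)^2 \ln(u(y)/V(y))$ of the form $\tfrac{1}{2}\cE(\phi)$ minus a controllable error, while keeping the prefactor $u(0)/\inf u$ of order $\phi(0)/\alpha$.

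I will take $u(y) = \phi(y)$ for $y \in \supp\phi$ and $u(y) = \alpha$ for $y \notin \supp\phi$. Since $P(f_N = \phi)>0$ forces $\phi^2 = L_N$ to be a positive integer on $\supp\phi$, one has $\phi(y)\ge 1\ge\alpha$ there, hence $u$ is uniformly bounded below by $\alpha$ and $u(0) = \phi(0)$. For $y \in \supp\phi$ the local average compares to $V_\phi(y) := (2d)^{-1}\sum_{|z-y|=1}\phi(z)$ via $V(y) \le V_\phi(y) + \alpha$, because each neighbour outside the support contributes $\alpha/(2d)$ rather than $0$. Applying the elementary inequality $\ln t \ge 1 - 1/t$ pointwise gives, after multiplying by $\phi(y)^2$ and summing,
$$\sum_y \phi(y)^2 \ln\frac{u(y)}{V(y)} \;\ge\; \sum_y \phi(y)^2 - \sum_y \phi(y) V(y) \;\ge\; \sum_y \phi(y)^2 - \sum_y \phi(y)V_\phi(y) - \alpha \sum_y \phi(y).$$
Expanding $\cE(\phi)$ edge by edge yields the identity $\sum_y \phi(y) V_\phi(y) = \sum_y \phi(y)^2 - \tfrac{1}{2}\cE(\phi)$, which collapses the right-hand side to $\tfrac{1}{2}\cE(\phi) - \alpha\sum_y \phi(y)$. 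A single Cauchy--Schwarz step $\sum_y \phi(y) = \sum_{y\in\supp\phi}\phi(y) \le \sqrt{|\supp\phi|\cdot\sum_y\phi(y)^2} = \sqrt{N\,|\supp\phi|}$ converts this into the error term appearing in the proposition.

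The main (rather mild) subtlety is the choice of $u$: it must be uniformly positive everywhere so that $\inf_{|y|\le N} u(y)\ge\alpha$ does not vanish, yet it must stay close to $\phi$ on $\supp\phi$ so that the reverse-Jensen step $\ln t \ge 1 - 1/t$ captures essentially all of $\tfrac{1}{2}\cE(\phi)$. Setting $u=\alpha$ off the support performs exactly this truncation, at the cost of an additive error of order $\alpha\sqrt{N|\supp\phi|}$, which can then be optimised in $\alpha\in(0,1)$ depending on the relative sizes of $\cE(\phi)$, $N$ and $|\supp\phi|$.
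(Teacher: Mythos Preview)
Your proof is correct and follows essentially the same approach as the paper: the same test function $u=\max(\phi,\alpha)$ (equivalently, $u=\phi$ on $\supp\phi$ and $u=\alpha$ elsewhere), the same logarithmic inequality $\ln t\ge 1-1/t$, and the same Cauchy--Schwarz step at the end. Your bookkeeping via the identity $\sum_y\phi(y)V_\phi(y)=\sum_y\phi(y)^2-\tfrac12\cE(\phi)$ is in fact slightly cleaner than the paper's, which routes the computation through $\sum_{y,z\in\supp\phi}$ and arrives at $-\tfrac12\cE(\phi)$ only after an additional (implicit) inequality.
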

\begin{proof}
To bound the probability
$P(f_N=\phi)$, we proceed as follows.
Let $\alpha\in ]0,1[$ and
let $u:\Zd\to]0,+\infty[$ be the positive function defined on $\Zd$ by
$$\forall y\in\Zd\qquad u(y)\,=\,\max\big(\phi(y),\alpha\big)\,.$$
Obviously, we have
$$E\Big(
\exp\Big(\sum_{y\in\Zd}\ln
\frac{u(y)}{V(y)}L_N(y)\Big)
\Big)\,\geq\,
P(f_N=\phi)\,
\exp\Big(\sum_{y\in\Zd}\ln
\frac{u(y)}{V(y)}\phi(y)^2\Big)\,.
$$
Since the random walk starts from~$0$, then $f_N(0)\geq 1$, so we need only to
consider functions~$\phi$ such that $\phi(0)\geq 1$. In this case, we have
$u(0)=\phi(0)\geq\alpha$.
Applying the fundamental estimate of lemma~\ref{funine},
%and remarking that $\phi(0)\geq 1\geq\alpha$,
we get
$$P(f_N=\phi)\,\leq\,
\frac{\phi(0)}{\alpha}
%{\inf\,\{\,u(y):|y-x|\leq N\,\}}
\exp\Big(-\sum_{y\in\Zd}\ln
\frac{u(y)}{V(y)}\phi(y)^2\Big)\,.
$$
Next, we have, for $y\in\Zd$,
$$
-\ln
\frac{u(y)}{V(y)}\,=\,
\ln\Big(
1+\frac{V(y)-u(y)}{u(y)}
\Big)
\,\leq\,\frac{\Delta_1 u(y)}{u(y)}\,,
$$
where
$\Delta_1$ is the discrete Laplacian operator, defined by
%\forall y\in\Zd\qquad
$$\Delta_1u(y)\,=\,V(y)-u(y)\,.$$
Reporting in the previous inequality, we arrive at
$$P(f_N=\phi)\,\leq\,
\frac{\phi(0)}{\alpha}
%{\inf\,\{\,u(y):|y-x|\leq N\,\}}
\exp\Big(\sum_{y\in\Zd}
\frac{\Delta_1u(y)}{u(y)}\phi(y)^2\Big)\,.
$$
%We choose $\alpha$ small enough to ensure that
%$$0\,<\,\alpha\,<\,\min\,\{\,\phi(y):y\in\supp\phi\,\}\,,$$
We evaluate next the sum in the exponential. For
$y\in\supp\phi$, we have $\phi(y)\geq 1>\alpha$, whence $\phi(y)=u(y)$,
therefore
\begin{align*}
\sum_{y\in\Zd}
\frac{\Delta_1u(y)}{u(y)}&\phi(y)^2\,=\,
\sum_{y\in\supp\phi}\kern-7pt
{\Delta_1u(y)}\,\phi(y)\cr
\,&=\,
\sum_{y\in\supp\phi}
\frac{1}{2d}
\sum_{z:|z-y|=1}\kern-7pt
\big(
u(z)-u(y)\big)
\,\phi(y)\cr
\,&\leq \,
\sum_{y\in\supp\phi}
\frac{1}{2d}
\sum_{z:|z-y|=1}\kern-7pt
\big(
\phi(z)-\phi(y)+\alpha\big)
\,\phi(y)\cr
\,&= \,
\frac{1}{2d}
\sum_{\tatop{y,z\in\supp\phi}{|y-z|=1}}
\phi(y)\phi(z)+
\sum_{y\in\supp\phi}\big(\alpha\phi(y)-\phi(y)^2\big)\cr
\,&= \,
-\frac{1}{4d}
\sum_{\tatop{y,z\in\supp\phi}{|y-z|=1}}
\big(\phi(y)-\phi(z)\big)^2+
\alpha\sum_{y\in\supp\phi}\phi(y)\,.
\end{align*}
Moreover
$$\Big(\sum_{y\in\supp\phi}\phi(y)\Big)^2\,\leq\,
\Big(\sum_{y\in\supp\phi}\phi(y)^2\Big)
\,\big|\supp\phi\big|
\,=\,
N\big|\supp\phi\big|
\,.$$
Putting together the previous inequalities, we obtain
%$$P(f_N=\phi)\,\leq\,
%\frac{
%%\max\big(\phi(0),\alpha\big)
%\phi(0)
%}{\alpha}
%\exp\Big(
%-\frac{1}{4d}
%\sum_{\tatop{y,z\in\supp\phi}{|y-z|=1}}
%\big(\phi(y)-\phi(z)\big)^2+\alpha
%\sqrt{\big|\supp\phi\big|}
%\Big)\,.
%$$
%Choosing $\alpha=?$, we obtain
the inequality stated in the proposition.
\end{proof}

\subsection{Donsker--Varadhan estimate for the random walk in the full space}
\label{devi}
In order to estimate the probability that $L_N$ belongs to a ball centered
at a fixed
function $g^2$, we develop here a deviation inequality, valid
for any value of $N$ and for any convex set of functions.
This deviation inequality is stated in Theorem~\ref{GD} of the introduction and we prove it here.
We use the notation introduced just before the statement of Theorem~\ref{GD}.

\begin{proof} Let $u$ be a positive function defined on $\Zd$ and let $x\in\Zd$.
Let $(M_n)_{n\in\N}$ be the martingale constructed with the function $u$
and the random walk $(S_n)_{n\in \N}$, defined in section~\ref{clama}.
We first remark that
${\tau(D,t)}$ is a stopping time for
	$(M_n)_{n\in\N}$. Indeed, the event $\{\,\tau(D,t)=n\,\}$ is measurable with respect to $L_n(D)$, hence also to
	$S_1,\dots,S_{n-1}$.
%	(while ${\tau(D,t)}$ is not!),
	We apply next the optional stopping theorem. Let $t,n\geq 1$,
we have
$$E_x\big(
M_{n\wedge \tau(D,t)}\big)
\,=\,E_x(M_0)\,=\,u(x)\,.$$
We note simply $\tau$ instead of $\tau(D,t)$ and
we
bound from below the lefthand member:
\begin{align}
\label{belo}
\nonumber
E_x\big(
M_{n\wedge \tau}\big)
&\geq\,
E_x\Big(
M_{n\wedge \tau}
1_{\tau<\infty}
\Big)\\
&=\,
E_x\Bigg(
\,\exp\Big(\sum_{y\in\Zd}\Big(\ln
\frac{u(y)}{V(y)}\Big)L_{n\wedge\tau}(y)\Big)\,
u(S_{n\wedge\tau})
1_{\tau<\infty}
\Bigg)
%&\geq\,
%\Big(\inf_{y\in\Zd}\,u(y)\Big)
%\,E_x\Bigg(
%\,\exp\Big(\sum_{y\in\Zd}\Big(\ln
%\frac{u(y)}{V(y)}\Big)L_{n\wedge\tau}(y)\Big)\,
%1_{\tau<\infty}
%\Bigg)
\,.
\end{align}
From now onwards, we suppose that the function~$u$ is superharmonic on $\Zd\setminus D$, i.e., we suppose that $u$
is such that
%In particular, we have
\begin{equation}
\label{subhar}
\forall y\in\Zd\setminus D\qquad{u(y)}
\,\geq\,
{V(y)}
\,=\,
\frac{1}{2d}
\!\!\!
\sum_{\tatop{z\in\Zd}{|y-z|=1}}
\!\!\!
u(z)\,.
\end{equation}
%We assume also that $u$ attains its global infimum over $\Zd$ on $D$, i.e., we suppose that
%\begin{equation}
	%\label{infi}
%\inf_{y\in\Zd}\,u(y)\,=\,
%\inf_{y\in D}\,u(y)\,>\,0\,.
%\end{equation}
We denote by $\mathcal{S^+}$ the collection of the positive functions $u$ defined on $\Zd$ which satisfy
\eqref{subhar}.
The superharmonicity of $u$ implies that
\begin{equation}
	\label{inte}
\sum_{y\in\Zd}\Big(\ln
\frac{u(y)}{V(y)}\Big)L_{n\wedge\tau}(y)
\,\geq\,
\sum_{y\in D}\Big(\ln
\frac{u(y)}{V(y)}\Big)L_{n\wedge\tau}(y)\,.
\end{equation}
%Moreover, the support of $L_t^D$ is included in $D$.
Reporting inequality~\eqref{inte} in inequality~\eqref{belo}, we get
$$u(x)\,\geq\,
E_x\Bigg(
\,\exp\Big(\sum_{y\in D}\Big(\ln
\frac{u(y)}{V(y)}\Big)L_{n\wedge\tau}(y)\Big)\,
u(S_{n\wedge\tau})
1_{\tau<\infty}
\Bigg)
%\Big(\inf_{y\in D}\,u(y)\Big)
%\,E_x\Bigg(
%\,\exp\Big(\sum_{y\in D}\Big(\ln
%\frac{u(y)}{V(y)}\Big)L_{n\wedge\tau}(y)\Big)\,
%1_{\tau<\infty}
%\Bigg)
\,.
$$
%The set $D$ is finite, the function
%$L_{n\wedge\tau}$ is bounded by $t$ over $D$, moreover,
On the event $\tau<\infty$, we have
$$\displaylines{
\forall y\in D\qquad
\lim_{n\to\infty}\,
L_{n\wedge\tau}(y)\,=\,
L_{\tau}(y)\,=\,
L_t^D(y)\,,\cr
\lim_{n\to\infty}\,
S_{n\wedge\tau}\,=\,
S_{\tau}
\,.}$$
By Fatou's lemma, we have
\begin{equation*}
u(x)
\,\geq\,
\,E_x\Bigg(
\,\exp\Big(\sum_{y\in D}\Big(\ln
\frac{u(y)}{V(y)}\Big)L_{t}^D(y)\Big)\,
u(S_\tau)
1_{\tau<\infty}
\Bigg)\,.
\end{equation*}
Yet $S_\tau$ belongs to $\bD$, thus
\begin{equation}
u(x)
\,\geq\,
\Big(\inf_{y\in \bD}\,u(y)\Big)
\,E_x\Bigg(
\,\exp\Big(\sum_{y\in D}\Big(\ln
\frac{u(y)}{V(y)}\Big)L_{t}^D(y)\Big)\,
1_{\tau<\infty}
\Bigg)\,.
\end{equation}
Recall that
$$\bD\,=\,D\cup\{\,x\in\Zd:\exists\, y\in D\quad |x-y|=1\,\}\,.$$
Taking now the infimum over $x\in \bD$, we obtain
\begin{equation}
	\label{inbd}
\inf_{x\in \bD}
\,E_x\Bigg(
\,\exp\Big(\sum_{y\in D}\Big(\ln
\frac{u(y)}{V(y)}\Big)L_{t}^D(y)\Big)\,
1_{\tau<\infty}
\Bigg)\,\leq\,1\,.
\end{equation}
We proceed by bounding from below the lefthand member
	of \eqref{inbd}
as follows:
for any $x\in  \bD$,
\begin{multline}
	\label{foui}
\,E_x\Bigg(
\,\exp\Big(\sum_{y\in D}\Big(\ln
\frac{u(y)}{V(y)}\Big)L_{t}^D(y)\Big)\,
1_{\tau<\infty}
\Bigg)\,\cr \geq
\,E_x\Bigg(
\,\exp\Big(\sum_{y\in D}\Big(\ln
\frac{u(y)}{V(y)}\Big)L_{t}^D(y)\Big)\,;
\,\tau<\infty,\frac{1}{t}L_t^D\in C
\Bigg)\\
 \geq\,
\,\exp\Bigg(
\inf_{g\in C}\,\,\sum_{y\in D}\Big(\ln
\frac{u(y)}{V(y)}\Big){t}g(y)\Bigg)\,
P_x\Big(\frac{1}{t}L_t^D\in C,\,\tau<\infty\Big)\,.
\end{multline}
Let us define
\begin{equation}
\label{phitc}
\phi_t(C)\,=\,
\inf_{x\in D}\,
P_x\Big(\frac{1}{t}L_t^D\in C,\,\tau<\infty\Big)\,.
\end{equation}
Whenever $\tau<\infty$, the
function
$\frac{1}{t}L_t^D$ belongs to the set $M_1(D)$ defined by
$$
M_1(D)\,=\,\big\{\,\phi
\in\ell^1(D):\sum_{x\in D}\phi(x)=1\,,\forall x\in D\quad 0\leq\phi(x)\leq 1
\big\}\,.$$
Therefore we can replace the set $C$ by its intersection with $M_1(D)$ in formula~\eqref{foui} and
the previous inequalities yield that
\begin{equation}
\label{cvb}
\phi_t(C)\,\leq\,
\,\exp\Bigg(
-t\inf_{g\in C\cap M_1(D)}\,\,\sum_{y\in D}\Big(\ln
\frac{u(y)}{V(y)}\Big)g(y)\Bigg)\,.
\end{equation}
%This inequality holds for any positive function $u$ defined on $\Zd$,
%which is in addition constant
%on $\Zd\setminus D$.
This inequality holds for any function $u$ in $\mathcal{S}^+$.
In order to get a functional which is convex, we perform a change of functions and we set
$\phi=\ln u$. We denote by $\T$ the image of $\S^+$ under this change of functions, i.e.,
$$\T\,=\,\big\{\,\ln u:u\in\S^+\,\big\}\,.$$
We rewrite inequality~\eqref{cvb} as follows: for any $\phi\in\T$,
\begin{equation}
\label{cvc}
\phi_t(C)\,\leq\,
\,\exp\Bigg(
-t
%\!\!\!\sup_{\phi\in\ell^1(\bD)}
\inf_{g\in
C\cap M_1(D)
}\,\,\sum_{y\in D}
-\ln
\Big(
\frac{1}{2d}
\!\!\!
\sum_{\tatop{z\in\Zd}{|y-z|=1}}
\!\!\!
\exp\big(\phi(z)-\phi(y)\big)
\Big)g(y)\Bigg)\,.
\end{equation}
We define a map $\Phi$ on $\T\times M_{1}(D)$ by
$$\Phi(\phi,g)\,=\,
\sum_{y\in D}
-\ln
\Big(
\frac{1}{2d}
\!\!\!
\sum_{\tatop{z\in \Zd}{|y-z|=1}}
\!\!\!
\exp\big(\phi(z)-\phi(y)\big)
\Big)g(y)\,.
$$
Optimizing the previous inequality~\eqref{cvc} over the function $\phi$, we get
\begin{equation}
\label{hui}
\phi_t(C)\,\leq\,
\,\exp\Bigg(
	-t\,\,\sup_{\phi\in\T}\inf_{g\in
C\cap M_1(D)
}\,\,
\Phi(\phi,g)
\Bigg)\,.
\end{equation}
The map $\Phi$ is linear in $g$. We shall next prove that it is convex in $\phi$.
In fact, the convexity in $\phi$ is a consequence of the convexity of the functions
$t\in\R\mapsto \exp(t)$ and
$t\in\R^+\mapsto -\ln(t)$. The delicate point is to check that the domain of definition of
$\Phi$ is convex. This is the purpose of the next lemma.
%We prove fist that its domain $\T$ is convex.
\begin{lemma}
\label{convex}
The set $\T$ is convex.
\end{lemma}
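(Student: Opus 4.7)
The plan is to show that if $\phi_1, \phi_2 \in \mathcal{T}$ and $\lambda \in [0,1]$, then $\lambda \phi_1 + (1-\lambda) \phi_2 \in \mathcal{T}$. Writing $\phi_i = \ln u_i$ with $u_i \in \mathcal{S}^+$, the candidate preimage is the geometric mean $u := u_1^\lambda u_2^{1-\lambda}$, which is certainly a positive function on $\mathbb{Z}^d$. The only thing to verify is that $u$ is superharmonic on $\mathbb{Z}^d \setminus D$.

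For this I would apply H\"older's inequality to the uniform probability measure on the $2d$ nearest neighbours of a point $y \in \mathbb{Z}^d \setminus D$: with conjugate exponents $1/\lambda$ and $1/(1-\lambda)$,
\[
\frac{1}{2d} \sum_{|z-y|=1} u_1(z)^\lambda u_2(z)^{1-\lambda} \;\leq\; \left(\frac{1}{2d} \sum_{|z-y|=1} u_1(z)\right)^{\!\lambda} \left(\frac{1}{2d} \sum_{|z-y|=1} u_2(z)\right)^{\!1-\lambda}.
\]
Since $u_1, u_2 \in \mathcal{S}^+$, each factor on the right is bounded by $u_i(y)$ (the superharmonicity relation \eqref{subhar}), and monotonicity of $t \mapsto t^\lambda$ and $t \mapsto t^{1-\lambda}$ on $(0,\infty)$ gives the bound $u_1(y)^\lambda u_2(y)^{1-\lambda} = u(y)$. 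Hence $u \in \mathcal{S}^+$ and $\ln u = \lambda \phi_1 + (1-\lambda)\phi_2 \in \mathcal{T}$, proving convexity.

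There is no real obstacle: the statement is essentially the well-known fact that the geometric mean of two positive superharmonic functions (with respect to the simple random walk kernel) is again superharmonic, which is an immediate consequence of H\"older's inequality. The only subtlety worth a line in the write-up is to note that the H\"older bound is applied neighbour-wise at each fixed $y \in \mathbb{Z}^d \setminus D$, so no global integrability of $u_1$ or $u_2$ is required.
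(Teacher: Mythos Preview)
Your proof is correct and is essentially identical to the paper's own argument: both write the convex combination as the logarithm of a geometric mean and apply H\"older's inequality with exponents $1/\lambda$, $1/(1-\lambda)$ to the uniform measure on the $2d$ neighbours of a point $y \in \mathbb{Z}^d \setminus D$, then invoke the superharmonicity of $u_1, u_2$. The only cosmetic difference is that the paper restricts to $\lambda \in (0,1)$ to avoid the degenerate endpoints, which are trivial anyway.
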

\begin{proof}
Let $\phi,\psi$ belong to $\T$ and let $\alpha,\beta\in]0,1[$ be such that $\alpha+\beta=1$.
There exist $u,v\in\S^+$ such that
$\phi=\ln u$, $\psi=\ln v$, whence
$$\alpha\phi+\beta\psi\,=\,\ln\big(u^\alpha v^\beta)\,,$$
and we have to check that
$w=u^\alpha v^\beta$ is in $\S^+$.
Let $y$ be a fixed point in $\Zd\setminus D$. We apply the discrete H\"older inequality to the
functions $u^\alpha$, $v^\beta$, with respect to the measure $\nu_y$ which is the uniform measure
on the neighbours of $y$,
$$\nu_y\,=\,
\frac{1}{2d}
\!\!\!
\sum_{\tatop{z\in\Zd}{|y-z|=1}}
\!\!\!
\delta_z\,,$$
	and with the exponents $p=1/\alpha$, $q=1/\beta$.
We obtain
$$
	\int u^\alpha v^\beta\,d\nu_y
	\,\leq\,
	\Big(\int u \,d\nu_y\Big)^\alpha
	\Big(\int v \,d\nu_y\Big)^\beta
\,.
$$
This inequality can be rewritten as
$$
\frac{1}{2d}
\!\!\!
\sum_{\tatop{z\in\Zd}{|y-z|=1}}
\!\!\!
	u^\alpha(z) v^\beta(z)
	\,\leq\,
	\Bigg(
\frac{1}{2d}
\!\!\!
\sum_{\tatop{z\in\Zd}{|y-z|=1}}
\!\!\!
	u(z)
	\Bigg)^\alpha
	\Bigg(
\frac{1}{2d}
\!\!\!
\sum_{\tatop{z\in\Zd}{|y-z|=1}}
\!\!\!
	v(z)
	\Bigg)^\beta
	%\,\leq\, u^\alpha(y) v^\beta(y)
	$$
We finally use the fact that $u,v$ are superharmonic to conclude.
\end{proof}
%Let $\ell^1_{c}(D)$ be the collection of the functions defined on $\Zd$,
%which are constant
%on $\Zd\setminus D$.

\noindent
Since the set $D$ is finite, the set
$\ell^{1}(D)$ is finite
dimensional, and the map $\Phi$ is continuous with respect to $g$ and any norm on $\ell^1(D)$.
Similarly, for any $\phi\in\T$ and $g\in M_1(D)$, the quantity $\Phi(\phi,g)$ depends only on the values of $\phi$
on the set $\bD$,
%$$D\cup\{\,x\in\Zd:\exists\, y\in D\quad |x-y|=1\,\}\,,$$
which is finite, thus
the map $\Phi$ is also continuous with respect to $\phi$ and the $\ell^1$ norm.
Moreover the set $C\cap M_1(D)$ is compact and convex (these are essential conditions in order to apply the minimax theorem).
Therefore, by the famous minimax theorem
(see \cite{FA}),
\begin{equation}
\label{supinf}
\sup_{\phi\in\T}\inf_{g\in
C\cap M_1(D)
}\,\,
\Phi(\phi,g)
\,=\,
\inf_{g\in C\cap M_1(D) }
\sup_{\phi\in\T}
\,\,
\Phi(\phi,g)
\,.
\end{equation}
Let us fix
${g\in C\cap M_1(D) }$ and let us bound from below
$\smash{\sup_{\phi\in\T} \Phi(\phi,g)}$.
Let us fix
$\phi\in\T$, we have, by convexity of $-\ln$,
$$\Phi(\phi,g)\,\geq\,
-\ln\Bigg(
\sum_{y\in D}
\Big(
\frac{1}{2d}
\!\!\!
\sum_{\tatop{z\in \Zd}{|y-z|=1}}
\!\!\!
\exp\big(\phi(z)-\phi(y)\big)
\Big)g(y)\Bigg)\,.
$$
Using the inequality $-\ln(t)\geq 1-t$ for $t>0$, we get
\begin{equation}
\label{ina}
\Phi(\phi,g)\,\geq\,
1
-
%\Bigg(
\sum_{y\in D}
\Big(
\frac{1}{2d}
\!\!\!
\sum_{\tatop{z\in \Zd}{|y-z|=1}}
\!\!\!
\exp\big(\phi(z)-\phi(y)\big)
\Big)g(y)
%\Bigg)\,.
\end{equation}
Let $\varepsilon$ be such that
$$0\,<\,\varepsilon\,<\,\min\,\big\{\,g(x):x\in D,\,g(x)>0\,\big\}\,.$$
We define next an adequate function $u_\varepsilon$.
The entrance time $T_0$ of $D$ is
$$T_0\,=\,\inf\,\big\{\,n\geq 0:S_n\in D\,\big\}\,.$$
We define
$$\forall x\in \Zd\qquad u_\varepsilon(x)
\,=\,E_x\Big(
\sqrt{\max\big(
	g(S_{T_0})
,\varepsilon\big)}
	\,\big|\,T_0<\infty\big)\,.
$$
With this definition, we have that
$$\forall x\in D\qquad u_\varepsilon(x)
\,=\,\sqrt{\max\big(g(x),\varepsilon\big)}\,.$$
We claim that this function $u_\varepsilon$ belongs to $\S^+$. Obviously it is strictly positive everywhere.
The function $u_\varepsilon$ is harmonic on $\Zd\setminus D$, hence it is also superharmonic.
%Let $V_\epsilon$ be the local average associated to $u_\varepsilon$,
%i.e.,
%$$\forall x\in\Zd\qquad V_\varepsilon(x)\,=\,\frac{1}{2d}
%\sum_{{y\in\Zd},{|x-y|=1}}u_\varepsilon(y)\,.$$
We apply inequality~\eqref{ina} with the function
$\phi_\varepsilon(x)=\ln u_\varepsilon$ and we get
\begin{align}
{\sup_{\phi\in\T} \Phi(\phi,g)}
\,\geq\,
\Phi(\phi_\varepsilon,g) &=
1 -
%\Bigg(
	\sum_{y\in D, g(y)>0}
\frac{1}{2d}
\!\!\!
\sum_{\tatop{z\in \Zd}{|y-z|=1}}
\!\!\!
	\frac{u_\varepsilon(z)}
	{u_\varepsilon(y)}
g(y)
\nonumber
\\
%\Bigg)\,.
%\cr
& =
1 -
%\Bigg(
	\sum_{y\in D, g(y)>0}
\frac{1}{2d}
\!\!\!
\sum_{\tatop{z\in \Zd}{|y-z|=1}}
\!\!\!
	\frac{u_\varepsilon(z)}{ \sqrt{\max\big(g(y),\varepsilon\big)}}
g(y)
%\Bigg)\,.
\nonumber
\\
&=
1 -
%\Bigg(
	\sum_{y\in D}
\frac{1}{2d}
\!\!\!
\sum_{\tatop{z\in \Zd}{|y-z|=1}}
\!\!\!
	{u_\varepsilon(z)}{ \sqrt{g(y)}}
%\Bigg)
	\,.
\label{inb}
\end{align}
This inequality holds for $\varepsilon>0$ sufficiently small. We send next $\varepsilon$ to $0$. We have
$$\forall x\in \Zd\qquad \lim_{\varepsilon\to 0}u_\varepsilon(x)
\,=\,E_x\Big( \sqrt{ g(S_{T_0}) } \,\big|\,T_0<\infty\Big)\,.
$$
Passing to the limit as $\varepsilon$ goes to $0$ in inequality~\eqref{inb}, we get
\begin{equation}
\label{inc}
{\sup_{\phi\in\T} \Phi(\phi,g)}
\,\geq\,
1 -
%\Bigg(
	\sum_{y\in D}
\frac{1}{2d}
\!\!\!
\sum_{\tatop{z\in \Zd}{|y-z|=1}}
\!\!\!
E_z\Big( \sqrt{ g(S_{T_0}) } \,\big|\,T_0<\infty\big)
	{ \sqrt{g(y)}}
%\Bigg)
	\,.
\end{equation}
Let us introduce
the hitting time $T_1$ of $D$ is
$$T_1\,=\,\inf\,\big\{\,n\geq 1:S_n\in D\,\big\}\,.$$
Notice that the entrance and hitting time are equal whenever the starting point is in $\Zd\setminus D$.
However, if the starting point belongs to $D$, then $T_0=0$ and $T_1\geq 1$.
With the help of $T_1$, we rewrite formula~\eqref{inc} as
\begin{multline}
\label{ind}
{\sup_{\phi\in\T} \Phi(\phi,g)}
\,\geq\,
1 -
	\sum_{y\in D}
E_y\Big( \sqrt{ g(S_{T_1}) } \,\big|\,T_0<\infty\big)
	{ \sqrt{g(y)}}\cr
\,=\,
	1 -
	\sum_{y\in D}
	\sum_{z\in D}
	P_y\big( S_{T_1}=z \,\big|\,T_0<\infty\big)
	\sqrt{ g(z) }
	{ \sqrt{g(y)}}
	\,.
\end{multline}
Using the time reversibility of the random walk
$(S_n)_{n\in \N}$, we have
\begin{equation}
\label{wgyu}
\forall y,z\in D\qquad
	P_y\big( S_{T_1}=z \,\big|\,T_0<\infty\big)
	\,=\,
	P_z\big( S_{T_1}=y \,\big|\,T_0<\infty\big)
	\,,
\end{equation}
whence
\begin{equation}
\label{fgyu}
	\sum_{y\in D}
	\sum_{z\in D}
	P_y\big( S_{T_1}=z \,\big|\,T_0<\infty\big)
	g(z) \,=\,
	\sum_{z\in D}
	\sum_{y\in D}
	P_z\big( S_{T_1}=y \,\big|\,T_0<\infty\big)
	g(z)
	\,=\,
	\sum_{z\in D}
	g(z)
	\,=\,1\,.
\end{equation}
The identities~\eqref{fgyu} yield that
\begin{equation}
\label{qgyu}
	1 -
	\sum_{y\in D}
	\sum_{z\in D}
	P_y\big( S_{T_1}=z \,\big|\,T_0<\infty\big)
	\sqrt{ g(z) }
	{ \sqrt{g(y)}}\,=\,
	\frac{1}{2}
	\sum_{y\in D}
	\sum_{z\in D}
	P_y\big( S_{T_1}=z \,\big|\,T_0<\infty\big)
	\big(\sqrt{ g(z) }-
	{ \sqrt{g(y)}}\big)^2\,.
\end{equation}
Reporting~\eqref{qgyu} in~\eqref{ind}, we get
\begin{equation}
\label{wqx}
{\sup_{\phi\in\T} \Phi(\phi,g)}
\,\geq\,
	\frac{1}{2}
	\sum_{y\in D}
	\sum_{z\in D}
	P_y\big( S_{T_1}=z \,\big|\,T_0<\infty\big)
	\big(\sqrt{ g(z) }-
	{ \sqrt{g(y)}}\big)^2\,.
\end{equation}
Now, for any $y,z\in D$ such that
${|y-z|=1}$, we have
	$$P_y\big( S_{T_1}=z \,\big|\,T_0<\infty\big)
	\,\geq\, \frac{1}{2d}\,,$$
thus we have the lower bound
\begin{equation}
\label{ine}
{\sup_{\phi\in\T} \Phi(\phi,g)}
\,\geq\,
\sum_{\tatop{y,z\in D}{|y-z|=1}}
\frac{1}{2d}
	\big(\sqrt{ g(z) }-
	{ \sqrt{g(y)}}\big)^2
\,=\,\frac{1}{2}\cE(\sqrt{g,D})
	\,.
\end{equation}
Taking the infimum with respect to
${g\in C\cap M_1(D) }$ and coming back to inequality~\eqref{hui}, we obtain
the desired result.
%$$\phi_t(C)\,\leq\,
%\,\exp\Bigg(
%-t
%\frac{t}{2}\cE(\sqrt{g})
%\,.$$
\end{proof}

\begin{remark}\label{R:dv_exc} To clarify the proof of Theorem \ref{GD} it may be useful to consider the \emph{finite} Markov chain on $D$ obtained from the simple random walk on $\mathbb{Z}^d$ by restricting it to the times when it visits $D$. This is the Markov chain where the transition probabilities are given by
$$q(y,z) = P_y(S_{T_1} = z).$$
 In particular, this chain coincides with the simple random walk on the vertices $y$ for which all neighbours are in $D$. However all the vertices on the boundary of $D$ are also connected to one another, and so this chain can jump from any boundary vertex to any other boundary vertex (corresponding to an excursion of the simple random walk away from $D$).

Then note that the right hand side of \eqref{wqx} is nothing but the Dirichlet energy for this Markov chain. On the other hand, \eqref{ine} shows that the Dirichlet energy for this Markov chain of a function $g: D \to \R$ is always lower bounded by the Dirichlet energy $\mathcal{E}(g; D)$ since we can simply restrict to the transitions on $\mathbb{Z}^d$ and ignore the additional connections along the boundary.

Effectively, this argument reduces the simple random walk on the infinite state space $\mathbb{Z}^d$ to a finite state space, in a way that is conceptually reminiscent of the compactification arguments to use Donsker--Varadhan large deviation estimates in Bolthausen's work \cite{BO} and many other works on large deviations. However, the advantage of this approach is that it does not alter significantly the geometry of the ambient space and so eventually lets us use functional inequalities that are more readily available in $\R^d$ than on a torus.
\end{remark}

\subsection{Correction for the origin}
\label{corr}
The problem with the inequality of Theorem~\ref{GD} is the presence of the infimum
over $x\in\bD$. In order to go around it, we shall take advantage of the fact
that our trajectories are constrained to have a range of cardinality less than $cn^d$.
We first bound from below the probability to travel between an arbitrary
point of the range and the origin. To this end, we shall use a standard estimate on
multinomial coefficients, that we recall next.
\begin{lemma}
\label{lf}
For any $k\geq 1$, $r\geq 2$,
any $k_1,\dots,k_r\in\{\,0,\dots, k\,\}$
such that
$k_1+\dots+k_r=k$, we have
$$
\Big|\frac{1}{k}\ln
\frac{k!}{ k_1!\cdots k_r!}
%\,=\,
+\sum_{i=1}^r
\frac{k_i}{ k}\ln \frac{k_i}{ k}
\Big|
%+\phi(k_1,\dots,k_r,n)
\,\leq\,
\frac{r}{ k}(\ln k+2)
\,.$$
\end{lemma}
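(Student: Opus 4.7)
The plan is a direct computation via a standard Stirling-type bound. By comparing the sum $\sum_{j=1}^n \ln j$ with $\int_1^n \ln x \, dx = n \ln n - n + 1$, and using the conventions $\ln 0! = 0$, $0 \ln 0 = 0$, one obtains, for all $n \geq 1$,
$$n \ln n - n + 1 \;\leq\; \ln n! \;\leq\; n \ln n - n + 1 + \ln n.$$
The lower bound follows from $\sum_{j=2}^n \ln j \geq \int_1^n \ln x\, dx$ and the upper bound from $\sum_{j=1}^{n-1}\ln j \leq \int_1^n \ln x\, dx$, both by monotonicity of $\ln$. Introduce the remainders $R_0 := \ln k! - (k \ln k - k)$ and, for each $i$ with $k_i \geq 1$, $R_i := \ln k_i! - (k_i \ln k_i - k_i)$; the above then shows $R_0 \in [1, 1 + \ln k]$ and $R_i \in [1, 1 + \ln k_i]$.

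A direct expansion, using $\sum_i k_i = k$ and the fact that indices with $k_i = 0$ contribute nothing on either side, yields the identity
$$\ln \frac{k!}{k_1! \cdots k_r!} \;+\; k \sum_{i=1}^r \frac{k_i}{k} \ln \frac{k_i}{k} \;=\; R_0 - \sum_{i : k_i \geq 1} R_i.$$
So it suffices to bound $|R_0 - \sum R_i|$. Setting $I = \{i : k_i \geq 1\}$, I would split into two cases: if $R_0 \geq \sum_{i \in I} R_i$ then the difference lies in $[0, R_0] \subseteq [0, 1 + \ln k]$; otherwise it lies in $[-\sum R_i, 0] \subseteq [-|I|(1 + \ln k), 0] \subseteq [-r(1 + \ln k), 0]$. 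In either case $|R_0 - \sum R_i| \leq r(1 + \ln k) \leq r(\ln k + 2)$, and dividing by $k$ gives the stated inequality.

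No step here is a genuine obstacle; once the integral-comparison form of Stirling is in hand the rest is bookkeeping and a short case split. As an alternative route that sidesteps Stirling entirely, one could observe that the multinomial mass $\binom{k}{k_1, \ldots, k_r} \prod_{i} (k_i/k)^{k_i}$ is at most $1$ (being a single probability) and at least $1 / \binom{k+r-1}{r-1} \geq 1/(k+1)^{r-1}$, by log-concavity showing that $(k_1, \ldots, k_r)$ is the mode, together with pigeonhole on the support; taking logs and dividing by $k$ yields the same conclusion.
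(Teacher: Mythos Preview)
Your proof is correct and takes essentially the same approach as the paper: both introduce the Stirling remainder $R_n = \ln n! - (n\ln n - n)$, bound it via comparison of $\sum \ln j$ with $\int_1^n \ln x\,dx$, rewrite the quantity of interest as $R_0 - \sum_{i\in I} R_i$, and bound this difference directly. The only differences are cosmetic (your constants $[1,1+\ln n]$ are marginally tighter than the paper's $[1,\ln n +2]$, and you split into two cases where the paper bounds each direction separately), and the alternative probabilistic argument you sketch at the end is a nice bonus not present in the paper.
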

\noindent
\begin{proof}
The proof of this estimate is standard (see for instance \cite{EL}).
Setting, for $k\in\mathbb N$, $f(k)=\ln k!-k\ln k+k$,
we have
%$${1\over n}\ln
\begin{align*}
\ln\frac{k!}{ k_1!\cdots k_r!}
\,&=\,\ln k!-\sum_{i=1}^r\ln k_i!\cr
\,&=\,k\ln k-k+f(k)-\sum_{i=1}^r(k_i\ln k_i-k_i+f(k_i))\\
\,&=\,-\sum_{i=1}^r
{k_i}\ln \frac{k_i}{k}
+f(k)-\sum_{i=1}^rf(k_i)\,.
\end{align*}
Comparing the discrete sum
$$\ln k!=\sum_{1\leq i\leq k}\ln k$$
to the integral $\int_1^k\ln x\,dx$, we see that
$1\leq f(k)\leq \ln k +2$ for all $k\geq 1$. On one hand,
$$ f(k)-\sum_{i=1}^rf(k_i)\,\leq\, \ln k+2-r\leq r(\ln k+2)\,,$$
on the other hand,
$$ f(k)-\sum_{i=1}^rf(k_i)\,\geq\, 1-
\sum_{i=1}^r(\ln k_i+2)\,\geq\,
1-2r-r\ln k\,\geq\, -r(\ln k+2)\,$$
and we have the desired inequalities.
\end{proof}

\noindent
The box
of side length~$r>0$
centered at the origin
is the set
\index{$\Lambda(r)$}
\[\Lambda(r) \,=\, \big\{\,(x_1,\cdots,x_d)\in\R^d
:
\forall i\in \{1,\cdots, d\}\,\,
-r/2<x_i\leq r/2
%\quad i=1,\cdots,d
\,\big\}\,. \]
With the help of Lemma~\ref{lf}, we obtain the following lower bound for the
symmetric random walk.
\begin{lemma}
\label{afaire}
Let $c>0$. There exists $c'>0$ such that, for $n$ large enough,
\begin{multline*}
\forall k\in\{\,n^{d+1},\dots,n^{d+2}\,\},\quad
\forall x\in\La(cn^d),\quad%\cr
P_x(S_{k}=0)+
P_x(S_{k-1}=0)\,\geq\,\exp(-c'n^{2d}/k)\,.
\end{multline*}
\end{lemma}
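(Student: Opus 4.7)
\medskip\noindent\textbf{Proof proposal.} The plan is to estimate $P_x(S_k=0)$ via its explicit multinomial representation, and to extract the claimed lower bound by choosing a single near-optimal configuration of step counts and invoking Lemma~\ref{lf}. The purpose of the sum $P_x(S_k=0)+P_x(S_{k-1}=0)$ is to bypass parity: on $\Zd$ we have $P_x(S_j=0)=0$ unless $j\equiv |x|_1\pmod 2$, where $|x|_1=\sum_i|x_i|$; since $k$ and $k-1$ have opposite parities, exactly one of them, call it $k^\star$, matches, and it suffices to bound $P_x(S_{k^\star}=0)$ from below.

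Write $k_i^{+}$ (resp.\ $k_i^{-}$) for the number of steps in direction $+e_i$ (resp.\ $-e_i$). A trajectory with $S_{k^\star}=0$ is determined (after choosing an ordering) by the $2d$-tuple $(k_i^{\pm})_{1\le i\le d}$ subject to $k_i^{-}-k_i^{+}=x_i$ for all $i$ and $\sum_i(k_i^{+}+k_i^{-})=k^\star$. Hence
$$P_x(S_{k^\star}=0)\,=\,\sum_{(k_i^{\pm})}\frac{k^\star!}{\prod_{i=1}^d k_i^{+}!\,k_i^{-}!}\,(2d)^{-k^\star}.$$
I would pick one near-balanced tuple by setting $k_i^{-}=\lfloor k^\star/(2d)+x_i/2\rfloor$ and $k_i^{+}=k_i^{-}-x_i$, then adjusting these by $O(1)$ to ensure they are nonnegative integers that sum exactly to $k^\star$ (the parity choice of $k^\star$ makes this possible). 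Since $|x_i|\le c n^d$ and $k^\star\ge n^{d+1}-1$, for $n$ large every $k_i^{\pm}$ is at least $k^\star/(4d)$, so the multinomial entry is well defined.

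Applying Lemma~\ref{lf} with $r=2d$ to this single tuple gives
$$\ln\frac{k^\star!}{\prod k_i^{\pm}!}\,\ge\,-k^\star\sum_{i,\pm} p_i^{\pm}\ln p_i^{\pm}\,-\,2d\,(\ln k^\star+2),\qquad p_i^{\pm}=\frac{k_i^{\pm}}{k^\star}=\frac{1}{2d}\mp\frac{x_i}{2k^\star}+O\!\Big(\frac{1}{k^\star}\Big).$$
A Taylor expansion of $-p\ln p$ around $p=1/(2d)$ shows that the first-order increments in $\mp x_i/(2k^\star)$ cancel when summed over the sign, so the only contribution through second order is quadratic: $-\sum_{i,\pm}p_i^{\pm}\ln p_i^{\pm}\ge \ln(2d)-c''|x|^2/(k^\star)^2$ for some $c''=c''(d)$. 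Multiplying by $k^\star$ and combining with the factor $(2d)^{-k^\star}$, the $k^\star\ln(2d)$ terms cancel and we get
$$P_x(S_{k^\star}=0)\,\ge\,\exp\!\Big(-\frac{c''|x|^2}{k^\star}-2d(\ln k^\star+2)\Big).$$

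To finish, use $|x|^2\le dc^2n^{2d}$, $k^\star\ge k/2$, and $k\le n^{d+2}$ so that $\ln k^\star\le (d+2)\ln n$. Since $n^{2d}/k\ge n^{2d-(d+2)}=n^{d-2}$, the logarithmic term $\ln k^\star$ is absorbed into $c'n^{2d}/k$ for $n$ large (which is precisely where $d\ge 3$ comes in, making $n^{d-2}\ge n$ dominate $\ln n$), yielding the claim with a suitably large $c'$. The only real obstacle is a careful bookkeeping of integrality and parity when adjusting the tuple $(k_i^{\pm})$; once these $O(1)$ corrections are absorbed into the $O(\ln k^\star)$ term of Lemma~\ref{lf}, the rest is a routine Taylor expansion.
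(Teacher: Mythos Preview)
Your proof is correct and follows essentially the same route as the paper: select the near-balanced multinomial term $k_i^{\pm}\approx k^\star/(2d)\mp x_i/2$, apply Lemma~\ref{lf}, estimate the resulting entropy to extract a $-c''|x|^2/k^\star$ contribution after the $\ln(2d)$ cancellation, and absorb the $O(\ln k^\star)$ remainder using $n^{2d}/k\ge n^{d-2}$. The only cosmetic difference is that the paper expands the logarithms directly via $\ln(1\pm dx_i/k)$ and the inequality $\ln(1+t)\le t$, whereas you phrase the same cancellation as a second-order Taylor expansion of $-p\ln p$ around $1/(2d)$; the two computations are equivalent.
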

\begin{proof}
	Let $x=(x_1,\dots,x_d)\in \La(cn^d)$ and let
$k\in\{\,n^{d+1},\dots,n^{d+2}\,\}$.
We have
$$P_x(S_{k}=0)\,=\,
P_0(S_{k}=x)\,=\,
	\sum_{
\tatop{{i_1,\dots,i_d} } {{j_1,\dots,j_d} }
}
	\frac{1}{(2d)^k}
	\frac{k!}{i_1!j_1!\cdots i_d!j_d!}
	\,,$$
	where the sum runs over the indices $i_1,j_1,\dots,i_d,j_d$ such that
	$$i_1-j_1=x_1,\dots,i_d-j_d=x_d\,,\quad
	i_1+j_1+\cdots+i_d+j_d=k\,.$$
	The index $i_1$ corresponds to the number of moves associated to the vector $(1,0,\dots,0)$,
	the index $j_1$ to
	the number of moves associated to the vector $(-1,0,\dots,0)$, and so on.
This set of indices is empty in the case where $k$ and $x_1+\dots+x_d$ don't have the same parity,
that is why we have to consider the sum
$P_x(S_{k}=0)+
P_x(S_{k-1}=0)$. To simplify the discussion, we assume that the term
$P_x(S_{k}=0)$ is non--zero. We will get the desired lower bound by considering only
one term in the sum, the term corresponding to
	$$i_1=\frac{k}{2d}+\frac{x_1}{2},\,
	j_1=\frac{k}{2d}-\frac{x_1}{2},\,\dots,\,
	i_d=\frac{k}{2d}+\frac{x_d}{2},\,
	j_d=\frac{k}{2d}-\frac{x_d}{2}\,.$$
	To be precise, we should take integer parts in the above formula, but this would become
	a bit messy, so we do as if all the fractions were integers.
	We get that
$$P_x(S_{k}=0)\,\geq\,
	\frac{1}{(2d)^k}
	\frac{k!}{
	\Big(\displaystyle \frac{k}{2d}+\frac{x_1}{2}\Big)!
	\Big(\displaystyle \frac{k}{2d}-\frac{x_1}{2}\Big)!
	\cdots
	\Big(\frac{k}{2d}+\frac{x_d}{2}\Big)!
	\Big(\frac{k}{2d}-\frac{x_d}{2}\Big)!
	}
	\,.$$
	Taking the $\ln$ and using the inequality of Lemma~\ref{lf}, we obtain
\begin{multline*}
	\ln P_x(S_{k}=0)\,\geq\,-k\ln (2d)\hfill\cr
	-\sum_{i=1}^d
	\Big(\displaystyle \frac{k}{2d}+\frac{x_i}{2}\Big)\ln
	\Big(\frac{1}{2d}+\frac{x_i}{2k}\Big)+
	\Big(\displaystyle \frac{k}{2d}-\frac{x_i}{2}\Big)\ln
	\Big(\frac{1}{2d}-\frac{x_i}{2k}\Big)
	-2d(\ln k+2)\cr
	\,\geq\,
	-\sum_{i=1}^d
	\Big(\displaystyle \frac{k}{2d}+\frac{x_i}{2}\Big)\ln
	\Big({1}+\frac{dx_i}{k}\Big)+
	\Big(\displaystyle \frac{k}{2d}-\frac{x_i}{2}\Big)\ln
	\Big({1}-\frac{dx_i}{k}\Big)
	-2d(\ln k+2)\cr
	\,\geq\,
	-\sum_{i=1}^d
	\Big(\displaystyle \frac{k}{2d}+\frac{x_i}{2}\Big)
	\Big(\frac{dx_i}{k}\Big)-
	\Big(\displaystyle \frac{k}{2d}-\frac{x_i}{2}\Big)
	\Big(\frac{dx_i}{k}\Big)
	-2d(\ln k+2)\cr
	\,\geq\,
	-\sum_{i=1}^d
	\Big(\frac{d}{k}\Big)x_i^2
	-2d(\ln k+2)
	\,\geq\,
	-d
	\Big(\frac{d}{k}\Big)(cn^d)^2
	-2d(\ln k+2)\,.
\end{multline*}
	Using the hypothesis on $k$, we conclude that
	$$\ln P_x(S_{k}=0)\,\geq\,
	-d^2c^2
	\Big(\frac{n^{2d}}{k}\Big)
	-2d((d+2)\ln n+2)\,.
	$$
	Since $n^{2d}/k\geq n^{d-2}\geq n$, then for $n$ large enough, we obtain
	the desired estimate.
\end{proof}

\noindent
We will use Lemma~\ref{afaire} together with the next proposition in order to have
a deviation inequality for the trajectories starting from the origin. The idea is to
let the random walk evolve naturally over a certain time interval, so that it has a chance
to visit all the points of $\bD$, and in particular the point of $\bD$ realizing the infimum
in the inequality of Theorem~\ref{GD}.
Of course, we are forced to introduce a correcting factor, but we will adjust our parameters
so that this correcting factor is not disturbing.
\begin{proposition}
\label{depe}
Let $D$ be an arbitrary subset of $\Zd$ and let $g$ be a function in $\ell^{1}(D)$.
Let $k,t$ be positive integers with $k<t$. Let $r>0$.
For any $x\in\bD$, we have
\begin{equation*}
P\Big(
\Big|\Big|\frac{1}{t}L_t^D-g^2
\Big|\Big|_{1,D}\,\leq\,r
,\,\tau<\infty
\Big)\,\leq\,
\frac{\displaystyle 2\,
P_x\Big(
\Big|\Big|\frac{1}{t}L_t^D-g^2
\Big|\Big|_{1,D}\,\leq\,r
+4\frac{k}{t-k}
,\,\tau<\infty
\Big)}
{\displaystyle P_x(S_{k}=0)+ P_x(S_{k-1}=0)}\,.
%\label{sirt}
\end{equation*}
\end{proposition}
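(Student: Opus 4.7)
The strategy is a strong Markov argument at time $k$ (or $k-1$, depending on parity) that relates the walk started at $x$ to one started at $0$, by waiting for the $x$-walk to first return to $0$.

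First, by the parity of the simple random walk, $P_x(S_j = 0) > 0$ iff $j + \|x\|_1$ is even, so exactly one of $P_x(S_k=0)$ and $P_x(S_{k-1}=0)$ is nonzero. I will carry out the argument for $k$; the case of $k-1$ is identical with $k-1$ in place of $k$. Write $A_r := \{\|L_t^D/t - g^2\|_{1,D} \leq r,\, \tau(D,t) < \infty\}$, and let $\tilde A_r$ denote the same event applied to the shifted walk $\tilde S_n := S_{k+n}$. On $\{S_k = 0\}$, the strong Markov property gives that $(\tilde S_n)_{n\ge 0}$ has the law of a simple random walk from $0$, independent of $\mathcal{F}_k$, so
$$P_x(S_k = 0,\,\tilde A_r) \;=\; P_x(S_k=0)\cdot P(A_r).$$

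The heart of the proof is to establish the deterministic inclusion
$$\{S_k = 0\}\cap \tilde A_r \;\subset\; A_{r + 4k/(t-k)},$$
where on the right the event refers to the original $x$-walk. Let $a := L_k^S(D) \in \{0,\dots,k\}$ be the local time of the $x$-walk on $D$ up to step $k$, and decompose $L_n^S = L_k^S + L_{n-k}^{\tilde S}$ for $n \geq k$. Since $a \leq k < t$, the first hitting time $\tau_x$ of local time $t$ on $D$ by the $x$-walk is necessarily $> k$, and equals $k + m^*$ where $m^* := \inf\{m \geq 1 : L_m^{\tilde S}(D) = t - a\} \le \tilde\tau$. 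On $\tilde A_r$ we have $\tilde\tau < \infty$, and hence $\tau_x < \infty$. Two applications of the triangle inequality, together with the pointwise bound $L_{\tilde\tau}^{\tilde S} \ge L_{m^*}^{\tilde S}$ and the observation that the $\ell^1(D)$-norm of their difference equals $L_{\tilde\tau}^{\tilde S}(D) - L_{m^*}^{\tilde S}(D) = t - (t-a) = a$, yield
$$\bigl\|L_{\tau_x}^S/t - g^2\bigr\|_{1,D} \;\leq\; \frac{L_k^S(D)}{t} + \bigl\|L_{m^*}^{\tilde S}/t - L_{\tilde\tau}^{\tilde S}/t\bigr\|_{1,D} + \bigl\|L_{\tilde\tau}^{\tilde S}/t - g^2\bigr\|_{1,D} \;\leq\; \frac{a}{t} + \frac{a}{t} + r \;\leq\; r + \frac{2k}{t} \;\leq\; r + \frac{4k}{t-k},$$
where the last inequality uses $t > k > 0$ (equivalently, $2(t-k) \leq 4t$). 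This establishes the inclusion.

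Combining with the strong Markov identity gives $P_x(A_{r+4k/(t-k)}) \geq P_x(S_k = 0) P(A_r)$; in the other parity case we obtain instead $P_x(A_{r+4(k-1)/(t-k+1)}) \geq P_x(S_{k-1}=0) P(A_r)$, which also gives the stated bound since $4(k-1)/(t-k+1) \le 4k/(t-k)$. In either case, since exactly one of $P_x(S_k=0)$, $P_x(S_{k-1}=0)$ is nonzero, the max of the two equals their sum, and
$$P(A_r) \;\leq\; \frac{P_x(A_{r+4k/(t-k)})}{\max\bigl(P_x(S_k=0), P_x(S_{k-1}=0)\bigr)} \;\leq\; \frac{2\,P_x(A_{r+4k/(t-k)})}{P_x(S_k=0) + P_x(S_{k-1}=0)},$$
which is precisely the claimed bound. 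The main subtlety, and the only step requiring care, is the event inclusion in the second paragraph: one must correctly account for the fact that the $x$-walk's stopping time $\tau_x = k + m^*$ corresponds, in the shifted walk, to reaching local time $t-a$ rather than $t$ on $D$, so that the shifted walk's local time profile at $m^*$ differs from the one appearing in $\tilde A_r$. The key quantitative point is that this $\ell^1(D)$-discrepancy is controlled by $a \leq k$, which is small compared to $t$ in the regime $k \ll t$ of interest.
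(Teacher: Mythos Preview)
Your proof is correct and follows the same approach as the paper: apply the Markov property at time $k$ (or $k-1$) to relate the walk started at $x$, conditioned to be at the origin, to the walk started at the origin. Your accounting of the error is slightly different and in fact a bit cleaner --- you decompose $L_{\tau_x}^S = L_k^S + L_{m^*}^{\tilde S}$ directly and bound each piece, obtaining the sharper error $2k/t$ before relaxing to $4k/(t-k)$, whereas the paper first proves and then applies twice the auxiliary inequality $\|\tfrac{1}{t}L_t^D - \tfrac{1}{s}L_s^D\|_{1,D} \le 2(t-s)/s$.

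One small imprecision: your claim that ``$P_x(S_j=0)>0$ iff $j+\|x\|_1$ is even'' omits the condition $j\ge \|x\|_1$; when $k-1<\|x\|_1$ both $P_x(S_k=0)$ and $P_x(S_{k-1}=0)$ can vanish simultaneously. This does not affect the argument, since in that case the right-hand side of the proposition is $+\infty$ and the bound is vacuous; moreover your final chain $P(A_r)\le P_x/\max \le 2P_x/\mathrm{sum}$ only uses $\mathrm{sum}\le 2\max$, which holds for any two nonnegative numbers and does not require the parity observation at all.
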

\begin{proof}
Let $s,t$ be such that $0<s<t$. We have
\begin{align}
\Big|\Big|\frac{1}{t}L_t^D-
\frac{1}{s}L_s^D\Big|\Big|_{1,D}\,&\leq\,
\Big|\Big|\frac{1}{t}L_t^D-
\frac{1}{s}L_t^D\Big|\Big|_{1,D}\,+\,
\Big|\Big|\frac{1}{s}L_t^D-
\frac{1}{s}L_s^D\Big|\Big|_{1,D}\,\cr
&\leq\,
\Big|\frac{1}{t}
-\frac{1}{s}\Big|\,t+\frac{t-s}{s}\,\leq\,
2\frac{t-s}{s}
\,.
\label{prin}
\end{align}
Let $x\in\Zd$ be an arbitrary starting point. Let also $r>0$ and
let $g$ be an arbitrary function
from $D$ to $\R$. We condition on the state visited
by the random walk at time $t-s$, we apply the strong Markov property
and we use twice the inequality~\eqref{prin}
to get:
\begin{multline}
P_x\Big(
\Big|\Big|\frac{1}{t}L_t^D-g^2
\Big|\Big|_{1,D}
	\,\leq\,r
,\,\tau<\infty
	\Big)
	\cr
 \geq\,
\sum_{y\in\Zd}
P_y\Big(
	S_{t-s}=y\,,
\Big|\Big|
	\frac{1}{s}\big(
	L_t^D-
	L_s^D\big)-g^2
\Big|\Big|_{1,D}\,\leq\,r
-2\frac{t-s}{s}
,\,\tau<\infty
\Big)\\
 \geq\,
\sum_{y\in\Zd}
P_x(S_{t-s}=y)\,
P_y\Big(
\Big|\Big|\frac{1}{s}L_s^D-g^2
\Big|\Big|_{1,D}\,\leq\,r
-2\frac{t-s}{s}
,\,\tau<\infty
\Big)\\
\geq\,
P_x(S_{t-s}=0)\,
P\Big(
\Big|\Big|\frac{1}{t}L_t^D-g^2
\Big|\Big|_{1,D}\,\leq\,r
-4\frac{t-s}{s}
,\,\tau<\infty
\Big)\,.
\label{firt}
\end{multline}
It might happen that
$P_x(S_{t-s}=0)$ vanishes if $|x|_{1}$ and $t-s$ don't have the same parity. To avoid
this nasty detail, we apply the inequality with $s+1$ instead of $s$. Noting that
	$$\frac{t-s-1}{s+1}\,\leq\,\frac{t-s}{s}\,,$$
we get
\begin{equation*}
P_x\Big(
\Big|\Big|\frac{1}{t}L_t^D-g^2
\Big|\Big|_{1,D}
\kern-7pt
\leq\,r
,\,\tau<\infty
	\Big)
\geq
P_x(S_{t-s-1}=0)\,
P\Big(
\Big|\Big|\frac{1}{t}L_t^D-g^2
\Big|\Big|_{1,D}
\kern-7pt
\leq\,
r
-4\frac{t-s}{s}
,\,\tau<\infty
\Big).
\end{equation*}
Summing this inequality and inequality~\eqref{firt}, and setting $k=t-s$, we obtain,
for any $0<k<t$,
\begin{multline*}
2\,P_x\Big(
\Big|\Big|\frac{1}{t}L_t^D-g^2
\Big|\Big|_{1,D}\,\leq\,r
,\,\tau<\infty
	\Big)
\,\geq\,\cr
\big(
P_x(S_{k}=0)+
P_x(S_{k-1}=0)\big)\,
P\Big(
\Big|\Big|\frac{1}{t}L_t^D-g^2
\Big|\Big|_{1,D}\,\leq\,r
-4\frac{k}{t-k}
,\,\tau<\infty
\Big)\,.
%\label{sirt}
\end{multline*}
We finally make the change of variable $r'=r
-4\frac{k}{t-k}$ and we obtain the desired inequality.
\end{proof}

%\end{proof}
\section{A priori estimates}
\label{S:apriori}

\subsection{Control of the range}
\label{conran}
%Let $n$ be an integer such that $n^{d+2}=N$.
%Without loss of generality, we can assume that $N$ is such an integer power.
We shall obtain a control on the size of $|R_N|$.
Let $c>0$. We write
$$
%\wP_N\big(\,|\supp f_N|\,>\,cn^d\,\big)\,=\,
\wP_N\big(\,|R_N|\,>\,cn^d\,\big)\,\leq\,
\frac{\exp(-cn^d)}{
E\big(
\exp\big(-|R_N|\big)\big)}\,.$$
From theorem~$1$ of~\cite{DVRA}, there exists a positive constant $k(1,d)$ such
that
$$\lim_{N\to\infty}\,\frac{1}{n^d}\,\ln
E\big(
\exp\big(-|R_N|\big)\big)\,=\,-k(1,d)\,.$$
We conclude that, for any $c>0$ and for $N$ large enough, we have
\begin{equation}
\label{range}
\wP_N\big(\,|R_N|\,>\,cn^d\,\big)\,\leq\,
\exp\big(-(c-2k(1,d))n^d\big)\,.
\end{equation}
\subsection{Control of the Dirichlet energy}
\label{codir}
We shall obtain a bound on $\cE(f_N)$ conditionally on the size of
$|R_N|$.
\begin{proposition}
\label{goode}
Let $c\geq 1$ and $\kappa\geq 1$ be such that
${\kappa}/{2}-4dc>0$.
For $n$ large enough, we have
$$\displaylines{
P\big(
\cE(f_N)\geq\kappa \,n^d\ln n,\,
|R_N|\leq cn^d\big)\,\leq\,
\exp\Big(
-\big(\frac{\kappa}{2}-4dc\big)n^d\ln n\Big)
\,.
}$$
\end{proposition}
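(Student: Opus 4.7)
The plan is to decompose the event according to the profile $\phi = f_N$ and apply the fixed-profile estimate of Proposition~\ref{profix}, then take a union bound over the (finite set of) admissible profiles. Concretely, I would write
\[
P\big(\cE(f_N) \geq \kappa n^d \ln n,\ |R_N| \leq cn^d\big)
\,=\, \sum_{\phi} P(f_N = \phi),
\]
where the sum runs over all $\phi : \Zd \to \sqrt{\N}$ with $\sum_x \phi(x)^2 = N$, $|\supp \phi| \leq cn^d$, and $\cE(\phi) \geq \kappa n^d \ln n$. Since $\supp f_N = R_N$ is always a connected subset of $\Zd$ containing the origin, I may further restrict the sum to such $\phi$.

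For each admissible $\phi$, I would apply Proposition~\ref{profix} with $\alpha = 1/n^{d+1}$. Since $|\supp \phi| \leq cn^d$ and $N = n^{d+2}$, the error term is controlled by $\alpha\sqrt{N\,|\supp \phi|} \leq \sqrt{c}$, and the prefactor $\phi(0)/\alpha \leq \sqrt{N}/\alpha$ contributes only $O(\ln n)$ to the exponent. Combined with $\cE(\phi) \geq \kappa n^d \ln n$, this yields, uniformly over admissible $\phi$,
\[
P(f_N = \phi) \,\leq\, \exp\Big(-\tfrac{\kappa}{2}\, n^d \ln n + O(\ln n)\Big).
\]

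The main step is to count the admissible profiles. A profile is determined by its support $S$ (a connected subset of $\Zd$ containing the origin, of size $k \leq cn^d$) and by the positive integer values of $\phi^2$ on $S$ summing to $N$. The classical lattice-animal bound gives at most $(2de)^k = \exp(O(n^d))$ connected supports of size $k$, which is a subleading contribution. For each fixed support of size $k$, the number of positive integer assignments with sum $N$ equals $\binom{N-1}{k-1}$; using the crude bound $\binom{N-1}{k-1} \leq N^k$ with $N = n^{d+2}$ gives
\[
\ln \binom{N-1}{k-1} \,\leq\, (d+2)\,c\, n^d \ln n \,\leq\, 3dc\, n^d \ln n
\]
for $d \geq 1$. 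Combining the per-profile bound with this count (and absorbing the polynomial factor from summing over $k \in \{1,\dots,cn^d\}$), I obtain a total upper bound of order $\exp\big(-(\kappa/2 - 3dc)\, n^d \ln n + O(n^d)\big)$, which under the hypothesis $\kappa/2 - 4dc > 0$ is at most $\exp\big(-(\kappa/2 - 4dc)\, n^d \ln n\big)$ for $n$ large, the $O(n^d)$ correction being absorbed into the gap between $3dc$ and $4dc$. The main obstacle is precisely this balance: the decay $e^{-\cE(\phi)/2}$ must overcome the combinatorial cost of enumerating all profiles with the prescribed support size, and the hypothesis $\kappa/2 > 4dc$ is exactly what quantifies the needed margin.
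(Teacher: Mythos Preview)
Your proof is correct and follows essentially the same approach as the paper: decompose over admissible profiles, apply Proposition~\ref{profix} uniformly, and take a union bound using the lattice-animal count for supports together with the crude $N^{cn^d}$ bound for the values. The only difference is your choice $\alpha = n^{-(d+1)}$ versus the paper's $\alpha = (\ln n)/n$; your choice makes the error term $\alpha\sqrt{N|\supp\phi|}$ a harmless constant rather than $\sqrt{c}\,n^d\ln n$, which is a minor improvement in bookkeeping but does not change the structure of the argument.
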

\begin{proof}
Let $\lambda,c>0$. We write
$$P\big(
\cE(f_N)\geq\lambda n^d,\,
|R_N|\leq cn^d\big)\,=\,
\sum_{\phi}P(f_N=\phi)\,,$$
where the summation extends over the profiles $\phi:\Z^d\to[0,+\infty[$ such that
$$\phi(0)\geq 1\,,\quad
\sum_{y\in\Zd}\phi(y)^2\,=\,N\,,\quad
|\supp \phi|\leq cn^d\,,\quad
\cE(\phi)\geq\lambda n^d
\,.$$
For such a profile $\phi$, we have,
thanks to the inequality of proposition~\ref{profix}:
$$P(f_N=\phi)\,\leq\,
\frac{
%\max\big(\phi(0),\alpha\big)
N
%\phi(0)
}{\alpha}
\exp\Big(
-\frac{\lambda}{2}n^d
%-\frac{1}{4d}
%\sum_{\tatop{y,z\in\supp\phi}{|y-z|=1}}
%\big(\phi(y)-\phi(z)\big)^2
+\alpha
\sqrt{Ncn^d}
\Big)\,.
$$
For the probability
$P(f_N=\phi)$ to be positive, it is necessary that $\supp\phi$ is a connected
subset of $\Zd$ containing the origin, and that $1\leq \phi(y)^2\leq N$ for
all $y\in\supp\phi$.
The number of possible choices for the support of $\phi$ is bounded by
	$\smash{(c_d)^{cn^d}}$, where $c_d$ is a constant depending on the dimension $d$ only.
Once the support is fixed, the number of choices for the profile $\phi$
is bounded by
$N^{cn^d}$.
In the end, the total number of
profiles $\phi$ satisfying the previous constraints
is bounded by
$\smash{(c_dN)^{cn^d}}$.
Therefore
$$P\big(
\cE(f_N)\geq\lambda n^d,\,
|R_N|\leq cn^d\big)\,\leq\,
\smash{(c_dN)^{cn^d}}
\frac{
%\max\big(\phi(0),\alpha\big)
N
}{\alpha}
\exp\Big(
-\frac{\lambda}{2}n^d
%-\frac{1}{4d}
%\sum_{\tatop{y,z\in\supp\phi}{|y-z|=1}}
%\big(\phi(y)-\phi(z)\big)^2
+\alpha
\sqrt{Ncn^d}
\Big)\,.
$$
We choose $\alpha$ and $\lambda$ of the form
$$\alpha\,=\,\frac{\ln n}{n}\,,\qquad\lambda\,=\,\kappa\ln n\,.$$
Recalling that $n^{d+2}=N$, the previous inequality can be rewritten as
\begin{align*}
P\big(
\cE(f_N)\geq\kappa \,n^d\ln n,\,
|R_N|\leq cn^d\big)\,& \leq\,
%\hfill\cr
\frac{ n^{d+3} }{\ln n}
\exp\Big(
{cn^d}\big(\ln c_d+(d+2)\ln n\big)
-\frac{\kappa}{2}n^d\ln n
%-\frac{1}{4d}
+
\sqrt{c} n^d\ln n
\Big)\\%\cr
&\,\leq\,
\exp\Big(
-\big(\frac{\kappa}{2}-4dc\big)n^d\ln n\Big)
\end{align*}
where the last inequality holds for $n$ large enough.
\end{proof}

\subsection{High density blocks}
\label{deco}
%Let $f$ be a function from $\Zd$ to $\R^+$ having compact support.
We divide $\Zd$ into boxes called blocks in the
following way.
The box
of side length~$r>0$
centered at the origin
is the set
\index{$\Lambda(r)$}
\[\Lambda(r) \,=\, \big\{\,(x_1,\cdots,x_d)\in\R^d
:
\forall i\in \{1,\cdots, d\}\,\,
-r/2<x_i\leq r/2
%\quad i=1,\cdots,d
\,\big\}\,. \]
%Such a box is neither open nor closed
Let $n$ be a positive integer.
For $\xx\in \Zd$, we define the block indexed by $\xx$
as
$$B(\xx)\, =\, n\xx+\La(n)\,.$$
Note that the blocks partition $\Rd$.
We perform here a deterministic construction on any function~$f:\Zd\to\R^+$
% $\ell^{2^*}(\Zd)$
in order to record the blocks where the function~$f$
has a high density.
This construction will be applied to build the coarse grained profile of the function~$f_N$.
%at the beginning of the coarse--graining procedure.
Let $f$ be a function from $\Zd$ to $\R^+$.
Let $n\geq1$, $\delta>0$. We recall the notation $2^*=2d/(d-2)$ and we define
$$\XX(f,\delta)\,=\,\Big\{\,\xx\in\Zd:
\sum_{y\in B(\xx)}\big(f(y)\big)^{2^*}\geq
\delta^{2^*} n^{d+2^*}
\,\Big\}\,.$$
%and we set
%$$D(f,\delta)\,=\,\bigcup_{\xx\in\XX(f,\delta)}B(\xx)\,.$$
The exponent $d+2^*$ corresponds to the typical situation for a block actively
visited by the random walk until time $N$: the number of visits per site should
be of order $n^2$, so $f_N$ is of order $n$ throughout the block.
The blocks corresponding to vertices outside of
$\XX(f,\delta)$ are low density blocks. Our goal is to control the total
contribution of these blocks to the $2^*$--norm of~$f$.
\begin{lemma}
\label{contout}
There exists a constant $c_d$ depending on the dimension $d$ only such that,
for any function $f$ from $\Zd$ to $\R^+$, we have
$$\sum_{\xx\in\Zd\setminus\XX(f,\delta)}
\sum_{y\in B(\xx)}
%\Zd\setminus D(f,\delta)}
\big(f(y)\big)^{2^*}\,\leq\,
c_d\big(
\delta^{2^*} n^{d+2^*}
%\delta n^d
\big)^{1-2/2^*}
%2(c_{PS})^2
\Big(\frac{1}{n^2}
\big(||f||_{2}\big)^2+{\cE(f)}\Big)\,.
$$
\end{lemma}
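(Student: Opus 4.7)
The plan is to combine a simple H\"older-type extraction using the low-density constraint with a discrete Sobolev--Poincar\'e inequality applied block by block.

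\textbf{Step 1: Use the low-density constraint.} For each $\xx\notin\XX(f,\delta)$, by definition
$\sum_{y\in B(\xx)}(f(y))^{2^*}\leq \delta^{2^*} n^{d+2^*}$. I would split the sum as
$$\sum_{y\in B(\xx)}(f(y))^{2^*}\,=\,\Big(\sum_{y\in B(\xx)}(f(y))^{2^*}\Big)^{1-2/2^*}\Big(\sum_{y\in B(\xx)}(f(y))^{2^*}\Big)^{2/2^*}\leq (\delta^{2^*}n^{d+2^*})^{1-2/2^*}\,\|f\|_{2^*,B(\xx)}^{2}\,,$$
where $\|\cdot\|_{p,B}$ denotes the $\ell^p$ norm restricted to $B$.

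\textbf{Step 2: A local Sobolev--Poincar\'e inequality on a block.} On the block $B(\xx)$ of side $n$, I would invoke the standard discrete Poincar\'e--Wirtinger / Sobolev--Poincar\'e inequality: if $\bar f_{B(\xx)}$ denotes the average of $f$ over $B(\xx)$, then
$$\|f-\bar f_{B(\xx)}\|_{2^*,B(\xx)}^{2}\,\leq\, c_d\,\cE(f,B(\xx)).$$
Since $2^*=2d/(d-2)$ satisfies $d/2-d/2^*=1$, Cauchy--Schwarz gives
$\|\bar f_{B(\xx)}\|_{2^*,B(\xx)}=|\bar f_{B(\xx)}|\,n^{d/2^*}\leq \|f\|_{2,B(\xx)}/n$. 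Combining via the triangle inequality and squaring yields
$$\|f\|_{2^*,B(\xx)}^{2}\,\leq\, c_d\Big(\cE(f,B(\xx))+\frac{1}{n^2}\|f\|_{2,B(\xx)}^{2}\Big).$$

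\textbf{Step 3: Sum over blocks.} Since the blocks $B(\xx)$ partition $\Zd$, the local $\ell^2$ norms add up to $\|f\|_2^2$ and the block-internal Dirichlet energies add up to at most $\cE(f)$ (boundary edges between blocks are only dropped, never doubled). Putting Steps 1 and 2 together and summing over all $\xx$ (not only those outside $\XX(f,\delta)$, which only increases the right-hand side), I obtain
$$\sum_{\xx\notin\XX(f,\delta)}\sum_{y\in B(\xx)}(f(y))^{2^*}\,\leq\,(\delta^{2^*}n^{d+2^*})^{1-2/2^*}\sum_{\xx\in\Zd}\|f\|_{2^*,B(\xx)}^{2}\,\leq\, c_d(\delta^{2^*}n^{d+2^*})^{1-2/2^*}\Big(\frac{1}{n^2}\|f\|_{2}^{2}+\cE(f)\Big),$$
which is the claimed inequality.

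\textbf{Main obstacle.} The only nontrivial input is the discrete Sobolev--Poincar\'e inequality on $B(\xx)$ with a dimensional constant independent of $n$; this follows by standard arguments (e.g., extending $f-\bar f$ by reflection or using the known discrete Sobolev inequality on $\Zd$ on a suitable extension), and the scaling in $n$ is dictated by the relation $d/2-d/2^*=1$. Everything else is bookkeeping with H\"older and the fact that blocks partition $\Zd$.
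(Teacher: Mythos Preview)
Your proof is correct and follows essentially the same approach as the paper: split each block sum as $(\sum)^{1-2/2^*}\cdot(\sum)^{2/2^*}$, bound the first factor by the low-density constraint, bound the second factor via a discrete Poincar\'e--Sobolev inequality on the block, and sum over blocks. The only cosmetic difference is that the paper quotes the local inequality $\|f\|_{2^*,B(\xx)}\le c_{PS}\big(\tfrac{1}{n}\|f\|_{2,B(\xx)}+\sqrt{2d\,\cE(f,B(\xx))}\big)$ directly from its appendix, whereas you rederive it by Poincar\'e--Wirtinger on $f-\bar f_{B(\xx)}$ plus the explicit bound $\|\bar f_{B(\xx)}\|_{2^*,B(\xx)}\le n^{-1}\|f\|_{2,B(\xx)}$; both routes yield the same estimate.
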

\begin{proof}
Let $f$ be a function from $\Zd$ to $\R^+$.
Let $\xx\in\Zd$. We write
\begin{equation}
\label{qze}
\sum_{y\in B(\xx)}\big(f(y)\big)^{2^*}\,=\,
\Big(\sum_{y\in B(\xx)}\big(f(y)\big)^{2^*}\Big)^{1-2/2^*}
\Big(\sum_{y\in B(\xx)}\big(f(y)\big)^{2^*}\Big)^{2/2^*}\,.
\end{equation}
To control the last factor, we apply the
discrete Poincar\'e--Sobolev inequality stated
in corollary~\ref{UPS}:
\begin{align}
\label{qzf}
\Big(\sum_{y\in B(\xx)}\big(f(y)\big)^{2^*}\Big)^{2/2^*}
&\,\leq\,
(c_{PS})^2\Big(\frac{1}{n}
\nonumber
||f||_{2,B(\xx)}+\sqrt{\cE(f,B(\xx))}\Big)^2\\
&\,\leq\,
2(c_{PS})^2\Big(\frac{1}{n^2}
\big(||f||_{2,B(\xx)}\big)^2+{\cE(f,B(\xx))}\Big)
\,.
\end{align}
If $\xx$ does not belong to
$\XX(f,\delta)$, then we have
\begin{equation}
\label{qzg}
\sum_{y\in B(\xx)}\big(f(y)\big)^{2^*}\,<\,
\delta^{2^*} n^{d+2^*}\,.
\end{equation}
Plugging inequalities~(\ref{qzf}) and~(\ref{qzg}) into~(\ref{qze}), we obtain
$$\sum_{y\in B(\xx)}\!
\big(f(y)\big)^{2^*}\!\leq\,
\big(
\delta^{2^*} n^{d+2^*}
\big)^{1-2/2^*}
2(c_{PS})^2\Big(\frac{1}{n^2}
\big(||f||_{2,B(\xx)}\big)^2+{\cE(f,B(\xx))}\Big)
\,.
$$
%Moreover
%$$\Zd\setminus
%D(f,\delta)\,=\,\bigcup_{\xx\not\in\XX(f,\delta)}B(\xx)\,.$$
Summing this inequality over the blocks
outside of $\XX(f,\delta)$, we get
%$$\sum_{y\in
%\Zd\setminus D(f,\delta)}
%\big(f(y)\big)^{2^*}\,\leq\,
%\big(
%\delta^{2^*} n^{d+2^*}
%\big)^{1-2/2^*}
%2(c_{PS})^2\Big(\frac{1}{n^2}
%\big(||f||_{2}\big)^2+{\cE(f)}\Big)\,.
%$$
%This gives
the estimate stated in the lemma.
\end{proof}
%$$\Big(||f||_{2^*,B(\xx)}\Big)^{2^*} $$
%\section{Coarse--graining}
\subsection{Local averaging}
\label{loca}
In order to build a coarse grained image of the local time, we shall
perform a local averaging.
The averaging operation is deterministic, so we define it here
for
any function $f:\Zd\to\R$.
%We shall return to the study of the random function $f_N$
%in section~\ref{continua}.
%\subsection{The averaging operator}
Let $M$ be an integer which is a divisor of $n$. Let $f$ be a function from $\Zd$ to $\R$.
To $f$ we associate a function $\fm$ obtained by performing a local average of $f$
over boxes of side length~$M$.
Let us define the function $\fm$ precisely.
For $\xx\in \Zd$, we define the block indexed by $\xx$ of side $M$
as
$$B'(\xx)\, =\, M\xx+\La(M)\,.$$
We define a function $\fm$ from $\Z^d$ to $\R$ which is constant on
the blocks
$B'(\xx)$, $\xx\in\Zd$, by setting
$$\forall \xx\in\Zd\quad\forall y\in
B'(\xx)\qquad
\fm(y)\,=\,\frac{1}{\big|B'(\xx)\big|}\sum_{z\in B'(\xx)}f(z)\,.$$
%\subsection{Control of the norm}
We have
\begin{align}
\nonumber
||\fm||_2^2&\,=\,\sum_{x\in\Zd}\big(\fm(x)\big)^2\\
&\,=\,\sum_{\xx\in\Zd}
\big|B'(\xx)\big|\Big(
\nonumber
\frac{1}{\big|B'(\xx)\big|}\sum_{z\in B'(\xx)}f(z)\Big)^2\\
&\,\leq\,
\sum_{\xx\in\Zd}
\sum_{z\in B'(\xx)}\big(f(z)\big)^2
\,=\,||f||_2^2
\,.
\label{normctl}
\end{align}
We shall next bound the $\ell^2$ norm of the difference between $f$ and $\fm$.
Let $\xx\in \Zd$.
We apply the
discrete Poincar\'e--Wirtinger inequality stated in corollary~\ref{UPW}
to the function $f$ and the block $B'(\xx)$:
$$\big(||f-\fm||_{2,B'(\xx)}\big)^2
\,\leq\,\big(M\,c_{PW}\big)^2
{\cE(f,B'(\xx))}\,.$$
We sum over $\xx\in\Zd$:
$$\big(||f-\fm||_{2}\big)^2
\,\leq\,\big(M\,c_{PW}\big)^2
\sum_{\xx\in\Zd}{\cE(f,B'(\xx))}
\,\leq\,\big(M\,c_{PW}\big)^2
{\cE(f)}\,.$$
Taking the square root, we conclude that
\begin{equation}
\label{difa}
||f-\fm||_{2}
\,\leq\,M\,c_{PW}
\sqrt{\cE(f)}\,.
\end{equation}
\section{Upper bound: proof of Theorem \ref{upbo}}
\label{S:proof_upbo}
\label{star}

We start here the proof of Theorem~\ref{upbo}.
Let $c\geq 1$ and $\kappa\geq 1$.
We consider the event
\begin{equation*}
\label{defa}
\cA\,=\,\big\{\,
\cE(f_N)\leq\kappa \,n^d\ln n,\,
|R_N|\leq cn^d\,\big\}\,.
\end{equation*}
Let $\cF$ be a collection of functions from~$\Zd$ to $\R^+$.
The estimate on the range~(\ref{range}) and
proposition~\ref{goode} yield that, for $n$ large enough,
\begin{multline}
E\big(e^{-|R_N|};
f_N\in\cF)\,\leq\,
E\big(e^{-|R_N|};
f_N\in\cF,\cA)
%\exp(-cn^d)\,.
\cr
	\,+\,
\exp\big(-(c-2k(1,d))n^d\big)\,+\,
\exp\Big(
-\big(\frac{\kappa}{2}-4dc\big)n^d\ln n\Big)\,.
\label{inisum}
\end{multline}
From now onwards, we focus on estimating the term
$E\big(e^{-|R_N|}; f_N\in\cF,\cA)$. So
we suppose that the event~$\cA$
occurs
and we deal only with trajectories of the random walk belonging to~$\cA$.
Applying corollary~\ref{TPS}, we have
\begin{equation}
\label{afa}
||f_N||_{2^*}
\,\leq\,
c_{PS}
\sqrt{2d\cE(f_N)}\,\leq\,
c_{PS}
\sqrt{2d\kappa \,n^d\ln n}
\,.
\end{equation}
We apply next the deterministic construction of section~\ref{deco} to the function~$f_N$.
The corresponding set $\XX(f_N,\delta)$ satisfies:
\begin{equation}
\label{afb}
\big|\XX(f_N,\delta)\big|\,
\delta^{2^*} n^{d+2^*}
\,\leq\,
\big(||f_N||_{2^*}\big)^{2^*}\,.
\end{equation}
Therefore, combining inequalities~(\ref{afa}) and~(\ref{afb}),
\begin{equation}
\label{cardx}
\big|\XX(f_N,\delta)\big|\,\leq\,
\frac{1}{ \delta^{2^*} n^{d+2^*} }
\Big( c_{PS}
\sqrt{2d\kappa \,n^d\ln n}\Big)^{2^*}\,\leq\,
\frac{1}
{ \delta^{2^*}}
(c_{PS})^{2^*}
(2d\kappa\ln n)^{2^*/2}
\,.
\end{equation}
In addition, since the range $R_N$ is connected and has cardinality at most $cn^d$,
then certainly the set $\XX(f_N,\delta)$ is included in the box $\La(3cn^{d-1})$;
otherwise, the range $R_N$ would contain a vertex which is outside the box $\La(3cn^d)$ (when coming back to the original lattice, the scale is multiplied by $n$), and a connected set containing $0$ which exits the box
$\La(3cn^{d})$ must contain a path of length at least
$3cn^{d}/2$.
We denote by $\cX(N,\delta)$ the collection of the subsets $X$ of $\Zd$
satisfying these constraints.
There exists a constant $c_d$ depending on the dimension $d$ only such that, for $n$ large enough,
\begin{equation}
\label{siq}
\big|
\cX(N,\delta)
\big|\,\leq\,
\exp\big(c_dC_0\big)
%(\ln n) \frac{1} { \delta^{2^*}} (c_{PS})^{2^*}
% (\kappa\ln n)^{2^*/2}
%\Big)
\,,
\end{equation}
where
\begin{equation}
\label{saq}
C_0\,=\,
\frac{1}
{ \delta^{2^*}}
\kappa^4(\ln n)^{5}\,.
\end{equation}
We decompose the expectation according to
the value $X$ of $\XX$ in $\cX(N,\delta)$:
\begin{align}
\label{suma}
E\big(e^{-|R_N|};
f_N\in\cF,\cA)\,=\,&
\sum_{X\in \cX(N,\delta)}
E\big(e^{-|R_N|};
f_N\in\cF,\,\cA,\,\XX(f_N,\delta)=X\big)\,.
%\nonumber
\end{align}
We fix next ${X\in \cX(N,\delta)}$ and we shall
estimate the probability appearing in
the sum.
We perform the local averaging on
$f_N$, thereby getting the function $\fmn$.
Using inequality~(\ref{difa}), we have
\begin{equation}
\label{dba}
%||\phi_X f_N-\phi_X\fmn||_2\,\leq\,
||f_N-\fmn||_2
\,\leq\,M\,c_{PW}
\sqrt{\cE(f_N)}
\,\leq\,M\,c_{PW}
\sqrt{\kappa \,n^d\ln n}
\,.
\end{equation}
\subsection{The coarse grained profile}
\label{coars}
We build here the coarse grained image of the local time.
Since $M$ is a divisor of $n$, then each block $B(\xx)$, for $\xx\in\Zd$,
is the disjoint union of the blocks $B'(\yy)$, $\yy\in\Zd$, which are included in it.
Let us make this statement more precise.
For $\xx\in\Zd$, we denote by $Y(\xx)$ the subset of $\Zd$ defined by:
$$Y(\xx)\,=\,\big\{\,\yy\in\Zd:B'(\yy)\subset B(\xx)\,\big\}\,.$$
With this definition, we have
$$\forall \xx\in\Zd\qquad
B(\xx)\,=\,\bigcup_{\yy\in Y(\xx)}B'(\yy)\,.$$
We recall that the norm $|\cdot|_\infty$ is defined by
$$\forall (x_1,\dots,x_d)\in\Rd\qquad
|(x_1,\dots,x_d)|_\infty\,=\,
{\max_{1\leq i\leq d}|x_i|}\,.$$
For $X$ a subset of $\Zd$, we define
$$\hX\,=\,X\,\cup\,
 \set{x\in \Zd\setminus X}{\ex y\in X\quad |x-y|_\infty=1 }\,.
$$
We have the simple bound
\begin{equation}
\label{neighb}
|\hX|\,\leq \,3^d\, |X|\,.
\end{equation}
We define also
$$Y(X)\,=\,\bigcup_{\xx\in X}Y(\xx)\,.$$
%Let $(x_i,i\in I)$ be the collection of the points $x\in\Zd$ such that
%$$B'(x)\,=\,\La(Mx,M)\,\subset
%\,\bigcup_{\xx\in\hX}B(\xx)\,=\,
%\bigcup_{\xx\in\hX}\La(n\xx,n)\,.$$
Since the blocks $B'(\xx)$, $\xx\in\Zd$, are pairwise disjoint, we have
$$|Y(\hX)|\,\times\,|\La(M)\cap\Zd|\,\leq\,
|\hX|\,\times\,|\La(n)\cap\Zd|\,,$$
whence, using inequality~\eqref{neighb},
%, supposing that $n,M$ are odd to avoid irrelevant details,
\begin{equation}
\label{heb}
|Y(\hX)|\,\leq\,
\frac{n^d}{M^d}\,|\hX|
\,\leq\,
\frac{3^dn^d}{M^d}\,|X|
\,.
\end{equation}
We take now $X=\XX(f_N,\delta)$.
%From the requirements imposed on the truncation function $\phi$, it follows also that
%$$\supp\,\phfm\,\subset\,\bigcup_{i\in I}B'(x_i)\,.$$
%In addition, the function $\phfm$ is constant on each block $B'(x_i)$ and
The function $\fmn$ is constant on each block $B'(\yy)$, $\yy\in Y(\hX)$, and
%, for any $x\in \Zd$,
$$\forall x\in \Zd\qquad
0\,\leq\,\fmn(x)\,\leq\,
%\sup_{y\in\Zd}\,\phi_Xf_N(y)\,\leq\,
\sup_{y\in\Zd}\,f_N(y)\,\leq\,\sqrt{N}\,.$$
We shall work in the domain
\begin{equation}
\label{bad}
D\,=\,\bigcup_{\xx\in \hX}B(\xx)\,=\,
\bigcup_{\yy\in Y(\hX)}B'(\yy)\,.
\end{equation}
We define also the set
\begin{equation}
\label{bod}
E\,=\,\bigcup_{\xx\in X}B(\xx)\,.
\end{equation}
We apply Lemma~\ref{contout}
to $f_N$. Recalling that
$$\big(||f_N||_{2}\big)^2\,=\,N\,=\,n^{d+2}\,,\qquad
\cE(f_N)\,\leq\,\kappa\,n^d\ln n\,,
%\quad \cE(f_N)\,\leq\, {\kappa \,n^d\ln n}
$$
we obtain
\begin{equation}
\label{outd}
\sum_{x\in
\Zd\setminus E}
\big(f_N(x)\big)^{2^*}\,\leq\,
c_d\big(
\delta^{2^*} n^{d+2^*}
\big)^{1-2/2^*}
%2(c_{PS})^2
\big(
n^d+ {\kappa \,n^d\ln n}
\big)\,.
\end{equation}
We discretize next the values of the functions $\fmn$.
%We shall next cover the set of functions that can be equal to $\fmn$
%on $D$ by a finite collection
%of $\ell^2(D)$ balls of constant radius.
Let $\eta>0$. We define
%$$\forall t\in\R^+\quad {\phantom{t}}^{\displaystyle \eta}\overline{t}\,=\,\eta\,\Big\lfloor \frac{t}{\eta}\Big\rfloor\,,$$
%and we define
the function $\fmne$ by setting
$$\forall x\in\Zd\qquad
%\fmne(x)\,=\, ^{\displaystyle\eta}\overline{\big(\fmn(x)\big)}
\fmne(x)\,=\, \eta\Big\lfloor
\frac{1}{\eta}{\fmn(x)}
\Big\rfloor
\,.$$
The function $\fmne$ restricted to $D$, denoted by
$\fmne|_D$,
is the coarse--grained image of the function~$f_N$.
By construction, we have
$$\forall x\in \Zd\qquad
\big|\fmn(x)-\fmne(x)\big|\,\leq\,\eta\,,$$
whence, using inequalities~(\ref{cardx}) and~(\ref{neighb}) (recall that our $\ell^p$ norms
are unscaled, see the definition in formula~\eqref{D:lp}),
\begin{align}
\big(||\fmn-\fmne||_{2,D}\big)^2&\,\leq\,
%\eta\,\Big| \bigcup_{\xx\in\hX}B(\xx)\Big|\,\leq\,
\eta^2\,|\hX|\,n^d\cr
&\,\leq\,
\eta^2\,3^d\,
\frac{1}
{ \delta^{2^*}}
(c_{PS})^{2^*}
(\kappa\ln n)^{2^*/2}
\,n^d\,.
\label{zed}
\end{align}
Putting together inequalities~(\ref{dba}) and~(\ref{zed}), we see that,
on the event
$\XX(f_N,\delta)=X$,
%there exists a function $g\in\cG(X,M,\eta)$ such that
%$||f_N-g||_{2,D}<\Gamma_0$, where
we have
\begin{equation}
\label{rty}
||f_N-\fmne||_{2,D}\,<\,\Gamma_0\,,
\end{equation}
where
\begin{align}
\Gamma_0\,=\,
%c\,\delta^{2/d}\,n^{d/2+1}(c_d{\kappa}\ln n)^{1/2^*}
M\,c_{PW}
\sqrt{\kappa \,n^d\ln n}
%\hfill\cr
%\,+\,\frac{4M}{n}\,\sqrt{N}
\,+\,
\Big(\eta^2\,3^d\,
\frac{1}
{ \delta^{2^*}}
(c_{PS})^{2^*}
(\kappa\ln n)^{2^*/2}
\,n^d\Big)^{1/2}\,.
\end{align}
The function $\fmne|_D$
belongs to
the collection
$\cG$
of the functions which are constant on each block
$B'(\yy),\,\yy\in Y(\hX)$, and with values in the set
$$\Big\{\,k\eta:k\in\N,\,0\leq k< \frac{\sqrt{N}}{\eta}\,\Big\}\,.$$
Using inequality~\eqref{heb},
a simple upper bound on the cardinality of $\cG$ is given by
\begin{equation}
\label{sib}
|\cG|
\,\leq\,
\Big(\frac{\sqrt{N}}{\eta}\Big)^{\displaystyle|Y(\hX)|}
\,\leq\,
\Big(\frac{\sqrt{N}}{\eta}\Big)^{\displaystyle
\frac{3^dn^d}{M^d}\,|X|}
\,.
\end{equation}
Notice that the collection $\cG$ depends on $f_N$ only through the set of
blocks
$\XX(f_N,\delta)$.
More precisely, once we know that
$\XX(f_N,\delta)=X$, then $\cG$ depends only on $X$ and the parameters $M,\eta$,
so we write $\cG=\cG(X,M,\eta)$.
%From
%the very  definition of the collection $\cG$, we see that there exists
%a function $g\in\cG$ such that
%$$\forall x\in D\qquad
%\big|\fmn(x)-g(x)\big|\,\leq\,\eta\,,$$
%\sup_{y\in\Zd}\,\phi_Xf_N(y)\,\leq\,
%\section{Continuation of the upper bound}
%We still fix ${X\in \cX(N,\delta)}$.
We come back to equation~(\ref{suma}) and we decompose further the expectation
as follows:
\begin{multline}
\label{sumb}
E\big(e^{-|R_N|};f_N\in\cF,\,\cA,\,\XX(f_N,\delta)=X\big)
%\cP_N\big(f_N\in\cF,\,\cA,\,\XX(f_N,\delta)=X\big)
\,\leq\,\\
\sum_{g\in \cG(X,M,\eta)}
E\big(e^{-|R_N|};
f_N\in\cF,\,\cA,\,\XX(f_N,\delta)=X,\,
\fmne|_D=g
%||f_N-g||_{2,D}<\Gamma_0
\big)\,.
\end{multline}
Our large deviations inequality will involve the local time $L_N$, so
we try to estimate $||L_N-g^2||_{1,D}$ once we know that $\fmne|_D=g$.
Let $g\in\cG(X,M,\eta)$.
By the Cauchy--Schwarz inequality,
$$||L_N-g^2||_{1,D}\,=\,
||f_N^2-g^2||_{1,D}\,\leq\,
||f_N-g||_{2,D}\,
||f_N+g||_{2,D}\,.$$
Suppose that $\fmne|_D=g$.
Thanks to inequality~\eqref{rty}, we have
%$||f_N-g||_{2,D}<\Gamma_0$, we have
\begin{equation}
\label{bfg}
||g||_{2,D}\,\leq\,||f_N||_{2,D}+
||g-f_N||_{2,D}\,\leq\,
\sqrt{N}+\Gamma_0\,.
\end{equation}
Setting $\Gamma_1=\Gamma_0(2\sqrt{N}+\Gamma_0)$,
we deduce from the previous inequalities that
\begin{equation}
\label{cfg}
||L_N-g^2||_{1,D}\,\leq\,
\Gamma_1\,.
\end{equation}
Inequality~\eqref{sumb} and the above inequalities yield that
\begin{multline}
\label{sumc}
E\big(e^{-|R_N|};
f_N\in\cF,\,\cA,\,\XX(f_N,\delta)=X\big)
\,\leq\,\\
\sum_{\tatop{g\in \cG(X,M,\eta)}{
||g||_{2,D}\leq \sqrt{N}+\Gamma_0}}
E\big(e^{-|R_N|};
f_N\in\cF,\,\cA,\,\XX(f_N,\delta)=X,
\,||L_N-g^2||_{1,D}<\Gamma_1
%\Gamma(\sqrt{N}+\Gamma)
\big)\,.
\end{multline}

\subsection{Continuation of proof of Theorem \ref{upbo}}
\label{contin}
We still fix ${g\in \cG(X,M,\eta)}$.
We come back to equation~\eqref{sumc} and we estimate the expectation
$$E\big(e^{-|R_N|};f_N\in\cF,\,\cA,\,\XX(f_N,\delta)=X,
\,||L_N-g^2||_{1,D}<\Gamma_1\big)\,.$$
We need first to bound from below $|R_N|$ when $L_N$ is close to $g^2$.
Let $\lambda>0$. If $x\in D$ is such that $g^2(x)\geq\lambda$
and $|L_N(x)-g^2(x)|<\lambda/2$, then certainly $L_N(x)>0$.
Therefore
\begin{align}
|R_N|& \geq\,\big|\,\big\{\,x\in D:g^2(x)\geq\lambda,\,
|L_N(x)-g^2(x)|<\lambda/2\,\big\}\,\big| \nonumber \\
&\,\geq\,
\big|\,\big\{\,x\in D:g^2(x)\geq\lambda\,\big\}\big|-
\big|\,\big\{\,x\in D:
|L_N(x)-g^2(x)|\geq\lambda/2\,\big\}\,\big|\,. \label{RNtriv}
\end{align}
Moreover, by Markov's inequality,
$$\big|\,\big\{\,x\in D:
|L_N(x)-g^2(x)|\geq\lambda/2\,\big\}\,\big|\,\leq\,
\frac{2}{\lambda}||L_N-g^2||_{1,D}\,.
$$
We conclude that
\begin{multline}
\label{imc}
E\big(e^{-|R_N|};f_N\in\cF,\,\cA,\,\XX(f_N,\delta)=X,
\,||L_N-g^2||_{1,D}<\Gamma_1\big)
%E\big(e^{-|R_N|}; ||L_N-g^2||_{1,D}<\Gamma_1\big)
\,\leq\,\qquad\qquad\\
\exp\Big(-
\big|\,\big\{\,x\in D:g^2(x)\geq\lambda\,\big\}\big|
+\frac{2\Gamma_1}{\lambda}
\Big)\,\times P(\cB)\,,
\end{multline}
where $\cB$ is the event defined by
\begin{equation}\label{E:cB}
\cB\,=\,\Big\{\,
f_N\in\cF,\,\cA,\,\XX(f_N,\delta)=X,
\,||L_N-g^2||_{1,D}<\Gamma_1\,\Big\}
\,.
\end{equation}
It remains to estimate the probability $P(\cB)$, for which we ultimately wish to use Theorem \ref{GD}.
 Recall the set $E$ from \eqref{bod}.
Then we have
\begin{equation}
\label{prel}
||L_N||_{1,\tD}\,\geq\,
||L_N||_{1,E}\,=\,
||L_N||_{1}\,-\,
||L_N||_{1,\Zd\setminus E}\,.
\end{equation}
%%||L_N||_{1}
%N
%\,-\,
%\big(||f_N||_{2^*,\Zd\setminus \tD}\big)^{2^*}\,
%\,\geq\,
%%||L_N||_{1}
%N\,-\,\Gamma_2\,,
%where the last inequality is a consequence from inequality~\eqref{outd},
%with
%$$\Gamma_2\,=\,
%c_d\big(
%\delta^{2^*} n^{d+2^*}
%\big)^{1-2/2^*}
%%2(c_{PS})^2
%\big(
%n^d+ {\kappa \,n^d\ln n}
%\big)\,.
%$$
We estimate
$||L_N||_{1,\Zd\setminus E}$ in the next lemma.
\begin{lemma}
\label{exte}
For $n$ large enough,
we have
$||L_N||_{1,\Zd\setminus E}\leq\Gamma_2$ where
\begin{equation*}
%\label{onta}
\Gamma_2
\,=\,c\,\delta^{4/d}\,
n^{d+2}
(c_d{\kappa}\ln n)^{2/2^*}\,.
\end{equation*}
\end{lemma}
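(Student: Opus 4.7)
The plan is to bound $\|L_N\|_{1, \Z^d \setminus E} = \sum_{x \in \Z^d \setminus E} f_N(x)^2$ by interpolating between the $\ell^2$ and $\ell^{2^*}$ norms on the relevant set, using H\"older's inequality.

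First, I would restrict the sum to $A := (\Z^d \setminus E) \cap \supp f_N$, which is finite since $\supp f_N = R_N$. On the event $\cA$ we have $|A| \le |R_N| \le c n^d$. Since $2/2^* = (d-2)/d$ and $2/d$ are H\"older-conjugate exponents (they sum to $1$), we can write
\begin{equation*}
\sum_{x \in \Z^d \setminus E} f_N(x)^2 \,=\, \sum_{x \in A} f_N(x)^2 \,\le\, \Big(\sum_{x \in A} f_N(x)^{2^*}\Big)^{2/2^*} |A|^{\,2/d}.
\end{equation*}

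Next, I would plug in the two available estimates. The factor $|A|^{2/d}$ is bounded by $(c n^d)^{2/d} = c^{2/d} n^2$. For the $2^*$-norm factor, I use inequality (\ref{outd}), namely
\begin{equation*}
\sum_{x \in \Z^d \setminus E} f_N(x)^{2^*} \,\le\, c_d \bigl(\delta^{2^*} n^{d+2^*}\bigr)^{1-2/2^*} \bigl(n^d + \kappa n^d \ln n\bigr),
\end{equation*}
and then raise this to the power $2/2^*$. The exponent bookkeeping is the routine part: since $1-2/2^* = 2/d$, we have $(\delta^{2^*} n^{d+2^*})^{(2/d)(2/2^*)} = \delta^{4/d} n^{d}$ (using $2^* (d-2)/d = 2$ and $d(d-2)/d = d-2$, so that $(2^*+d)(d-2)/d = d$), and the bracketed $n^d \kappa \ln n$ factor contributes $n^{d(d-2)/d} (\kappa \ln n)^{2/2^*} = n^{d-2} (\kappa \ln n)^{2/2^*}$.

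Multiplying everything together and absorbing numerical constants into a single constant $c$, we get
\begin{equation*}
\sum_{x \in \Z^d \setminus E} f_N(x)^2 \,\le\, c^{2/d} n^2 \cdot c_d^{2/2^*} \delta^{4/d} n^{d-2+2} (\kappa \ln n)^{2/2^*} \,\le\, c\, \delta^{4/d} n^{d+2} (c_d \kappa \ln n)^{2/2^*},
\end{equation*}
which is exactly $\Gamma_2$. The step requires no new probabilistic input beyond the already-established bounds $|R_N| \le c n^d$ and $\cE(f_N) \le \kappa n^d \ln n$ (which entered the derivation of (\ref{outd})); the only obstacle is the careful exponent arithmetic with $2^* = 2d/(d-2)$, which I have just verified.
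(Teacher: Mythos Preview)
Your approach is correct and is exactly the paper's: H\"older with exponents $2^*/2$ and $d/2$, the range bound $|\supp f_N|\le cn^d$, and inequality~\eqref{outd}. One slip in your exponent bookkeeping: the parenthetical computes $(2^*+d)(d-2)/d=d$, but the actual exponent on $n$ from the factor $(\delta^{2^*}n^{d+2^*})^{(2/d)(2/2^*)}$ is $(d+2^*)\cdot 2(d-2)/d^2=2$, not $d$ (you dropped the extra $2/d$). Your final display nonetheless arrives at the correct $n^{d+2}$, since $n^2\cdot n^2\cdot n^{d-2}=n^{d+2}$; just fix the intermediate justification.
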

\begin{proof}
We use H\"older's inequality with the exponents $2^*/2$ and $d/2$ to write
$$\displaylines{
||L_N||_{1,\Zd\setminus E}
\,\leq\,
\sum_{\xx\in
\Zd\setminus X}
\sum_{y\in B(\xx)}
\big(f_N(y)\big)^{2}\,\leq\,%\hfill\cr
\Big(\sum_{\xx\in
\Zd\setminus X}
\sum_{y\in B(\xx)}
\big(f_N(y)\big)^{2^*}\Big)^{2/2^*}
\big|
\supp f_N
\setminus
E
%\bigcup_{\xx\not\in X}B(\xx)
\big|^{1-2/2^*}\,.}
$$
Moreover, on the event~$\cA$, we have
$$\big|
\supp f_N
\setminus
E
\big|\,\leq\,
\big|
\supp f_N
\big|\,\leq\,cn^d\,.$$
Using also inequality~\eqref{outd},
%We apply Lemma~\ref{contout}
%to $f_N$. Recalling that
%$\big(||f_N||_{2}\big)^2=N=n^{d+2}
%%\quad \cE(f_N)\,\leq\, {\kappa \,n^d\ln n}
%$,
%we obtain
%$$
%\sum_{\xx\in
%\Zd\setminus X}
%\sum_{y\in B(\xx)}
%\big(f_N(y)\big)^{2^*}\,\leq\,
%c_d\big(
%\delta^{2^*} n^{d+2^*}
%\big)^{1-2/2^*}
%%2(c_{PS})^2
%\big(
%n^d+ {\kappa \,n^d\ln n}
%\big)\,.
%$$
%From the previous inequalities,
we conclude that
$||L_N||_{1,\Zd\setminus E}\leq\Gamma_2$.
% as required.
%\begin{equation*}
%\label{onta}
%||L_N||_{1,\Zd\setminus E}
%\,\leq\,
%c\,\delta^{4/d}\,
%n^{d+2}
%(c_d{\kappa}\ln n)^{2/2^*}\,.
%\end{equation*}
\end{proof}
%We set $\tD\,=\,D(\hX)$.
%The support of $g$ is included in $D(\hX)$, therefore
%%$$
%%||L_N-g^2||_1<\Gamma_1\quad\Rightarrow
%%||L_N||_{1,D(\hX)}\,\geq\,N-\Gamma_1$$
%%

\noindent
It follows from inequality~\eqref{prel} and Lemma~\ref{exte} that
\begin{equation*}
P\big(\cB)
%P\big(||L_N-g^2||_{1,D}<\Gamma_1\big)
\,=\,
P\big(
\cB,
||L_N||_{1,\tD}\,\geq\,N-\Gamma_2
\big)
\,=\,
\sum_{N-\Gamma_2\leq t\leq N}
P\big(
\cB,\,
||L_N||_{1,\tD}=t
\big)\,.
\end{equation*}
%{\bf A voir}
Now, if
$||L_N||_{1,\tD}=t$, then $\tau(D,t)<\infty$ and
$L_N(y)=L_t^{\tD}(y)$ for any $y\in \tD$, thus
$$ ||L_N-g^2||_{1,\tD}\,=\,
||L_t^{\tD}-g^2||_{1,\tD}\,,$$
and we obtain the bound
\begin{equation}
%P\big(||L_N-g^2||_{1,D}<\Gamma_1\big)\,\leq\,
P\big(\cB\big)\,\leq\,
%\sum_{t=N-\Gamma_1}^N
\sum_{N-\Gamma_2\leq t\leq N}
P\big(
f_N\in\cF,\,\cA,\,\XX(f_N,\delta)=X,
||L_t^{\tD}-g^2||_{1,\tD}<\Gamma_1
,\,\tau<\infty
\big)\,.
\label{intera}
\end{equation}
Let us fix $t$ such that
\begin{equation}
\label{contg}
N-\Gamma_2\leq t\leq N\,.
\end{equation}
The time has now come to make specific choices for the parameters
$\delta,M,\eta,\lambda$ introduced in the course of the proof (recall that $\lambda$ was introduced in~\eqref{RNtriv}).
We suppose that
\begin{equation}
\delta\,=\,\frac{1}{n^\alpha}\,,\quad
M\,=\,n^\beta\,,\quad
\eta\,=\,\frac{1}{n^\gamma}\,,\quad
\lambda\,=\,{n^\rho}\,,
\label{expos}
\end{equation}
where $\alpha,\,\beta,\,\gamma,\,\rho$ are positive exponents, which
satisfy furthermore
%1<\alpha<\frac{d}{2^*}=\frac{d}{2}-1\,,\quad
\begin{align}
\label{furco}
	&\beta<1\,,\quad \frac{2^*}{2}\alpha-\gamma<\beta\,,\quad
2^*\alpha<d\beta\,,
\cr
	&1+\beta<\rho\,,\quad
2-4\alpha/d<\rho\,,\quad
\rho<2\,.
%\label{condi}
%1+\max(\beta,1-4\alpha/d)<\rho<2\,.
\end{align}
These conditions imply that, as $N$ or $n$ goes to $\infty$,
%$\Gamma_1$ is negligible in front of $N$,
\begin{equation}
\label{dxr}
\Gamma_0\,\lsim\,n^{d/2+\beta}\,,\quad
\Gamma_1\,\lsim\,n^{d+1+\beta}\,,\quad
\Gamma_2\,\lsim\,n^{d+2-4\alpha/d}\,,
\end{equation}
where $\lsim$ means that the logarithms are equivalent.
In particular, we have
\begin{equation}
\label{der}
\frac{\Gamma_0}{\sqrt N}\to0\,,\qquad
\frac{\Gamma_1}{N}\to0\,,\qquad
\frac{\Gamma_2}{N}\to0\,,\qquad
\frac{n^{d+1}}{\Gamma_1}\to0\,,
\end{equation}
whence for $n$ large enough
\begin{equation}
\label{aer}
\Gamma_0<\sqrt{N}\,,\qquad
\Gamma_1<3\Gamma_0\sqrt{N}\,,\qquad
	n^{d+1}<\Gamma_1<n^{d+2}\,,\qquad
\Gamma_2<N/2\,.
\end{equation}
We apply Proposition~\ref{depe} with $r=\Gamma_1/t$,
and $k=\lfloor\Gamma_1\rfloor$: for any $x\in\bD$, we have
\begin{align*}
P\big(
&||L_t^{\tD}-g^2||_{1,\tD}<\Gamma_1
,\,\tau<\infty
\big)\,=\,
P\Big(
\Big|\Big|\frac{1}{t}L_t^{\tD}-\frac{1}{t}g^2\Big|\Big|_{1,\tD}<
\frac{\Gamma_1}{t}
,\,\tau<\infty
\Big)\cr
\,&\leq\,
\frac{\displaystyle 2\,
P_x\Big(
\Big|\Big|\frac{1}{t}L_t^D-
\frac{1}{t}g^2
\Big|\Big|_{1,D}\,\leq\,
\frac{\Gamma_1}{t}
+4\frac{\Gamma_1}{t-\Gamma_1}
,\,\tau<\infty
\Big)}
{\displaystyle P_x(S_{
\lfloor\Gamma_1\rfloor
}=0)+ P_x(S_{
\lfloor\Gamma_1\rfloor
-1}=0)}\,.
\end{align*}
%Now we have also $\bD\subset\La(3cn^{d})$.
Moreover, using~\eqref{contg} and~\eqref{der},
for $n$ large enough,
$$
%\frac{\Gamma_1}{t}
%+4\frac{n^{d+1}}{t-n^{d+1}}\,\leq\,
\frac{\Gamma_1}{t}
+4\frac{\Gamma_1}{t-\Gamma_1}\,\leq\,
\frac{5\Gamma_1}{t-\Gamma_1}
\,\leq\, \frac{7\Gamma_1}{t}
\,.$$
Since $|R_N|\leq cn^d$, then the range $R_N$ is included in the box $\La(2cn^d)$ and its trace on the renormalized lattice is included in
the box $\La(2cn^{d-1})$.
The set $\bD$ is obtained by enlarging slightly this trace and then coming back to the original lattice, hence it is included in the box
$\La(3cn^{d})$.
We bound the denominator with the help of
Lemma~\ref{afaire} (notice that $n^{d+1}<\Gamma_1<n^{d+2}$) and we take the infimum over $x\in\bD$:
\begin{align}
\label{tagd}
%P(\cB)\,\leq\,
P\big(
||L_t^{\tD}-g^2||_{1,\tD}<\Gamma_1
,\,\tau<\infty
\big)
\,\leq\,%\hfill\\
2 \,\exp\Big(c'\frac{n^{2d}}{\lfloor\Gamma_1\rfloor}\Big)
\inf_{x\in \bD}
P_x\Big(
\Big|\Big|\frac{1}{t}L_t^{\tD}-\frac{1}{t}g^2\Big|\Big|_{1,\tD}<
\frac{7\Gamma_1}{t}
,\,\tau<\infty
\Big)\,.
\end{align}
For $k\in\N$ and $r>0$, we define
the closed convex set
$$C(g,k,r)\,=\,\Big\{\,
h\in\ell^1(D):
\Big|\Big|h-\frac{1}{k}g^2\Big|\Big|_{1,\tD}\leq
r
%\frac{7\Gamma_1}{t}
\,\Big\}\,.$$
We apply the deviation inequality of Theorem~\ref{GD} to the set
$C(g,t, {7\Gamma_1}/{t})$:
\begin{equation}
\label{vagd}
\inf_{x\in \bD}
P_x\Big(
\Big|\Big|\frac{1}{t}L_t^{\tD}-\frac{1}{t}g^2\Big|\Big|_{1,\tD}\leq
\frac{7\Gamma_1}{t}
,\,\tau<\infty
\Big)\,\leq\,
\exp\Big(
-t\,\inf_{h\in
C(g,t, {7\Gamma_1}/{t})
}\,\frac{1}{2}\cE(\sqrt{h},D) \Big)
\,.
\end{equation}
The previous inequalities~\eqref{tagd} and~\eqref{vagd}
yield that
\begin{equation}
\label{agd}
%P\big(
%||L_t^{\tD}-g^2||_{1,\tD}<\Gamma_1
%\big)
%P(\cB) \,\leq\,
P\big(
||L_t^{\tD}-g^2||_{1,\tD}<\Gamma_1
,\,\tau<\infty
\big)
\,\leq\,
	%\hfill\\
2 \, \exp\Big(
c'\frac{n^{2d}}{\lfloor\Gamma_1\rfloor}
-t\,\inf_{h\in
C(g,t, {7\Gamma_1}/{t})
}\,\frac{1}{2}\cE(\sqrt{h},D) \Big)
\,.
\end{equation}
%Recall that the function $g$ is such that
%$||f_N-g||_2<\Gamma$, hence there exists $f\in\cF$ such that
%$||f-g||_2<\Gamma$ and
%$||f||_2=\sqrt{N}$.
We deal next with the infimum in the exponential.
Our first goal is to obtain a bound which depends on $N$ (and not on $t$).
Let $h\in
C(g,t, {7\Gamma_1}/{t})$.
We have
\begin{align*}
\Big|\Big|h-\frac{1}{N}g^2\Big|\Big|_{1,D}
\,&\leq\,
\Big|\Big|h-\frac{1}{t}g^2\Big|\Big|_{1,D}\,+\,
\Big|\Big| \frac{1}{t}g^2-
\frac{1}{N}g^2\Big|\Big|_{1,D}
\\
\,&\leq\,
\frac{7\Gamma_1}{t}+
\Big| \frac{1}{t}-
\frac{1}{N}\Big|||g^2||_{1,D}\,.
%\Big|\Big|h-\frac{1}{N}f^2\Big|\Big|_{1,D}
%\,&\leq\,
%\Big|\Big|h-\frac{1}{N}g^2\Big|\Big|_{1,D}\,+\,
%\Big|\Big| \frac{1}{N}g^2-
%\frac{1}{N}f^2\Big|\Big|_{1,D}\,.\\
%\,&\leq\,
%\frac{7\Gamma_1}{t}+
%\Big| \frac{1}{t}-
%\frac{1}{N}\Big|||g^2||_{1,D}
%+\frac{1}{N}
%||g^2-f^2||_{1,D}\,.
\end{align*}
Now, from~\eqref{contg}, we have
$$\Big| \frac{1}{t}-
\frac{1}{N}\Big|\,=\,
 \frac{N-t}{Nt}
\,\leq\,
\frac{\Gamma_2}{N(N-\Gamma_2)}\,.$$
%&||g||_2\,\leq\,
%||g-f_N||_2\,+\,
%||f_N||_2\,\leq\,
%\Gamma_0+ \sqrt{N}
%\,,\cr
%&||g^2-f^2||_{1,D}\,\leq\,
%||g-f||_2\,
%||g+f||_2\,\leq\,\Gamma(\Gamma+2\sqrt{N})
%\,.
%\end{align*}
Therefore, using the inequalities~\eqref{bfg}, \eqref{contg} and~\eqref{aer}, we get
\begin{align}
\label{nbv}
\Big|\Big|h-\frac{1}{N}g^2\Big|\Big|_{1,D}
&\,\leq\,
\frac{7\Gamma_1}{t}+
\frac{\Gamma_2}{N(N-\Gamma_2)}
(\Gamma_0+ \sqrt{N})^2\cr
%\frac{1}{N} \Gamma(\Gamma+2\sqrt{N})\cr
&\,\leq\,
\frac{7\Gamma_1}{N-\Gamma_2}+
\frac{4\Gamma_2}{N-\Gamma_2}
%\frac{3\Gamma}{\sqrt N}
\,\leq\,
\frac{14\Gamma_1}{N}+
\frac{8\Gamma_2}{N}
%\,,\cr
%\Big|\Big|h-\frac{1}{N}f^2\Big|\Big|_{1,D}
%&\,\leq\,
%\frac{66\Gamma}{\sqrt N}+
%\frac{1}{N} \Gamma(\Gamma+2\sqrt{N})
%\,\leq\,
%\frac{69\Gamma}{\sqrt N}
\,.
\end{align}
Setting
$$r(N)\,=\,
\frac{14\Gamma_1}{N}+
\frac{8\Gamma_2}{N}\,,$$
we conclude that
$$C\Big(g,t, \frac{7\Gamma_1}{t}\Big)
\subset
C\big(g,N, r(N)\big)
%\cap
%V(\cF,\Gamma,N)
\,.$$
%where
%\begin{equation*}
%V(\cF,\Gamma,N)\,=\,\Big\{\,
%h\in \ell^1(\Zd):
%\exists\,f\in\cF\quad
%\Big|\Big|h-\frac{1}{N}f^2\Big|\Big|_{1,D}
%\,\leq\,
%\frac{69\Gamma}{\sqrt N}\,\Big\}\,.
%%\label{prox}
%\end{equation*}
Recalling that $t\geq N-\Gamma_2$, inequality~\eqref{agd}
%and~\eqref{prox}
now implies
\begin{multline*}
P\big(
||L_t^{\tD}-g^2||_{1,\tD}<\Gamma_1,\,\tau<\infty
\big) \,\leq\,\cr
 2 \, \exp\Big(
c'\frac{n^{2d}}{\lfloor\Gamma_1\rfloor}
-(N-\Gamma_2)\,\inf\,\Big\{\,
\,\frac{1}{2}\cE(\sqrt{h},D)
:h\in C\big(g,N, r(N)\big)
\,\Big\} \Big) \,.
\end{multline*}
The good point is that this upper bound does not depend any more on~$t$.
Plugging this upper bound
in equations~\eqref{imc} and~\eqref{intera}, we obtain
\begin{multline}
\label{zbv}
%E\Big(e^{-|R_N|};f_N\in\cF,\,\cA,\,\XX(f_N,\delta)=X,
%\,||L_N-g^2||_{1,D}<\Gamma_1\Big)
E\big(e^{-|R_N|};\cB \big)
%E\big(e^{-|R_N|};
%||L_N-g^2||_{1,D}<\Gamma_1\big)
\,\leq\,
	2(\Gamma_2+1)
\exp\Big(-
\big|\,\big\{\,x\in D:g^2(x)\geq\lambda\,\big\}\big|
+\frac{2\Gamma_1}{\lambda}
\qquad
\cr
%\Big)\,
%\, \exp\Big(
+
c'\frac{n^{2d}}{\lfloor\Gamma_1\rfloor}
-(N-\Gamma_2)\,\inf\,\Big\{\,\frac{1}{2}\cE(\sqrt{h},D):
h\in C\big(g,N, r(N)\big)
%\cap V(\cF,\Gamma,N)
\,\Big\}\Big)
\,.\qquad
%\label{terb}
\end{multline}
Our next goal is to remove the dependence on~$g$ in the upper bound.
We wish to obtain an upper bound which depends only on the set~$\cF$.
To do so, we shall control the Dirichlet energy
$\cE(\sqrt{h},D)$ restricted to~$D$ with the help of the
Dirichlet energy
$\cE(\sqrt{h})$ in the whole space. Of course, this creates a correcting
factor, which we study in the next section.
\subsection{Truncation}
\label{trun}
We first define a truncation operator associated to the set $X$.
Let $\phi:\Rd\to [0,1]$ be a function such that

\noindent
$\bullet$ $\supp\phi\subset\La(7/4)$,

\noindent
$\bullet$ $\phi$ is piecewise affine on $\La(7/4)\setminus\La(5/4)$,

\noindent
$\bullet$ $\phi$ is equal to $1$ on $\La(5/4)$,

\noindent
%$\bullet$ the gradient of $\phi$ has norm less than $4$.
$\bullet$ the gradient of $\phi$ has Euclidean norm less than $4$
on $\La(7/4)\setminus\La(5/4)$.

\noindent
%Next, we will translate and rescale the argument of the function $\phi$.
%For $x\in\Zd$, let $\phi_x$ be the function defined by
%$$\forall y\in\Zd\qquad \phi_x(y)\,=\,\phi(x+y/n)\,.$$
To the set $X$, we associate the function $\phi_X$ defined by
$$\forall y\in\Zd\qquad \phi_X(y)\,=\,\max\,\big\{\,
\phi(y/n-x):
{x\in X}\,\big\}
\,.$$
%Let $D$ be the domain
%$$D\,=\,\bigcup_{\xx\in \hX}B(\xx)\,.$$
By construction, we have
\begin{equation}
\label{seein}
\supp\phi_X\,\subset\,
\bigcup_{x\in X}\Lambda(nx,2n)\,\subset\, D\,,
\end{equation}
where the domain $D$ was defined in~\eqref{bad}.
%the support of $\phi_X$ is included in $D$.
Let $f$ be any function in $\ell^1(\Zd)$.
We define the function $\phi_Xf$ by setting
$$%\forall f\in\cF\quad
\forall y\in\Zd
\qquad (\phi_Xf)(y)\,=\,\phi_X(y) f(y)\,.$$
If $f$ is a function in $\ell^1(D)$, we extend $f$ outside $D$ by setting
$f(x)=0$ for $x\in\Zd\setminus D$ and the previous definition still makes sense.
%\subsection{Control of the Dirichlet energy}
We recall that the set $E$ is defined in~\eqref{bod}.
%$$E\,=\,
%\bigcup_{\xx\in X}B(\xx)\,.$$
\begin{proposition}
\label{depa}
Let $f$ be any function in $\ell^1(D)$. We have the inequality
$$\cE(f,D)\,\geq\,\Bigg(
	\max\Big(\sqrt{\cE(\phi_Xf)}-
\frac{4}{n}
	||f||_{2,D\setminus E},0\Big)\Bigg)^2\,.$$
%\begin{align*}
%\cE(\phi_Xf)
%%\,\leq\,\cE(f,D)+\frac{128d}{n}
%%\big(||f||_{2,E}\big)^2
%\,&\leq\,\cE(f,D)+\frac{32d}{n^2}
%\big(||f||_{2,E}\big)^2
%+\frac{16d}{n}
%\,\cE(f,E)^{\frac{1}{2}}
%\, ||f||_{2,E}\cr
%\,&\leq\,\cE(f,D)+\frac{16d}{n}
%||f||_{2,E}\Big(
%\frac{2}{n}||f||_{2,E}
%+ \,\sqrt{\cE(f,E)}\Big)\,.
%\end{align*}
\end{proposition}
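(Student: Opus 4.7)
The plan is to derive the stated inequality from a discrete Leibniz rule combined with Minkowski's inequality in $\ell^2$ over the edge set of $\Zd$. For any ordered nearest-neighbor pair $(y,z)$, write
$$(\phi_X f)(y) - (\phi_X f)(z) \,=\, \phi_X(y)\big(f(y) - f(z)\big) \,+\, f(z)\big(\phi_X(y) - \phi_X(z)\big).$$
Taking squares and summing over all oriented edges $|y-z|=1$, then applying the triangle inequality in $\ell^2$ (i.e.\ Minkowski), yields
$$\sqrt{\cE(\phi_X f)} \,\leq\, \sqrt{\cE_1} \,+\, \sqrt{\cE_2},$$
where $\cE_1 = \frac{1}{2d}\sum_{|y-z|=1} \phi_X(y)^2 (f(y)-f(z))^2$ and $\cE_2 = \frac{1}{2d}\sum_{|y-z|=1} f(z)^2(\phi_X(y)-\phi_X(z))^2$.

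The next step is to bound the two pieces separately. For $\cE_1$, I would use the pointwise bound $\phi_X \leq 1$ together with the support control \eqref{seein}: if $\phi_X(y)>0$ then $y \in nx + \La(7n/4)$ for some $x \in X$; since $D$ contains $\bigcup_{|e|_\infty \leq 1}B(x+e)\supset nx + \La(7n/4 + 1)$, any neighbor $z$ of $y$ also lies in $D$. Thus every edge contributing to $\cE_1$ lives entirely inside $D$, giving $\cE_1 \leq \cE(f,D)$. For $\cE_2$, I would use two ingredients: first, the construction of $\phi$ guarantees $\phi \equiv 1$ on $\La(5/4)$, so $\phi_X \equiv 1$ on $E = \bigcup_{x \in X}B(x)$, which forces $\phi_X(y) - \phi_X(z) = 0$ whenever $z \in E$ (its neighbor $y$ is still inside $\La(nx,5n/4)$); second, the gradient bound on $\phi$ translates into $|\phi_X(y) - \phi_X(z)| \leq 4/n$ for every pair of lattice neighbors (via a max-of-translates argument on the maximizing $x$). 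Counting the $2d$ neighbors of each vertex $z \in D\setminus E$ then gives
$$\cE_2 \,\leq\, \frac{(4/n)^2}{2d}\cdot 2d\sum_{z \in D\setminus E}f(z)^2 \,=\, \frac{16}{n^2}\,\|f\|_{2,D\setminus E}^2,$$
so $\sqrt{\cE_2} \leq (4/n)\|f\|_{2,D\setminus E}$ (recalling $f$ is extended by $0$ outside $D$).

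Combining the two bounds gives
$$\sqrt{\cE(\phi_X f)} \,\leq\, \sqrt{\cE(f,D)} \,+\, \frac{4}{n}\,\|f\|_{2,D\setminus E},$$
which, after rearrangement and truncation at $0$ (to account for the case where the right-hand side would be negative, where the claim is trivial because $\cE(f,D)\geq 0$), squares to the desired inequality.

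The only delicate point, and the one I would check carefully, is the geometric bookkeeping: verifying both that $\supp \phi_X$ is far enough inside $D$ so that edges touching $\supp \phi_X$ never leave $D$ (ensuring $\cE_1 \leq \cE(f,D)$), and that the transition region where $\phi_X$ actually varies is disjoint from $E$ (ensuring $\cE_2$ only sees vertices of $D\setminus E$). Both reduce to elementary inclusions between the boxes $\La(n)$, $\La(5n/4)$, $\La(7n/4)$ and their $\hat X$-neighborhoods, valid for $n$ sufficiently large; the rest of the argument is the standard Leibniz/Minkowski splitting.
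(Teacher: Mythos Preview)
Your proposal is correct and follows essentially the same approach as the paper's proof. The only stylistic difference is that you invoke Minkowski's inequality on the discrete Leibniz decomposition to get $\sqrt{\cE(\phi_X f)} \le \sqrt{\cE_1}+\sqrt{\cE_2}$ directly, whereas the paper expands the square fully and controls the cross term via Cauchy--Schwarz; both routes yield the same bound $\sqrt{\cE(\phi_X f)} \le \sqrt{\cE(f,D)} + \tfrac{4}{n}\|f\|_{2,D\setminus E}$ and rely on the same geometric facts about $\supp\phi_X\subset D$ and $\phi_X\equiv 1$ on $E$.
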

\begin{proof}
Since the support of $\phi_Xf$ is included in~$D$ (see the inclusion~\eqref{seein}), then we have
\begin{align*}
\cE(\phi_Xf)\,&=\,
\cE(\phi_Xf,D)
\,=\,
\frac{1}{2d}
\!\!
\sum_{\tatop{y,z\in D}{|y-z|=1}}
\Big(\phi_X(y)f(y)-\phi_X(z)f(z)\Big)^2\cr
\,&=\,
\frac{1}{2d}
\!\!
\sum_{\tatop{y,z\in D}{|y-z|=1}}
\Big(\phi_X(y)\big(f(y)-f(z)\big)-\big(\phi_X(z)-\phi_X(y)\big)f(z)\Big)^2\cr
\,&=\,
\frac{1}{2d}
\!\!
\sum_{\tatop{y,z\in D}{|y-z|=1}}
	\Big(
\big(\phi_X(y)\big)^2\big(f(y)-f(z)\big)^2+
\big(\phi_X(z)-\phi_X(y)\big)^2(f(z))^2\cr
&\kern70pt -2
	\phi_X(y)\big(f(y)-f(z)\big)\big(\phi_X(z)-\phi_X(y)\big)f(z)\Big)\,.
\end{align*}
From the definition of $\phi_X$, it follows that if $|y-z|=1$, then
$|\phi_X(z)-\phi_X(y)|\leq 4/n$, and
$\phi_X(z)=\phi_X(y)$
unless both $y,z$ are in $D\setminus E$. Therefore we have the
following bound:
\begin{align*}
\cE(\phi_Xf)
\,&\leq\,\cE(f,D)+\Big(\frac{4}{n}\Big)^2
\kern -3pt
\sum_{z\in D\setminus E} f(z)^2
+\frac{4}{nd}\kern -3pt
\sum_{\tatop{y,z\in D\setminus E}{|y-z|=1}}
\kern -3pt
\big| f(y)-f(z)\big|\big|f(z)\big|\,.
\end{align*}
%Now
%$$\big| f(y)-f(z)\big|\big|f(z)\big|\,\leq\,\frac{3}{2}\Big(
%\big(f(y)\big)^2
%+\big(f(z)\big)^2\Big)\,.$$
%Reporting in the previous inequality
%and summing over $y,z$, we obtain the inequality stated in the proposition.
Using the Cauchy--Schwarz inequality, we get
$$\displaylines{
\sum_{\tatop{y,z\in D\setminus E}{|y-z|=1}}
\big| f(y)-f(z)\big|\big|f(z)\big|
\,\leq\,
%\begin{align*}
%\,&\leq\,\cE(f,D)+\frac{32d}{n}
%\sum_{z\in E} f(z)^2
%+\frac{8}{n}
\Big(\sum_{\tatop{y,z\in D\setminus E}{|y-z|=1}}
\big( f(y)-f(z)\big)^2
%\Big)^{\frac{1}{2}} \Big(
\sum_{\tatop{y,z\in D\setminus E}{|y-z|=1}}
\big( f(z)\big)^2\Big)^{\frac{1}{2}}\hfill\cr
\,\leq\,
\,
\sqrt{2d \cE(f,D\setminus E)}
\sqrt{2d}
\, ||f||_{2,D\setminus E}
\,=
\,{2d}
\,\sqrt{\cE(f,D)}
\, ||f||_{2,D\setminus E}
\,.
}$$
%\note{Put factor $2d$ inside square root above...}
Reporting in the previous inequality, we have
\begin{align*}
\cE(\phi_Xf)
\,&\leq\,\cE(f,D)+
\frac{16}{n^2}
\big(||f||_{2,D\setminus E}\big)^2
+\frac{8}{n}
\sqrt{\cE(f,D)}
||f||_{2,D\setminus E}\cr
\,&=\,
\Big(
\sqrt{\cE(f,D)}+
\frac{4}{n}
||f||_{2,D\setminus E}\Big)^2\,,
\end{align*}
from which we deduce easily the inequality stated in the proposition.
\end{proof}
\subsection{End of proof of Theorem \ref{upbo}}
\label{conc}
%We apply the ake now $X=\XX(f_N,\delta)$.
We come back to inequality~\eqref{zbv}. We wish to obtain an upper bound which depends
on the set~$\cF$ and not on $g$. We know that $f_N$ belongs to~$\cF$,
so we pick a function $h$ in
$C\big(g,N, r(N)\big)$ and we
try to control the distance between
$\frac{1}{N}L_N$ and $\phi_X^2h$. We write
%$||f_N-\phi_Xh||_2$ as follows:
\begin{align*}
\Big|\Big|\frac{1}{N}L_N-\phi_X^2h\Big|\Big|_{1}
%&\,\leq\,
%\Big|\Big|
%\frac{1}{N}L_N\Big|\Big|_{1,\Zd\setminus D}
%+
%\Big|\Big|\frac{1}{N}L_N-\phi_X h\Big|\Big|_{1,D}\cr
\,\leq\,
%\Big|\Big|
%\frac{1}{N}L_N\Big|\Big|_{1,\Zd\setminus D}
%+
\Big|\Big|\frac{1}{N}L_N-
\frac{1}{N}
\phi_X^2
L_N
\Big|\Big|_{1}
+
\Big|\Big|\frac{1}{N}
\phi_X^2
L_N-
\phi_X^2
h\Big|\Big|_{1}
\end{align*}
and we control separately each term.
For the first term, we use the fact that $\phi_X$ is equal to $1$ on $E$ and
lemma~\ref{exte} to get
\begin{equation}
\label{conta}
\Big|\Big|\frac{1}{N}L_N-
\frac{1}{N}
\phi_X^2
L_N
\Big|\Big|_{1}
\,\leq\,
||\frac{1}{N}L_N||_{1,\Zd\setminus E}
\,\leq\,
\frac{\Gamma_2}{N}\,.
\end{equation}
For the second term, we have, recalling that
$\supp\,\phi_X\subset D$ and the definition of $\cB$ in \eqref{E:cB}, and using~\eqref{nbv},
\begin{align}
\Big|\Big|\frac{1}{N}
\phi_X^2
L_N-
\phi_X^2
 h\Big|\Big|_{1}\,&\leq\,
\Big|\Big|\frac{1}{N}L_N- h\Big|\Big|_{1,D}\cr
\,&\leq\,
\Big|\Big|\frac{1}{N}L_N- \frac{1}{N}g^2\Big|\Big|_{1,D}+
\Big|\Big|\frac{1}{N}g^2- h\Big|\Big|_{1,D}\cr
\,&\leq\,
\frac{\Gamma_1}{{N}}\,+\,r(N)
\,.
\label{gui}
\end{align}
Inequalities~\eqref{conta} and~\eqref{gui} together yield
\begin{align}
\label{pouf}
\Big|\Big|\frac{1}{N}L_N-
\phi_X^2
h\Big|\Big|_{1}\,\leq\,
%c\,
%\delta^{4/d}\,
%n^{d+2}
%(c_d{\kappa}\ln n)^{2/2^*}\,+\,
\frac{\Gamma_2}{{N}}\,+\,
\frac{\Gamma_1}{{N}}\,+\,
r(N)
\,.
\end{align}
Furthermore, we have, by inequalities~\eqref{cfg} and~\eqref{pouf},
\begin{align}
\label{aie}
\Big|\Big|\frac{1}{N}g^2-
\phi_X^2
h\Big|\Big|_{1,D}&\,\leq\,
\Big|\Big|\frac{1}{N}g^2-\frac{1}{N}L_N\Big|\Big|_{1,D}\,+\,
\Big|\Big|\frac{1}{N}L_N-
\phi_X^2
h\Big|\Big|_{1,D}\cr
&\,\leq\,
\frac{\Gamma_1}{{N}}\,+\,
\frac{\Gamma_2}{{N}}\,+\,
\frac{\Gamma_1}{{N}}\,+\,
r(N)\,.
%c\,\delta^{4/d}\,
%(c_d{\kappa}\ln n)^{2/2^*}\,+\,
%2\frac{\Gamma_1}{{N}}\,+\,
%\frac{66\Gamma}{\sqrt{N}}\,.
\end{align}
Let us set
\begin{equation}
\label{ultg}
\Gamma_3\,=\,
\frac{\Gamma_2}{{N}}\,+\,
2\frac{\Gamma_1}{{N}}\,+\,
r(N)
\,\leq\,
\frac{16\Gamma_1}{{N}}\,+\,
\frac{9\Gamma_2}{{N}}
\,.
\end{equation}
We work next on the first term in the
exponential appearing in~\eqref{zbv}.
For any $f\in\ell^1(D)$, we have
$$\displaylines{
\Big|\,\Big\{\,x\in D:g^2(x)\geq\lambda\,\Big\}\,\Big|\,\geq\,
\Big|\,\Big\{\,x\in D:f(x)\geq\frac{2\lambda}{N},\,
\big|f(x)-\frac{g^2(x)}{N}\big|<\frac{\lambda}{N}\,\Big\}\,\Big|\cr
\,\geq\,
\Big|\,\Big\{\,x\in D:
f(x)\geq\frac{2\lambda}{N}
\,\Big\}\Big|-
\Big|\,\Big\{\,x\in D:
\big|f(x)-\frac{1}{N}g^2(x)\big|\geq\frac{\lambda}{N}
\,\Big\}\,\Big|\,.
}$$
Moreover, by Markov's inequality,
%Suppose now that $h \in C\big(g,N, {66\Gamma}/{\sqrt{N}}\big)$.
%Then
$$\Big|\,\Big\{\,x\in D:
\big|f(x)-\frac{1}{N}g^2(x)\big|\geq\frac{\lambda}{N}
\,\Big\}\,\Big|\,\leq\,
\frac{N}{\lambda}\Big|\Big|
f-\frac{1}{N}g^2
\Big|\Big|_{1,D}
\,,
$$
whence
$$
\Big|\,\Big\{\,x\in D:g^2(x)\geq\lambda\,\Big\}\,\Big|\,\geq\,
\Big|\,\Big\{\,x\in D:
f(x)\geq\frac{2\lambda}{N}
\,\Big\}\Big|\,-\,
\frac{N}{\lambda}\Big|\Big|
f-\frac{1}{N}g^2
\Big|\Big|_{1,D}\,.
$$
%Suppose now that $h \in C\big(g,N, {66\Gamma}/{\sqrt{N}}\big)$.
Let $h \in C(g,N, r(N))$ and
let us apply this inequality with $f=\phi_X^2h$.
Together with inequality~\eqref{aie}, we obtain
\begin{align*}
\Big|\,\Big\{\,x\in D:g^2(x)\geq\lambda\,\Big\}\,\Big|\,\geq\,
\Big|\,\Big\{\,x\in D:
(\phi_X^2h)(x)\geq\frac{2\lambda}{N}
\,\Big\}\Big|\,-\,
\frac{N}{\lambda}\Gamma_3\,.
\end{align*}
Taking the supremum over $h$
in $C(g,N, r(N))$,
we get
\begin{align*}%\label{dimc}
\Big|\,\Big\{\,x\in D:g^2(x)\geq\lambda\,\Big\}\,\Big|\,\geq\,
%\hfill\cr
\,\sup\,\Big\{\,
\Big|\,\Big\{\,x\in D:
(\phi_X^2h)(x)\geq\frac{2\lambda}{N}
\,\Big\}\Big|:
h\in C\big(g,N, r(N)\big)\,\Big\}
%-\frac{66\Gamma\sqrt{N}}{\lambda}\,.
-\frac{N}{\lambda}\Gamma_3\,.
\end{align*}
Plugging this into inequality~\eqref{zbv},
we arrive at
%$$\displaylines{%\label{dimc}
\begin{multline*}
E\big(e^{-|R_N|};\cB \big)
%E\big(e^{-|R_N|}; ||L_N-g^2||_{1,D}<\Gamma_1\big)
 \leq\,
	2(\Gamma_2+1)
\exp\Big(
%\big|\,\big\{\,x\in\Zd:g^2(x)\geq\lambda\,\big\}\big|
\frac{N}{\lambda}\Gamma_3
+\frac{2\Gamma_1}{\lambda}
%\Big)\,
%\, \exp\Big(
+
c'\frac{n^{2d}}{\lfloor\Gamma_1\rfloor}
\\
%\hfill\cr
 -
\sup\,\Big\{\,
\Big|\,\Big\{\,x\in D:
(\phi_X^2h)(x)\geq\frac{2\lambda}{N}
\,\Big\}\Big|
: h\in
C\big(g,N, r(N)\big)
%V(\cF,\Gamma,N)
\,\Big\}
\cr
	-(N-\Gamma_2)\,
\inf\,\Big\{\,
	\frac{1}
	{2}\cE(\sqrt{h},D)
%\cr
%\hfill
: h\in
C\big(g,N, r(N)\big)
%V(\cF,\Gamma,N)
\,\Big\}
	\Big)
	\cr
 \leq\,
	2(\Gamma_2+1)
\exp\Big(
%\big|\,\big\{\,x\in\Zd:g^2(x)\geq\lambda\,\big\}\big|
\frac{N}{\lambda}\Gamma_3
+\frac{2\Gamma_1}{\lambda}
%\Big)\,
%\, \exp\Big(
+
c'\frac{n^{2d}}{\lfloor\Gamma_1\rfloor}
\\
%\hfill\cr
 \quad -
\inf\,\Big\{\,
\Big|\,\Big\{\,x\in D:
(\phi_X^2h)(x)\geq\frac{2\lambda}{N}
\,\Big\}\Big|+
\frac{N-\Gamma_2}{2}\cE(\sqrt{h},D)
%\cr
%\hfill
: h\in
C\big(g,N, r(N)\big)
%V(\cF,\Gamma,N)
\,\Big\}\Big)
\,.
%}$$
\end{multline*}
%where the infimum is taken over
Let again $h\in
%$h\in
C\big(g,N, r(N)\big)$.
%%\cap V(\cF,\Gamma,N)
%$.
We apply Proposition~\ref{depa} to the function $f=\sqrt{h}$:
$$\cE(\sqrt{h},D)\,\geq\,
\Bigg(\max
\Big( \sqrt{\cE(\phi_X\sqrt{h})}-
\frac{4}{n}
||\sqrt{h}||_{2,D\setminus E},0\Big)\Bigg)^2\,.$$
Now, using inequalities~\eqref{conta}, \eqref{gui} and~\eqref{ultg}, we get
\begin{align}
\big(||\sqrt{h}||_{2,D\setminus E}\big)^2=
||{h}||_{1,D\setminus E} \leq
\Big|\Big|
h-\frac{1}{N}L_N
\Big|\Big|_{1,D\setminus E}
%+
%\Big|\Big|
%\frac{1}{N}g^2
%-\frac{1}{N}L_N
%\Big|\Big|_{1,E}
+
\Big|\Big|
\frac{1}{N}L_N
\Big|\Big|_{1,\Zd\setminus E}
%\,\cr
\,\leq\,\frac{\Gamma_1}{N}+r(N)+\frac{\Gamma_2}{N}
\,\leq\,\Gamma_3\,.%\qquad
%\frac{\Gamma_1}{{N}}\,+\,
%\frac{66\Gamma}{\sqrt{N}}\,.
\end{align}
We conclude that
\begin{multline}
E\big(e^{-|R_N|};
	{\mathcal B}
%||L_N-g^2||_{1,D}<\Gamma_1
	\big)\,\leq\,
	2(\Gamma_2+1)
\exp\Big(
\frac{N}{\lambda}\Gamma_3
+\frac{2\Gamma_1}{\lambda}
+ c'\frac{n^{2d}}{\lfloor\Gamma_1\rfloor}
\hfill\cr
-\inf\,\Big\{\,
\Big|\,\Big\{\,x\in D:
(\phi_X^2h)(x)\geq\frac{2\lambda}{N}
\,\Big\}\Big|+
\hfill\cr
\frac{N-\Gamma_2}{2}
\Big(\max\Big( \sqrt{\cE(\phi_X\sqrt{h})}-
\frac{4\sqrt{\Gamma_3}}{n},0\Big) \Big)^2 :
h\in
C\big(g,N,r(N) \big)
%V(\cF,\Gamma,N)
\,\Big\}\Big)
\,.
\label{interm}
\end{multline}
%For $n$ large enough,
%$$2\Gamma_1
%\exp\Big(
%\frac{N}{\lambda}\Gamma_3
%+\frac{2\Gamma_1}{\lambda}
%+ c'\frac{n^{2d}}{\Gamma_1}\Big)
%\,\leq\,\exp\Big(n^{d+1-\rho+\max(\beta,1-4\alpha/d)}\Big)\,.$$
Our next goal is to remove the term $2\lambda/N$ appearing in the infimum.
Let us fix again $h\in C\big(g,N,r(N) \big)$.
We define an auxiliary function $\wh$ by setting
$$\forall x \in\Zd\qquad\wh(x)\,=\,\Bigg(\max\Big(\phi_X\sqrt{h}(x)-
\sqrt{\frac{2\lambda}{N}},0\Big)\Bigg)^2\,.$$
Let also define
$$A\,=\,\Big\{\,x\in\Zd:
\phi_X\sqrt{h}(x)>
\sqrt{\frac{2\lambda}{N}}\,\Big\}\,=\,
\Big\{\,x\in\Zd:
\wh(x)>0
\,\Big\}
\,.$$
Obviously, we have $A\subset D$ (since $\supp \phi_X\subset D$) and
$$
%\Big|\big\{\,x\in\Zd:\wh(x)>0\,\big\}\Big|
\big|A\big|\,\leq\,
\Big|\big\{\,x\in D:\phi_X^2h(x)\geq
{\frac{2\lambda}{N}} \,\big\}\Big|\,.$$
We have then
\begin{align*}
\cE\big(\sqrt{\wh}\big) & =\,
\frac{1}{2d}
\sum_{\tatop{y,z\in \Zd}{|y-z|=1}}
\Big(\sqrt{{\wh}(y)}-\sqrt{{\wh}(z)}\Big)^2\hfill\\
&=\,
\frac{1}{2d}
\sum_{\tatop{y,z\in A}{|y-z|=1}}
\Big(\phi_X\sqrt{h}(y)-\phi_X\sqrt{h}(z)\Big)^2
\,+\,
\frac{1}{d}
\sum_{\tatop{y\in A,z\not\in A}{|y-z|=1}}
\Big(\phi_X\sqrt{h}(y)-
\sqrt{\frac{2\lambda}{N}}
\Big)^2\\
&\leq\,
\cE\big(\phi_X\sqrt{h}\big)\,.
\end{align*}
We conclude that
\begin{multline}
\Big|\,\Big\{\,x\in D:
(\phi_X^2h)(x)\geq\frac{2\lambda}{N}
\,\Big\}\Big|+
%\hfill\cr
\frac{N-\Gamma_2}{2}
\Big(\max
\big( \sqrt{\cE(\phi_X\sqrt{h})}-
\frac{4\sqrt{\Gamma_3}}{n},0 \big) \Big)^2
\cr
\,\geq\,
\Big|\big\{\,x\in\Zd:\wh(x)>0\,\big\}\Big|
+\frac{N-\Gamma_2}{2}
\Big(\max\big(
\sqrt{\cE\big(\sqrt{\widetilde h}\big)}-
\frac{4\sqrt{\Gamma_3}}{n},0\big) \Big)^2 \,.
\label{qfrt}
\end{multline}
We evaluate next the distance between $\wh$ and $L_N/N$.
To that end, we introduce the function
$$\forall x \in\Zd\qquad
\widetilde{\frac{1}{N}L_N}(x)\,=\,\Bigg(\max\Big(
{\frac{1}{\sqrt N}f_N}(x)-
\sqrt{\frac{2\lambda}{N}},0\Big)\Bigg)^2\,,$$
and we write
\begin{equation}
\Big|\Big|\wh-\frac{1}{N}L_N\Big|\Big|_{1}\,\leq\,
\Big|\Big|\wh-\widetilde{\frac{1}{N}L_N}\Big|\Big|_{1}\,+\,
\Big|\Big|
{\widetilde {\frac{1}{N}L_N}}-
\frac{1}{N}L_N\Big|\Big|_{1}\,.
\label{tgu}
\end{equation}
We shall control the first term with the help of the following lemma.
\begin{lemma}
\label{eaxe}
For any $s,t>0$, any $a>0$, we have
$$\Big|
\Big(\max\big( \sqrt{t}-a,0\big)\Big)^2
-
\Big(\max\big(\sqrt{s}-a,0\big)\Big)^2
\Big|\,\leq\,|t-s|\,.$$
\end{lemma}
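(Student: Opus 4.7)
The plan is to recognise that both sides of the claimed inequality are values of the single one-variable function $\phi:[0,\infty)\to[0,\infty)$ defined by $\phi(x)=(\max(\sqrt{x}-a,0))^2$, and to prove the cleaner statement that $\phi$ is $1$-Lipschitz, i.e.\ $|\phi(t)-\phi(s)|\leq|t-s|$ for all $s,t\geq 0$. The desired inequality is then just the special case $s,t>0$.

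To prove the Lipschitz bound, I would carry out the following short computation. On $[0,a^2]$ the function $\phi$ vanishes identically. On $(a^2,\infty)$ one has $\phi(x)=x-2a\sqrt{x}+a^2$, so $\phi'(x)=1-a/\sqrt{x}$, and since $\sqrt{x}>a$ there, this derivative lies in $(0,1)$. As $x\downarrow a^2$ one checks $\phi'(x)\to 0$, so $\phi$ is continuous on $[0,\infty)$ and continuously differentiable on $(0,\infty)\setminus\{a^2\}$ with $0\leq\phi'\leq 1$ throughout. The mean value theorem, applied on each of the two subintervals when $s$ and $t$ straddle $a^2$, immediately gives $|\phi(t)-\phi(s)|\leq|t-s|$.

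If an entirely elementary, derivative-free presentation is preferred, the plan would be to assume without loss of generality that $s\leq t$ and split into three cases. When $t\leq a^2$, both terms vanish and the inequality is trivial. When $s\geq a^2$, I would write
$$
\phi(t)-\phi(s)\,=\,(\sqrt{t}-a)^2-(\sqrt{s}-a)^2\,=\,(\sqrt{t}+\sqrt{s}-2a)(\sqrt{t}-\sqrt{s})\,=\,\frac{\sqrt{t}+\sqrt{s}-2a}{\sqrt{t}+\sqrt{s}}\,(t-s),
$$
and observe that the prefactor lies in $[0,1]$ because $\sqrt{s},\sqrt{t}\geq a\geq 0$. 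The remaining case $s<a^2<t$ reduces to the previous one by inserting $a^2$: $\phi(t)-\phi(s)=\phi(t)-\phi(a^2)\leq t-a^2\leq t-s$.

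No genuine obstacle is expected here; the statement is a short analytic lemma whose role further down (applied at \eqref{tgu}) will be to control $\|\widetilde h-\widetilde{\tfrac{1}{N}L_N}\|_{1}$ by the untruncated $\ell^1$ distance between the values $h$ and $\tfrac{1}{N}L_N$, thereby allowing the truncation by the threshold $\sqrt{2\lambda/N}$ to be absorbed in the energy estimate without loss.
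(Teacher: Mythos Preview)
Your proposal is correct. Your second, case-split argument is essentially the paper's proof: the paper also assumes $t\ge s$ and distinguishes the cases $\sqrt t\ge\sqrt s\ge a$ and $\sqrt t\ge a\ge\sqrt s$ (the case $a\ge\sqrt t$ being trivial), with only cosmetic algebraic differences---in the first case the paper expands $(\sqrt t-a)^2-(\sqrt s-a)^2=t-s-2a(\sqrt t-\sqrt s)\le t-s$ rather than factoring, and in the straddling case it bounds $(\sqrt t-a)^2\le(\sqrt t-\sqrt s)^2$ directly instead of reducing to the previous case via the intermediate point $a^2$. Your first approach, observing that $\phi(x)=(\max(\sqrt x-a,0))^2$ has $\phi'=0$ on $(0,a^2)$ and $\phi'(x)=1-a/\sqrt x\in(0,1)$ on $(a^2,\infty)$, is a slightly cleaner repackaging of the same computation and makes the $1$-Lipschitz nature of $\phi$ transparent without any case analysis.
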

\begin{proof}
If %$\sqrt{t}\geq a$ and
 $\sqrt{t}\geq\sqrt{s}\geq a$, then
$$\displaylines{
\Big(\max\big( \sqrt{t}-a,0\big)\Big)^2
-
\Big(\max\big(\sqrt{s}-a,0\big)\Big)^2
\,=\,
\big( \sqrt{t}-a\big)^2-
\big( \sqrt{s}-a\big)^2\cr
\,=\,t-s
-2a(\sqrt{t}-\sqrt{s})\,\leq\,t-s\,.}$$
If %$\sqrt{t}\geq a$ and
 $\sqrt{t}\geq a\geq\sqrt{s}$, then
$$\displaylines{
\Big(\max\big( \sqrt{t}-a,0\big)\Big)^2
-
\Big(\max\big(\sqrt{s}-a,0\big)\Big)^2
\,=\,
\big( \sqrt{t}-a\big)^2
\,\leq\,
\big( \sqrt{t}-\sqrt{s}\big)^2\cr
\,=\,\Big(\frac{t-s}{\sqrt{t}+\sqrt{s}}\Big)^2
\,\leq\,(t-s)
\frac{t+s}{\big(\sqrt{t}+\sqrt{s}\big)^2}\,\leq\,t-s\,.
}$$
In each case, we obtain the desired inequality.
\end{proof}

\noindent
Now, thanks to Lemma~\ref{eaxe} applied with
$$a=\sqrt{{2\lambda}/{N}}\,,\quad t=\phi_X^2h(x)\,,\quad
s=\frac{1}{N}L_N(x)\,,$$
we have
$$\Big|\Big|\wh-\widetilde{\frac{1}{N}L_N}\Big|\Big|_{1}\,\leq\,
\sum_{x\in\Zd}\Big|\phi_X^2h(x)-\frac{1}{N}L_N(x)\Big|
\,=\,
\,\Big|\Big|\phi_X^2h-{\frac{1}{N}L_N}\Big|\Big|_{1}
\,\leq\,\Gamma_3
\,,
$$
where the last inequality is a consequence of inequalities~\eqref{pouf}
and~\eqref{ultg}.
%and the last term is controlled by inequality
We deal finally with the second term of formula~\eqref{tgu}.
We set $a=\sqrt{{2\lambda}/{N}}$
and we define
$$B\,=\,\Big\{\,x\in \Zd:
\frac{1}{N}L_N(x)\geq a^2
\,\Big\}\,.$$
We have
\begin{align*}
\Big|\Big|
{\widetilde {\frac{1}{N}L_N}}-
\frac{1}{N}L_N
&\Big|\Big|_{1}
\!\!=
\sum_{x\in B}\Big|
\Big(\frac{1}{\sqrt N}f_N(x)-a\Big)^2
\!-\frac{1}{N}L_N(x)
\Big|
+
\!\!\!
\sum_{x\in \Zd\setminus B}
\!
\frac{1}{N}L_N(x)
\cr
&\,\leq\,
\sum_{x\in B}
\Big|a^2-2a\frac{1}{\sqrt N}f_N(x)\Big|
+a^2\big|\supp f_N\big|
\cr
&\,\leq\,
3a\sum_{x\in B}
\Big|\frac{1}{\sqrt N}f_N(x)\Big|
+a^2\big|\supp f_N\big|
\cr
&\,\leq\,
3a\,
\big|\supp f_N\big|^{1/2}
\Big|\Big|
\frac{1}{N}L_N\Big|\Big|_{1}^{1/2}
+a^2\big|\supp f_N\big|\cr
&\,\leq\,
3\sqrt{\frac{2\lambda}{N}cn^d}
+
{\frac{2\lambda}{N}cn^d}
\,\leq\,
4\sqrt{\frac{2c\lambda}{n^2}}
%+ {\frac{2c\lambda}{n^2}}
\,,
\end{align*}
where the last inequalities come from the fact that, on the event~$\cA$,
%\note{changed $\cE$ into $\cA$}
the range of the random walk has cardinality at most $cn^d$, and
that $\lambda/n^2=n^{\rho-2}$ goes to $0$ as $n$ goes to $\infty$.
Plugging the previous inequalities
in inequality~\eqref{tgu}, we obtain that
$$\Big|\Big|\wh-\frac{1}{N}L_N\Big|\Big|_{1}\,\leq\,
\Gamma_3+4\sqrt{\frac{2c\lambda}{n^2}}\,.
$$
%Yet, from inequality~\eqref{pouf},
We conclude that,
if $h\in C\big(g,N,r(N)\big)$ and
%\frac{1}{N}
$f_N\in\cF$, then
$\wh\in
V(\cF,\Gamma_4,N)$,
where
$$\Gamma_4\,=\,\Gamma_3+4\sqrt{\frac{2c\lambda}{n^2}}\,,$$
and
\begin{equation*}
V(\cF,\Gamma,N)\,=\,\Big\{\,
h\in \ell^1(\Zd):
\exists\,f\in\cF\quad
\Big|\Big|h-\frac{1}{N}f^2\Big|\Big|_{1}
\,\leq\,\Gamma
\,\Big\}\,.
\label{prox}
\end{equation*}
Together with
inequalities~\eqref{interm} and~\eqref{qfrt}, we obtain finally that
$$\displaylines{%\label{dimc}
%E\big(e^{-|R_N|};\cB
%%||L_N-g^2||_{1,D}<\Gamma_1
%\big)\,\leq\,
E\big(e^{-|R_N|};
\cB
%||L_N-g^2||_{1,D}<\Gamma_1
\big)\,\leq\,
2(\Gamma_2+1)
\exp\Big(
%\big|\,\big\{\,x\in\Zd:g^2(x)\geq\lambda\,\big\}\big|
\frac{N}{\lambda}\Gamma_3
+\frac{2\Gamma_1}{\lambda}
%\Big)\,
%\, \exp\Big(
+
c'\frac{n^{2d}}{\lfloor\Gamma_1\rfloor}
\hfill\cr
-\inf\,\Big\{\,
\Big|\big\{\,x\in\Zd:\wh(x)>0\,\big\}\Big|
+\frac{N-\Gamma_2}{2}
\Big(\max\Big(
\sqrt{\cE\big(\sqrt{\wh}\big)}-
\frac{4\sqrt{\Gamma_3}}{n},0\Big) \Big)^2
%\frac{8d\Gamma_3}{n},0\big) \Big)^2
%\cr
%\hfill
: \wh\in
V(\cF,\Gamma_4,N)
\,\Big\}\Big)
\,.
}$$
We have reached our goal, indeed, this last upper bound depends only
on the set~$\cF$ and it is uniform over $g\in
\cG(X,M,\eta)$.
We report this upper bound in the successive decompositions in sums
presented in formulas~\eqref{suma} and~\eqref{sumc}.
%\eqref{inisum},
These two formulas imply that
%\begin{multline}
%E\big(e^{-|R_N|};
%f_N\in\cF)\,\leq\,
%E\big(e^{-|R_N|};
%f_N\in\cF,\cA)\,+\,
%\cr
%\exp\big(-(c-2k(1,d))n^d\big)\,+\,
%\exp\Big(
%-\big(\frac{\kappa}{2}-4dc\big)n^d\ln n\Big)\,.
%\label{inisum}
%\end{multline}
$$
%\displaylines{
E\big(e^{-|R_N|};
f_N\in\cF,\cA)\,\leq\,%\hfill\cr
\big|{\cX(N,\delta)}\big|\times
\big|{\cG(X,M,\eta)}\big|\times
\sup\,
E\big(e^{-|R_N|};
\cB
%||L_N-g^2||_{1,D}<\Gamma_1
\big)
\,,
%}
$$
where the supremum is over
$X\in{\cX(N,\delta)}$ and
$g \in{\cG(X,M,\eta)}$.
%Reporting successively this bound in formulas~\eqref{inisum},~\eqref{suma},~\eqref{sumc},
Using the combinatorial bounds \eqref{cardx},\eqref{siq},\eqref{sib}, and
recalling the definition of~$C_0$ given in \eqref{saq}, we have, for some constant $c_d$
depending on the dimension only,
%$$
$$\big|
\cX(N,\delta)
\big|\,\leq\,
\exp\big(c_dC_0\big)\,,\qquad
|\cG|
\,\leq\,
\Big(\frac{\sqrt{N}}{\eta}\Big)^{\displaystyle
\frac{n^d}{M^d}\,c_dC_0}\,.$$
%\ln\big|{\cX(N,\delta)}\big|\,\lsim\,n^{2^*\alpha}\,,\qquad
%\ln\big|{\cG(X,M,\eta)}\big|\,\lsim\,n^{d(1-\beta)+2^*\alpha}\,,$$
%the last expansion being uniform with respect to $X\in \cX(N,\delta)$.
Together with
our
upper bound on the last expectation, this yields
\begin{multline}
%$$\displaylines{
\ln\,E\big(e^{-|R_N|};
f_N\in\cF,\cA)\,\leq\,
%\hfill
C_1-
\inf\,\Big\{\,
\Big|\big\{\,x\in\Zd:\wh(x)>0\,\big\}\Big|
+\qquad\qquad\cr
\qquad\qquad
\frac{N-\Gamma_2}{2}
\Big(\max\Big(
\sqrt{\cE\big(\sqrt{\widetilde h}\big)}-
	\frac{4\sqrt{\Gamma_3}}{n},0\Big) \Big)^2
: \wh\in
V(\cF,\Gamma_4,N)
\,\Big\}
%\inf\,\Big\{\,
%\Big|\,\Big\{\,x\in \Zd:
%h(x)\geq\frac{2\lambda}{N}
%\,\Big\}\Big|+
%\hfill\cr
%\hfill
%\frac{N-\Gamma_2}{4d}
%\Big( \sqrt{\cE(h)}-
%\frac{8d\Gamma_3}{n} \Big)^2 :
%h\in V(\cF,\Gamma_3,N)
%%V(\cF,\Gamma,N)
%\,\Big\}
\,,
\label{gou}
\end{multline}
where
$$C_1\,=\,
c_dC_0+
\frac{n^d}{M^d}\,c_dC_0\,\ln
\Big(\frac{\sqrt{N}}{\eta}\Big)+
\ln\big(2(\Gamma_2+1)\big)+
%\hfill\cr
\frac{N}{\lambda}\Gamma_3
+\frac{2\Gamma_1}{\lambda}
%\Big)\,
%\, \exp\Big(
+
c'\frac{n^{2d}}{\lfloor\Gamma_1\rfloor}
\,.$$
We compute the asymptotic expansion of these different terms in powers of $n$.
Recalling the definitions of $C_0$ and $\Gamma_3$
given in~\eqref{saq} and~\eqref{ultg}, we obtain, using~\eqref{expos},
\begin{equation}
\label{fxr}
C_0\,\lsim\,n^{\alpha 2^*}\,,\qquad
\Gamma_3\,\lsim\,
n^{\beta-1}\,+
\,n^{-4\alpha/d}\,.
\end{equation}
Together with the relations~\eqref{dxr}, this yields
\begin{equation}
\label{vxr}
C_1\,\lsim\,n^{d(1-\beta)+\alpha 2^*}
\,+\,n^{d+1+\beta-\rho }
\,+\,n^{d+2-4\alpha/d-\rho }
\,.
\end{equation}
In addition, we have
$$\Gamma_4\,\lsim\,
n^{\beta-1}\,+
\,n^{-4\alpha/d}\,+
\,n^{\rho/2-1}\,.$$
We make next a specific choice for the values of the exponents, which satisfies
the constraints stated before~\label{expo}. However we do not try to look for
the best possible exponents. So, we choose
\begin{equation}
\label{choi}
\alpha\,=\,\frac{d}{12}\,,\quad
\beta\,=\,\frac{3}{4}\,,\quad
\gamma\,=\,\frac{d^2}{12(d-2)}\,,\quad
\rho\,=\,\frac{15}{8}\,.
\end{equation}
%$$\alpha\,=\,\frac{d}{12}\,,\quad
%\beta\,=\,\frac{1}{4}\,,\quad
%\gamma\,=\,\frac{d^2}{12(d-2)}\,,\quad
%\rho\,=\,\frac{7}{4}\,.$$
With this choice, we obtain that
\begin{equation*}
C_1\,\lsim\,n^{d-1/8}\,,
\quad
	\Gamma_2\,\lsim\,n^{d+5/3}\,,
\quad
	\Gamma_3\,\lsim\,n^{-1/4}\,,
\quad
\Gamma_4\,\lsim\,n^{-1/16}\,.
\end{equation*}
%\begin{equation*}
%C_1\,\lsim\,n^{d-1/12}\,,
%\quad
%\Gamma_3\,\lsim\,n^{-1/3}\,,
%\quad
%\Gamma_4\,\lsim\,n^{-1/8}\,.
%\end{equation*}
The equivalents above are logarithmic, however, by perturbing slightly the values of
the exponents $\alpha,\beta,\gamma,\rho$, we can ensure that, for $n$ large enough,
\begin{equation*}
C_1\,\leq\,n^{d-1/8}\,,
\quad {\Gamma_2}\,\leq\,Nn^{-1/4}\,,
\quad 4\sqrt{\Gamma_3}\,\leq\,n^{-1/8}\,,
\quad
\Gamma_4\,\leq\,n^{-1/16}\,.
\end{equation*}
Plugging these inequalities in~\eqref{gou}, we obtain finally the statement of Theorem~\ref{upbo}.
%We fix $\rho$ and we adjust the parameters so that the three exponents
%coincide, i.e., we require that
%$${d(1-\beta)+\alpha 2^*}\,=\,
%{d+1-\beta-\rho }\,=\,
%{d+2-4\alpha/d-\rho }\,.$$
%The solutions to these equations are
%$$\alpha\,=\,\frac{d(d-2)}{2(3d^2-2d-4)}(\rho-1)\,,\qquad
%\beta\,=\,\frac{3d^2-8-2(d-2)\rho}{3d^2-2d-4}\,.
%$$
%{\bf En fait je ne suis pas sur du choix optimal des parametres:
%cela risque aussi de dependre de l'application de Faber-Krahn,
%alors on verra plus tard.}

\section{Continuous version of Theorem \ref{upbo}}
\label{S:continuous}

\subsection{Linear interpolation}
\label{lini}

We need a version of Theorem \ref{upbo} in which the discrete Dirichlet energy is replaced by the continuous one, so that we can use quantitative versions of the Faber--Krahn inequality. The first step is to transform the local time into a function defined continuously everywhere on $\R^d$. We will state such a version in Theorem \ref{cupbo}. We start preparing for this result.

Let $f$ be a function defined on $\Z^d$ with values in $\R^+$.
We define a function $\ft$ on $\R^d$ by interpolating linearly $f$
successively in the $d$
directions of the axis.
More precisely, we set $f_0=f$ and for all $(x_1,\dots,x_d)\in\Zd, $
\begin{multline*}
\forall \alpha\in [0,1]\quad
 f_1\big(\alpha x_1+(1-\alpha)(x_1+1),x_2,\dots,x_d\big)\,=
\alpha f_0(x_1,\dots,x_d)+
(1-\alpha) f_0(x_1+1,\dots,x_d)\,.
\end{multline*}
We define iteratively,
for $2\leq k\leq d$, for all $(x_1,\dots,x_{k-1})\in\R^{k-1}$, for all $(x_k,\dots,x_d)\in\Z^{d-k+1}$,
\begin{multline*}
\forall \alpha\in [0,1]\quad f_k\big(x_1,\dots,x_{k-1},\alpha x_k+(1-\alpha)(x_k+1),x_{k+1},
\dots,x_d\big)=\\
\alpha f_{k-1}(x_1,\dots,x_{k-1},\dots,x_d)+
(1-\alpha)
f_{k-1}(x_1,\dots,x_{k-1}+1,\dots,x_d)
\end{multline*}
and finally $\ft=f_d$.
Let $C$ be the unit cube
$$C\,=\,\big\{\,
x=(x_1,\dots,x_d)\in\Rd:0\leq x_i\leq 1,\,1\leq i\leq d\,\big\}\,.$$
Let us denote by $D^d$ the union of all
the lines parallel to the axis which go through the points of $\Zd$.
We make the following observations relating $f$ to $\widetilde f$.
\begin{lemma}
  \label{L:ftildef}
The function $\ft$ is continuous on $\Rd$ and
%oand it is in $L_2(\Rd)$, therefore
%The function $\hht$ is in addition
$C^\infty$ on $\Rd\setminus D^d$.
  We have
  \begin{equation}\label{ftildef1}
	  \int_{\R^d} \widetilde f(x)\, dx = \,\sum_{x\in\Zd} f(x)\,,
  \end{equation}
  \begin{equation}\label{ftildef3}
	  \int_{\R^d} \widetilde f(x)^2\, dx \leq \,\sum_{x\in\Zd} f(x)^2\,,
  \end{equation}
  \begin{equation}\label{ftildef2}
    \int_{\R^d} |\nabla \widetilde f(x)|^2 dx \le d \cE(f).
  \end{equation}
\end{lemma}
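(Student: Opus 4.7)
The strategy is to reduce everything to a single-cube calculation and then sum combinatorially. On the closed unit cube $K_z = z + [0,1]^d$ for $z \in \Zd$, a short induction on the coordinate index (matching the iterative definition of $\widetilde f$) shows that $\widetilde f$ is nothing but the multilinear interpolation of its $2^d$ corner values:
$$
\widetilde f(z+x) \,=\, \sum_{\varepsilon \in \{0,1\}^d} w_\varepsilon(x)\, f(z+\varepsilon), \qquad w_\varepsilon(x) \,=\, \prod_{i=1}^d x_i^{\varepsilon_i}(1-x_i)^{1-\varepsilon_i},
$$
for $x \in [0,1]^d$. The weights $w_\varepsilon$ are nonnegative and sum to $1$, and each one-dimensional factor integrates to $1/2$ over $[0,1]$. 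Continuity of $\widetilde f$ on $\R^d$ and smoothness on the interior of each cube (hence on $\R^d \setminus D^d$) are immediate consequences.

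For \eqref{ftildef1}, integrating the multilinear formula yields $\int_{K_z}\widetilde f = 2^{-d}\sum_\varepsilon f(z+\varepsilon)$; summing over $z \in \Zd$ and observing that each lattice point belongs to exactly $2^d$ adjacent cubes cancels this factor and gives $\int \widetilde f = \sum_y f(y)$. For \eqref{ftildef3}, Jensen's inequality applied to the convex combination defining $\widetilde f$ gives $\widetilde f(z+x)^2 \le \sum_\varepsilon w_\varepsilon(x)\, f(z+\varepsilon)^2$; integrating and summing as above then produces the inequality.

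For \eqref{ftildef2}, differentiating the multilinear formula in direction $i$ yields
$$
\partial_i \widetilde f(z+x) \,=\, \sum_{\varepsilon' \in \{0,1\}^{d-1}} w'_{\varepsilon'}(x)\,\Delta_i f(z,\varepsilon'),
$$
where $w'_{\varepsilon'}(x) = \prod_{j \ne i} x_j^{\varepsilon'_j}(1-x_j)^{1-\varepsilon'_j}$ and $\Delta_i f(z,\varepsilon')$ denotes the discrete gradient of $f$ across the edge in direction $e_i$ inside $K_z$ indexed by $\varepsilon'$. These new weights again sum to $1$ and integrate to $2^{-(d-1)}$ on $[0,1]^d$. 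Jensen gives $(\partial_i \widetilde f)^2 \le \sum_{\varepsilon'} w'_{\varepsilon'}\,\Delta_i f^2$; integrating over $K_z$, summing over $z \in \Zd$, and using that each edge of direction $e_i$ is shared by exactly $2^{d-1}$ cubes yields
$$
\int_{\R^d} |\partial_i \widetilde f|^2 \,\le\, \sum_{y \in \Zd}\bigl(f(y+e_i)-f(y)\bigr)^2.
$$
Summing over $i = 1,\dots,d$ and recognizing the right-hand side as $d\,\cE(f)$ (by the very definition of $\cE$) finishes the proof.

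The only point requiring care is the bookkeeping of combinatorial multiplicities: one must check that the single-cube integration factor $2^{-d}$ (resp. $2^{-(d-1)}$) is precisely cancelled by the multiplicity $2^d$ (resp. $2^{d-1}$) with which each lattice vertex (resp. edge in a fixed direction) appears in the adjacent cubes. No substantive analytic obstacle arises; the three bounds are essentially convexity statements on the unit cube plus correct combinatorial accounting.
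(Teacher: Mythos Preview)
Your proof is correct and follows essentially the same approach as the paper: both derive the explicit multilinear formula on each unit cube, integrate to get the factor $2^{-d}$ (resp.\ $2^{-(d-1)}$), and then cancel against the vertex (resp.\ edge) multiplicity when summing over $\Zd$. The only stylistic difference is in the gradient bound \eqref{ftildef2}: the paper expands $\int_C(\partial_i\widetilde f)^2$ as a bilinear form via the identity $\int_0^1\big((1-\varepsilon)(1-x)+\varepsilon x\big)\big((1-\varepsilon')(1-x)+\varepsilon' x\big)\,dx=\tfrac{1}{6}(1+\delta(\varepsilon,\varepsilon'))$ and then bounds by symmetry, whereas you apply Jensen directly to the convex combination $\partial_i\widetilde f=\sum_{\varepsilon'}w'_{\varepsilon'}\Delta_i f$; both routes arrive at the identical intermediate inequality $\int_{K_z}(\partial_i\widetilde f)^2\le 2^{-(d-1)}\sum_{\varepsilon'}(\Delta_i f)^2$, and your argument is marginally cleaner.
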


\begin{proof}
With the help of a standard induction,
we get the following formula for $\ft$:
$$\displaylines{
\forall (y_1,\dots,y_{d})\in \Zd\quad
\forall (x_1,\dots,x_{d})\in
(y_1,\dots,y_{d})+C\hfill\cr
\qquad \ft(x_1,\dots,x_d)\,=\,\hfill\cr
\sum_{\varepsilon_1,\dots,\varepsilon_{d} \in\{0,1\}}
\kern-7pt
f\big(y_1+\varepsilon_1,\dots,y_d+\varepsilon_d\big)
\prod_{1\leq k\leq d}\Big(
(1-\varepsilon_k)\big(1-(x_k-y_k)\big)
+\varepsilon_k(x_k-y_k)\Big)
\,.
}$$
Let us
compute the integral of $\ft$ over $\R^d$. We have
$$\int_{C}\ft(x)\,dx\,=\,
\int_{0\leq x_1\leq 1}\Bigg(\cdots
\Bigg(\int_{0\leq x_d\leq 1}
\ft(x)\,dx_d\Bigg)\cdots\Bigg) dx_1\,.$$
Let us fix
$(x_1,\dots,x_{d-1})\in [0,1]^{d-1}$ and let us compute
\begin{align*}
\int_{0\leq x_d\leq 1}
\ft (x_1,\dots,x_{d})
\,dx_d&\,=
\int_{0\leq x_d\leq 1}
\Big((1-x_d)f_{d-1} (x_1,\dots,x_{d-1},0)
+x_df_{d-1} (x_1,\dots,x_{d-1},1)
\Big)
\,dx_d\\
& \,=\,
\frac{1}{2}\Big(
f_{d-1} (x_1,\dots,x_{d-1},0)
+f_{d-1} (x_1,\dots,x_{d-1},1)
\Big)\,.
\end{align*}
Iterating this computation, we obtain
$$\int_{C}\ft(x)\,dx\,=\,
\frac{1}{2^d}\sum_{x_1,\dots,x_d\in\{0,1\}}
f (x_1,\dots,x_{d})\,.$$
Each point of $\Zd$ belongs to $2^d$ integer translates of $C$, therefore
$$\int_{\Rd}\ft(x)\,dx\,=\,\sum_{y\in\Zd}
\int_{y+C}\ft(x)\,dx\,=\,
\sum_{x\in\Zd}f(x)\,$$
which proves \eqref{ftildef1}.
Inequality~\eqref{ftildef3} is proved in the same way, by using $d$ times the convexity of the
function $t\to t^2$ in the linear interpolations.
Our next goal is to compute the integral of $(\nabla\ft)^2$ over $\R^d$.
Let us first compute its partial derivatives. For simplicity, we deal with
the derivative with respect to the last variable $x_d$, and we consider only
the points $x$ in the unit cube $C$:
$$\displaylines{
\frac{\partial\ft}{\partial x_d}(x)\,=\,
\kern-7pt
\sum_{\varepsilon_1,\dots,\varepsilon_{d-1} \in\{0,1\}}
\kern-7pt
D_df\big(\varepsilon_1,\dots,\varepsilon_{d-1}\big)
\prod_{k=1}^{d-1}\Big(
(1-\varepsilon_k)\big(1-x_k\big)
+\varepsilon_kx_k\Big)
\,,
}$$
where we define, for
${\varepsilon_1,\dots,\varepsilon_{d-1} \in\{0,1\}}$,
$$D_df\big(\varepsilon_1,\dots,\varepsilon_{d-1}\big)\,=\,
f\big(\varepsilon_1,\dots,\varepsilon_{d-1},1\big)\,-\,
f\big(\varepsilon_1,\dots,\varepsilon_{d-1},0\big)\,.$$
We apply Fubini's theorem to write
\begin{multline*}
\int_{C}
\Big(\frac{\partial\ft}{\partial x_d}(x)\Big)^2\,dx\,=\,
%\int_{C}
\kern-7pt
\sum_{\tatop{\varepsilon_1,\dots,\varepsilon_{d-1} \in\{0,1\}}
{\varepsilon'_1,\dots,\varepsilon'_{d-1} \in\{0,1\}}}
%\sum_{\varepsilon_1,\dots,\varepsilon_{d} \in\{0,1\}}
D_df\big(\varepsilon_1,\dots,\varepsilon_{d-1}\big)
D_df\big(\varepsilon'_1,\dots,\varepsilon'_{d-1}\big)\cr
\prod_{k=1}^{d-1}
\Bigg(\int_{0}^1
\Big( (1-\varepsilon_k)\big(1-x_k\big) +\varepsilon_kx_k\Big)
\Big( (1-\varepsilon'_k)\big(1-x_k\big) +\varepsilon'_kx_k\Big)\,dx_k\Bigg)
%\cr \,\leq\,
\,.
\end{multline*}
%$$I( \varepsilon,\varepsilon')\,=\,
%\int_{0}^1
%\Big( (1-\varepsilon)\big(1-x\big) +\varepsilon x\Big)
%\Big( (1-\varepsilon')\big(1-x\big) +\varepsilon'x\Big)\,dx\,.$$
We compute the value of the integrals.
For $\varepsilon,\varepsilon'\in\{\,0,1\,\}$, we have
$$\int_{0}^1
\Big( (1-\varepsilon)\big(1-x\big) +\varepsilon x\Big)
\Big( (1-\varepsilon')\big(1-x\big) +\varepsilon'x\Big)\,dx\,=\,
\frac{1}{6}\big(1+\delta(
\varepsilon,\varepsilon')\big)\,,$$
%\begin{cases}
%\quad\displaystyle\frac{1}{3} & \text{ if }\varepsilon=\varepsilon'\cr
%\cr
%\quad\displaystyle\frac{1}{6} & \text{ if }\varepsilon\neq\varepsilon'
%\end{cases}\,.
where $\delta$ is the Kronecker symbol.
Next, we have the bound
\begin{align*}
\int_{C}
\Big(\frac{\partial\ft}{\partial x_d}(x)\Big)^2\,dx\,& \leq
%\int_{C}
\sum_{\tatop{\varepsilon_1,\dots,\varepsilon_{d-1} \in\{0,1\}}
{\varepsilon'_1,\dots,\varepsilon'_{d-1} \in\{0,1\}}}
%\kern-7pt
%\sum_{\varepsilon_1,\dots,\varepsilon_{d} \in\{0,1\}}
\Big(D_df\big(\varepsilon_1,\dots,\varepsilon_{d-1}\big)\Big)^2
\prod_{k=1}^{d-1}
\frac{1}{6}\big(1+\delta(
 \varepsilon_k,\varepsilon'_k)
\big)\\
%\cr \,\leq\,
%\cr
& =
\frac{1}{2^{d-1}}
%\kern-7pt
\sum_{{\varepsilon_1,\dots,\varepsilon_{d-1} \in\{0,1\}}}
%\sum_{\varepsilon_1,\dots,\varepsilon_{d} \in\{0,1\}}
\Big(D_df\big(\varepsilon_1,\dots,\varepsilon_{d-1}\big)\Big)^2
%\cr \,\leq\,
\,.
\end{align*}
Next, we sum the previous inequality over all integer translates of the unit cube~$C$.
We obtain
\begin{align*}
& \int_{\Rd}
\Big(\frac{\partial\ft}{\partial x_d}(x)\Big)^2\,dx\,
 \,=\,\sum_{y\in\Zd}
\int_{y+C}
\Big(\frac{\partial\ft}{\partial x_d}(x)\Big)^2\,dx\,
 \\
 &  \leq\,
%\smash{
\sum_{y\in\Zd}
\frac{1}{2^{d-1}}
%}
%\kern-7pt
%\smash{
\sum_{{\varepsilon_1,\dots,\varepsilon_{d-1} \in\{0,1\}}}
%}
%\sum_{\varepsilon_1,\dots,\varepsilon_{d} \in\{0,1\}}
%\kern-7pt
\Big(f\big(y_1+\varepsilon_1,\dots,y_{d-1}+\varepsilon_{d-1},y_d+1\big)
-f\big(y_1+\varepsilon_1,\dots,y_{d-1}+\varepsilon_{d-1},y_d\big)
\Big)^2\\
& =\,
\sum_{y\in\Zd}
\Big(f\big(y_1,\dots,y_{d-1},y_d+1\big)
-f\big(y_1,\dots,y_{d-1},y_d\big)
\Big)^2
\,.
\end{align*}
Summing finally the integrals associated to each partial derivatives, we conclude that
$$%\frac{1}{2}
\int_{\Rd}\big|\nabla \ft\big|^2\,dx\,\leq\,d\,\cE(f)\,.$$
Notice that, in the definition~\eqref{defE} of $\cE$, each edge of the lattice appears twice in the summation.
This proves \eqref{ftildef2}, which finishes the proof of the lemma.
\end{proof}
%{\bf Question: est-ce bien correct d'avoir $2d$ dans cette inegalite?
%Dans ce cas, doit-on repercuter le $2d$ dans l'enonce de la section suivante?}

\subsection{Rescaling}
We denote by $\Znd$ the lattice $\Zd$ rescaled by a factor $n$:
$\Znd\,=\,\frac{1}{n}\Z^d\,.$
We shall simultaneously rescale the space by a factor $n$ and the values of $L_N$
by a factor $n^d/N = 1/ n^2$.
Starting with the local time $L_N$, which is a function defined on $\Zd$,
we define the function $\ell_N$ on $\Rd$ by setting
\begin{equation}\label{D:rescaledLN}
\forall x\in\Rd\qquad \ell_N(x)\,=\,
\frac{n^d}{N}{L_N\big(\lfloor nx\rfloor\big)}\,.
\end{equation}
For $g$ a continuous function from $\Rd$ to $\R$, we define its support as
	$$\supp g\,=\,
\Big\{\,x\in \Rd:
g(x)\neq 0
%\geq\frac{n^\rho}{N}
\,\Big\}\,,$$
and we denote by $|\supp g|$ its Lebesgue measure.
%Let $\kappa\geq 1$.

Here is a continuous analogue of Theorem \ref{upbo}.
\begin{theorem}
\label{cupbo}
%There exist $a,b,c$ positive real numbers such that,
Let us denote by $D^d$ the union of all
the lines parallel to the axis which go through the points of $\Zd$.
Let $\cL$ be a collection of functions from~$\Rd$ to $\R^+$ such that
$\big\| \ell\big\|_2=1$ for $\ell\in\cL$.
For any $\kappa\geq 1$,
we have, for $n$ large enough,
\begin{multline*}
%\label{fnaly}
E\big(e^{-|R_N|};
\ell_N\in\cL)\,\leq\,
%\hfill
3\exp(-\kappa n^d)+
%\hfill\cr
\exp\Bigg(2n^{d-1/8}-
\hfill
\cr
\qquad
n^d\,
	\inf\,\Big\{\,
	\big|\,\supp\,g\,\big|
	%\,-\,
	%{2^{7d}}
	%{3\kappa}{n^{d-1}}
%\geq\frac{n^\rho}{N}
+%\hfill
%\cr\hfill
\frac{1}
{2d}
(1-n^{-1/4})
\bigg(\max\Big(
\sqrt{
	\int_{\Rd}\big|\nabla g\big|^2\,dx
}-
	\frac{\sqrt{d}}{n^{1/8}} ,0\Big) \bigg)^2
%\Big( \sqrt{\cE(h)}-
%\frac{1}{n^{1+c}} \Big)^2
\hfill\cr
\hfill
:	g\text{ is continuous } \Rd\to\R^+
	\text{ and }C^\infty\text{ on }\Rd\setminus \frac{1}{n}D^d,\,
\hfill\cr
%:g\in L^2(\Rd),\,\quad g\geq 0,\,\quad
%1-
	%\frac{2}{\sqrt{n}}
%	\big((2^d+1)2\kappa\big)^{1/2}\,-\,
	 %8d^22^{2d} \frac{\sqrt{3\kappa}}{n} -
%	 \frac{3}{n^{1/16}}\,\leq\,
%\big\| g\big\|_2\,\leq\,1\,,
	%\quad \big|\,\supp\hht_n\,\big|\,\leq\, (2^d+1)2\kappa,\,
%\quad
%\cr\hfill
\exists\,\ell\in\cL\quad
	\Big\|g^2- \ell\Big\|_{L^1(\Rd)}
\,\leq\,
	%8d^22^{2d} \frac{\sqrt{3\kappa}}{n} +
	\frac{3}{n^{1/16}}
	%\,+\,
	%\frac{2}{\sqrt{n}}
	%\big((2^d+1)2\kappa\big)^{1/2}
\,\Big\}\Bigg)\,.
\end{multline*}
\end{theorem}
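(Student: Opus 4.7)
The plan is to deduce Theorem~\ref{cupbo} from Theorem~\ref{upbo} via the rescaling $x\mapsto nx$ together with the linear interpolation operator $\widetilde{\cdot}$ of Section~\ref{lini}. Given $\cL$, I introduce the discrete collection
\begin{equation*}
\cF\,=\,\big\{\,f:\Z^d\to\R^+\,:\,\widehat{\ell}_f\in\cL\,\big\},\qquad \widehat{\ell}_f(x)\,:=\,\frac{n^d}{N}f(\lfloor nx\rfloor)^2,
\end{equation*}
so that $\ell_N\in\cL$ is equivalent to $f_N\in\cF$. Applying Theorem~\ref{upbo} to $\cF$ (with $\kappa$ slightly enlarged to absorb the prefactor $3$ in front of $\exp(-\kappa n^d)$) bounds $E(e^{-|R_N|};\ell_N\in\cL)$ by an expression involving an infimum over $h\in\ell^1(\Z^d)$ with $\|h-f^2/N\|_1\leq n^{-1/16}$ for some $f\in\cF$.

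For each such $h$, I build a competitor $g$ for the continuous infimum by setting
\begin{equation*}
g(x)\,:=\,n^{d/2}\,\widetilde{\sqrt h}(nx),\qquad x\in\R^d.
\end{equation*}
By Lemma~\ref{L:ftildef}, $g$ is continuous on $\R^d$ and $C^\infty$ off $\frac{1}{n}D^d$. A change of variable $y=nx$ combined with the gradient estimate \eqref{ftildef2} yields
\begin{equation*}
\int_{\R^d}|\nabla g(x)|^2\,dx\,=\,n^2\int_{\R^d}|\nabla\widetilde{\sqrt h}(y)|^2\,dy\,\leq\,n^2 d\,\cE(\sqrt h).
\end{equation*}
Using $N=n^{d+2}$, this converts the discrete Dirichlet term $(N/2)(\sqrt{\cE(\sqrt h)}-n^{-9/8})^2$ into exactly $(n^d/(2d))(\sqrt{\int|\nabla g|^2}-\sqrt{d}\,n^{-1/8})^2$, matching the continuous statement (including the behaviour of the $\max$ with $0$).

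The $L^1$ closeness $\|g^2-\widehat{\ell}_f\|_{L^1(\R^d)}\leq 3n^{-1/16}$ follows by a change of variable $y=nx$ and the triangle inequality, reducing to
\begin{equation*}
\int_{\R^d}\big|\widetilde{\sqrt h}(y)^2-h(\lfloor y\rfloor)\big|\,dy\,+\,\|h-f^2/N\|_1.
\end{equation*}
The second summand is at most $n^{-1/16}$ by hypothesis; the first is bounded cube-by-cube using a discrete Poincar\'e--Wirtinger estimate and the a priori control on $\cE(\sqrt h)$ inherited from the event $\cA$ used in the proof of Theorem~\ref{upbo}. The multiplicative slack $3$ in the statement absorbs this interpolation error.

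The most delicate step, and the main obstacle, is the support comparison: since linear interpolation can a priori inflate the support by up to a factor of $2^d$, one only has the naive bound $n^d|\supp g|=|\supp\widetilde{\sqrt h}|\leq 2^d|\{h>0\}|$. To recover the desired $n^d|\supp g|\leq|\{h>0\}|+n^{d-1/8}$ I exploit that, on $\cA$, the support of $f=f_N$ is the connected range $R_N$, whose Minkowski neighbourhood $R_N+[-1,1]^d$ exceeds $R_N$ only by a boundary layer of size $O(n^{d-1})\ll n^{d-1/8}$. One first restricts the discrete infimum to those $h$ whose positivity set has boundary of size $o(n^{d-1/8})$ (the other $h$ contribute a negligible amount thanks to the Dirichlet-energy bound), and if necessary truncates $g$ near $\partial\{h>0\}$ at a Dirichlet-energy cost absorbed into the slack $\sqrt{d}\,n^{-1/8}$ of the $\max$. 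Plugging the resulting estimates into Theorem~\ref{upbo} and accommodating all errors in the slack terms $2n^{d-1/8}$ and $3\exp(-\kappa n^d)$ completes the proof.
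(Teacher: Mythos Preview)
Your overall framework---translate $\ell_N\in\cL$ into $f_N\in\cF$, apply Theorem~\ref{upbo}, then pass from the discrete competitor $h$ to a continuous $g$ via the linear interpolation of Section~\ref{lini} and the rescaling $x\mapsto nx$---is correct and matches the paper. The Dirichlet energy comparison and the $L^1$-closeness step are essentially right in spirit (though the a priori bounds on $\cE(\sqrt h)$ and $\|h\|_1$ do not come from $\cA$; they come from restricting the infimum in Theorem~\ref{upbo} to those $h$ for which the infimand is $\le 2\kappa n^d$, the remaining $h$ being absorbed into the extra $2\exp(-\kappa n^d)$).

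The genuine gap is the support comparison. Your claim that ``$R_N+[-1,1]^d$ exceeds $R_N$ only by a boundary layer of size $O(n^{d-1})$'' is false: the range of a random walk of length $N$ in $\Z^d$ can have boundary comparable to $|R_N|$ itself (there is no isoperimetric control here). Worse, $h$ is only $\ell^1$-close to $f_N^2/N$, so $\{h>0\}$ need not resemble $R_N$ at all. The paper handles this differently and without any appeal to the geometry of $R_N$: having first restricted to $h$ with $|\supp h|\le 2\kappa n^d$ and $\cE(h)\le 3\kappa/n^2$ (after the substitution $h\to h^2$), it compares $|\supp h|$ not with $|\supp\widetilde h|$ but with the level set $|\{\widetilde h>\lambda\}|$. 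A unit cube with one vertex outside $\supp h$ and another vertex where $h\ge\lambda$ contributes at least $c_d\lambda^2$ to $\cE(h)$, so the number of such cubes is $O(n^{-2}\lambda^{-2})$. One then \emph{defines} the continuous competitor via a further truncation,
\[
g(x)\,=\,\max\Big(n^{d/2}\widetilde h(nx)-\tfrac{1}{\sqrt n},\,0\Big),
\]
so that $\supp g$ is exactly the (rescaled) level set $\{\widetilde h>\lambda\}$ with $\lambda=n^{-(d+1)/2}$, and the boundary error $d2^{5d}\cdot 3\kappa\,n^{d-1}$ is absorbed into the slack $n^{d-1/8}$. This truncation is the missing ingredient; your direct choice $g=n^{d/2}\widetilde{\sqrt h}(n\cdot)$ cannot work because $|\supp\widetilde{\sqrt h}|$ is only controlled by $(3^d+1)|\supp h|$.
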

\begin{proof}
Let us define formally the operator which transforms the function~$f_N$ into $\ell_N$ as in \eqref{D:rescaledLN}.
To a function $f:\Zd\to\R^+$, we associate a function $\Phi_n(f):\Rd\to\R^+$ by setting
	\begin{equation}\label{E:rescale_op}
\forall x\in\Rd\qquad \Phi_n(f)(x)\,=\,
	\frac{n^d}{N}{\Big(f\big(\lfloor nx\rfloor\big)\Big)^2}\,.
\end{equation}
	With this definition, we check that $\ell_n=\Phi_n(f_N)$.
	Therefore
$$%\displaylines{
E\Big(e^{-|R_N|};
\ell_N\in\cL\Big)\,=\,
E\Big(e^{-|R_N|};
f_N\in\cF\Big)\,,
$$
where $\cF$ is the collection of functions defined by
	$$\cF\,=\,
	\Phi_n^{-1}(\cL)
	\,=\,
	\big\{\,f:\Zd\to\R^+, \,\Phi_n(f)\in\cL\,\big\}\,.$$
	We apply the upper bound of theorem~\ref{upbo}.
	For $h$ a function from $\Zd$ to $\R^+$, we denote its support
	by
	$$\supp h\,=\,
\Big\{\,x\in \Zd:
h(x)\neq 0
%\geq\frac{n^\rho}{N}
\,\Big\}\,.$$
Let $\kappa\geq 1$.
For $n$ large enough, we have
$$\displaylines{
E\big(e^{-|R_N|};
\ell_N\in\cL)\,\leq\,
%\hfill
\exp(-\kappa n^d)+
%\hfill\cr
\hfill
\cr
\exp\Bigg(n^{d-1/8}-
\inf\,\Big\{\,
\big|\,\supp h
%\geq\frac{n^\rho}{N}
\big|+
%\cr
\frac{N}
{2}
(1-n^{-1/4})
\bigg(\max\Big(
\sqrt{\cE\big(\sqrt{h}\big)}-
\frac{1}{n^{9/8}} ,0\Big) \bigg)^2
%\Big( \sqrt{\cE(h)}-
%\frac{1}{n^{1+c}} \Big)^2
\cr
\hfill
:h\in \ell^1(\Zd),\,
\exists\,f\in\cF\quad
\Big|\Big|h-\frac{1}{N}f^2\Big|\Big|_{1}
\,\leq\,\frac{1}{n^{1/16}}
\,\Big\}\Bigg)\,.
}$$
In the infimum appearing above, we can limit ourselves to functions $h$
for which the infimum is of order $n^d$, otherwise the exponential becomes
negligible compared to the first term
$\exp(-\kappa n^d)$. The relevant functions $h$ should be such that
\begin{equation}
\label{aly}
%\big\| h\big\|_1\,=\,1\,,\quad
\big|\,\supp\, h\,\big|\,\leq\,2\kappa n^d\,,
\quad
\cE\big(\sqrt{h}\big)\,\leq\,
\frac{3\kappa}{n^2}\,.
\end{equation}
We make also the change of function $h\to h^2$.
We obtain that, for $n$ large enough,
\begin{multline}
\label{naly}
E\big(e^{-|R_N|};
\ell_N\in\cL)\,\leq\,
%\hfill
3\exp(-\kappa n^d)+
%\hfill\cr
\hfill
\cr
\exp\Bigg(n^{d-1/8}-
\inf\,\Big\{\,
\big|\,\supp h
%\geq\frac{n^\rho}{N}
\big|+
%\cr
\frac{N}
{2}
(1-n^{-1/4})
\bigg(\max\Big(
\sqrt{\cE\big({h}\big)}-
\frac{1}{n^{9/8}} ,0\Big) \bigg)^2
%\Big( \sqrt{\cE(h)}-
%\frac{1}{n^{1+c}} \Big)^2
\cr
\hfill
:h\in \ell^2(\Zd),\,h\geq 0\,,
\big|\,\supp h\,\big|\,\leq\,2\kappa n^d\,,
\cE\big({h}\big)\,\leq\,
\frac{3\kappa}{n^2},\,
%\cr\hfill
\exists\,f\in\cF\quad
\Big|\Big|h^2-\frac{1}{N}f^2\Big|\Big|_{1}
\,\leq\,\frac{1}{n^{1/16}}
\,\Big\}\Bigg)\,.
\end{multline}
We apply now the linear interpolation procedure described in section~\ref{lini} to the function $h$
appearing in the above infimum.
%Since $||\ell_N||=1$, then $||f^2||_1=N$ for any $f\in\cF$ and the constraint on the function $h$
Since $||f^2||_1=N$ for any $f\in\cF$, the constraint on the function $h$
implies that
\begin{equation}
\label{vbg}
\Big|\|h^2\|_1-{1}\Big|
\,\leq\,\frac{1}{n^{1/16}}\,.
\end{equation}
Starting from a function $h$ in $\ell^2(\Z^d)$,
we apply the interpolation procedure and
we obtain a function $\smash{\hht}$
in $L^2(\R^d)$
which satisfies, according to
inequalities~\eqref{ftildef3}
and~\eqref{ftildef2},
\begin{equation}
\label{hjy}
||\hht||_{L^2(\Rd)}\,\leq\,||h||_{\ell^2(\Zd)}\,,\quad
\int_{\Rd}\big|\nabla \hht\big|^2\,dx\,\leq\,d\,\cE(h)\,.
\end{equation}
%\note{I think you want to use \eqref{ftildef1} and \eqref{ftildef2}, but the comparison between $\int \widetilde h^2$ and $\sum h^2$ is not written yet.}
We wish to obtain an infimum involving only continuous functions, so we have to get completely
rid of the discrete function~$h$. Therefore we should control
$\big|\,\supp h\,\big|$ and the distance between $\hht$ and the set $\cL$.
Let us start with
$\big|\,\supp h\,\big|$. The linear interpolation $\hht$ of $h$ is non--zero only in
the unit cubes having at least one vertex in the support of~$h$, therefore
\begin{multline}
\label{peni}
\big|\,\{\,x\in \Rd:\hht(x)>0\,\}\,\big|\,\leq\,
\big|\,\supp h\,\big|+\,%\hfill\cr
\big|\,\{\,x\in \Zd\setminus\supp h:\exists y\in
\supp h
\quad |x-y|_\infty\leq 1
\,\}\,\big|
\,.
\end{multline}
Notice that we use $|\cdot|$ to denote the Lebesgue measure for a continuous set
and the cardinality for a discrete set.
Each point $x$ of $\Zd$ admits $3^d$ points $y\in\Zd$ such that $|x-y|_\infty=1$,
therefore
\begin{equation}
\label{meni}
\big|\,\{\,x\in \Rd:\hht(x)>0\,\}\,\big|\,\leq\,
	(3^d+1)\big|\,\supp h\,\big|\,.
\end{equation}
This inequality will be useful, however
we need a better lower bound for
$\big|\,\supp h\,\big|$, therefore we need a better upper bound on the last term in inequality~\eqref{peni}.
This is a delicate matter. Our strategy is to use the bound on the discrete Dirichlet energy to control
this boundary term, and to do so, we truncate the function at a fixed level $\lambda>0$. Since $\hht(x)$ is
a linear interpolation between the $2^d$ values of $h$ at the vertices of the unit cube containing $x$,
we have
\begin{multline}
\label{geni}
\big|\,\{\,x\in \Rd:\hht(x)>\lambda\,\}\,\big|\,\leq\,
\big|\,\supp h\,\big|\cr
	+\,%\hfill\cr
\Big|\,\Big\{\,x\in \Zd\setminus\supp h:\exists y\in \supp h
	\quad |x-y|_\infty\leq 1\,,\quad h(y)\geq{\lambda}
\,\Big\}\,\Big|
\,.
\end{multline}
Let $C$ be a unit cube with vertices in $\Zd$ such that one vertex $x$ of $C$ is not in $\supp h$
and another vertex $y$ of $C$ satisfies
	$h(y)\geq{\lambda}$.
	Then there exist two vertices $x',y'$ of $C$ which are nearest neighbours and which
	satisfy
	$$h(x')-h(y')\geq\frac{\lambda}{2^{d}}\,,$$
and the contribution to the discrete Dirichlet energy $\cE(h)$ of the edges belonging to the
boundary of $C$ is larger or equal than
	$$\frac{1}{d}\big(h(x')-h(y')\big)^2\geq\frac{1}{d}\frac{\lambda^2}{2^{2d}}\,.$$
An edge belongs to at most $2^{d-1}$ unit cubes. The number $M$ of cubes making such a contribution
satisfies therefore
	$$M\frac{1}{d}\frac{\lambda^2}{2^{2d}}\frac{1}{2^{d-1}}\,\leq\,\cE(h)\,.$$
Moreover the last term in inequality~\eqref{geni} is bounded from above by $M2^{2d}$.
Taking into account the bound on $\cE(h)$ given in~\eqref{aly}, we conclude that
\begin{multline}
\label{weni}
\big|\,\supp h\,\big|\,\geq\,
\big|\,\{\,x\in \Rd:\hht(x)>\lambda\,\}\,\big|\,-\,
	M2^{2d} %\cr
	\,\geq\,
\big|\,\{\,x\in \Rd:\hht(x)>\lambda\,\}\,\big|\,-\,
	\frac{d2^{5d}}{\lambda^2}
\frac{3\kappa}{n^2}\,.
\end{multline}
In order to delay the rescaling of the space, we define an intermediate operator $\Phi$ acting
on the functions as follows.
To a function $f:\Zd\to\R^+$, we associate a function $\Phi(f):\Rd\to\R^+$ by setting
	$$\forall x\in\Rd\qquad \Phi(f)(x)\,=\,
	{\Big(f\big(\lfloor x\rfloor\big)\Big)^2}\,.$$
We rewrite
\eqref{E:rescale_op} with the help of $\Phi$ as follows:
\begin{equation}
\label{ggi}
\forall x\in\Rd\qquad \Phi_n(f)(x)\,=\,
	\frac{n^d}{N}
	\Phi(f)(nx)\,.
\end{equation}
Let now $f$ be an element of
$\cF$ such that
$$\Big|\Big|h^2-\frac{1}{N}f^2\Big|\Big|_{1}
\,\leq\,\frac{1}{n^{1/16}}\,.$$
By definition of $\cF$, we have that $\Phi_n(f)\in\cL$.
On the one hand,
we have
\begin{equation}
\label{zeni}
	\Big\|\Phi(h)-
\frac{1}{N} \Phi(f)
	\Big\|_{L^1(\Rd)}\,=\,
	\Big|\Big|h^2-\frac{1}{N}f^2\Big|\Big|_{\ell^1(\Zd)}
\,\leq\,\frac{1}{n^{1/16}}\,.
\end{equation}
On the other hand,
we have
\begin{equation}
\label{beni}
	\Big\|\hht^2-
 \Phi(h)
	\Big\|_{L^1(\Rd)}\,=\,
	\int_{\Rd}\big|
	\hht(x)^2-
	h\big(\lfloor x\rfloor\big)^2
	\big|\,dx
	\,.
\end{equation}
Let us define
$$\forall x\in\Zd\qquad D(x)\,=\,\big\{\,y\in \Rd:
	\lfloor y\rfloor=x\,\big\}\,.$$
We can rewrite the previous integral as
\begin{align}
\label{bvni}
	\Big\|\hht^2- \Phi(h)
	\Big\|_{L^1(\Rd)}
	&\,=\,
	\sum_{x\in\Zd}
	\int_{D(x)}\big|
	\hht(y)^2-
	h\big(\lfloor y\rfloor\big)^2
	\big|\,dy
	\cr
	&\,\leq\,
	\sum_{x\in\Zd}
	\sup_{y\in D(x)}\big|
	\hht(y)^2-
	h(x)^2
	\big|
	\cr
	\,\leq\,
	\sum_{x\in\Zd}
	&2^d\max\big\{\,
	\big|
	h(y)^2-
	h(z)^2
	\big|:
	y,z\in \overline{D(x)}\cap\Zd, |y-z|=1\,\big\}
	\cr
	&\,\leq\,
	\sum_{x\in\Zd}
	2^{d}
	\sum_{\tatop{y,z\in \overline{D(x)}\cap\Zd }{|y-z|=1}}
	\big|
	h(y)^2-
	h(z)^2
	\big|
	\cr
	&\,\leq\,
	2^{2d}
	\sum_{\tatop{y,z\in \Zd}{|y-z|=1}}
	\big| h(y)- h(z) \big|
	\times
	\big| h(y)+ h(z) \big|
	\cr
	&\,\leq\,
	2^{2d}
	\Big(\sum_{\tatop{y,z\in \Zd}{|y-z|=1}}
	\big| h(y)- h(z) \big|^2
	\Big)^{1/2}
	\Big(\sum_{\tatop{y,z\in \Zd}{|y-z|=1}}
	\big| h(y)+ h(z) \big|^2\Big)^{1/2}
	\cr
	&\,\leq\,
	2^{2d}
	\big({2d\cE(h)}
	\big)^{1/2}
	8d\|h\|_{\ell^2(\Zd)}
	\,.
\end{align}
Using inequalities~\eqref{aly} and~\eqref{vbg} (remember that we changed $h$ into $h^2$ just
afterwards), we obtain that
\begin{equation}
\label{cni}
	\Big\|\hht^2- \Phi(h)
	\Big\|_{L^1(\Rd)}
	\,\leq\,
	32d^22^{2d}
	\frac{\sqrt{3\kappa}}{n}
	\,.
\end{equation}
Combining this inequality and inequality~\eqref{zeni}, we conclude that, for $n$ large enough,
\begin{equation}
\label{cri}
	\Big\|\hht^2- \frac{1}{N} \Phi(f) \Big\|_{L^1(\Rd)}
\,\leq\, 32d^22^{2d} \frac{\sqrt{3\kappa}}{n} +
	\frac{1}{n^{1/16}}
\,\leq\,
	\frac{2}{n^{1/16}}\,.
\end{equation}
In addition, the function $\hht$ has bounded support and it is continuous on $\Rd$.
%oand it is in $L_2(\Rd)$, therefore
Let us denote by $D^d$ the union of all
the lines parallel to the axis which go through the points of $\Zd$.
The function $\hht$ is also
$C^\infty$ on $\Rd\setminus D^d$.
Thus we can take the infimum over the set of functions $\hht$ having these properties.
We are now ready to substitute $\hht$ to $h$ in inequality~\eqref{naly}.
We bound from below the infimum in the exponential with the help of
inequalities~\eqref{hjy},
\eqref{meni},
\eqref{weni}
and~\eqref{cri}:
\begin{multline}
\label{snaly}
E\big(e^{-|R_N|};
\ell_N\in\cL)\,\leq\,
%\hfill
3\exp(-\kappa n^d)+
%\hfill\cr
\exp\Bigg(n^{d-1/8}-
\hfill
\cr
\qquad\inf\,\Big\{\,
\big|\,\{\,x\in \Rd:\hht(x)>\lambda\,\}\,\big|\,-\,
	%\frac{2^{7d}}{\lambda^2}
	\frac{d2^{5d}}{\lambda^2}
\frac{3\kappa}{n^2}
%\geq\frac{n^\rho}{N}
+\hfill
\cr\hfill
\frac{N}
{2}
(1-n^{-1/4})
\bigg(\max\Big(
\sqrt{
\frac{1}{d}\int_{\Rd}\big|\nabla \hht\big|^2\,dx
}-
\frac{1}{n^{9/8}} ,0\Big) \bigg)^2
%\Big( \sqrt{\cE(h)}-
%\frac{1}{n^{1+c}} \Big)^2
\hfill\cr
\hfill
:
	%\hht\in L^2(\Rd)\,,\quad
	\hht\text{ is continuous } \Rd\to\R^+
	\text{ and }C^\infty\text{ on }\Rd\setminus D^d,\,
%	\hfill\cr
%\big\| \hht\big\|_2\,\leq\,1\,,\quad
	\quad
\big|\,\supp\hht\,\big|\,\leq\, (3^d+1)2\kappa n^d\,,
\cr\hfill
\exists\,f\in\cF\quad
	\Big\|\hht^2- \frac{1}{N} \Phi(f) \Big\|_{L^1(\Rd)}
\,\leq\, %8d^22^{2d} \frac{\sqrt{3\kappa}}{n} +
	\frac{2}{n^{1/16}}
\,\Big\}\Bigg)\,.
\end{multline}
We are almost done. We make a change of scale in order to replace $\cF$ by $\cL$
in the infimum. To the function
$\hht: \Rd\to\R^+$, we associate the function
$\hht_n: \Rd\to\R^+$ obtained by rescaling the space by a factor~$n$ and the values by $n^{d/2}$:
	$$\forall x\in\Rd\qquad \hht_n(x)\,=\,
	%\frac{n^d}{N}
	{n^{d/2}}
	%\sqrt{\widetilde{h}(nx)}
	{\widetilde{h}(nx)}
	\,.$$
We have then
\begin{align}
\label{zntly}
	\phantom{\int_{\Rd}}
	\big|\,\supp\hht_n\,\big|&\,=\,
n^d\big|\,\supp\hht\,\big|\,,\cr
\big|\,\{\,x\in \Rd:\hht(x)>\lambda\,\}\,\big|
	&\,=\,
	{n^d}
	\big|\,\{\,x\in \Rd:\hht_n(x)>n^{d/2}{\lambda}\,\}\,\big|
	%\frac{1}{n^d}
	\,,\cr
\int_{\Rd}\big|\nabla \hht_n(x)\big|^2\,dx
	%&\,=\, \cr
	&\,=\,
	n^d
\int_{\Rd}\big|
n
\nabla
	{\widetilde{h}}(nx)\big|^2\,dx
	\,=\,
%\frac{1}{n^{d-2}}
n^{2}
\int_{\Rd}\big|
\nabla
	{\widetilde{h}}(x)\big|^2\,dx
%\nabla \hht(x)\big)\big|^2\,dx
	\,.
\end{align}
Moreover, using the identity~\eqref{ggi}, we see that
\begin{equation}
\label{znaly}
	\Big\|\hht_n^2- \Phi_n(f) \Big\|_{L^1(\Rd)}\,=\,
\int_{\Rd}
	n^d\Big|\hht(nx)^2-
	\frac{1}{N} \Phi(f)(nx)
	\Big|\,dx\,=\,
	\Big\|\hht^2- \frac{1}{N} \Phi(f) \Big\|_{L^1(\Rd)}\,.
\end{equation}
	Remember that $\cF=\Phi_n^{-1}(\cL)$, thus
	$\Phi_n(f)\in\cL$ whenever $f\in\cF$.
	The identities~\eqref{zntly} and~\eqref{znaly}
	allow to rewrite~\eqref{snaly} as follows:
\begin{multline}
\label{tnaly}
E\big(e^{-|R_N|};
\ell_N\in\cL)\,\leq\,
%\hfill
3\exp(-\kappa n^d)+
%\hfill\cr
\exp\Bigg(n^{d-1/8}-
\inf\,\Big\{\,
n^d
	\big|\,\{\,x\in \Rd:\hht_n(x)>n^{d/2}{\lambda}\,\}\,\big|
	\cr
	\,-\,
	%\frac{2^{7d}}{\lambda^2}
	\frac{d2^{5d}}{\lambda^2}
\frac{3\kappa}{n^2}
%\geq\frac{n^\rho}{N}
+
%\hfill
%\cr\hfill
\frac{N}
{2}
(1-n^{-1/4})
\bigg(\max\Big(
\sqrt{
\frac{1}{dn^2}\int_{\Rd}\big|\nabla \hht_n\big|^2\,dx
}-
\frac{1}{n^{9/8}} ,0\Big) \bigg)^2
%\Big( \sqrt{\cE(h)}-
%\frac{1}{n^{1+c}} \Big)^2
\cr
\hfill
	%:\hht_n\text{ continuous piecewise linear }
	:\hht_n\text{ is continuous } \Rd\to\R^+
	\text{ and }C^\infty\text{ on }\Rd\setminus \frac{1}{n}D^d,\,
%\Rd\to\R^+,\,\hfill\cr
	\quad
%\hfill\cr
%:\hht_n\in L^2(\Rd)\,,\quad
%\hht_n\geq 0\,,\quad
%\big\| \hht_n\big\|_2\,\leq\,1\,,\quad
\big|\,\supp\hht_n\,\big|\,\leq\, (3^d+1)2\kappa,\,
\cr\hfill
\exists\,\ell\in\cL\quad
	\Big\|\hht_n^2- \ell\Big\|_{L^1(\Rd)}
\,\leq\,
	\frac{2}{n^{1/16}}
\,\Big\}\Bigg)\,.
\end{multline}
We choose $\lambda=n^{-\frac{d+1}{2}}$ and  we set
%$g=\hht_n$ and we obtain the inequality stated
$$\forall x\in \Rd\qquad
g(x)\,=\,\max\Big(
\hht_n(x)- \frac{1}{\sqrt{n}},0 \Big)\,.$$
This function $g$ satisfies
\begin{align}
\label{unaly}
	\big|\,\{\,x\in \Rd:\hht_n(x)>n^{d/2}{\lambda}\,\}\,\big|
	&\,=\,
	\big|\,\{\,x\in \Rd:g(x)>0\,\}\,\big|\,,\cr
	1+\frac{2}{n^{1/16}}
\,\geq\,\int_{\Rd}\big|\hht_n\big|^2\,dx
	&\,\geq\,
\int_{\Rd}\big| g\big|^2\,dx\,,\cr
\int_{\Rd}\big|\nabla \hht_n\big|^2\,dx
	&\,\geq\,
\int_{\Rd}\big|\nabla g\big|^2\,dx\,.
\hfil
\end{align}
Moreover we have
\begin{align}
	\label{glou}
	\big\|g^2-\hht_n^2\big\|_{L^1(\Rd)}&\,=\,
	\int_{\Rd}\big|
	g(x)^2-
	\hht_n(x)^2
	\big|\,dx\cr
	&\,\leq\,
\frac{1}{\sqrt{n}}
	\int_{\Rd}\big|
	g(x)+
	\hht_n(x)
	\big|\,dx\,
	\cr
	&\,\leq\,
\frac{1}{\sqrt{n}}
	\Big(
	\big\|g\big\|_{L^2(\Rd)}
	\big(\big|\,\supp\,g\,\big|\big)^{1/2}+
\big\|\hht_n\big\|_{L^2(\Rd)}
	\big(\big|\,\supp\,\hht_n\,\big|\big)^{1/2}\Big)\cr
	&\,\leq\,
\frac{2}{\sqrt{n}}
\big\|\hht_n\big\|_{L^2(\Rd)}
	\big(\big|\,\supp\,\hht_n\,\big|\big)^{1/2}\cr
	&\,\leq\,
\frac{4}{\sqrt{n}}
	\big((3^d+1)2\kappa\big)^{1/2}\,.
\end{align}
Plugging the inequalities~\eqref{unaly} and~\eqref{glou}
in the infimum of~\eqref{tnaly}, we get, for $n$ large enough,
\begin{multline*}
%\label{fnaly}
E\big(e^{-|R_N|};
\ell_N\in\cL)\,\leq\,
%\hfill
3\exp(-\kappa n^d)+
%\hfill\cr
\exp\Bigg(n^{d-1/8}-
\hfill
\cr
\qquad\inf\,\Big\{\,
n^d
	\big|\,\supp\,g\,\big|
	\,-\,
	%{2^{7d}}
	{d2^{5d}}
	{3\kappa}{n^{d-1}}
%\geq\frac{n^\rho}{N}
+
%\hfill
%\cr\hfill
\frac{N}
{2}
(1-n^{-1/4})
\bigg(\max\Big(
\sqrt{
\frac{1}{dn^2}\int_{\Rd}\big|\nabla g\big|^2\,dx
}-
\frac{1}{n^{9/8}} ,0\Big) \bigg)^2
%\Big( \sqrt{\cE(h)}-
%\frac{1}{n^{1+c}} \Big)^2
\hfill\cr
\hfill
:	g\text{ is continuous } \Rd\to\R^+
	\text{ and }C^\infty\text{ on }\Rd\setminus \frac{1}{n}D^d,\,
\hfill\cr
%:g\in L^2(\Rd),\,\quad
%g\geq 0\,,\quad
	%\frac{2}{\sqrt{n}}
%	\big((2^d+1)2\kappa\big)^{1/2}\,-\,
	 %8d^22^{2d} \frac{\sqrt{3\kappa}}{n} -
%1- \frac{3}{n^{1/16}}\,\leq\,
%\big\| g\big\|_2\,\leq\,1\,,
	%\quad \big|\,\supp\hht_n\,\big|\,\leq\, (2^d+1)2\kappa,\,
\quad
%\cr\hfill
\exists\,\ell\in\cL\quad
	\Big\|g^2- \ell\Big\|_{L^1(\Rd)}
\,\leq\,
	%8d^22^{2d} \frac{\sqrt{3\kappa}}{n} +
	\frac{3}{n^{1/16}}
	%\,+\,
	%\frac{2}{\sqrt{n}}
	%\big((2^d+1)2\kappa\big)^{1/2}
\,\Big\}\Bigg)\,.
\end{multline*}
For $n$ large enough, this inequality can be rewritten as in the statement of
theorem~\eqref{cupbo}.
\end{proof}

\section{Application of the quantitative Faber--Krahn inequality}
\label{S:FK}

\subsection{Statement}
A crucial ingredient to extend Bolthausen's result to dimensions $d\geq 3$ is the
quantitative Faber-Krahn inequality. Currently, the best version of this
inequality is due to
Brasco, De Philippis and Velichkov \cite{BLGV}.
A weaker version
was proved before by Fusco, Maggi and Pratelli \cite{FMP}, and we could very well
rely on this weaker version to achieve our goal.
Let $G$ be an open subset of $\Rd$ having finite Lebesgue measure.
Let $\lambda(G)$ be the first eigenvalue of the Dirichlet--Laplacian of $G$, defined by
$$\lambda(G)\,=\,\inf\Big\{\,\int_G\big|\nabla u\big|^2\,dx:||u||_{2,G}=1\,\Big\}\,.$$
Recall that our notation differs from Bolthausen's by a factor $1/2$: the notation $\lambda(G)$ in \cite{BO} is half of the quantity defined above.

\medskip

\noindent
To control the distance of $G$ to a ball, we define the Fraenkel asymmetry
$$\cA(G)\,=\,\inf\Big\{\,\frac{|G\Delta B|}{|B|}:
B\text{ ball such that }|B|=|G|\,\Big\}\,,$$
where $G\Delta B$ is the symmetric difference between the sets $G$ and $B$.
\begin{theorem}
\label{fmp}
There exists a positive constant $\sigma$, which depends on the dimension $d$ only,
such that, for any
an open subset $G$ of $\Rd$ having finite Lebesgue measure, we have, for any $d$ dimensional
ball $B$,
$$
|G|^{2/d}\lambda(G)\,-\,
|B|^{2/d}\lambda(B)\,\geq\,\sigma\cA(G)^2\,.$$
\end{theorem}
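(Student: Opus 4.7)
This is the deep result of Brasco, De Philippis and Velichkov \cite{BLGV} (with a somewhat weaker exponent by Fusco, Maggi and Pratelli \cite{FMP}), so our ``proof'' is really to import it. Nevertheless, let me sketch the strategy I would follow if I were to reconstruct the argument. By scaling invariance of the product $|G|^{2/d}\lambda(G)$, I would normalise $|G|=|B|=\omega_d$ so that $B$ is the unit ball; the claim then reduces to showing
\[
\lambda(G)-\lambda(B)\,\geq\, \sigma \cA(G)^2\,.
\]
I would argue by contradiction and selection: suppose there is a sequence $(G_n)$ of open sets of unit volume with $\cA(G_n)>0$ and $(\lambda(G_n)-\lambda(B))/\cA(G_n)^2 \to 0$, and I would replace each $G_n$ by a minimiser $U_n$ of a penalised functional of the form
\[
U \;\mapsto\; \lambda(U)\,+\,\Lambda_n\bigl(|U|-\omega_d\bigr)\,-\,\eta_n \bigl(\cA(U)-\cA(G_n)\bigr)^2\,,
\]
with $\Lambda_n$ large enough to force $|U_n|=\omega_d$ and $\eta_n$ chosen so that $U_n$ is asymptotically at least as bad as $G_n$ but is also an approximate minimiser of $\lambda$ under a hard volume constraint (a Cicalese--Leonardi style selection principle).

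The core of the argument is then a \emph{regularity step}: one shows that the penalised minimisers $U_n$ enjoy uniform $C^{1,\alpha}$ regularity of their free boundary $\partial U_n$. This is the standard machinery (Alt--Caffarelli, Brian\c{c}on--Hayouni--Pierre) adapted to an eigenvalue functional. Consequently, up to translations, $U_n$ converges to $B$ in Hausdorff distance and indeed as nearly spherical sets: $\partial U_n$ can be written as a graph $\{(1+u_n(\theta))\theta:\theta\in S^{d-1}\}$ with $\|u_n\|_{C^{1,\alpha}}\to 0$.

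The final step is Fuglede's computation for nearly spherical sets. One performs a shape derivative expansion of $\lambda(U_n)$ around $B$, which to second order reads
\[
\lambda(U_n)-\lambda(B)\,=\,\tfrac{1}{2}\,Q(u_n)\,+\,o(\|u_n\|_{H^{1/2}}^2)\,,
\]
where $Q$ is an explicit non-negative quadratic form on the sphere whose kernel corresponds exactly to volume-preserving translations of $B$. After mod-ing out translations (which is precisely what the Fraenkel asymmetry measures), $Q$ is coercive in $H^{1/2}(S^{d-1})$, and in particular $Q(u_n)\gtrsim \|u_n\|_{L^1}^2 \gtrsim \cA(U_n)^2$. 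This produces a lower bound $\lambda(U_n)-\lambda(B)\geq c\,\cA(U_n)^2$, contradicting the choice of $(G_n)$ via the selection.

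The main obstacle, and the reason the result was open for a long time before \cite{BLGV}, is the regularity step: one needs uniform regularity of the free boundaries of the penalised minimisers in order to reach the nearly spherical regime, since Fuglede-type estimates fail catastrophically for merely $L^1$-close sets. The quadratic exponent $\cA(G)^2$ is sharp and is forced by this Fuglede expansion. For the purposes of the present paper we need only the \emph{existence} of some constant $\sigma$ (and some exponent, quadratic or worse, would in fact suffice), so we simply invoke \cite{BLGV}.
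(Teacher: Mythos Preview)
Your proposal is correct and matches the paper's own treatment: the paper does not prove this theorem either, it simply cites it as the result of Brasco, De Philippis and Velichkov \cite{BLGV} (mentioning the earlier weaker version of Fusco, Maggi and Pratelli). Your additional sketch of the selection-principle / regularity / Fuglede strategy is accurate extra context that the paper does not include.
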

This result is
proved by
Brasco, De Philippis and Velichkov \cite{BLGV}.
With the help of this
quantitative Faber-Krahn inequality, we shall prove the crucial Lemma~\ref{A1},
which is the $d$ dimensional counterpart of lemma~$A.1$ in Bolthausen's paper \cite{BO}.
Let $\lambda_d$ be the principal eigenvalue of $-\Delta$ in the unit ball $B(0,1)$ of $\Rd$,
let $\omega_d$ be the volume of $B(0,1)$, and let us define
$$\rho_d\,=\,\Big(\frac{\lambda_d}{d^2\omega_d}\Big)^{\textstyle\frac{1}{d+2}}\,.$$
Let $G$ be an open subset of $\Rd$ having finite Lebesgue measure and
let $r>0$ be such that $|G|=r^d\omega_d$.
The classical Faber--Krahn inequality states that
\begin{equation}
\label{cfkr}
\lambda(G)
\,\geq\,
%\Big(\frac{\omega_d}{|G|}\Big)^{2/d}\lambda_d+
\frac{\lambda_d}{r^{2}}
\,,
\end{equation}
while the inequality of theorem~\ref{fmp} can be rewritten as
\begin{equation}
\label{fkr}
\lambda(G)
\,\geq\,
%\Big(\frac{\omega_d}{|G|}\Big)^{2/d}\lambda_d+
\frac{1}{r^{2}}\Big(\lambda_d+
\frac{ \sigma\cA(G)^2 }{(\omega_d)^{2/d}}
\Big)\,.
\end{equation}
By the classical Faber--Krahn inequality,
we have
$$|G|+\frac{1}{2d}\lambda(G)\,\geq\,
r^d\omega_d+ \frac{\lambda_d}{2dr^{2}}\,.$$
Let us define
\begin{equation}
\label{psir}
\forall r>0\qquad\psi(r)\,=\,
r^d\omega_d+ \frac{\lambda_d}{2dr^{2}}\,.
\end{equation}
The function $\psi$ admits a unique minimum on $\R^+$ at $r=\rho_d$.
The constant $\chi_d$ is defined through the variational formula
$$\chi_d\,=\,\inf\,\Big\{\,|G|+\frac{1}{2d}\lambda(G):
G\text{ open subset of $\Rd$}\,\Big\}\,.$$
We conclude from the previous inequalities that
$$\chi_d\,=\,\frac{d+2}{2}\Big(\frac{\lambda_d}{d^2}\Big)^{\textstyle\frac{d}{d+2}}
(\omega_d)^{\textstyle\frac{2}{d+2}}\,.$$
Let $\phi$ be the eigenfunction of $-\Delta$ in $B(0,\rho_d)$ with Dirichlet boundary
conditions associated to $\lambda_d$ and normalized so that $||\phi||_2=1$, $\phi\geq 0$.
We extend $\phi$ to $\Rd$ by setting it equal to $0$ outside $B(0,\rho_d)$.
For $x\in\Rd$, we denote by $\phi_x$ the translate of $\phi$ defined by
$$\forall y\in\Rd\qquad\phi_x(y)\,=\,\phi(y-x)\,.$$
We state next the counterpart
of Lemma~$A.1$
of Bolthausen's paper \cite{BO}.
The difference is that we work in dimensions $d\geq 3$ and in the full
space rather than in the torus. The spirit of the proof is exactly the same
as in Bolthausen's case, the major new input is the quantitative Faber--Krahn inequality.
Equipped with this powerful inequality,
the proof becomes more transparent than Bolthausen's proof,
which contains somehow a two--dimensional version of a quantitative isoperimetric inequality.
\begin{lemma}
\label{A1}
If $g:\Rd\to\R^+$ is $C^\infty$ and such that
$||g||_2=1$, $g\geq 0$, and if
$$\varepsilon\,=\,\inf_{x\in\Rd}\,||g-\phi_x||_2\,>\,0$$
is small enough, then
$$\big|\{\,g>0\,\}\big|+\frac{1}{2d}
\int_{\Rd}\big|\nabla g\big|^2\,dx\,\geq\,\chi_d+\varepsilon^{48}\,.
$$
\end{lemma}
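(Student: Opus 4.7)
The plan is to argue by contradiction: assume the deficit
\[
 \mathrm{def}(g)\;=\;\big|\{g>0\}\big|+\frac{1}{2d}\int_{\R^d}|\nabla g|^2\,dx-\chi_d
\]
is smaller than $\varepsilon^{48}$ and construct some $x_0\in\R^d$ with $\|g-\phi_{x_0}\|_2<\varepsilon$, contradicting the hypothesis. Write $G=\{g>0\}$ and define $r>0$ by $|G|=r^d\omega_d$; since $\|g\|_2=1$, the Rayleigh quotient inequality gives $\int|\nabla g|^2\ge \lambda(G)$. The quantitative Faber--Krahn inequality of Theorem~\ref{fmp} then yields
\[
 \lambda(G)\;\ge\;\frac{\lambda_d}{r^2}+\frac{\sigma\,\cA(G)^2}{r^2\omega_d^{2/d}}.
\]
Combining with the function $\psi$ from \eqref{psir}, which satisfies $\psi(r)\ge \chi_d+c_0(r-\rho_d)^2$ near its unique minimum $r=\rho_d$, I obtain the basic inequality
\[
 |G|+\frac{1}{2d}\int|\nabla g|^2 \;\ge\; \chi_d \;+\; c_0(r-\rho_d)^2 \;+\; c_1\cA(G)^2 \;+\; \frac{1}{2d}\Big(\int|\nabla g|^2-\lambda(G)\Big).
\]
All three excess terms are non-negative, so under the contradiction hypothesis each of them is bounded above by $\varepsilon^{48}$.

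Next I translate these three smallness conditions into structural information on $g$. The bound on $\cA(G)$ yields a point $x_0\in\R^d$ such that $|G\,\Delta\, B(x_0,r)|\lesssim \varepsilon^{24}$; the bound on $|r-\rho_d|$ gives $|B(x_0,r)\,\Delta\,B(x_0,\rho_d)|\lesssim \varepsilon^{24}$. Writing $e_G$ for the $L^2$-normalised positive ground state on $G$ and decomposing $g=\alpha e_G + w$ with $w\perp e_G$, the bound $\int|\nabla g|^2-\lambda(G)\lesssim \varepsilon^{48}$ combined with a spectral gap for $G$ (which holds robustly since $G$ is close to the ball $B(x_0,\rho_d)$, whose Laplacian has a fixed gap) forces $\|w\|_2\lesssim \varepsilon^{24}$ and hence $\|g-e_G\|_2\lesssim \varepsilon^{24}$.

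The core of the argument is then to show
\[
 \|e_G-\phi_{x_0}\|_2 \;\lesssim\; \varepsilon^2,
\]
which combined with the previous estimate yields $\|g-\phi_{x_0}\|_2<\varepsilon$ for $\varepsilon$ small enough. For this I use quantitative spectral stability of the Dirichlet Laplacian: since $|G\,\Delta\,B(x_0,\rho_d)|$ is small, one can test $\phi_{x_0}$ (suitably truncated near $\partial B(x_0,\rho_d)\setminus G$) against the Rayleigh quotient on $G$, and conversely test $e_G$ against $\phi_{x_0}$'s domain, to produce two-sided bounds on $\lambda(G)$ and on the $L^2$ overlap $\langle e_G,\phi_{x_0}\rangle$. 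The interior decay of $\phi_{x_0}$ away from $\partial B(x_0,\rho_d)$, together with the $C^{1}$ behaviour of $\phi$ at its own boundary, controls the loss incurred by the truncation by a power of $|G\,\Delta\,B(x_0,\rho_d)|$ plus $|r-\rho_d|$.

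The main obstacle is precisely this last spectral stability step: Fraenkel asymmetry is a purely measure-theoretic closeness between $G$ and a ball, so domain convergence in the sense required for eigenfunction convergence must be extracted carefully (via a Mosco/$\gamma$-convergence argument or an explicit quasi-orthogonality computation using $\phi_{x_0}$ as a quasi-mode). The specific exponent $48$ is a bookkeeping artefact: the three excess quantities are controlled to order $\varepsilon^{48}$, each one degrades to $\varepsilon^{24}$ after taking a square root, and composition with the stability estimate loses another factor of two, giving the stated threshold with plenty of room. Since the statement only requires the existence of \emph{some} positive exponent (and the choice of exponents $\alpha,\beta,\gamma,\rho$ in \eqref{choi} is conservative), we do not attempt to optimise.
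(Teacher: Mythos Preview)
Your opening moves---extracting the three non-negative ``excess'' terms $c_0(r-\rho_d)^2$, $c_1\cA(G)^2$, and $\tfrac{1}{2d}(\int|\nabla g|^2-\lambda(G))$ from the quantitative Faber--Krahn inequality and bounding each by $\varepsilon^{48}$---match the paper's argument exactly. The divergence, and the gap, is in what you do next.

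You decompose $g=\alpha e_G+w$ against the ground state $e_G$ of the \emph{irregular} domain $G$, and then need two facts: (i) a uniform spectral gap $\lambda_2(G)-\lambda_1(G)\gtrsim 1$, and (ii) quantitative stability $\|e_G-\phi_{x_0}\|_2\lesssim\varepsilon^2$. For (i) your justification ``holds robustly since $G$ is close to the ball'' is not a proof; it \emph{can} be salvaged via the Krahn--Szeg\H{o} inequality (which bounds $\lambda_2(G)$ from below in terms of $|G|$ alone), but you should say so. The real problem is (ii): Fraenkel asymmetry is purely measure-theoretic closeness, and neither Mosco/$\gamma$-convergence (which is qualitative) nor a naive quasi-mode argument gives a quantitative $L^2$ bound on $e_G-\phi_{x_0}$ for general open sets $G$ with $|G\,\Delta\,B|$ small. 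For instance $\phi_{x_0}$ need not lie in $H^1_0(G)$ (since $B(x_0,\rho_d)\setminus G$ may be nonempty), and $e_G$ may have mass outside $B(x_0,\rho_d)$ that you cannot control without further input. You correctly flag this as ``the main obstacle'' but do not resolve it.

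The paper sidesteps both issues by a truncation trick: rather than work with $e_G$, it multiplies $g$ by a smooth cutoff $h$ to obtain $\tilde g=hg$ supported in the \emph{ball} $B(x_0,r_0+\delta)$. The cost in Dirichlet energy is controlled by $\|g\|_{L^2(G\setminus B(x_0,r_0))}$, which is small because $|G\setminus B(x_0,r_0)|\lesssim\varepsilon^{24}$ and $\|g\|_4$ is bounded (Sobolev). Now one uses the spectral gap of the \emph{ball} $B(x_0,r_0+\delta)$---explicit and dimension-dependent only---to get $\int|\nabla\tilde g|^2-\int|\nabla\hat\phi_0|^2\ge\delta_{12}\|\tilde g-\hat\phi_0\|_2^2$, where $\hat\phi_0$ is the normalised eigenfunction on that ball. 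Finally $\|\hat\phi_0-\phi_{x_0}\|_2$ is a comparison between eigenfunctions of two nearby \emph{balls}, handled by an elementary scaling/Lipschitz estimate. Choosing $\delta=\varepsilon^3$ balances the errors and yields $\chi_d+c\varepsilon^2$, comfortably stronger than $\chi_d+\varepsilon^{48}$. The key idea you are missing is: do not try to compare $G$ to a ball at the level of eigenfunctions; instead transplant $g$ itself onto a ball and compare there.
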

\begin{proof}
There are many constants involved throughout this proof. So we denote by $c$ a generic
positive constant which depends only on the dimension~$d$, and we warn that of course the value
of $c$ changes from one formula to another!
Let $g$ be a function as in the statement of the lemma and let us set
$$G\,=\,\big\{\,x\in \Rd:g(x)>0\,\big\}\,.$$
We need only to consider the case where $|G|<\infty$ and the function $g$ is such that
$$\big|G\big|+\frac{1}{2d}
\int_{\Rd}\big|\nabla g\big|^2\,dx\,\leq\,2\chi_d\,.$$
The Sobolev inequality implies then that there exists a constant $c_S(d)$ depending on the
dimension only such that
\begin{equation}
\label{aob}
||g||_4^2\,\leq\,c_S(d)
\int_{\Rd}\big(g^2+\big|\nabla g\big|^2\big)\,dx\,\leq\,c_S(d)(1+4d\chi_d)\,.
\end{equation}
Let $r>0$ be such that $|G|=r^d\omega_d$.
Applying inequality~\eqref{fkr} to the set $G$, we obtain
\begin{multline}
\big|\{\,g>0\,\}\big|+\frac{1}{2d}
\int_{\Rd}\big|\nabla g\big|^2\,dx\,\geq\,
r^d\omega_d+
\frac{1}{2d}\lambda(G)\cr
\,\geq\,
r^d\omega_d+
\frac{1}{2dr^{2}}\Big(\lambda_d+
\frac{ \sigma\cA(G)^2 }{(\omega_d)^{2/d}}
\Big)
\,=\,
\psi(r)+
\frac{ \sigma\cA(G)^2 }{ {2dr^{2} (\omega_d)^{2/d}}
}
\,,
\label{sati}
\end{multline}
where $\psi(r)$ is the function defined in~\eqref{psir}.
We recall that the function $\psi$ admits a unique minimum at $r=\rho_d$
and moreover $\psi''(\rho_d)>0$.
Therefore there exist two positive constants
$\eta,c$ such that $\eta<\rho_d/2$ and
\begin{equation}
\label{mini}
\forall r\in ]\rho_d-\eta,\rho_d+\eta[\qquad
\psi(r)\geq \psi(\rho_d)+c(r-\rho_d)^2\,.
\end{equation}
%Let $\varepsilon>0$ be fixed.
For
$\varepsilon_0>0$ small enough, we can take $\eta$ sufficiently small to ensure
that we have in addition
$$
\forall r\in \R^+\setminus ]\rho_d-\eta/2,\rho_d+\eta/2[\qquad
\psi(r)\geq
 \psi(\rho_d)+
(\varepsilon_0)^{48}\,.$$
%$$\forall r\in
%[\,0,\frac{1}{2}\rho_d\,]\cup[\,2\rho_d,+\infty\,[
%%\R^+\setminus [\rho_d/2,2\rho_d]
%\qquad \psi(r)\geq \chi_d+\varepsilon^{?}\,.$$
In particular, the inequality stated in the lemma holds if $\varepsilon<\varepsilon_0$
and if $r$ does not belong
to the interval
$]\rho_d-\eta/2,\rho_d+\eta/2[$.
From now onwards, we suppose that $\varepsilon<\varepsilon_0$ and
we consider only the cases of sets $G$
such that
$\rho_d-\eta/2< r<\rho_d+\eta/2$.
From inequality~\eqref{mini}, we see that we need only to consider
the case where
\begin{equation}
\label{fco}
c(r-\rho_d)^2\,\,\leq\,\varepsilon^{48}\,.
\end{equation}
Recalling that $\psi(r)\geq\chi_d$, it follows from~\eqref{sati} that if
$$\frac{\sigma\cA(G)^2}{8d(\rho_d)^{2}(\omega_d)^{2/d}}
\,\geq\,\varepsilon^{48}\,,$$
then the inequality of the lemma is satisfied.
%the desired inequality holds. Let us set
%$$c(d)\,=\,\sqrt{\frac{1}{\sigma}{8(\rho_d)^{2}(\omega_d)^{2/d}}}\,.$$
From now onwards, we consider only the cases of sets $G$
satisfying
\begin{equation}
\label{sata}
{\cA(G)}
\,<\,
\sqrt{\frac{1}{\sigma}{8d(\rho_d)^{2}(\omega_d)^{2/d}}\varepsilon^{48}}\,=\,
c\varepsilon^{24}\,.
\end{equation}
Therefore there exists $x_0\in\Rd$ and $r_0>0$
such that
\begin{equation}
\label{diff}
\big|G\Delta B(x_0,r_0)\big|\,\leq\,
c
(2\rho_d)^d\omega_d
\varepsilon^{24}
\,\leq\,
c\varepsilon^{24}
\,.
\end{equation}
This inequality, together with inequality~\eqref{fco}, imply that
%This implies in particular that,
%for some $c>0$ depending on the dimension only,
\begin{equation}
\label{adiff}
\big|r_0-\rho_d\big|\,\leq\,
c\varepsilon^{24}
\,.
\end{equation}
Let $\delta>0$. We shall compare
$\int_{\Rd}\big|\nabla g\big|^2\,dx$
with $\lambda\big({B(x_0,r_0+\delta)}\big)$.
To this end, we truncate smoothly the function $g$ as follows.
Let $h$ be a $C^\infty$ function from $\R^d$ to $[0,1]$ satisfying
$$\forall x\in\Rd\qquad
h(x)\,=\,
\begin{cases}
1 & \text{ if }x\in B(x_0,r_0)\cr
0 & \text{ if }x\not\in B(x_0,r_0+\delta)\cr
\end{cases}\,,
$$
as well as the following bound on its gradient:
%, for some constant $c''(d)$ depending
%on the dimension $d$ only:
$$\forall x\in\Rd\qquad
\big|\nabla h(x)\big|\,\leq\frac{c}{\delta}\,.$$
We set $\gt=hg$ and we estimate the Dirichlet energy of $\gt$ as follows.
Let
$$A\,=\,B(x_0,r_0+\delta)\setminus B(x_0,r_0)\,,$$
we have
\begin{multline}
\label{kold}
\int_{\Rd}\big|\nabla \gt\big|^2\,dx\,=\,
\int_{B(x_0,r_0)}\big|\nabla g\big|^2\,dx\,+\,
\int_{A}\big|\nabla (hg)\big|^2\,dx\,\cr
\,\leq\,
\int_{\Rd}\big|\nabla g\big|^2\,dx\,+\,
\int_{A}\big|\nabla h\big|^2g^2\,dx\,+\,
\int_{A}2hg \,
\big|\nabla h\big|\,
\big|\nabla g\big|\,dx\,.
\end{multline}
Moreover
$$
\int_{A}\big|\nabla h\big|^2g^2\,dx
\,\leq\,
\Big(\frac{c}{\delta}\Big)^2
\int_{A\cap G}g^2\,dx\,.$$
Yet, using~\eqref{diff}, we have
\begin{equation}
\label{vold}
\big|A\cap G\big|
\,\leq\,
\big|G\setminus B(x_0,r_0)\big|
\,\leq\,
c\varepsilon^{24}
\,,
\end{equation}
and, by H\"older's inequality,
\begin{multline}
\label{hold}
\int_{A\cap G}g^2\,dx\,\leq\,
\int_{G\setminus B(x_0,r_0)}g^2\,dx%\hfill\cr
\,\leq\,
\Big(
\int_{\Rd}g^4\,dx
\Big)^{\frac{1}{2}}
\big|G\setminus B(x_0,r_0)\big|
^{\frac{1}{2}}
\,\leq\,
||g||_4^2
\sqrt{c\varepsilon^{24}}\,.
\end{multline}
We control the last integral of~\eqref{kold} as follows:
\begin{equation}
\label{last}
\int_{A}2 |hg| \,
\big|\nabla h\big|\,
\big|\nabla g\big|\,dx
\,\leq\,
2\frac{c}{\delta}
\int_{A\cap G} \,
\big|g\nabla g\big|\,dx%\hfill\cr
\,\leq\,
2\frac{c}{\delta}
\Big(\int_{\Rd}\big|\nabla g\big|^2\,dx\Big)^{\frac{1}{2}}
\Big(\int_{A\cap G}g^2\,dx\Big)^{\frac{1}{2}}
\,.
\end{equation}
We plug these
inequalities in~\eqref{kold}. Together with inequality~\eqref{aob}, we obtain
\begin{equation}
\int_{\Rd}\big|\nabla \gt\big|^2\,dx\,\leq\,
\int_{\Rd}\big|\nabla g\big|^2\,dx\,+\,
c
\frac{\varepsilon^{12}}{\delta^2}+
c
\frac{\varepsilon^{6}}{\delta}\,.
\end{equation}
The end of the argument is the same as in lemma~$A.1$ of \cite{BO}.
We denote by $\hp_0$ the normalized eigenfunction in $B(x_0,r_0+\delta)$, that is,
$$\forall x\in
B(x_0,r_0+\delta)\qquad
\hp_0(x)\,=\,
\Big(\frac{\rho_d}{r_0+\delta}\Big)^{d/2}
\phi\Big(
%x_0+
\frac{\rho_d}{r_0+\delta}(x-x_0)\Big)\,.$$
With a change of variables, we obtain that
$$\int_{\Rd}(\hp_0)^2\,dx\,=\,1$$
and
$$\int_{\Rd}\big|\nabla \hp_0\big|^2\,dx\,=\,
\Big(\frac{\rho_d}{r_0+\delta}\Big)^{2}
\int_{\Rd}\big|\nabla \phi\big|^2\,dx\,.$$
We have then, using~\eqref{vold},
\begin{multline}
\label{glob}
\big|\{\,g>0\,\}\big|
+
\frac{1}{2d}
\int_{\Rd}\big|\nabla g\big|^2\,dx\,\geq\,
\big|B(x_0,r_0)\big|-
%c(d) (2\rho_d)^d\omega_d
c\varepsilon^{24}
%\hfill\cr
%\hfill
+
\frac{1}{2d}
\int_{\Rd}\big|\nabla \gt\big|^2\,dx
-c
\frac{\varepsilon^{12}}{\delta^2}-
c
\frac{\varepsilon^{6}}{\delta}
\cr
\,\geq\,
\big|B(x_0,r_0+\delta)\big|
+\frac{1}{2d}
\int_{\Rd}\big|\nabla \hp_0\big|^2\,dx
%- c(d) (r_0)^{d-1}
-c\delta
%- c(d) (2\rho_d)^d\omega_d
-c\varepsilon^{24}
%\hfill\cr
%\hfill
+
\frac{1}{2d}
\int_{\Rd}\Big(\big|\nabla \gt\big|^2
-\big|\nabla \hp_0\big|^2\Big)\,dx
-c
\frac{\varepsilon^{12}}{\delta^2}-
c
\frac{\varepsilon^{6}}{\delta}
\,.
\end{multline}
%This implies furthermore that, for $\varepsilon$ small enough, we
%have $|r-\rho_d|<\eta/2$ and
%$|r-r_0|<\eta/2$, so that $r_0$ belongs to the interval
%$]\rho_d-\eta,\rho_d+\eta[$ and we can apply inequality~\eqref{mini}
%to $r_0$ as well.
By inequality~\eqref{adiff}, we have
\begin{equation}
\label{ulto}
\big|r_0+\delta-\rho_d\big|\,\leq\,
c\varepsilon^{24}+\delta\,.
\end{equation}
Thus, for $\varepsilon$ and $\delta$ small enough, the value
$r_0+\delta$ belongs to the interval
$]\rho_d-\eta,\rho_d+\eta[$ and we can apply \eqref{mini} to get
\begin{equation}
\label{intert}
\big|B(x_0,r_0+\delta)\big|
+\frac{1}{2d}
\int_{\Rd}\big|\nabla \hp_0\big|^2\,dx
\,\geq\,\psi
(r_0+\delta)
\,\geq\,
\psi(\rho_d)+c\big(r_0+\delta-\rho_d\big)^2\,.
\end{equation}
To estimate the second integral in~\eqref{glob}, we proceed as in lemma~$A.1$ of \cite{BO}.
We denote by $\delta_{12}$ the difference between the first and the second eigenvalues
of the Laplacian in
$B(x_0,r_0+\delta)$. We have
$$\displaylines{
\frac{1}{2d}
\int_{\Rd}\Big(\big|\nabla \gt\big|^2
-\big|\nabla \hp_0\big|^2\Big)\,dx
\,\geq\,
\delta_{12}||\gt-\hp_0||_2^2\,.
}
$$
Again, since $r_0+\delta$ belongs to the interval
$]\rho_d-\eta,\rho_d+\eta[$, which is included in
$]\rho_d/2,3\rho_d/2[$, there exists a constant $c>0$ depending on the dimension~$d$
only such that
$\delta_{12}\geq c$. Moreover we have
$$
\varepsilon\,\leq\,||g-\phi_{x_0}||_2\,\leq\,
||g-\gt||_2\,+\,
||\gt-\hp_0||_2\,+\,
||\hp_0-\phi_{x_0}||_2\,.
$$
Now, thanks to inequalities~\eqref{aob} and~\eqref{hold},
$$||g-\gt||_2^2\,=\,
\int_{\Rd}\big((1-h)g\big)^2\,dx\,\leq\,
\int_{G\setminus B(x_0,r_0)}g^2\,dx\,\leq\,
c {\varepsilon^{12}}\,.
$$
Using the fact that the eigenfunction $\phi$ has bounded support
and is Lipschitz, we have
$$\displaylines{
||\hp_0-\phi_{x_0}||_2^2
\,=\,
%\int_{\Rd}\big(\hp_0-\phi_{x_0}\big)^2\,dx\,\leq\,
\int_{\Rd}
\Bigg(
\Big(\frac{\rho_d}{r_0+\delta}\Big)^{d/2}
\phi\Big(
%x_0+
\frac{\rho_d}{r_0+\delta}(x-x_0)\Big)
-\phi(x-x_0)\Bigg)^2\,dx\hfill\cr
\,\leq\,
c\Bigg(\Big(\frac{\rho_d}{r_0+\delta}\Big)^{d/2}-1\Bigg)^2
+c\Bigg(\Big(\frac{\rho_d}{r_0+\delta}\Big)-1\Bigg)^2\cr
\,\leq\,
c\big({r_0+\delta}-{\rho_d}\big)^2
\,.
}
$$
The previous inequalities yield
$$\displaylines{
\frac{1}{2d}
\int_{\Rd}\Big(\big|\nabla \gt\big|^2
-\big|\nabla \hp_0\big|^2\Big)\,dx
\,\geq\,
c\Big(
\varepsilon\,-\,
c {\varepsilon^{12}}
\,-\,
c\big({r_0+\delta}-{\rho_d}\big)^2
\Big)^2
\,.}$$
Reporting in the inequality~\eqref{glob}, and using inequalities~\eqref{ulto}
and~\eqref{intert},
we get
$$\displaylines{
\big|\{\,g>0\,\}\big|
+
\frac{1}{2d}
\int_{\Rd}\big|\nabla g\big|^2\,dx\,\geq\,
\psi(\rho_d)
%+c\big(r_0+\delta-\rho_d\big)^2
+
c\Big(
c\varepsilon\,-\,
c {\varepsilon^{12}}
\,-\,
%c\big({r_0+\delta}-{\rho_d}\big)^2
c\big(c\varepsilon^{24}+\delta\big)^2
\Big)^2
%\hfill\cr
%- c (r_0)^{d-1}
-c\delta
%- c (2\rho_d)^d\omega_d
-c\varepsilon^{24}
-c
\frac{\varepsilon^{12}}{\delta^2}-
c
\frac{\varepsilon^{6}}{\delta}\,.
}$$
Choosing $\delta=\varepsilon^3$, we conclude that,
for $\varepsilon$ small enough,
$$\displaylines{
\big|\{\,g>0\,\}\big|
+
\frac{1}{2d}
\int_{\Rd}\big|\nabla g\big|^2\,dx\,\geq\,
\chi_d+
%+c\big(r_0+\delta-\rho_d\big)^2
c {\varepsilon^{2}}
\,.
}$$
Therefore the inequality stated in the lemma is satisfied for $\varepsilon$
small enough whenever the conditions~\eqref{fco} and~\eqref{sata}
are fulfilled.
\end{proof}

We shall extend slightly Lemma~\ref{A1}, in two ways. First, we will relax the
condition $||g||_2=1$, second, we will consider functions which are not $C^\infty$
on the whole space, but on the complement of a countable union of lines.
We could probably reach the Sobolev space $W^{1,2}(\Rd)$, however this won't be necessary
for our purpose.
\begin{corollary}
\label{vA1}
Let $D$ be a subset of $\Rd$ which is a countable union of lines.
Let $\varepsilon>0$.
If $g:\Rd\to\R^+$ is continuous and
	$C^\infty$ on $\Rd\setminus D$,
	and if
	$$\displaylines{
		1-\varepsilon^{49}\,\leq\,||g||_2\,\leq\,1
		+\varepsilon^{49}\,,\cr
	\forall{x\in\Rd}\quad||g-\phi_x||_2\,\geq\,2\varepsilon\,,}$$
	then, for $\varepsilon$ small enough,
$$\big|\{\,g>0\,\}\big|+\frac{1}{2d}
\int_{\Rd}\big|\nabla g\big|^2\,dx\,\geq\,\chi_d+\varepsilon^{49}\,.
$$
\end{corollary}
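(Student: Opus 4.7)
The plan is to deduce Corollary \ref{vA1} from Lemma \ref{A1} by renormalizing $g$ to have $\|g\|_2 = 1$ and by extending Lemma \ref{A1} from $C^\infty$ functions to continuous functions in $W^{1,2}(\Rd)$.

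For the renormalization, set $\widetilde g = g/\|g\|_2$, so that $\|\widetilde g\|_2 = 1$, $\{\widetilde g > 0\} = \{g > 0\}$, and $\int_{\Rd}|\nabla \widetilde g|^2\,dx = \|g\|_2^{-2}\int_{\Rd}|\nabla g|^2\,dx$. Since $\|\widetilde g - g\|_2 = \big|\|g\|_2 - 1\big| \leq \varepsilon^{49}$, the triangle inequality gives $\inf_{x\in\Rd}\|\widetilde g - \phi_x\|_2 \geq 2\varepsilon - \varepsilon^{49} \geq \varepsilon$ for $\varepsilon$ sufficiently small.

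Because $d \geq 3$, the lines making up $D$ have codimension at least two, so $D$ has vanishing $(d-1)$-dimensional Hausdorff measure. A standard removability argument (equivalently, $D$ has zero $W^{1,2}$-capacity) then shows that $\widetilde g \in W^{1,2}(\Rd)$ with distributional gradient equal almost everywhere to the classical one, and since $\widetilde g$ vanishes continuously outside the open set $G = \{\widetilde g > 0\}$, it belongs to $H^1_0(G)$. The proof of Lemma \ref{A1} then goes through \emph{verbatim} for $\widetilde g$: the Sobolev embedding, the quantitative Faber--Krahn inequality~\eqref{fkr} (applied via the Rayleigh characterization of $\lambda(G)$, for which $\widetilde g \in H^1_0(G)$ is an admissible test function), and the truncation step $\widetilde g \mapsto h\widetilde g$ by a smooth cutoff (which relies only on the product rule and integration by parts) are all valid in the Sobolev framework. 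Applying this extended form of Lemma \ref{A1} yields
$$|\{\widetilde g > 0\}| + \frac{1}{2d}\int_{\Rd}|\nabla \widetilde g|^2\,dx \,\geq\, \chi_d + \varepsilon^{48}.$$

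Finally we translate back to $g$. We may assume $\int_{\Rd} |\nabla g|^2\,dx \leq 4d\chi_d$, since otherwise the conclusion is immediate. Then $\int_{\Rd}|\nabla \widetilde g|^2\,dx$ is bounded by a constant depending only on $d$, and since $\big|\|g\|_2^2 - 1\big| \leq 3\varepsilon^{49}$,
$$\Big|\int_{\Rd}|\nabla g|^2\,dx - \int_{\Rd}|\nabla \widetilde g|^2\,dx\Big| \,=\, \big|\|g\|_2^2 - 1\big|\int_{\Rd}|\nabla \widetilde g|^2\,dx \,\leq\, C_d\,\varepsilon^{49}.$$
Combining,
$$|\{g > 0\}| + \frac{1}{2d}\int_{\Rd}|\nabla g|^2\,dx \,\geq\, \chi_d + \varepsilon^{48} - \frac{C_d}{2d}\varepsilon^{49} \,\geq\, \chi_d + \varepsilon^{49}$$
for $\varepsilon$ sufficiently small. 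The only mildly delicate step is the extension of Lemma \ref{A1} from $C^\infty$ to $W^{1,2}$ regularity; however this requires no new ideas beyond the observation that all the calculus manipulations in the original proof remain valid for Sobolev functions multiplied by smooth cutoffs.
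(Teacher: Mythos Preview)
Your proof is correct and follows essentially the same approach as the paper: normalize $g$ to $\widetilde g = g/\|g\|_2$, check the distance condition $\|\widetilde g - \phi_x\|_2 \geq \varepsilon$ survives, observe that the proof of Lemma~\ref{A1} goes through for functions of this regularity, and translate the conclusion back to $g$. The only cosmetic differences are that the paper is terser about the regularity point (simply asserting that the proof of Lemma~\ref{A1} applies directly) and, in the final step, factors out $(1-\varepsilon^{49})^{-2}$ from the whole functional rather than bounding the difference of the Dirichlet terms under the a~priori assumption $\int|\nabla g|^2 \le 4d\chi_d$; both bookkeepings are equivalent.
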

\begin{proof}
	The condition on the regularity is not problematic, in fact the proof of Lemma~\ref{A1}
	can be used to deal directly with these functions. The only thing we need to do is to
	rescale the function $g$ in order to have a function of $L^2$ norm one, for which we
	can use lemma~\ref{A1}.
	So let $g$ be a function satisfying the hypothesis of corollary~\ref{vA1} and let us set
	$$h\,=\,\frac{1}{\|g\|_2}g\,.$$
	We have obviously $\|h\|_2=1$. Moreover, for any $x\in \Rd$,
\begin{multline*}
	2\varepsilon\,\leq\,
	||g-\phi_x||_2\,\leq\,
	||g-h||_2\,+\,
	||h-\phi_x||_2
	\cr\,\leq\,
	\Big|1-
	\frac{1}{\|g\|_2}\Big|
	||g||_2
	\,+\,
	||h-\phi_x||_2
	\,\leq\,
	\varepsilon^{49}
	%(1+\varepsilon^{49})
	\,+\,
	||h-\phi_x||_2\,.
\end{multline*}
	We can thus apply the inequality of Lemma~\ref{A1} to $h$. Moreover, we have
\begin{multline*}
\big|\{\,h>0\,\}\big|+\frac{1}{2d}
\int_{\Rd}\big|\nabla h\big|^2\,dx\,=\,
\big|\{\,g>0\,\}\big|+\frac{1}{2d
	(||g||_2)^2
	}
\int_{\Rd}\big|\nabla g\big|^2\,dx\,
\cr
\,\leq\,
	\frac{1}{(1-\varepsilon^{49})^2}\Big(
\big|\{\,g>0\,\}\big|+\frac{1}{2d
	}
	\int_{\Rd}\big|\nabla g\big|^2\,dx\Big)\,.
\end{multline*}
In the end, we get that, for $\varepsilon$ small enough,
$$
\big|\{\,g>0\,\}\big|+\frac{1}{2d
	}
	\int_{\Rd}\big|\nabla g\big|^2\,dx\,\geq\,
	(1-\varepsilon^{49})^2(\chi_d+\varepsilon^{48})\,,
	$$
	and the last term is larger than
$\chi_d+\varepsilon^{49}$ for $\varepsilon$ sufficiently small.
\end{proof}

\subsection{Application of Faber--Krahn: proof of Theorem \ref{T:Prop31_intro}}
The time has come to apply the quantitative Faber--Krahn inequality to the random walk in order to prove
Theorem \ref{T:Prop31_intro}.
%, which we restate here for convenience.
Note that
Theorem \ref{T:Prop31_intro}
is the analogue of Proposition 3.1 in \cite{BO}.

%Let $\varepsilon>0$ and let $\cL$ be the set of
%\begin{proposition}\label{Prop41}
%Let $\cL_n$ be the set of
%functions
	%%from~$\Rd$ to $\R^+$
	%defined by
%$$\cL_n\,=\,\big\{\,
%%\ell:\Rd\to\R^+: %g\text{ is }C^\infty,\,
%\ell\in L^1(\Rd):
%||\ell||_1=1,\, \ell\geq 0,\,
%\inf_{x\in\Rd}\,||\ell-(\phi_x)^2||_1\,\geq 1/n^{1/800}\,\big\}\,.$$
%%$$\cL(\varepsilon)\,=\,\big\{\,
%%g:\Rd\to\R^+: g\text{ is }C^\infty,\,
%%||g||_2=1,\, g\geq 0,\,
%%\inf_{x\in\Rd}\,||g-\phi_x||_2\,\geq\varepsilon\,\big\}\,.$$
%For $n$ large enough, we have
%\begin{equation}
%\label{ouf}
%E\big(e^{-|R_N|};
%\ell_N\in\cL_n)\,\leq\,
%\exp\Big(-
%n^d\,
	%\chi_d -  n^{d-\frac{1}{17}}\Big)
	%\,.
%\end{equation}
%\end{proposition}
%\note{Very important I changed + into - in second term of exponential!}

We shall apply theorem~\ref{cupbo} to the set
$\cL_n$.
To this end, we consider a function $g$ satisfying the constraints of the infimum
appearing in theorem~\ref{cupbo}.
So,
let $g$ be a continuous function
$\Rd\to\R^+$
such that
$g$ is $C^\infty$ on $\Rd\setminus \frac{1}{n}D^d$\,,
%:g\in L^2(\Rd),\,\quad g\geq 0,\,\quad
%1- \frac{2}{n^{1/16}}\,\leq\, \big\| g\big\|_2\,\leq\,1\,,
	%\quad \big|\,\supp\hht_n\,\big|\,\leq\, (2^d+1)2\kappa,\,
and there exists
$\ell\in\cL_n$ satisfying
$$	\big\|g^2- \ell\big\|_{L^1(\Rd)}
\,\leq\,
	%8d^22^{2d} \frac{\sqrt{3\kappa}}{n} +
	\frac{3}{n^{1/16}}\,.$$
	%\,+\,
	%\frac{2}{\sqrt{n}}
	%\big((2^d+1)2\kappa\big)^{1/2}
	This implies in particular that
	 $$1-\frac{3}{n^{1/16}}\,\leq\,
\big\| g\big\|_2\,\leq\,
	 1+\frac{3}{n^{1/16}}
	\,.$$
	Since $\ell$ belongs to
$\cL_n$,
then we have, for any $x\in\Rd$, for $n$ large enough,
\begin{align*}
	\frac{1}{n^{1/800}}
	&\,\leq\,
||\ell-(\phi_x)^2||_1\,\leq\,
||\ell-g^2||_1\,+\,
||g^2-(\phi_x)^2||_1\,\cr
	 &\,\leq\,
	\frac{3}{n^{1/16}}\,+\,
\int_{\Rd}
	\big|g -\phi_x\big|\times
	\big|g +\phi_x\big|
	\,dx\cr
	 &\,\leq\, \frac{3}{n^{1/16}}\,+\,
||g-\phi_x||_2\times
||g+\phi_x||_2\cr
	 &\,\leq\, \frac{3}{n^{1/16}}\,+\,
3||g-\phi_x||_2
	\,.
\end{align*}
It follows that, for $n$ large enough,
$$\forall x\in\Rd\qquad ||g-\phi_x||_2\,\geq\,
	\frac{1}{4n^{1/800}}\,.$$
	We apply the inequality of corollary~\ref{vA1} to $g$ with $\varepsilon^{49}=3/{n^{1/16}}$.
	For $n$ large enough, we have
\begin{equation}
\label{zntert}
\big|\{\,g>0\,\}\big|+\frac{1}{2d}
\int_{\Rd}\big|\nabla g\big|^2\,dx\,\geq\,\chi_d+
	\frac{1}{n^{1/16}}\,.
\end{equation}
In order to exploit the inequality of
theorem~\ref{cupbo}, we will first restrict the set of the functions $g$ which are relevant in the infimum.
Let $\kappa\geq 1$ and let $\delta>0$ be such that
$$\omega_d
\Big(\frac{\lambda_d}{{2}\delta}\Big)^{d/2}\,\geq\,2\kappa\,.$$
Let us set
$$G\,=\,\big\{\,x\in \Rd:g(x)>0\,\big\}\,.$$
Let $r>0$ be such that $|G|=r^d\omega_d$.
By the classical Faber--Krahn inequality~\ref{cfkr},
we have
$$\int_{\Rd}\big|\nabla g\big|^2\,dx\,\geq\,
\lambda(G)\,\big( \|g\|_2\big)^2\,\geq\,
\frac{\lambda_d}{r^{2}}
	 \Big(1-\frac{3}{n^{1/16}}\Big)^2
\,.$$
Suppose that
$$\int_{\Rd}\big|\nabla g\big|^2\,dx\,\leq\,\delta\,.$$
We would then have, for $n$ large enough,
$$r^2\,\geq\,
\frac{\lambda_d}{{2}\delta}\,,\qquad
|G|\,=\,
\omega_dr^d\,\geq\,
\omega_d
\Big(\frac{\lambda_d}{{2}\delta}\Big)^{d/2}\,\geq\,2\kappa
\,,$$
and for such a function $g$, the functional in the infimum is larger or equal than $2\kappa$.
This will also be the case if the Dirichlet energy of $g$ is too large, say larger than $4d\kappa$.
So, up to terms which are negligible compared to
$\exp(-\kappa n^d)$, we can restrict ourselves to functions $g$ such that
$$\delta\,\leq\,\int_{\Rd}\big|\nabla g\big|^2\,dx\,\leq\,4d\kappa\,.$$
We have then, for $n$ large enough,
\begin{multline}
	\label{simo}
\bigg(\max\Big(
\sqrt{
\int_{\Rd}\big|\nabla g\big|^2\,dx
}-
	\frac{\sqrt{d}}{n^{1/8}} ,0\Big) \bigg)^2
	\,\geq\,
\int_{\Rd}\big|\nabla g\big|^2\,dx-
	\frac{2\sqrt{d}}{n^{1/8}}\sqrt{4d\kappa}%\cr
	\,\geq\,
\int_{\Rd}\big|\nabla g\big|^2\,dx-
\frac{1}{n^{1/9}}
	\,.
\end{multline}
We apply theorem~\ref{cupbo} and we use inequality~\eqref{simo} to simplify
the infimum.
We have, for $n$ large enough,
\begin{multline*}
%\label{fnaly}
E\big(e^{-|R_N|};
\ell_N\in\cL_n)\,\leq\,
%\hfill
4\exp(-\kappa n^d)+
%\hfill\cr
\hfill
\cr
\qquad
\exp\Bigg(2n^{d-1/8}-
n^d\,
(1-n^{-1/4})
	\inf\,\Big\{\,
	\big|\,\supp\,g\,\big|
	%\,-\,
	%{2^{7d}}ma?classica
	%{3\kappa}{n^{d-1}}
%\geq\frac{n^\rho}{N}
+
%\hfill \cr\hfill
\frac{1}
{2d}
\int_{\Rd}\big|\nabla g\big|^2\,dx-
\frac{1}{2dn^{1/9}}
%\Big( \sqrt{\cE(h)}-
%\frac{1}{n^{1+c}} \Big)^2
\hfill\cr
\hfill
:
	g\text{ is continuous } \Rd\to\R^+
	\text{ and }C^\infty\text{ on }\Rd\setminus \frac{1}{n}D^d,\,
%:g\in L^2(\Rd),\,\quad g\geq 0,\,\quad
	%\frac{2}{\sqrt{n}}
%	\big((2^d+1)2\kappa\big)^{1/2}\,-\,
	 %8d^22^{2d} \frac{\sqrt{3\kappa}}{n} -
	%\quad \big|\,\supp\hht_n\,\big|\,\leq\, (2^d+1)2\kappa,\,
\quad
%\cr\hfill
\exists\,\ell\in\cL\quad
	\Big\|g^2- \ell\Big\|_{L^1(\Rd)}
\,\leq\,
	%8d^22^{2d} \frac{\sqrt{3\kappa}}{n} +
	\frac{3}{n^{1/16}}
	%\,+\,
	%\frac{2}{\sqrt{n}}
	%\big((2^d+1)2\kappa\big)^{1/2}
\,\Big\}\Bigg)\,.
\end{multline*}
We can finally use inequality~\eqref{zntert}!
We obtain
\begin{multline*}
%\label{fnaly}
E\big(e^{-|R_N|};
\ell_N\in\cL_n)\,\leq\,
%\hfill
4\exp(-\kappa n^d)+
%\hfill\cr
%\hfill
%\cr
%\qquad
\exp\Bigg(2n^{d-1/8}-
n^d\,
(1-n^{-1/4})
	\Big(
\chi_d+ \frac{1}{n^{1/16}}
-\frac{1}{2dn^{1/9}}
	\Big)\Bigg)\,.
\end{multline*}
By choosing $\kappa> \chi_d$, for $n$ large enough, we obtain
the statement of theorem \ref{T:Prop31_intro}.

%\section{Removal of the excursions}
\section{Filling the ball and proof of Theorem \ref{T:filledball_intro} }

\label{S:filling}

We now come to the proof of Theorem \ref{T:filledball_intro}. We will show that with high probability under $\wP_N$ a ball of approximately the right radius is entirely filled. The analogue of this result in \cite{BO} (Proposition 4.1), is however only valid for $d=2$, whereas the proof below holds for any $d \ge 2$. Fix $x \in \R^d$, and let
$$
\cG_{loc,x} = \left\{ \|\ell_N - (\phi_x)^2 \|_1 \le 1/ n^{s}\right \}
$$
with $s = 1/800$ is as in Theorem \ref{T:Prop31_intro} and we recall that the rescaled local time profile $\ell_N$ is defined in \eqref{D:rescaledLN}. For $\kappa>0$ and $x \in \R^d$,
$$
\cR_{\kappa,x} = \big\{\forall  z \in B(x, \rho_d  (1 - n^{- \kappa}) ) \cap \Z_n^d,\quad \ell_N(z)>0 \big \}.
$$
This is the event that the Euclidean ball of radius $\rho_d n (1 - n^{-\kappa})$ around $nx$, restricted to the unscaled lattice $\mathbb{Z}^d$, is filled by the walk. The main result of this section, which is the analogue of Proposition 4.1 in \cite{BO}, is the following.

  \begin{theorem}\label{T:filledball} There exists $\kappa>0$ such that for all $n$ (or equivalently $N$) large enough,
  $$
  \frac1{Z_N}\E( e^{ - |R_N|} ; \cG_{loc,x} ; (\cR_{\kappa,x})^c) \le \exp( - n^{\kappa}).
  $$
\end{theorem}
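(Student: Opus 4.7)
\medskip\noindent\textbf{Proof proposal for Theorem \ref{T:filledball}.} By translation invariance assume $x=0$, and fix a small $\kappa>0$; set the mesoscopic scale $m=n^{1-2\kappa}$. The plan is to cover the target region $B(0,\rho_d n(1-n^{-\kappa}))$ by $O(n^d/m^d)$ mesoscopic balls $B(z,m)$ and show, with a union bound, that each such ball is entirely filled by $R_N$ with error $\exp(-n^{\kappa'})$ for some $\kappa'>0$. Throughout, everything is conditioned on $\cG_{loc,0}$, which deterministically pins $\ell_N$ near $(\phi_0)^2$ in $L^1$.

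\medskip\noindent\emph{Step 1 (time spent in mesoscopic balls).} I first observe that on $\cG_{loc,0}$, the walk spends a large, deterministic amount of time in every $B(z,m)$ with $z$ in the relevant region. Indeed, because $\phi_0$ vanishes linearly at $\partial B(0,\rho_d)$, one has $\phi_0^2(y/n)\gtrsim n^{-2\kappa}$ throughout $B(z,m)$, so
\[
L_N(B(z,m)) \;=\; (N/n^d)\int_{B(z/n,m/n)}\ell_N \;\gtrsim\; m^d n^{2-2\kappa}
\]
up to a polynomial error of order $n^{-s}\cdot n^{d+2}$, which is much smaller because $\kappa$ and $s=1/800$ are chosen so that $d+2-s \ll$ the above. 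This is handled in Section~\ref{S:time_meso}.

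\medskip\noindent\emph{Step 2 (bridges via change of measure).} Next I construct many disjoint \emph{bridges}: intervals of duration $m^2$ during which the walk starts and ends in $B(z,m/2)$ and stays inside $B(z,m)$. Under the bare measure $P$ this event has probability bounded below by a positive constant whenever one starts in the bulk $B(z,m/2)$. To transfer this to $\wP_N$, I perform a change of measure (implemented in continuous time to avoid parity pitfalls) whose Radon--Nikodym derivative has exponential cost of the same order $\exp(-\chi_d n^d)$ as the partition function $Z_N$ from Proposition~\ref{P:lbZ}. Since $\cG_{loc,0}$ pins $|R_N|$ up to a small additive error, $e^{-|R_N|}$ times this cost is comparable to $Z_N$, and standard binomial large deviations (using the Step~1 lower bound on the number of ``attempts'' the walk makes in $B(z,m)$) produce at least $K_N := m^{d-2}n^{2-2\kappa}/\polylog$ disjoint bridges with $\wP_N$-probability $\geq 1-\exp(-n^{\kappa''})$.

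\medskip\noindent\emph{Step 3 (reduction to bridge avoidance of arbitrary $\cX$).} On the event that $B(z,m)$ is not filled, there exists some nonempty set $\cX\subset B(z,m)$ of avoided points. I condition on everything outside $B(z,m)$ and on the endpoints and number of bridges; then the bridges are conditionally independent, and the probability that every bridge avoids $\cX$ factorises. Summing over $k\geq 1$ and over all $\cX\subset B(z,m)$ with $|\cX|=k$ gives
\[
\wP_N\bigl(B(z,m)\not\subset R_N \mid \text{bridges}\bigr)
\;\leq\; \sum_{k\geq 1}\binom{m^d}{k}\,\bigl(\sup_{|\cX|=k}\mathsf{b}(\cX)\bigr)^{K_N},
\]
where $\mathsf{b}(\cX)$ is the probability that a single bridge avoids $\cX$.

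\medskip\noindent\emph{Step 4 (uniform bridge transition estimates).} To bound $\mathsf{b}(\cX)$ I need transition-probability bounds for bridges that are uniform in the endpoints. Because endpoints are in $B(z,m/2)$, a gambler's ruin/Green-function argument in the ball gives, for a point $y$ at distance $r$ from $\partial B(z,m)$ and time $t$ in the bulk of $[0,m^2]$, a bridge transition density of order $(r/m)^2\cdot m^{-d}$. This is the content of Section~\ref{S:bridge_trans}.

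\medskip\noindent\emph{Step 5 (moment method and annular decomposition).} The main obstacle, and the most delicate step, is to bound $\mathsf{b}(\cX)$ uniformly in the \emph{geometry} of $\cX$, depending only on $k=|\cX|$. Partition $B(z,m)$ into $O(\log m)$ concentric annuli $A_j$ at distance $\asymp 2^j$ from $\partial B(z,m)$ and choose $j^*$ maximising $|\cX\cap A_j|$, so $|\cX_{j^*}|\geq k/\log m$. By the Step~4 density estimate, for a point $y\in A_{j^*}$ the expected bridge time at $y$ is $\asymp (2^{j^*}/m)^2$, and by monotonicity of the Green function in $\mathbb{Z}^d$ the expected bridge time in $\cX_{j^*}$ starting from a point of $\cX_{j^*}$ is at most proportional to $(2^{j^*})^2$. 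The $(2^{j^*})^2$ factors cancel in a second-moment (Paley--Zygmund) argument, yielding
\[
1-\mathsf{b}(\cX) \;\geq\; c\, |\cX_{j^*}|^{1-2/d}\, m^{2-d}/\log m
\;\geq\; c\, k^{1-2/d}\, m^{2-d}/(\log m)^{c'}
\]
independently of the geometry of $\cX$.

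\medskip\noindent\emph{Step 6 (putting everything together).} Plugging this uniform bound into the sum of Step~3 and optimising over $k$, the term $\binom{m^d}{k}\exp(-c K_N k^{1-2/d} m^{2-d}/\polylog)$ is dominated by its $k=1$ contribution, giving an overall bound $\exp(-K_N m^{2-d}/\polylog) = \exp(-n^{2-2\kappa - (d-2)(1-2\kappa)\cdot(\text{sign terms})}/\polylog)$, which is of the form $\exp(-n^{\kappa_0})$ for some $\kappa_0>0$ provided $\kappa$ is small enough. A final union bound over the $O(n^d/m^d)$ mesoscopic centres $z$ gives Theorem~\ref{T:filledball}. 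The main difficulty, by a clear margin, is Step~5: securing a bound that is genuinely uniform over the combinatorial configuration of $\cX$ and not just over its cardinality — this is where the annular decomposition and the cancellation of the $(2^{j^*})^2$ factors are essential.
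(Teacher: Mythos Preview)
Your outline is the paper's strategy essentially verbatim, and Steps~1--4 and the overall structure of Step~6 are correct. There are, however, two concrete slips.

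\medskip
\textbf{Step 5: wrong output of the moment computation.} You describe the annular decomposition and the cancellation of the $(2^{j^*})^2$ factors exactly as in the paper, but the conclusion you write,
\[
1-\mathsf b(\cX)\ \gtrsim\ k^{1-2/d}\,m^{2-d}/(\log m)^{c'},
\]
does not follow from those ingredients. With $r=2^{j^*}$, the first moment of the bridge local time in $\cX_{j^*}$ is $\asymp |\cX_{j^*}|\,(r/m)^2\,m^{2-d}=|\cX_{j^*}|\,r^2/m^d$ (your ``$(r/m)^2$'' is missing the factor $m^{2-d}$), while the conditional first moment is $\lesssim r^2\,\mathrm{polylog}$ (this is the time in the \emph{whole} annulus, which is what the paper bounds via Lemma~\ref{L:HKexc}, not a Green-function rearrangement). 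The $r^2$ factors cancel and one obtains
\[
1-\mathsf b(\cX)\ \gtrsim\ \frac{|\cX_{j^*}|}{m^d\,\mathrm{polylog}}\ \ge\ \frac{k}{m^d\,(\log m)^{c'}},
\]
which is precisely Lemma~\ref{L:exchit}. The formula $k^{1-2/d}m^{2-d}$ is the \emph{bulk heuristic} from the paper's outline (ignoring boundary effects and using rearrangement on the free Green function for the denominator); it is not what the annular argument produces, and in particular your ``monotonicity of the Green function in $\Z^d$'' does not apply directly to the bridge kernel with boundary.

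\medskip
\textbf{Step 3/6: the missing $e^k$.} On the event $\cB_\cX$ one has $e^{-|R_N\cap B|}=e^{-|B|+k}$, so after conditioning on the exterior the summand carries an extra $e^k$ that must be beaten together with the entropy $\binom{m^d}{k}\le e^{Ck\log m}$; see Lemma~\ref{L:badmiss} and the proof of Theorem~\ref{T:filledball} just below it.

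\medskip
With the corrected bound $k/(m^d\,\mathrm{polylog})$ in Step~5, Step~6 still closes: the exponent of the $k$-th term is
\[
k\bigl(C\log m+1\bigr)\ -\ c\,K_N\,\frac{k}{m^d}\ =\ k\bigl(C\log m+1\bigr)\ -\ c\,\frac{n^{2\kappa}}{\mathrm{polylog}}\,k,
\]
which is $\le -c\,n^{2\kappa}k/\mathrm{polylog}$ for large $n$, and the sum over $k$ together with the union bound over mesoscopic centres gives the claimed $\exp(-n^{\kappa})$.
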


\subsection{Time spent in mesoscopic balls}
\label{S:time_meso}

We recall that constants $C, c>0$ denote constants depending only on the dimension, whose precise numerical value is allowed to change from line to line. We will use Landau's notations $O(\cdot), o(\cdot)$ where the implicit constants are allowed to depend on the dimension only. The notation $\lesssim, \gtrsim, \asymp$ denote inequalities and equality up to constant respectively: thus $a_n \lesssim b_n$ means $a_n \le C b_n$ for some constant $C$ (depending only on the dimension). $\kappa$ will be a small parameter eventually chosen in a way that depends only on the dimension, so at the end of the proof we will be able to absorb its value in a generic constant $c>0$, but we will refrain from doing so during the course of the proof.

Without loss of generality we take $x = 0$ in the rest of the proof of Theorem \ref{T:filledball}; write $\cG_{loc}$ and $\cR_\kappa$ for $\cG_{loc, x}$ and $\cR_{\kappa, x}$. Fix $z \in B( 0, \rho_d n(1- n^{ - \kappa}))$. We let $m = \rho_d n^{1- 2\kappa}$ where $\kappa$ is sufficiently small; $m$ is a mesoscopic scale (quite close to $n$) and our first task will be to control the amount of time spent in a ball of that scale around the point $z$. Let
$$
B = B(z, m) \cap \Z^d
$$
be the discretised ball of radius $m$ around $z$.
Note that $B \subset B(0, \rho_d n(1- n^{-\kappa}/2))$.
The idea will be to condition on some information outside $B$ including the local time of the random walk on every site outside of $B$. Let $B^\circ$ denote the ball of radius $m/2$ around $x$, i.e., $B^\circ = B(x, m/2) \cap \Z^d$.

We make the following simple deterministic observations (recall that we expect $L_N(x)$ to be typically of order $n^2$ at any point in the bulk of $R_N$).
\begin{lemma}\label{L:det_time}
If $\cG_{loc}$ holds, then necessarily, for some sufficiently small but fixed $\kappa$ and $\delta>0$ depending only on the dimension $d$, we have
\begin{equation}\label{timeBcirc}
L_N(B^\circ) \ge \delta m^dn^{2- 2\kappa}.
\end{equation}
Furthermore,
\begin{equation}\label{range_det}
|R_N| \ge \omega_d( \rho_d n)^{d} - c n^{d-\eps}
\end{equation}
 where $\eps$ depends only $d$.
\end{lemma}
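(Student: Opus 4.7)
The plan is to extract both bounds directly from the $L^1$ closeness $\|\ell_N - \phi_0^2\|_1 \le n^{-s}$ (with $s=1/800$) guaranteed by $\cG_{loc}$, combined with the standard pointwise description of the Dirichlet eigenfunction $\phi_0$ near the boundary of its support. I would take as a single external fact that $\phi_0$ is smooth on $B(0,\rho_d)$, strictly positive in the interior, and vanishes to exact first order on $\partial B(0,\rho_d)$ by the Hopf boundary point lemma (the ball being smooth); concretely, there exists $c>0$ depending only on $d$ with
$$\phi_0(y) \;\ge\; c\,(\rho_d - |y|), \qquad y \in B(0,\rho_d).$$

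For part \eqref{timeBcirc}, first I would transfer the claim to rescaled coordinates. Set $B' = B(z/n, m/(3n)) \subset \R^d$. For any $x \in B'$, the coordinates of $\lfloor nx\rfloor$ lie within $m/3 + \sqrt{d} < m/2$ of $z$ once $n$ is large, so $\lfloor nx \rfloor \in B^\circ$; since $\ell_N$ is constant on unit cubes of side $1/n$ indexed by $\lfloor nx\rfloor$, this yields
$$N\int_{B'} \ell_N(x)\,dx \;\le\; L_N(B^\circ).$$
On $\cG_{loc}$ the triangle inequality gives $\int_{B'} \ell_N \ge \int_{B'} \phi_0^2 - n^{-s}$. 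Since $|z/n| \le \rho_d(1 - n^{-\kappa})$ and the radius of $B'$ is $\rho_d n^{-2\kappa}/3$, every $y \in B'$ satisfies $\rho_d - |y| \ge \rho_d n^{-\kappa}/2$ for $\kappa$ small and $n$ large, hence $\phi_0(y)^2 \gtrsim n^{-2\kappa}$ throughout $B'$, giving
$$\int_{B'} \phi_0^2 \;\gtrsim\; n^{-2\kappa}\,\Leb(B') \;\gtrsim\; n^{-2\kappa(d+1)}.$$
Choosing $\kappa$ so that $2\kappa(d+1) < s$ makes the $L^1$ error $n^{-s}$ negligible, so $\int_{B'} \ell_N \gtrsim n^{-2\kappa(d+1)}$, and multiplying by $N = n^{d+2}$ yields $L_N(B^\circ) \gtrsim n^{d+2-2\kappa(d+1)} \asymp m^d n^{2-2\kappa}$, which is \eqref{timeBcirc}.

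For \eqref{range_det}, I would promote the $L^1$ control to a pointwise one via Markov's inequality. Put $\eta = n^{-s/2}$, so $A := \{x \in \R^d : |\ell_N(x) - \phi_0(x)^2| > \eta\}$ has Lebesgue measure at most $n^{-s}/\eta = n^{-s/2}$. On $B(0,\rho_d) \setminus A$, any point with $\phi_0^2 > \eta$ automatically has $\ell_N > 0$. By the boundary bound on $\phi_0$, the set $\{\phi_0^2 \le \eta\} \cap B(0,\rho_d)$ sits inside the shell $\{\rho_d - |y| \le \sqrt{\eta}/c\}$ and so has measure $\lesssim \sqrt{\eta} = n^{-s/4}$. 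Therefore
$$\Leb\big\{x \in B(0,\rho_d) : \ell_N(x) > 0 \big\} \;\ge\; \omega_d \rho_d^d - C n^{-s/4}.$$
Finally $\{\ell_N > 0\}$ is the union over $y \in R_N$ of cubes of side $1/n$, so its Lebesgue measure equals $n^{-d}|R_N|$; scaling up by $n^d$ gives $|R_N| \ge \omega_d(\rho_d n)^d - C n^{d-\eps}$ with $\eps = s/4 = 1/3200$.

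The only real point of technical care is to fix $\kappa$ small enough relative to $s$ in part \eqref{timeBcirc} so that the $L^1$ error $n^{-s}$ of $\cG_{loc}$ is dominated by the integral of $\phi_0^2$ over the mesoscopic ball $B'$, which has volume only $n^{-2\kappa d}$ and on which $\phi_0^2$ is only of order $n^{-2\kappa}$; any $\kappa < s/(2(d+1))$ works. The discrete-to-continuous boundary corrections produce only lower-order surface terms of size $O(m^{d-1})$ and cause no difficulty.
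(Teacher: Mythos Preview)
Your proposal is correct and follows essentially the same approach as the paper: both arguments use the boundary behaviour $\phi_0(y)\asymp \rho_d-|y|$ together with the $L^1$ bound $\|\ell_N-\phi_0^2\|_1\le n^{-s}$, first to force $\int_{B^\circ/n}\ell_N\gtrsim n^{-2(d+1)\kappa}$ provided $2(d+1)\kappa<s$, and then (for the range) to show the eigenfunction mass cannot be absorbed by the small exceptional set. The paper only sketches the second part as ``a similar argument,'' whereas you make the Markov-inequality step and the identification $\Leb\{\ell_N>0\}=n^{-d}|R_N|$ explicit; your use of the slightly shrunken ball $B'=B(z/n,m/(3n))$ to handle the floor in $\ell_N$ is also a bit more careful than the paper's corresponding line.
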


\def\ph{\varphi}
\begin{proof}
Let $\ph = \phi^2$. Note that as $z \to \partial B(0, \rho_d)$, for some constant $C>0$ depending only on the dimension,
\begin{equation}\label{radialderivative}
\ph(z) \sim C d(z)^2, \text{ where } d(z) = \dist_{\R^d} (z, \partial B(0, \rho_d)).
\end{equation}
 Indeed recall that $\phi$, the first eigenfunction in $B(0, \rho_d)$, has nonzero normal derivative on $\partial B(0, \rho_d)$.

Let $y \in B$. Then
$\dist (y, \partial B(0, \rho_d n)) \ge \rho_d n^{1- \kappa}/2$ and so $d ( y / n)  \ge \rho_d /(2 n^{\kappa})$.
%we have that for every $y \in B$, $\pi(y) \gtrsim 1/L^{d + 2 \kappa}$.
Now suppose for contradiction that $L_N(B^\circ) \le \delta  m^dn^{2- 2\kappa}$. Then necessarily, by definition of $\ell_N$ as a function on $\R^d$ in \eqref{D:rescaledLN},
$$
\int_{(B^\circ/n)} \ell_N (y) dy \le \sum_{y \in \Z^d \cap B^\circ} \frac1{n^{d+2}}L_N(y) = \frac{L_N(B^\circ) }{ n^{d+2}} \le \delta (m/n)^d n^{-2\kappa} =\delta \rho_d^d n^{ - 2(d+1) \kappa}
$$
whereas
$$
\int_{(B^\circ/n)} \ph(y) dy \ge \omega_d (\tfrac{m/2}{n})^d C \rho_d^2 n^{-2 \kappa}/4 = (\tfrac{C \rho_d^{d+2}\omega_d }{2^{2+d}}) n^{ - 2(d+1) \kappa}.
$$
Thus if we take $\delta = C \rho_d^{d+2}\omega_d / 2^{3+d}$,
$$
\| \ell_N - \ph\|_1 \ge \Big| \int_{(B^\circ/n)} \ell_N(y) dy - \int_{(B^\circ/n)} \ph(y)dy \Big| \ge ( \tfrac{C \rho_d^{d+2}\omega_d}{2^{3+d}}) n^{- 2(d+1) \kappa}.
$$
Hence if $\kappa$ is small enough that $2(d+1) \kappa \le s$ we see that this cannot hold at the same time as $\cG_{loc}$. This shows \eqref{timeBcirc} with this choice of parameters.

The proof of \eqref{range_det} follows a similar argument.
\end{proof}

%Furthermore, on $\cG_{loc}$, note that deterministically

\subsection{Bridges in mesoscopic balls}

\label{S:bridge_meso}

Let us call an interval of time $[t_1, t_2]$ a \textbf{bridge} if $X[t_1, t_2] \subset B$ and $X_{t_1}, X_{t_2} \in B^\circ$. (We warn the reader that this differs from the more standard notion of bridge). We call $a = X_{t_1}$ and $b = X_{t_2}$ its endpoints and $t_2 - t_1$ its length; note that we require that $a, b \in  B$ be far away from $\partial B$; in fact we require them to be in $B^\circ$.
We will consider bridges of length at least $m^2$ and our first goal will be to show that there are sufficiently many such disjoint bridges.

   Let $\cG_{br} = \cG_{br, \delta} $ be the event that there are at least $a\delta n^{2- 2\kappa} m^{d-2}$ bridges of length $m^2$, where $\delta $ is as in Lemma \ref{L:det_time}, and where $a $ is a suitable constant depending also only on the dimension, which will be chosen below.

\begin{lemma}
  \label{L:numexc}
We can choose $\kappa$ sufficiently small and $a>0$, depending only on the dimension, so that
 $$
 \E ( e^{ - |R_N|}; \cG_{loc}; (\cG_{br})^c  ) \le Z_N \exp( - c n^{d - 6\kappa}) ,
 $$
 for some constant $c>0$ depending only on $d$.
\end{lemma}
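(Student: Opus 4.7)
Proof plan for Lemma \ref{L:numexc}.

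The overall strategy is a change of measure designed so that its Radon--Nikodym cost exactly offsets the energetic gain $e^{-|R_N|}$ against $Z_N$. Let $\phi$ be the principal Dirichlet eigenfunction on $B(0,\rho_d)$ (as in Proposition \ref{P:lbZ}), rescale it to $\psi(y)=\phi(y/n)$ on $\Zd$, smoothly cut off near the boundary, and use the Donsker--Varadhan martingale $M_n=\prod_{k<n}\psi(S_k)/V\psi(S_k)\cdot\psi(S_n)$ of Section \ref{clama} to define a tilted measure $Q$ with $dP/dQ\mathbin{\upharpoonright}\cF_N$ controlled by $\exp\!\big((\lambda/(2\rho_d^2))n^d+O(n^{d-1})\big)$. (Following the remark in the paper, it is technically convenient to first pass to a continuous-time version and perform the tilt there, then return to discrete time.) Under $Q$, the walk behaves essentially like a confined diffusion in $B(0,\rho_d n)$.

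Next, combine the Radon--Nikodym cost with the deterministic lower bound $|R_N|\ge \omega_d(\rho_d n)^d-Cn^{d-\eps}$ from Lemma \ref{L:det_time} (valid on $\cG_{loc}$) to obtain
\[
\E\!\big(e^{-|R_N|};\cG_{loc};(\cG_{br})^c\big)\;\le\;\exp\!\big(-\chi_d n^d+Cn^{d-\eps}\big)\cdot Q\big(\cG_{loc}\cap(\cG_{br})^c\big).
\]
By Proposition \ref{P:lbZ}, the prefactor is at most $Z_N\cdot e^{Cn^{d-\eps}}$, so it suffices to show $Q(\cG_{loc}\cap(\cG_{br})^c)\le \exp(-c\,n^{d-5\kappa})$ for $\kappa$ small enough.

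To estimate the $Q$-probability of having too few bridges, I would partition time into blocks $[km^2,(k+1)m^2]$ for $k=0,\ldots,N/m^2-1$, and call a block \emph{successful} if it is a bridge in the sense of Section \ref{S:bridge_meso} (the walk stays in $B=B(z,m)$ throughout and the endpoints lie in $B^\circ=B(z,m/2)$). By the strong Markov property, conditional on $\cF_{km^2}$ and on $\{X_{km^2}\in B^\circ\}$, the probability that block $k$ is successful is bounded below by a dimension-dependent constant $c_0>0$: indeed a simple random walk started in the bulk $B^\circ$ (distance $\ge m/2$ from $\partial B$) has uniformly positive probability of remaining in $B$ during $m^2$ steps and ending in $B^\circ$, by diffusive scaling and an invariance principle (or equivalently by coupling to Brownian motion in $B$). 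Furthermore, the time-in-$B^\circ$ estimate \eqref{timeBcirc} of Lemma \ref{L:det_time} gives
\[
\#\{k:X_{km^2}\in B^\circ\}\;\ge\;\frac{L_N(B^\circ)}{m^2}\;\ge\;\delta\,m^{d-2}n^{2-2\kappa},
\]
since each $k$ contributes at most $m^2$ to $L_N(B^\circ)$. Thus on $\cG_{loc}$ we have at least $\delta m^{d-2}n^{2-2\kappa}$ independent (conditionally) Bernoulli-type trials, each with success probability $\ge c_0$, so a standard Chernoff/Bernstein bound yields $\ge c_0\delta m^{d-2}n^{2-2\kappa}/2$ successes except with probability $\exp(-c\,\delta m^{d-2}n^{2-2\kappa})=\exp(-c\,n^{d-2(d-1)\kappa})$. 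Choosing $a<c_0/2$ and $\kappa$ small enough that $2(d-1)\kappa<6\kappa$ fails only when $d\ge4$; one adjusts the exponent by a fixed multiplicative factor to absorb this, yielding the stated $\exp(-cn^{d-6\kappa})$.

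The main obstacle is executing the change of measure cleanly: the eigenfunction $\phi$ is only defined on a bounded continuous ball, so one must handle the boundary cutoff (which contributes $O(n^{d-1})$ error terms in $dP/dQ$), and the switch to continuous time (to avoid parity and integer-part difficulties highlighted in the proof of Lemma \ref{afaire}) must not destroy the bridge-counting argument. The quantitative bookkeeping that the Radon--Nikodym cost, the deterministic range lower bound, and Proposition \ref{P:lbZ} align up to $o(n^d)$ corrections is the delicate part; the Bernoulli concentration step is then routine.
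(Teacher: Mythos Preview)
Your overall strategy---change of measure via the principal eigenfunction to offset the entropic cost against $Z_N$, then a Bernoulli/Chernoff bound under the tilted law---is exactly the paper's. However, your trial-counting step contains a real gap. From $L_N(B^\circ)\ge\delta m^d n^{2-2\kappa}$ one cannot deduce $\#\{k:X_{km^2}\in B^\circ\}\ge\delta m^{d-2}n^{2-2\kappa}$: the walk may accumulate all of its $B^\circ$-time during blocks whose left endpoints $km^2$ lie outside $B^\circ$. Your justification ``each $k$ contributes at most $m^2$ to $L_N(B^\circ)$'' only yields $\#\{k:\text{block }k\text{ meets }B^\circ\}\ge L_N(B^\circ)/m^2$, which is not what you need, since your conditional lower bound on the bridge probability requires the trial to \emph{start} in $B^\circ$. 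The paper replaces the fixed time grid by stopping times: $\sigma_i$ is the first return to $B^\circ$ after $\tau_{i-1}$, and $\tau_i$ the first subsequent time at which either $m^2$ jumps have elapsed or the walk has exited; each interval $[\sigma_i,\tau_i]$ then starts in $B^\circ$ and accounts for at most $m^2$ of the $B^\circ$-local time, so the number of trials is deterministically $\ge L_N(B^\circ)/m^2$ on $\cG_{loc}$, and the binomial domination goes through.

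Two smaller points you flag but underestimate. The eigenfunction is not cut off near $\partial B(0,\rho_d)$---that would make the tilted measure a sub-probability and leave $\Delta_1\tilde\phi/\tilde\phi$ uncontrolled at the cutoff---but rather \emph{extended} to a strictly positive radial function on all of $\Rd$, equal to $\phi+n^{-\kappa}$ inside the ball and decaying to $n^{-\kappa}/2$ outside with second derivative $O(n^{\kappa})$. And your $O(n^{d-1})$ control of the Radon--Nikodym exponent is too optimistic: on $\cG_{loc}$ the walk is not literally confined to the ball, so the paper must first bound---deterministically from $\cG_{loc}$---the discrete local time outside $B(0,\rho_d n(1-n^{-0.9\kappa}))$, and separately control the continuous-time fluctuation $|\tilde t_N-N|$, before the exponent in \eqref{RN} reduces to $-(\lambda/2)n^d+O(n^{d-\eps})$.
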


\begin{proof}
We already know that the walk spends at least $\delta  n^{2-2\kappa} m^d$ units of time in $B^\circ$. Roughly speaking, every time the walk is in $B^\circ$ there is a positive probability that during the next $m^2$ units of time, the walk stays in $B$ and its position at the end of this interval is again in $B^\circ$, thereby completing a bridge; independently of the past. So we wish to use standard Chernoff bounds for deviations of binomial random variables. In order to implement this strategy, we must however take care that we are working under the weighted probability measure $\wP_N$ and not $P$. We will deal with this complication by performing a suitable change of measure (which as it turns out is essentially the same as the one used by Bolthausen in his proof of Proposition 4.1 in \cite{BO}). The first step will be to work in continuous time rather than discrete time. By Lemma \ref{L:det_time} (and more precisely \eqref{range_det}) we write
$$
  \E ( e^{ - |R_N|} ;\cG_{loc} ;(\cG_{br})^c  )  \le \exp \big(- \omega_d(\rho_d n)^d + c n^{d-\eps}\big) \P( \cG_{loc}; \cG_{br}^c)
$$
and we interpret the event in the right hand side of the above inequality as an event for the jump chain of a continuous Markov chain whose jump rates from $x $ to $y$ is $1/(2d)$ if $x$ and $y$ are neighbours and zero else. Let $(\widetilde X_t, t \ge 0)$ be this process, let $\widetilde \P$ denote its law and let $J(\widetilde X)$ be the jump chain of $\widetilde X$. Set $\widetilde \cG_{loc} = \{J( \widetilde X) \in \cG_{loc}\}$ and, similarly, $\widetilde \cG_{br} = \{ J( \widetilde X) \in \cG_{br}\}$. Then $\P( \cG_{loc} ; ( \cG_{br})^c) ) = \widetilde \P( \widetilde \cG_{loc}; (\widetilde \cG_{br})^c)$.

\begin{figure}
  \includegraphics[width=\textwidth]{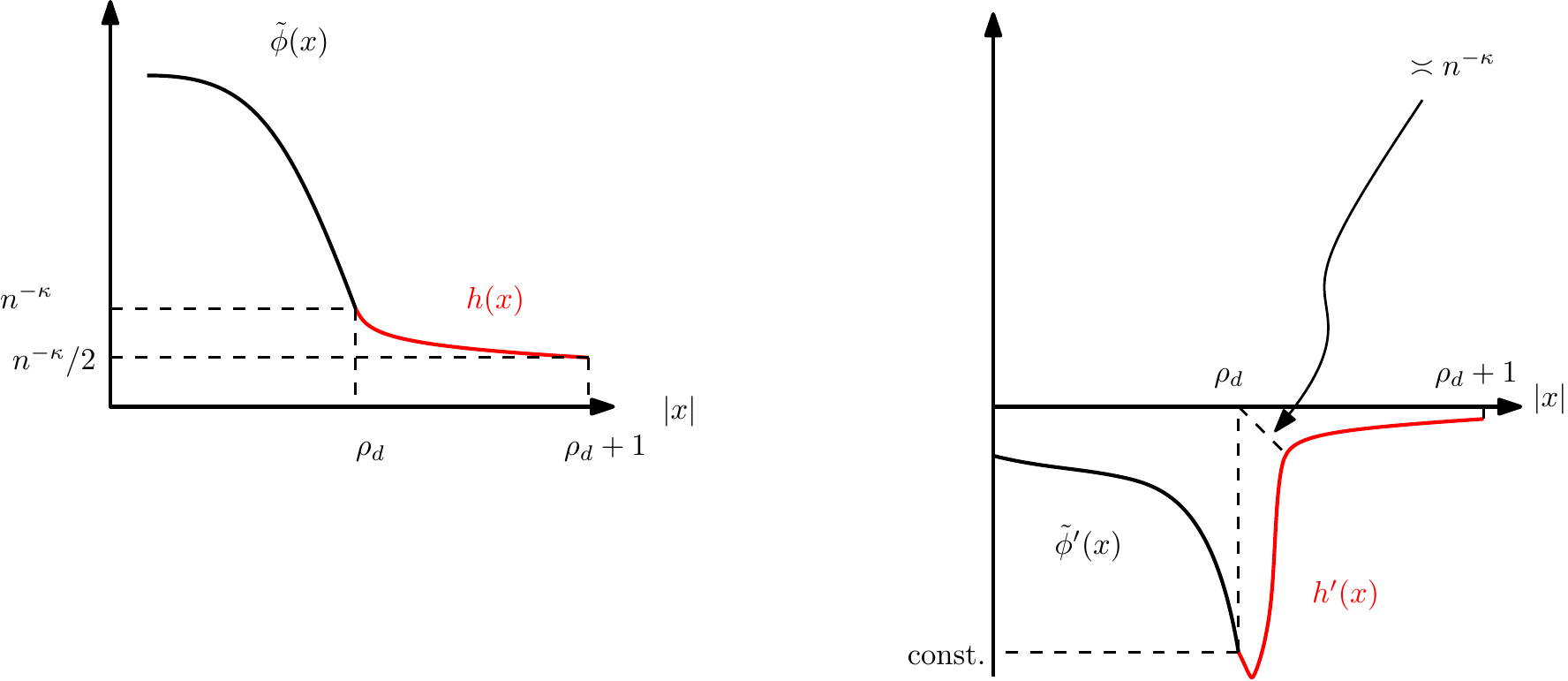}
  \caption{Choice of function $h$: as desired, $h$ decreases from $n^{- \kappa}$ to $n^{ - \kappa}/2$ over $[\rho, \rho+1]$. This requires the area above the red function on the right hand side (its derivative) to be less than $n^{-\kappa}/2$. The value of the constant in the right hand side is the normal (radial) derivative of $\phi$. To achieve this, the second derivative will be at most $\lesssim n^{\kappa}$, as claimed in \eqref{hsec}.}
  \label{F:eigenmodif}
\end{figure}

Define a modification $\widetilde \phi$ of $\phi$ as follows:
$$
\widetilde \phi (x)  = \phi(x) + n^{-\kappa} \quad  ; \quad x \in B( 0, \rho_d) %(1- n^{-\kappa})).
$$
We will also define $\widetilde \phi$ outside of $B( 0, \rho_d)$%  ( 1- n^{-\kappa}))$
so that it is positive everywhere and also in such a way that it is reasonably smooth near the boundary of that ball (indeed, if we do not take a positive function we cannot use it to change the underlying probability measure). Of course we could set $\widetilde \phi$ to be constant outside of that ball, equal to its value on the boundary, but this turns out to not be sufficiently smooth; in particular the Laplacian on this sphere would be too large.

Instead we define $$\widetilde \phi (x) = h (|x|)$$ where
$$h : [ \rho_d, \rho_d +1] \to [ n^{-\kappa}/2 , n^{- \kappa}]$$ is a smooth monotone decreasing convex function such that its derivative at $1+ \rho_d$ is 0 while its derivative at $\rho_d$ is the radial derivative of $\phi$ on $\partial B(0, \rho_d)$, %(1-n^{-\kappa}))$,
and such that
\begin{equation}\label{hsec}
\sup_{ x \in [ \rho_d, \rho_d+1]} \frac{d^2}{dx^2} h \lesssim n^{\kappa}.
\end{equation}
It is elementary to check that such a function $h$ exists, so that $\widetilde \phi$ is well defined (see Figure \ref{F:eigenmodif} for an illustration of the function $\tilde \phi$ and its derivative).

With the help of $\widetilde \phi$ we can define a new probability measure $\widetilde \Q$ to be the law of the Markov chain whose transition rates are given by
$$
q(x,y) = \frac{\widetilde \phi (y/n)}{\widetilde \phi (x/n)}
$$
whenever $x\neq y$ are neighbours in $\mathbb{Z}^d$, and $q(x,y) = 0$ otherwise. Note that since $\widetilde \phi$ is positive, $\tilde \Q$ is indeed equivalent to $\tilde \P$ and furthermore, letting $\tilde t_N$ be the first time $\tilde X$ has jumped $N$ times,
\begin{equation}
\left.\frac{d\widetilde \P}{d\widetilde \Q}\right|_{\cF_{\tilde t_N}} = \frac{\widetilde \phi(\widetilde X_0 /n)}{\widetilde \phi (\widetilde X_{\tilde t_N}/n)} \exp\left(\int_0^{\tilde t_N} \frac{\Delta_1 \widetilde \phi}{\widetilde \phi}(\widetilde X_s/n) ds \right).
\label{RN}
\end{equation}
(See, e.g., e.g. \cite{RogersWilliams}, IV, (22.8)). Here recall that $\Delta_1$ is the discrete Laplacian, i.e. $2d$ times the generator of the Markov chain $\widetilde X$ under $\widetilde \P$.

\medskip \noindent \textbf{Step 1.} We will argue that
\begin{equation}\label{step1}
\E( e^{- |R_N|}; \cG_{loc}; (\cG_{br})^c) \le Z_N  e^{- N^{1- 5 \kappa}} + Z_N e^{3 n^{d- \eps}} \tilde \Q( \tilde \cG_{loc}; (\tilde \cG_{br})^c).
\end{equation}

\begin{comment}
Hence, rewriting in terms of this new probability measure, using \eqref{range} and since $\max_{B} \widetilde \psi /\min_B \widetilde \psi \le N^a$ for some $a>0$, we have:
\begin{align}
  \E ( e^{ - |R_N|} ;\cG_{loc} ;(\cG_{br})^c  ) & \le \exp\big(- \omega_d(\rho_d n)^d + c n^{d-\eps}\big) \widetilde \P( \widetilde \cG_{loc}; (\widetilde \cG_{br})^c) \nonumber \\
  & \le \exp\big(- \omega_d (\rho_d n)^d + c n^{d-\eps}\big) N^a \E_{\widetilde \Q} \left( \exp \Big(\int_0^N  \tfrac{\Delta_1 \widetilde \phi(\widetilde X_s)}{\widetilde \phi(\widetilde X_s)}ds\Big) ;{\widetilde \cG_{loc}; (\widetilde \cG_{br})^c }\right)\label{chgmeas}.
\end{align}
%Now, on $\cG_{loc}$ it is easy to check (see e.g. (4.3) and (4.4) in \cite{bolthausen} for exactly the same argument) that
We will argue later that:
\begin{equation}\label{claim_eigenmodif}
\int_0^{\tilde t_N}  \tfrac{\Delta_1 \widetilde \phi(X_s)}{\widetilde \phi(X_s)}ds = - (\lambda/2) n^d + O (n^{d-\eps})
\end{equation}
%\note{check if factor 2 should be added}
where again $\eps$ depends only on the parameters in the good event $\cG_{loc}$.
%[This estimate uses the smoothing of $\widetilde \phi$ outside of $B(0, \rho ( 1- n^{-\kappa}))$.]
Let us complete the proof of Lemma \ref{L:numexc} assuming \eqref{claim_eigenmodif}. Combining with \eqref{chgmeas} and the already established lower bound for $Z_N$,  namely $Z_N \ge \exp\{ - (\omega_d (\rho_d)^d  + \lambda/2) n^d  + O( n^{d-1})\}$, we deduce
\begin{equation}\label{chgmeas_exc}
  \E ( e^{ - |R_N|} ;{\cG_{loc}} ;{(\cG_{br})^c}  )  \le Z_N e^{ n^{d-\eps}} \Q (\cG_{loc}; (\cG_{br})^c).
\end{equation}
%Now note that every time the walk is in $B^{\circ}$ there is a constant $\Q$-probability, $p$ say, that within the next $m^2$ units of time the walk will remain in $B$ and so form an excursion of length $m^2$.
%On $\cG_{loc}$, the amount of time spent by the walk in $B^\circ$ is (deterministically) at least $\gtrsim Tm^d/ L^d  = L^d m^{d}$.
\end{comment}

Essentially the proof consists in analysing carefully the integral in \eqref{RN}.
 We follow roughly the arguments in \cite{BO} (see equations (4.3) and (4.4)), with additional details.
 We start by observing that if $x \in B(0, \rho_d n) \cap \Z^d$ is such that all its $2d$ neighbours are also in $B(0, \rho_d n) \cap \Z^d$, then by a Taylor expansion,
\begin{align*}
\Delta_1 \tilde \phi (x/n) &=  \Delta_1 \phi(x/n) = \frac12 \Delta \phi(x/n) + O( n^{-3}) \\
&  = - \frac{\lambda}{2n^2} \phi (x/n) + O (n^{ - 3})\\
& = - \frac{\lambda}{2n^2} \tilde \phi (x/n)  + O ( n^{- 3}) + O( n^{ - 2 - \kappa}).
\end{align*}
Assume that
$\kappa <1$ without loss of generality so that $O( n^{-3}) = O ( n^{ - 2 - \kappa})$. Hence if furthermore $x \in B(0, \rho_d n( 1- n^{ - 0.9\kappa}))$,% ,
\begin{equation}\label{chgms_eigen1}
\big | \frac{\Delta_1 \tilde \phi}{\tilde \phi} (x/n) + \frac{\lambda}{2n^2} \big | \lesssim n^{-2 - \kappa + 0.9\kappa} \le n^{ - 2 - 0.1 \kappa}
\end{equation}
using \eqref{radialderivative}.

Furthermore, for any other $x$,
$$
|\Delta_1 \tilde \phi (x/n)| \lesssim n^{ - 2 + \kappa}
$$
by \eqref{hsec} (where the implicit constant depends only on $\tilde \phi$ and so on the dimension), so that
\begin{equation}
  \label{chgms_eigen2}
  \big | \frac{\Delta_1 \tilde \phi}{\tilde \phi} (x/n) + \frac{\lambda}{2n^2} \big | \lesssim n^{-2+ 2\kappa}
\end{equation}
for such $x$, and the same remark holds about the implicit constant.

Combining \eqref{chgms_eigen1} and \eqref{chgms_eigen2}, it follows that
\begin{equation}\label{chgmeas_int}
\int_0^{\tilde t_N} \frac{\Delta_1 \tilde \phi}{\tilde \phi}(\tilde X_s/n)ds  = ( - \lambda/2 )\frac{{\tilde t_N}}{n^2} + O ( \tilde t_N n^{-2 -0.1 \kappa}) + O \Big( n^{-2 + 2\kappa}\tilde L_{\tilde t_N} \big( B(0, \rho_d n( 1- n^{ - 0.9 \kappa}))^c\big)\Big),
\end{equation}
where for a set $A \subset \Z^d$ and a time $t>0$, $\tilde L_t( A) = \int_0^t 1_{\{ \tilde X_s \in A\}} ds$ is the local time of $\tilde X$ (in continuous time) in the set $A$ up to time $t$.

Set $\tilde \cA = \tilde \cG_{loc} \cap (\tilde \cG_{br})^c$. Set $\tilde \cA' $ to be the event
$$
\tilde \cA\quad ; \quad |\tilde t_{N} - N | \le N^{1-\kappa}  \quad ; \quad \tilde L_{\tilde t_N} (B(0,\rho_d n(1- n^{ - 0.9 \kappa})^c) \le N n^{-2.1 \kappa}.
$$
Then, still writing $R_N$ for the range of the (continuous time) walk at the time $\tilde t_N$ of its $N$th jump,
\begin{equation}\label{A'}
\E_{\tilde \P}( e^{- |R_N|}; \tilde \cA) \le \E_{\tilde \P}( e^{-|R_N|}; \tilde \cA') + \tilde \P( \tilde \cA \setminus \tilde \cA') .
\end{equation}
We bound separately each of those terms. We start with the first term. On $\cA'$, we see that
$$
\int_0^{\tilde t_N} \frac{\Delta_1 \tilde \phi}{\tilde \phi}(\tilde X_s/n)ds  = ( - \lambda /2) n^d  + O (n^{d- (d+2) \kappa}) + O ( n^{d- 0.1\kappa}) + O ( n^{d- 0.1 \kappa})
$$
so that if $\kappa < \eps/(d+2)$ (which implies $\kappa < 10 \eps$), all error terms are $O (n^{d- (d+2)\kappa}) \le n^{d- \eps}$ and thus, plugging into \eqref{RN}, and using the fact that $\max_{B(0, \rho_d)} \tilde \phi \lesssim 1  $, $\min_{B(0, \rho_d)} \tilde \phi \gtrsim n^{- \kappa}$, as well as the already established lower bound of Proposition \ref{P:lbZ} on $Z_N $,
\begin{align}
\E_{\tilde \P} (e^{- |R_N|}; \tilde \cA') &\lesssim n^\kappa\exp\big( - \omega_d (\rho_d n)^d - (\lambda/2) n^d + 2n^{d- \eps} \big) \tilde \Q( \tilde \cA') \nonumber \\
& \le Z_N e^{2n^{d- \eps} + O( n^{d-1})} \tilde \Q( \tilde \cA')\nonumber \\
& \le Z_N e^{3n^{d- \eps}} \tilde \Q( \cG_{loc} ; (\cG_{br})^c).\label{chgmeasA'}
\end{align}
Let us now deal with the second term in \eqref{A'}. We have,
\begin{equation}\label{decA'}
\tilde \P( \tilde \cA\setminus \tilde \cA' ) \le \tilde \P( |\tilde t_N - N| \ge N^{1- \kappa}) + \tilde \P ( \tilde L_{\tilde t_N} (B(0, \rho_d n(1 - n^{- 0.9 \kappa}))^c) \ge N n^{- 2.1\kappa}; \tilde \cG_{loc}).
\end{equation}
Now, standard Chernoff estimates for exponential random variables show that
\begin{equation}\label{devtN}
\P( |\tilde t_N - N| \ge N^{1- \kappa})  \le \exp( - c N^{1- 3 \kappa})
\end{equation}
for some constant $c>0$. Furthermore, on $\cG_{loc}$, deterministically we have
$$
L_N (B(0, \rho_d n ( 1- n^{ - 0.9 \kappa})^c) \le N n^{-2.2 \kappa},
$$
since otherwise, reasoning as in Lemma \ref{L:det_time},
$$
\int_{B(0, \rho_d ( 1- n^{ - 0.9 \kappa}))^c} \ell_N (y) dy \ge n^{-2.2 \kappa} \text{ while }\int_{B(0,\rho_d ( 1- n^{ - 0.9 \kappa}))^c} \phi^2(y) dy \le C \omega_d n^{- 2.7 \kappa}
$$
and we would deduce that for $n$ large enough, $\| \ell_N - \phi^2\| >n^{-2.3\kappa}$ which by definition cannot take place on $\cG_{loc}$ if $\kappa \le s/2.3$ (with $s = 1/800 $ as in
Theorem \ref{T:Prop31_intro}. Hence
\begin{equation}\label{devLgamma}
\tilde \P ( \tilde L_{\tilde t_N} (B(0, \rho_d n(1- n^{ - 0.9 \kappa}))^c) \ge Nn^{- 2.1 \kappa}; \tilde \cG_{loc}) \le \P( \sum_{i=1}^{ \lfloor Nn^{-2.2 \kappa}\rfloor } X_i  \ge Nn^{- 2.1\kappa})
\end{equation}
where $X_i$ are independent unit exponential random variables. Hence for all $n$ (or equivalently $N$) large enough,
$$
\P( \sum_{i=1}^{ \lfloor Nn^{-2.2 \kappa}\rfloor } X_i  \ge Nn^{-2.1 \kappa}) \le
\P( \sum_{i=1}^{ \lfloor Nn^{-2.2 \kappa}\rfloor } X_i  \ge 2 Nn^{-2.2 \kappa}) \le \exp ( - c N n^{-2.2 \kappa}) \le \exp ( - N^{1 - 3\kappa})\,,
$$
for some constant $c>0$, by elementary Chernoff estimates for exponential random variables. Hence by \eqref{devLgamma},
$$
\tilde \P ( \tilde L_{\tilde t_N} (B(0, \rho_d n(1- n^{ - 0.9 \kappa}))^c) \ge Nn^{-2.1 \kappa}; \tilde \cG_{loc})  \le \exp( - N^{1- 3 \kappa}).
$$
Plugging into \eqref{decA'}, we deduce that if $\kappa$ is sufficiently small, for all $N$ large enough,
$$
\tilde \P( \cA \setminus \cA') \le \exp ( - N^{1 - 4 \kappa}).
$$
Since $Z_N = \exp (- O( n^d))$, we deduce that
\begin{equation}\label{controlA'}
\tilde \P( \cA \setminus \cA') \le Z_N \exp( - N^{1- 5 \kappa}) .
\end{equation}
Combining \eqref{controlA'}, \eqref{chgmeasA'} and \eqref{A'}, we obtain
$$
\E_{\tilde \P} ( e^{- |R_N|}; \cG_{loc}; (\cG_{br})^c) \le Z_N  e^{- N^{1- 5 \kappa}} + Z_N e^{3 n^{d- \eps}} \tilde \Q( \tilde \cG_{loc}; (\tilde \cG_{br})^c),
$$
as desired in \eqref{step1}.
%\end{proof}

%-\Delta \phi_1 = \lambda_1 \phi
%(-\Delta/2) \phi_{1/2} = \lambda_{1/2} \phi
%so \lambda_1 = 2 \lambda_{1/2}
%
%exp ( - \lambda_[1/2} t ) = \exp (- \lambda_{1}/2t).

\medskip \noindent \textbf{Step 2.} Now it remains to show that $\widetilde \cG_{loc} \cap \tilde \cG_{br}$ is overwhelmingly likely under $\widetilde \Q$. More precisely, we will argue that
\begin{equation}\label{step2}
\tilde \Q (  (\tilde \cG_{br})^c; \tilde \cG_{loc}) \le \exp ( - c n^{2- 2\kappa} m^{d-2}).
\end{equation}

We claim that every time the walk is in $B^\circ$, there is a probability bounded below by a constant $p$, say, depending only on the dimension, such that under $\tilde \Q$, the walk will perform a bridge of duration $m^2$ (recall that this means the walk remains in $B$ for the next $m^2$ jumps and ends up in $B^\circ$ again after this time), uniformly over the initial position in $B^\circ$ of the walk. To see this, note that
%since $B$ is of diameter $m = o(L)$ and since $\psi$ is a uniformly continuous function, we have that
$\min_{B/n} \widetilde \phi = (1- o(1)) \max_{B/n} \widetilde \phi$, so that the Radon--Nikodym derivative $d\widetilde \Q / d\widetilde \P$ during an interval of time consisting of the first $m^2$ jumps of the chain (call this time $\tilde t_{m^2}$) is at least
\begin{align}
\frac{d\tilde \Q }{ d\tilde \P} &\ge
\frac{\min_{B/n} \widetilde \phi }{\max_{B/n} \widetilde \phi} \exp ( \tilde t_{m^2}\inf_{x \in B} \frac{\Delta_1 \widetilde \phi }{\widetilde \phi }(x/n) )\nonumber \\
& \ge (1+o(1)) \exp ( - (\lambda/2 + o(1)) \frac{\tilde t_{m^2}}{n^2}) . \label{controltime}
\end{align}
Now, under $\widetilde \P$, $\tilde t_{m^2} \le 2 m^2$ with probability at least $3/4$, if $m$ (equivalently $n$ or $N$) is large enough. Moreover, the probability of making a bridge of duration $m^2$, under $\widetilde \P,$ is clearly at least $p$ for some constant $p>0$ depending only on $d$. Since the latter event depends only on the jump chain and the former event depends only on the time parametrisation, which are independent processes under $\widetilde \P$, we conclude from \eqref{controltime} that for any $a \in B^\circ$,
\begin{align*}
\tilde \Q_a(X[0, \tilde t_{m^2}] \text{ is a bridge}) &\ge \E_{\tilde \P_a} \big( \frac{d\tilde \Q_a}{d\tilde \P_a} ;X[0, \tilde t_{m^2}] \text{ is a bridge}\big)\\
& \ge (1+ o(1))\E_{\tilde \P_a} \big(\exp( - (\lambda/2 + o(1)) \frac{\tilde t_{m^2}}{n^2}) ; \tilde t_{m^2} \le 2m^2 ; X[0, \tilde t_{m^2}] \text{ is a bridge} )\big)\\
& \ge (3/4) (1+ o(1)) \tilde \P_a(X[0, \tilde t_{m^2}] \text{ is a bridge}) \ge  (3/4) p (1+ o(1)).
\end{align*}
Hence $\tilde \Q_a( X[0, \tilde t_{m^2}]\text{ is a bridge}) \ge p/2$ for $n$ large enough; thereby proving what we desired (with $p$ replaced by $p/2$, a distinction which is of no consequence in the rest of the argument).
 %let $\widetilde \lambda_B$ (resp. $\lambda_B$) denote the first eigenvalue under of the walk under $\Q$ (resp. $\P$) with Dirichlet boundary condition on $\partial B$, and let $\widetilde \phi_B$ (resp. $\phi_B$) be the corresponding eigenfunction. Likewise, let $\widetilde \cE$ (resp. $\cE$) denote the Dirichlet forms, and $\widetilde \pi$ (resp. $\pi$) the corresponding invariant measures. Then note that $\widetilde \cE(f,f) = (1+ o(1)) \cE(f,f)$ and $\widetilde \pi(x) = (1+ o(1)) \pi(x)$ uniformly in $B$ and in $x$ in the allowed region, from which it is easy to see via the variational characterisation of spectral gap that
%\begin{equation}
%\widetilde \lambda_B = \lambda_B(1+ o(1));   \quad \quad \widetilde \phi_B \approx %\phi_B.
%\end{equation}
%Since $ $

Since we know by \eqref{timeBcirc} that on $ \tilde \cG_{loc}$, the total amount of discrete steps in $B^\circ$ (and hence in $B$) is deterministically at least $\delta m^d n^{2- 2\kappa}$, and each time the walk is in $B^\circ$ there is a probability $p/2$ (under $\tilde \Q$) to make a bridge over the next $m^2$ jumps, we deduce that
the number of bridges stochastically dominates a binomial random variable of parameters $(\delta m^{d-2} n^{2-2\kappa}, p/2 )$, under $\tilde \Q$. More precisely, let
$$\sharp (s,t): = \text{ number of jumps by $\tilde X $ during } (s, t],
$$
and define the sequence of stopping times:
$$
\sigma_1 = \inf \{ t>0 : \tilde X_t  \in B^\circ\}; \quad  \tau_1 = \inf \{t>0: \sharp (\sigma_1, t) \ge m^2\} \wedge \inf \{ t > \sigma_1: \tilde X_t \notin B^\circ\};
$$
then inductively, for $i \ge 2$:
$$
\sigma_i = \inf \{ t > \tau_{i-1} : \tilde X_t \in B^\circ\} ; \quad \tau_i = \inf \{ t>0 : \sharp (\sigma_i, t) \ge m^2\} \wedge \inf \{ t> \sigma_i: \tilde X_t \notin B^\circ\}.
$$
Let $$
\xi_i = 1_{\{\sharp(  \sigma_i, \tau_i) \ge  m^2\}}; \quad i \ge 1
$$
be the Bernoulli variable which is the indicator of the event that the $i$th trial results in a bridge. Then note that by Lemma \ref{L:det_time}, if $j = \lfloor \delta m^{d-2} n^{2- 2\kappa}\rfloor $, then $\tau_j \le \tilde t_N$ no matter what on $\tilde \cG_{loc}$. Hence if $S = \sum_{i=1}^j \xi_i$, then
$$
\{S \ge (\delta p /4) m^{d-2} n^{2- 2\kappa}\} \subset \tilde \cG_{br}
$$
where the constant $a>0$ defining $\cG_{br}$ is taken to be $a = p /4$.

Moreover by the Markov property and the above,
$$
\tilde  \Q( \xi_i = 1 | (X_t, t \le \sigma_i)) \ge p/2.
$$
Hence $S$ dominates stochastically a Binomial random variables with parameters $j$ and $p/2$. By standard Chernoff bounds for binomials, for some constant $c>0$, we deduce that
$$
\tilde \Q (  (\tilde \cG_{br})^c; \tilde \cG_{loc}) \le \exp ( - c n^{2- 2\kappa} m^{d-2}),
$$
which shows \eqref{step2}.

Plugging \eqref{step2} into \eqref{step1}, we deduce
%\begin{align*}
$$
 \E ( e^{ - |R_N|} ; {\cG_{loc}}; {(\cG_{br})^c}  )  \le Z_N (e^{- N^{1- 5\kappa}} + e^{ n^{d-\eps}} e^{- c n^{2- 2\kappa}   m^{d-2}}).
$$
Since $m = \rho_d n^{1- 2\kappa}$,
$\eps$ is fixed (by Lemma \ref{L:det_time}) in a way that depends only on the dimension, and we are free to choose $\kappa$ as small as we want, we can choose it so that $n^{2- 2 \kappa} m^{d-2} = n^{d - 6\kappa}$ is much greater than $ n^{d- \eps}$ (i.e., we assume $6\kappa < \eps$) and then
for all $n$ large enough we have
$$
 \E ( e^{ - |R_N|} ; {\cG_{loc}} ; {(\cG_{br})^c}  )  \le Z_N (e^{ - N^{1- 5 \kappa}} + e^{ - c n^{2- 2\kappa} m^{d-2}}) \le  Z_N e^{ - c n^{d- 6 \kappa}}.
$$
This completes the proof of Lemma \ref{L:numexc}.
%\end{align*}
\end{proof}

\subsection{Conditioning and summation}

\label{S:sum}

Call $\P^*$ the conditional probability given the local time at every site in $\mathbb{Z}^d \setminus B$. Let $k\ge 1$, $\cX = \{x_1, \ldots, x_k \}\subset  B $ be a subset of $k$ distinct points in $B$ and let $\cB_{\cX}$ be the bad event that the range in $B$ avoids exactly those $k$ points, i.e., $R_N \cap B = B \setminus \cX$.
Note that
\begin{equation}\label{eq:decomp}
\{ B \not \subset R_N \} \subset  (\cG_{br})^c \cup \bigcup_{k =1}^{|B|} \bigcup_{\cX \subset  B; |\cX| = k} (\cB_{\cX} \cap \cG_{br}).
\end{equation}
So
we are led to try and analyse expectations of the form
$$
\frac1{Z_N} \E\left[ e^{ - |R_N\cap B^c|} \E^*[ e^{- | R_N \cap B| }1_{\cB_{\cX} \cap \cG_{br}} ] \right]
$$
where $\cX \subset B$ and $| \cX | = k \ge 1$ is arbitrary between $1$ and $|B|$. The key will be the following estimate:
\begin{lemma}For constants $C,c$ depending only on the dimension $d$,
  \label{L:badmiss}
  $$
  \E^*\left[ e^{- | R_N\cap B| }1_{\cB_{\cX} ; \cG_{br}} ] \right] \le Ce^{- |B| +k } \exp(- c \frac{n^{2 - 2\kappa} k}{ m^2(\log m)^{2+ 10d}} ).
  $$
\end{lemma}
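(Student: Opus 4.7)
The first observation is that on $\cB_\cX$, the quantity $|R_N \cap B|$ is deterministic and equals $|B|-k$, so the factor $e^{-|R_N \cap B|}1_{\cB_\cX} = e^{-|B|+k}1_{\cB_\cX}$ comes out of the expectation. The lemma therefore reduces to the estimate
\[
\P^*(\cB_\cX \cap \cG_{br}) \le C\exp\!\left(-c\,\frac{n^{2-2\kappa}k}{m^2(\log m)^{2+10d}}\right).
\]
The plan is to enrich the conditioning $\P^*$ by also conditioning on the full sequence of entry/exit times and locations at $\partial B$ (equivalently, all excursions of the walk outside $B$) as well as on the number of bridges in $\cG_{br}$ and their endpoints and lengths. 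Under this enriched conditioning the successive bridges are conditionally independent Markovian pieces: a walk of the prescribed length $m^2$ with prescribed endpoints $a,b\in B^\circ$ that is constrained to stay in $B$. Thus, writing $\nu=\lceil a\delta n^{2-2\kappa}m^{d-2}\rceil$ for the number of bridges guaranteed by $\cG_{br}$,
\[
\P^*(\cB_\cX \cap \cG_{br}) \le \sup_{a,b\in B^\circ}\Big(1-q_k(a,b)\Big)^{\nu},
\]
where $q_k(a,b)$ is the probability that a bridge with endpoints $a,b$ of length $m^2$ hits at least one point of $\cX$. The whole game is then to prove a lower bound
\begin{equation}\label{E:qklower}
q_k(a,b) \ge c\,\frac{k}{m^d(\log m)^{1+10d}}
\end{equation}
uniformly over $a,b\in B^\circ$ and over all configurations of $k$ points $\cX\subset B$; this will immediately yield the claim after taking logarithms and using $\nu\gtrsim n^{2-2\kappa}m^{d-2}$.

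The core of the argument, and the main obstacle, is the uniform bound \eqref{E:qklower}. The plan is to decompose $B$ into dyadic annuli parametrised by distance to $\partial B$:
\[
A_j = \bigl\{y\in B : 2^{j-1}\le d(y,\partial B) < 2^j\bigr\}, \qquad 1\le j\le J:=\lceil \log_2 m\rceil.
\]
By pigeonhole there exists $j^\star$ with $k_\star := |\cX\cap A_{j^\star}| \ge k/J\ge k/(2\log m)$. Write $\cX_\star=\cX\cap A_{j^\star}$ and $r=2^{j^\star}$; since it suffices to hit $\cX_\star$, I plan to lower-bound $q_k(a,b)$ by the second-moment (Paley--Zygmund) inequality applied to the occupation time $T_\star = \sum_{0\le t\le m^2} 1_{\{Y_t\in \cX_\star\}}$ of the bridge $Y$:
\[
q_k(a,b) \;\ge\; \P^{\rm br}_{a,b}(T_\star>0) \;\ge\; \frac{\bigl(\E^{\rm br}_{a,b}[T_\star]\bigr)^2}{\E^{\rm br}_{a,b}[T_\star^2]} \;\ge\; \frac{\E^{\rm br}_{a,b}[T_\star]}{\max_{x\in \cX_\star}\E^{\rm br}_{x,b}[T_\star]}.
\]
For the numerator, the bridge transition probabilities of Section~\ref{S:bridge_trans} give, for $a,b\in B^\circ$ and $y\in A_{j^\star}$, the pointwise lower bound $\P^{\rm br}_{a,b}(Y_t=y)\gtrsim (r/m)^2\,m^{-d}$ for a positive fraction of times $t\in[m^2/3,2m^2/3]$, so
\[
\E^{\rm br}_{a,b}[T_\star] \;\gtrsim\; m^2 \cdot k_\star \cdot \tfrac{r^2}{m^{d+2}} \;=\; \tfrac{k_\star r^2}{m^d}.
\]
For the denominator — the key cancellation — I would bound the bridge Green function by the free-walk Green function $G(x,y)\lesssim |x-y|^{2-d}$ and argue $\max_{x}\E^{\rm br}_{x,b}[T_\star]\lesssim r^2$, by splitting the contribution into (i) an ``initial-transient'' part near $x$ lasting time $\lesssim r^2$ (the gambler's ruin time to reach a distance $\gtrsim r$ from the boundary), whose expected local time adds up to $\sum_{|y-x|\le r}|x-y|^{2-d}\lesssim r^2$, and (ii) an ``equilibrium'' part contributing $k_\star r^2/m^d\lesssim r^2$. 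Taking the ratio the $r^2$ factors cancel and we obtain
\[
q_k(a,b) \;\ge\; c\,\frac{k_\star}{m^d} \;\ge\; \frac{ck}{m^d\log m},
\]
which is even a bit stronger than \eqref{E:qklower}; the remaining polylog slack $(\log m)^{10d}$ absorbs the discrete-lattice, boundary-regularity and uniformity-in-$(a,b)$ corrections I expect from Section~\ref{S:bridge_trans}.

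The hard step is the upper bound on $\max_{x\in\cX_\star}\E^{\rm br}_{x,b}[T_\star]$, because the bridge conditioning can distort the Green function near the boundary. The plan is to control it by coupling the bridge to a free walk killed on $\partial B$ using the spectral decomposition (or equivalently, a Doob $h$-transform with $h=\phi_1$): this changes the transition kernel by a factor $\phi_1(y)/\phi_1(x)$ which is $O(1)$ when both $x,y$ lie in a single annulus $A_{j^\star}$, so the bound $\sum_y G(x,y) 1_{\cX_\star}(y) \lesssim \sum_{|y-x|\le r}|x-y|^{2-d}\lesssim r^2$ transfers to the bridge up to constants. Once \eqref{E:qklower} is in hand, the geometric bound $(1-q_k)^\nu\le \exp(-\nu q_k)$ yields the lemma.
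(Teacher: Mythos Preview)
Your overall architecture matches the paper exactly: factor out $e^{-|B|+k}$ on $\cB_\cX$, enrich the conditioning so that the bridges become independent with law $\P^{a\to b;\tau}$, reduce to a uniform lower bound on the probability that a single bridge hits $\cX$, pass to a dyadic annulus $A_{j^\star}$ by pigeonhole, and run a first/second moment argument for the occupation time of $\cX_\star=\cX\cap A_{j^\star}$. The first-moment lower bound you state is precisely the paper's, coming from \eqref{HKlb}.

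The gap is in your denominator bound. Two of the heuristics you invoke do not actually deliver $\lesssim r^2$. First, the ``equilibrium after time $r^2$'' picture is false: the bridge (equivalently the $\phi_1$-Doob process) has mixing time of order $m^2$, not $r^2$, so for $r^2\ll t\ll m^2$ the one-point marginals are nowhere near $\phi_1^2$. Second, bounding the bridge Green function by the \emph{free} Green function and then writing $\sum_{y\in\cX_\star}G(x,y)\lesssim\sum_{|y-x|\le r}|x-y|^{2-d}$ is not justified: nothing forces the far points of $\cX_\star$ to be negligible, and in fact $\sum_{y\in A_{j^\star}}|x-y|^{2-d}\asymp rm$, not $r^2$, since the annulus has width $r$ but diameter $m$. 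The Doob-transform observation that $\phi_1(y)/\phi_1(x)\asymp 1$ for $x,y\in A_{j^\star}$ only tells you the bridge Green function is comparable to the \emph{killed} Green function $G^B$, and it is the boundary killing (a second gambler's-ruin factor at the target point $y$) that brings $rm$ down to $r^2$.

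The paper closes exactly this gap via \eqref{HKub}: it does not sum the bridge Green function over $\cX_\star$ pointwise, but uses the cruder yet uniform bound $\P^{a\to b;\tau}(X_{s+t}\in A_{j^\star}\mid X_s=x)\lesssim r^2(\log t)^{10d}/t$, whose proof (Lemmas~\ref{L:stayinball}--\ref{L:HKexc}) applies the gambler's-ruin estimate twice, once at the conditioning point $x$ and once at the arrival point in $A_{j^\star}$. Integrating in $t$ gives $\lesssim r^2(\log m)^{1+10d}$ for the conditional expectation, which after the ratio with the first moment yields the $(\log m)^{2+10d}$ in the statement. Your instinct that polylog slack would appear here is correct, but it comes from this double-boundary estimate, not from discretisation or uniformity-in-$(a,b)$ corrections.
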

We defer the proof of Lemma \ref{L:badmiss} and instead show how it implies Theorem \ref{T:filledball}.

\begin{proof}[Proof of Theorem \ref{T:filledball}, assuming Lemma \ref{L:badmiss}] Using \eqref{eq:decomp} and Lemma \ref{L:numexc}, we see that since $|R_N\cap B| \le |B|$,
\begin{align*}
\frac1{Z_N}\E( e^{ - R_N} 1_{ \cG_{loc}} 1_{B \not \subset R_N}) & \le
e^{- cn^{d- 6\kappa}}+  \sum_{k=1}^{|B|} \sum_{\cX \subset B, |\cX| = k} \frac1{Z_N} \E\left[ e^{ - |R_N\cap B^c|} \E^*[ e^{- | R_N \cap B| }1_{\cB_{\cX} ; \cG_{br}} ] \right]
\\
& \le e^{ - cn^{d-6 \kappa }} + C\sum_{k=1}^{|B|} \sum_{\cX \subset B, |\cX| = k}  \frac{1}{Z_N} \E\left[ e^{ - |R_N\cap B^c|} e^{- |B| +k}  e^{- c \frac{n^{2- 2\kappa} k}{ m^2(\log m)^{2+ 10d}} }\right]\\
& \le  e^{ - cn^{d-6 \kappa}}  + C \sum_{k=1}^{|B|} {|B| \choose k} e^k e^{- c \frac{n^{2- 2\kappa} k}{ m^2(\log m)^{2+ 10d}} } \frac{1}{Z_N} \E\left[ e^{ - |R_N|}\right]\\
& \le e^{ - cn^{d-6 \kappa}}  + C \sum_{k=1}^{m^d}{|B|  \choose k} \exp\left( k (1 - c\frac{n^{2- 2\kappa} k}{ m^2(\log m)^{2+ 10d}})\right).
\end{align*}
Now note that on the one hand, the entropic factor satisfies ${|B| \choose k}  \le (\omega_d m^{d})^k = e^{ C k d\log m}$. On the other hand since $m = \rho_d n^{1-2\kappa}$ we have that  $n^{2 - 2\kappa} / (m^2 (\log m)^{2+ 10d}) \gtrsim n^{2\kappa}/(\log n)^{2+ 10d}$. This is of course much greater than the exponential factor in the entropic term of $C \log m$, and so altogether the above series is exponentially decaying. Hence we can conclude that
$$
\frac1{Z_N}\E( e^{ - R_N} 1_{ \cG_{loc}} 1_{B \not \subset R_N}) \le C \exp( - n^{\kappa} )
$$
where $\kappa>0$ and $C$ depend only on the dimension. Summing over all $O(n^d)$ possible centres of the ball $B$ and using a union bound we immediately deduce the statement of Theorem \ref{T:filledball}.
\end{proof}

\subsection{Transition probabilities for bridges}

\label{S:bridge_trans}

We now start the proof of Lemma \ref{L:badmiss}. The idea is to show that for each bridge of duration at least $m^2$ there is a good chance of hitting our $k$ points. For this we will need the following bounds on the heat kernel of bridge; we will now further condition on the endpoints of the bridge. Let $\P^{a \to b; \tau}$ denote the law of a bridge of duration $\tau$ starting from $a \in B$ and ending in $b$, i.e., simple random walk starting from $a$, conditioned to be in $b$ at time $\tau$ and to remain in $B$ throughout $[0, \tau]$. Implicit in this notation is the fact that $\P^a (X_\tau = b) >0$, i.e., the parity of $b-a$ is the same as $\tau$.

We start with the following lemma.

\begin{lemma}
  \label{L:stayinball}Suppose $\dist (x, \partial B)  =r$ with $1\le r \le m$. Then if $s \asymp m^2$,
      \begin{equation}\label{eq:Exc}
  \P_x( X[0, s] \subset B) \asymp \frac{r}{m}.
  \end{equation}
\end{lemma}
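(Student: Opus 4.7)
The plan is to prove the matching upper and lower bounds separately, each by projection onto a one-dimensional random walk combined with a gambler's-ruin argument.

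For the upper bound, let $b^*\in\partial B(z,m)$ be a point closest to $x$ and let $v=(b^*-x)/|b^*-x|$. The tangent hyperplane to the Euclidean ball $B(z,m)$ at $b^*$ separates $B$ from its complement, so $B\subset\{y:v\cdot(y-x)<r\}$, and hence $\{X[0,s]\subset B\}\subset\{\max_{n\le s}W_n<r\}$ where $W_n=v\cdot(X_n-x)$. The process $W$ is a centered random walk with symmetric bounded increments and per-step variance $1/d$. The discrete reflection principle applied to $W$, with the overshoot across level $r$ controlled by the local central limit theorem (the density of $W_s$ is $O(1/\sqrt{s})=O(1/m)$ and the step size is $\le 1$), yields
\[
\P\Big(\max_{n\le s}W_n\ge r\Big)=2\,\P(W_s\ge r)+O(1/m).
\]
Since $W_s$ is approximately Gaussian with standard deviation $\sqrt{s/d}\asymp m$, an elementary computation gives $1-2\,\P(W_s\ge r)\asymp r/m$ uniformly in $1\le r\le m$, whence $\P_x(X[0,s]\subset B)\lesssim r/m$.

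For the lower bound, set $F=B(z,m/2)\cap\Z^d$ and $\tau=T_F\wedge T_{\partial B}$. Optional stopping applied to the discrete harmonic function $h(y)=|y-z|^{-(d-2)}$ in the annulus $B\setminus F$ (harmonic for $d\ge 3$; in $d=2$ one uses $\log|y-z|$), followed by a first-order Taylor expansion at $|y-z|=m$, gives $\P_x(T_F<T_{\partial B})\asymp r/m$. The classical expected-exit-time estimate for the random walk in the annulus---obtained by solving the discrete Dirichlet problem $(P-I)u=-1$ and comparing to its continuum counterpart $-\Delta u=2d$---yields $\E_x[\tau]\asymp rm$ for a starting point at distance $r$ from $\partial B$. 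Markov's inequality then gives $\P_x(\tau>s/2;\,T_F<T_{\partial B})\le 2\E_x[\tau]/s\lesssim r/m$ with a constant that can be made strictly smaller than the constant in $\P_x(T_F<T_{\partial B})\gtrsim r/m$ provided $s/m^2$ is at least a dimension-dependent constant (consistent with $s\asymp m^2$); subtracting gives $\P_x(T_F\le s/2,\,T_F<T_{\partial B})\gtrsim r/m$. Finally, from any $y\in F$, Doob's $L^2$-maximal inequality applied to the coordinatewise martingales $(X_n-y)_i$ gives $\P_y(\max_{n\le s/2}|X_n-y|_\infty\le m/4)\ge c>0$, on which event $X[0,s/2]\subset B(y,m/4)\subset B$. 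Combining via the strong Markov property, $\P_x(X[0,s]\subset B)\gtrsim r/m$.

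The main obstacle is delicately tracking constants in the lower bound: $\P_x(T_F<T_{\partial B})\asymp r/m$ is a small quantity that must not be washed out by the Markov-type tail bound on $\tau$, which forces $s$ to be a sufficiently large dimension-dependent multiple of $m^2$. This threshold is automatic from the hypothesis $s\asymp m^2$ once one absorbs the dimension-dependent constant into the implicit $\gtrsim$. A minor technical point is the discrete overshoot of the walk across the levels $r$ and $m/2$, which is bounded by $1$ in each case and contributes only lower-order corrections via the local CLT and the smoothness of $h$.
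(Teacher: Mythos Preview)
Your upper bound uses the same tangent-hyperplane reduction as the paper, though you invoke the reflection principle for the projected one-dimensional walk directly, whereas the paper couples to Brownian motion via KMT for $r\ge\log m$ and bootstraps the small-$r$ case through a slab of width $\log m$. Your route is more elementary and works, modulo some care with overshoot for the (generally non-lattice) projected walk. Your lower bound, by contrast, is genuinely different: the paper uses the principal Dirichlet eigenfunction $\phi^1_B$ directly, observing that $M_s=(1-\lambda^1_B)^{-s}\phi^1_B(X_s)$ is a nonnegative martingale and applying optional stopping at $\tau_B\wedge s$ to get $\phi^1_B(x)\le\|\phi^1_B\|_\infty\,\P_x(\tau_B>s)/(1-\lambda^1_B)^s$; since $\lambda^1_B\asymp m^{-2}$ and $\phi^1_B(x)\asymp r/m$ (nonzero radial derivative at the boundary), this yields $\P_x(\tau_B>s)\gtrsim r/m$ in one stroke, with no constraint on the implicit constants in $s\asymp m^2$.

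Your hit-the-inner-ball scheme is sound in principle but has a genuine gap as written. The Markov step gives $\P_x(\tau\le s/2,\,T_F<T_{\partial B})\ge c_1 r/m-2C_1 rm/s$, which is positive only when $s>(4C_1/c_1)m^2$; with $c_1\asymp 1$ and $C_1\asymp 1$ this threshold is of order several times $m^2$. Your Doob step, on the other hand, yields $\P_y(\max_{n\le s/2}|X_n-y|_\infty\le m/4)\ge 1-32s/m^2$, positive only when $s<m^2/32$. These two windows are disjoint, so the split at $s/2$ cannot satisfy both constraints simultaneously, and your sentence about ``absorbing the dimension-dependent constant into $\gtrsim$'' does not resolve this. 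The fix is either to replace the Doob step by a spectral bound for $\P_y(\tau_B>t)$ with $y\in F$ (which is exactly the paper's eigenfunction argument specialized to $r\asymp m$, and works for all $t\asymp m^2$), or to run your two steps with an asymmetric split $[0,s-\varepsilon m^2]\cup[s-\varepsilon m^2,s]$ for a small fixed $\varepsilon$, prove the bound at a single sufficiently large $s_0=Km^2$, and then invoke the trivial monotonicity $\P_x(X[0,s]\subset B)\ge\P_x(X[0,s_0]\subset B)$ for all $s\le s_0$.
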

The right hand side is essentially the familiar gambler's ruin probability in one dimension. Here it is important that we use a curved ball $B$ and not a box (otherwise if $x$ is near a corner the probability would be much smaller). This lemma could be deduced from Proposition 6.9.4 in \cite{LL} but we choose to include a proof in order to make the presentation as self-contained as possible.
%and also because the proof in \cite{LL} appears to contain some small errors.

\begin{proof}
%For this proof we can assume by invariance under translation that the ball $B$ is centered at zero.
The lower bound follows easily from an eigenvalue estimate and optional stopping: let $\lambda^1_B$ denote the principal eigenvalue of the (discrete) Laplacian $-\Delta_1$ in the ball $B$ with Dirichlet boundary conditions.
%Then we know that $\lambda^1_B/m^2 \to \lambda/2$, where $\lambda$ is as always the principal eigenvalue of the continuous Laplacian $-\Delta$.
Let $\phi^1_B$ is the corresponding eigenfunction, normalised so that $m^{-d} \sum_{x \in B} \phi(x) = 1$.
%then $\phi^1_B (x/m)\to \phi$, where $\phi$ is still the eigenfunction corresponding to $\lambda$, normalised so that $\int_{B(0,1)} \phi_B(x) dx = 1$. This convergence is uniform.
Then note that $$
M_s = \frac{1}{(1-\lambda_B^1)^s} \phi_B^1 (X_s),\quad  s = 0, 1, \ldots
$$
is a nonnegative martingale. Apply the optional stopping theorem at the bounded stopping time $\tau_B \wedge s$ (where $\tau_B$ is the first time the walk leaves $B$) to see that
$$
\phi_B^1 (x) = \frac1{(1- \lambda_B^1)^s} \E(\phi_B^1(X_s) 1_{\tau_B > s})\le \frac{\| \phi_B^1\|_\infty \P_x( \tau_B > s)}{ (1- \lambda_B^1)^s}.
$$
Now, we have already mentioned that $\lambda_B^1 \asymp 1/m^2$ (with implied constants depending as usual only on the dimension) so that when $s \asymp m^2$,
$$
\phi_B^1 (x) \lesssim \| \phi_B^1\|_\infty \P_x( \tau_B > s).
$$
Moreover, using Lemma 2.1 of \cite{BO}, and using known properties of the principal eigenfunction in the continuum (namely that there is a radial derivative on the boundary of the ball), we see that
$$
\phi_B^1 (x) \lesssim \frac{r }{ m} + O(1/m) \lesssim \frac{r }{ m}.
$$
Since furthermore the principal eigenfunction on the unit ball in the continuum is bounded, using again Lemma 2.1 in \cite{BO}, we deduce the lower bound
\begin{equation}\label{Tball}
\P_x ( \tau_B > s) \gtrsim \frac{r }{ m} ,
\end{equation}
which gives the desired lower bound.

In the other direction, let $\pi$ be the hyperplane tangent to the ball $B$ closest to the point $x$. Let $H$ denote the half space in the complement of $\pi$ that contains $x$, and let $\vec{n}$ be the normal vector to $\pi$. Let $\tau_{H} = \inf\{t \ge 0: X_t \notin H\}$, and note that since $B \subset H$,
$$
\P( \tau_B>s) \le \P( \tau_H >s).
$$
We will show that
\begin{equation}\label{Tplane}
\P( \tau_H > s) \lesssim r/ m.
\end{equation} Consider first the case where $r = \dist (x, \partial B) \ge \log m$. Then, using a KMT approximation (strong coupling with Brownian motion), see Theorem 7.1.1 in \cite{LL}, there is a $d$-dimensional Brownian motion $(W^d_t, t \ge 0)$ such that if $s \asymp m^2$ then for some constant $C>0$,
$$\sup_{t \le s} |W_t - X_t | \le C \log m
$$
on an event $E_1$ of probability at least $ 1- m^{-1}$.
Let $$\text{osc} = \sup_{0 \le t \le s} |W^d_t - W^d_{\lfloor t \rfloor}|$$
and note that on an event $E_2$ of probability at least $ 1- O(m) e^{ - (\log m)^2/2} \ge 1- 1/m$,
$$
\text{osc} \le \log m.
$$
Let $\pi^-$ denote a hyperplane parallel to $\pi$ at distance $C\log m$ from $\pi$ such that $\pi^-$ in the half space which does not contain $B$. Let $T_{\pi^-}$ denote the first (continuous) time when $W^d$ hits $\pi^-$. Then on $E_1 \cap E_2$, $\tau_H > s$ implies $T_{\pi^-} > s$ and hence
$$
\P_x( \tau_H>s ) \le \P_x( T_{\pi^-} >s) + \P_x( E_1^c) + \P_x( E_2^c) \le 2/m + \P_x( T^- >s).
$$
Using rotational invariance of Brownian motion and projecting onto $\vec{n}$, letting $W= W^1$ be a standard one-dimensional Brownian motion and using the reflection principle,
$$
\P_x(T_{\pi^-} >s) = \P_{r + C \log m} (W_t \ge 0 \text{ for all } 0\le t \le s)\asymp \frac{r+ C \log m}{m}.
$$
Since we assumed initially that $ r \ge \log m$ we see that the right hand side above is $\asymp r/m$ and so this proves the upper bound in this case.

Now suppose that $r \le \log m$. Let $\pi^+$ denote another hyperplane parallel to $\pi$, also at distance $\log m$ from $\pi$ but such that $\pi^+$ is contained in the half space containing $B$ (and in particular, $\pi^+$ intersects $B$). Let $S$ be the slab comprised between $\pi $ and $\pi^+$, and let $\tau_S = \inf \{ t \ge 0: X_t \notin S\}$ denote the first time the walk leaves the slab.
Then note that if the walk has left $S$ by time $s/2$, it must do so by hitting $\pi^+$ before $\pi$ and must then avoid $\pi$ for at least $s/2$ units of time. Of course, $s/2 \asymp m^2$ hence using the result in the case already proved that $r \ge \log m$, since $\pi^+$ is at distance $\log m$ from $\pi$,
\begin{equation}\label{ballstrip}
\P_x( \tau_H >s) \lesssim \P_x( \tau_S > s/2) + \P_x( \tau_{\pi^+} < \tau_H) \frac{\log m}{m}.
\end{equation}
Let us bound the first term in the right hand side above. Note that the slab $S$ has a width equal to $\log m$ by definition. Hence every $(\log m)^2$ units of time, the walk has a probability bounded below by $p>0$ to exit $S$. We deduce that
$$
\P_x( \tau_S > s/2) \le (1-p)^{\lfloor (s/2)/(\log m)^2\rfloor }
$$
and since $s \asymp m^2$ we see that this decays faster than any polynomial in $m$ and hence in particular is $O( 1/m)$. Moreover, we claim that
\begin{equation}\label{Tstrip}
  \P_x( \tau_{\pi^+} < \tau_H)  \asymp \frac{r}{\log m}
\end{equation}
so that, combining with \eqref{ballstrip}, we get $\P( \tau_H >s) \lesssim  r/m$ as desired. To see \eqref{Tstrip}, let $x'$ be the point at which $\pi$ is tangent to $B$ and consider the martingale
$$
M'_t = (X_t - x') \cdot \vec{n},
$$
that is to say, the (signed) distance to the plane $\pi$ of the walk $X_t$. Note that this is indeed a martingale since $X$ is a martingale and the projection onto $\vec{n}$ is a linear operation. Apply the optional stopping theorem at the time $\tau_S = \tau_H \wedge \tau_{\pi^+}$. If $\tau_S = \tau_H$ then $|M'_{\tau_S} | \le 1 $, whereas if $\tau_S = \tau_{\pi^+}$ then $|M_{\tau_S}-  \log m |\le 1$. Consequently,
$$
r = O(1) (1- \P_x (\tau_{\pi^+} < \tau_H)) + (\log m  + O(1) ) \P_x( \tau_{\pi^+} < \tau_H)
$$
from which it immediately follows that
$$
\P_x( \tau_{\pi^+} < \tau_H) \le \frac{r + O(1)}{\log m + O(1)}
$$
which proves \eqref{Tstrip}. As explained, the lemma follows.
\end{proof}

We will also need a slight improvement of this estimate where the end point is specified. (This would also follow from Proposition 6.9.4. in \cite{LL} but as above we prefer to provide our own proof).

\begin{lemma}\label{L:stayinball_bridge} In the same setting as Lemma \ref{L:stayinball},
  We have
  $$\P_x ( X[0, t] \subset B; X_t = z) \lesssim t^{ - d/2} \frac{\dist(x, \partial B)}{\sqrt{t}} \frac{\dist(z, \partial B)}{\sqrt{t}}.$$
  Furthermore if $z \in B^\circ$ and $t \asymp m^2$ is such that $\P_x( X_t = z )>0$, the same inequality holds with $\lesssim$ replaced by $\gtrsim$.
\end{lemma}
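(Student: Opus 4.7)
The plan is to split the time interval $[0,t]$ into three pieces $[0, t/4] \cup [t/4, 3t/4] \cup [3t/4, t]$ and use Lemma \ref{L:stayinball} on the two outer pieces to gain the boundary factors $\dist(x,\partial B)/\sqrt{t}$ and $\dist(z,\partial B)/\sqrt{t}$, while using the standard (unkilled) heat kernel bound $p_{t/2}(y_1,y_2) \lesssim t^{-d/2}$ on the middle piece to gain the diffusive factor $t^{-d/2}$. Let $p_t^B(x,z) = \P_x(X[0,t] \subset B; X_t = z)$ denote the transition density of the walk killed at $\partial B$; by reversibility, $p_t^B(x,z) = p_t^B(z,x)$.

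For the upper bound, Chapman--Kolmogorov for the killed walk gives
\begin{equation*}
p_t^B(x,z) \,=\, \sum_{y_1, y_2 \in B} p_{t/4}^B(x,y_1) \, p_{t/2}^B(y_1,y_2) \, p_{t/4}^B(y_2,z).
\end{equation*}
Since $p_{t/2}^B(y_1,y_2) \le p_{t/2}(y_1,y_2) \lesssim t^{-d/2}$ by the standard local CLT/heat kernel bound in $\Z^d$, we obtain
\begin{equation*}
p_t^B(x,z) \,\lesssim\, t^{-d/2} \, \P_x(X[0,t/4] \subset B) \, \P_z(X[0,t/4] \subset B),
\end{equation*}
where we also used reversibility to rewrite $\sum_{y_2} p_{t/4}^B(y_2,z) = \P_z(X[0,t/4] \subset B)$. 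Applying Lemma \ref{L:stayinball} (with $s = t/4 \asymp m^2$) to each of the two probabilities on the right yields exactly $\dist(x,\partial B)/m \asymp \dist(x,\partial B)/\sqrt{t}$ and similarly for $z$, giving the claimed upper bound.

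For the lower bound, with $z \in B^\circ$ so that $\dist(z,\partial B) \asymp m \asymp \sqrt{t}$, we must show $p_t^B(x,z) \gtrsim t^{-d/2}\,\dist(x,\partial B)/\sqrt{t}$. I would restrict the above sum to $y_1, y_2$ lying in a fixed ball $B^{\circ\circ} \subset B^\circ$ of radius $m/4$ around $z$, and of the correct parity so that transitions are possible. Starting from $x$, the probability to stay in $B$ for time $t/4$ and end up in $B^{\circ\circ}$ with the required parity is $\gtrsim \dist(x,\partial B)/\sqrt{t}$: the gambler's-ruin lower bound in Lemma \ref{L:stayinball} gives the boundary factor for staying in $B$, and an additional standard diffusive/ballistic argument (the walk has probability bounded below to be in $B^{\circ\circ}$ at time $t/4$ given it has not exited) localises the endpoint in $B^{\circ\circ}$. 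For the middle piece, $p_{t/2}^B(y_1,y_2) \gtrsim t^{-d/2}$ uniformly in $y_1,y_2 \in B^{\circ\circ}$ of compatible parity, by a standard heat kernel lower bound in a ball away from the boundary (e.g., a coupling of the killed walk on $[0, t/2]$ with the free walk, together with the local CLT and Lemma \ref{L:stayinball} applied to the uncoupled event). The third piece is handled by reversibility exactly as the first, now using $\dist(z, \partial B) \asymp \sqrt{t}$.

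The main technical obstacle is the parity constraint: one needs to sum (and bound from below) only over $y_1, y_2$ of the correct parities so that all three transitions have positive probability, and arrange the endpoints $y_1, y_2$ to lie in the bulk $B^\circ$ so that the diffusive middle bound $p_{t/2}^B(y_1,y_2) \gtrsim t^{-d/2}$ applies. This is precisely why the statement requires $z \in B^\circ$ (not just $z \in B$): if $z$ were near $\partial B$, the final piece would cost an extra factor $\dist(z,\partial B)/\sqrt{t} \ll 1$ which cannot be absorbed. Verifying the standard ``Gaussian'' lower bound on the killed heat kernel in the bulk is routine given the input of Lemma \ref{L:stayinball} (which already encodes the boundary behaviour), but care is needed to carry the parity through all three factors; once this bookkeeping is done, the proof is complete.
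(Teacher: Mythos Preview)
Your argument is correct and essentially identical to the paper's own proof: both split the time interval into three pieces, apply Chapman--Kolmogorov for the killed walk, use the global heat-kernel bound $p_s(y_1,y_2)\lesssim s^{-d/2}$ on the middle piece, and invoke Lemma~\ref{L:stayinball} (together with reversibility) on the two outer pieces to extract the boundary factors. The only cosmetic difference is that the paper uses three equal pieces of length $t/3$ whereas you use $t/4,\,t/2,\,t/4$; this is immaterial. For the lower bound, the paper is terser --- it simply appeals to the lower bound in Lemma~\ref{L:stayinball} and the fact that a Brownian bridge between two points of $B(0,1/2)$ stays in $B(0,1)$ with uniformly positive probability --- while you spell out the localisation in a bulk sub-ball $B^{\circ\circ}$ and the parity bookkeeping more explicitly; but the underlying idea is the same.
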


\begin{proof}
  This uses a simple time reversal argument as well as the Markov property. Split the interval $[0,t]$ into three intervals of length $t /3$ each (for this argument we can assume without loss of generality that $t/3$ is an integer). Observe that the process $(X_{t-s} : 0 \le s \le t )$ is also by a reversibility a random walk which given $X_t =z$ will be starting from $z$. Hence, using the standard fact that $\P_{x'}(X_t = z') \lesssim t^{-d/2}$ for all $t \ge 1$ and $x', z' \in \mathbb{Z}^d$,
  \begin{align*}
    \P_x ( X[0, t] \subset B; X_t = z)  & \le \sum_{x',z'\in B} \P_x( X[ 0 , t/3] \subset B; X_{t/3} = x') \P_{x'} (X[0, t /3] \subset B; X_{t/3} = z')\times \\
    & \quad \quad \quad \quad \quad \quad \times \P_{z'} (X_{t/3} = z; X[ 0, t/3] \subset B)\\
    & \le   \sum_{x',z'\in B} \P_x( X[ 0 , t/3] \subset B; X_{t/3} = x') \P_{x'} (X[0, t /3] \subset B; X_{t/3} = z')\times \\
    & \quad \quad \quad \quad \quad \quad \times \P_{z} (X_{t/3} = z'; X[ 0, t/3] \subset B)\\
    & \lesssim \sum_{x',z'\in B} \P_x( X[ 0 , t/3] \subset B; X_{t/3} = x')  \times \frac1{t^{d/2}} \times \P_{z} (X_{t/3} = z'; X[ 0, t/3] \subset B)\\
    & \le t^{ - d/2} \P_x (X[0, t/3] \subset B) \P_z (X[0, t / 3] \subset B)\\
    & \lesssim t^{ - d/2} \frac{\dist(x, \partial B)}{\sqrt{t}} \frac{\dist(z, \partial B)}{\sqrt{t}}
  \end{align*}
  by \eqref{Tplane}, as desired. When $z \in B^\circ$ and $t \asymp m^2$, the opposite inequality holds using the lower bound in Lemma \ref{L:stayinball}, and the fact that a Bronwian bridge from a point in $B(0,1/2)$ to another point in $B(0,1/2)$ has a probability bounded below to stay in $B(0,1)$ throughout, uniformly in the endpoints of the trajectory within $B(0,1/2)$.
  \end{proof}

The estimate we will rely on is the following:

\begin{lemma}
  \label{L:HKexc}
  Let $t \ge 2$ and $x \in B$ with $\dist (x, \partial B) =r$ with $1 \le r \le m$. Then if $\tau = m^2$, uniformly in $a, b \in B^\circ$, and $s \in [\tau/4 ; \tau/2]$,
  \begin{equation}
    \label{HKlb}
    \P^{a \to b; \tau} (X_s = x) \gtrsim \frac1{m^d} \left( \frac{r}{m}\right)^2
  \end{equation}
if the parity of $x - a$ is the same as $s$.  Moreover, in that case, uniformly in $a, b \in B^\circ$, and $s \in [\tau/4 ; \tau/2]$, $1 \le t \le \tau/4$, $ 1\le r \le m$, the following holds: let $A = \{x \in B : r \le \dist(x, \partial B) < 2r \}$ be the annulus at distance $r$. Then for any $x \in A$,

  \begin{equation}\label{HKub}
   \P^{a \to b; \tau} (X_{s+ t} \in A | X_s = x)  \lesssim r^2  (\log t )^{10d} / t.
  \end{equation}
%  where $p_t(x,y)$ is the full plane heat kernel of simple random walk.
\end{lemma}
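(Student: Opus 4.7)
The plan for both parts is to use the Markov property to reduce bridge transition probabilities to ordinary killed probabilities controlled by Lemma~\ref{L:stayinball_bridge}, and then handle the one remaining quantity by a one-dimensional reflection argument.

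For the lower bound \eqref{HKlb}, I would decompose at time $s$:
$$\P^{a\to b;\tau}(X_s = x) \;=\; \frac{\P_a(X_s = x,\, X[0,s]\subset B)\,\P_x(X_{\tau-s}=b,\, X[0,\tau-s]\subset B)}{\P_a(X_\tau = b,\, X[0,\tau]\subset B)}.$$
All three time parameters are $\asymp m^2$, so Lemma~\ref{L:stayinball_bridge} applies to each factor. For the two numerator factors, I would apply reversibility if needed so that the endpoint of each killed probability becomes one of $a$ or $b$, both lying in $B^\circ$; the matching lower bound of the lemma then produces $\gtrsim m^{-d}(r/m)$ for each factor. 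For the denominator, with both $a,b\in B^\circ$, the lemma's upper bound gives $\lesssim m^{-d}$. Taking the ratio yields $\gtrsim m^{-d}(r/m)^2$, which is \eqref{HKlb}.

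For the upper bound \eqref{HKub}, I would first observe that the conditional law given $X_s=x$ is that of a bridge of length $T:=\tau-s\asymp m^2$ from $x$ to $b$ staying in $B$. A second application of the Markov property at time $t$ gives
$$\P^{x\to b; T}(X_t \in A) \;=\; \frac{\displaystyle\sum_{y\in A}\P_x(X_t=y, X[0,t]\subset B)\,\P_y(X_{T-t}=b, X[0,T-t]\subset B)}{\P_x(X_T = b, X[0,T]\subset B)}.$$
Since $T-t\asymp m^2$, $\dist(y,\partial B)\asymp r$ for $y\in A$, and $b\in B^\circ$, Lemma~\ref{L:stayinball_bridge} yields the two-sided estimates $\P_y(X_{T-t}=b, X[0,T-t]\subset B)\asymp r/m^{d+1}$ and $\P_x(X_T = b, X[0,T]\subset B)\asymp r/m^{d+1}$, so these boundary-correction factors cancel in the ratio and
$$\P^{x\to b; T}(X_t\in A)\;\lesssim\; \P_x\big(X_t\in A,\, X[0,t]\subset B\big).$$
It remains to prove $\P_x(X_t \in A, X[0,t]\subset B) \lesssim (r^2/t)(\log t)^{10d}$. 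When $t\le r^2$ this is trivial. Otherwise I would reduce to a one-dimensional half-space calculation: couple $X$ to a Brownian motion $W$ via the KMT approximation (Theorem 7.1.1 of \cite{LL}), as in the proof of Lemma~\ref{L:stayinball}, so that $\sup_{u\le t}|X_u-W_u|\le C\log t$ on an event of overwhelming probability; then project $W$ onto the inward unit normal to $\partial B$ at the point closest to $x$ to obtain a one-dimensional Brownian motion $Y$ starting at $r$. The event that $W$ is in $A$ and has stayed in $B$ is contained, up to a coupling error, in $\{Y_t\in[0,\,2r+C\log t],\,Y[0,t]\ge 0\}$. The reflection principle in one dimension gives
$$\P_r\big(Y_t\in[\alpha,\beta],\,Y[0,t]\ge 0\big)\;=\;\int_\alpha^\beta\big(p_t(r,u)-p_t(r,-u)\big)\,du\;\lesssim\;\frac{r(\beta^2-\alpha^2)}{t^{3/2}},$$
so with $\beta=2r+C\log t$ we obtain a bound $\lesssim (r+\log t)^3/t^{3/2}$; since $r\le\sqrt t$ in this regime and $r\ge 1$, this is at most $(r^2/t)(\log t)^c$ for some absolute $c$. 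Bounding $c\le 10d$ conservatively and absorbing the coupling-error event (which contributes at most a polynomial correction) yields the claim.

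The main obstacle is the final one-dimensional estimate: the polylogarithmic factor $(\log t)^{10d}$ is inherited from the KMT coupling error and from the looseness in approximating the ball $B$ by a half-space. A more delicate direct analysis via Dirichlet Green's function estimates in a ball could in principle sharpen the exponent, but the stated bound is already adequate for the moment computation carried out in Section~\ref{S:moment}.
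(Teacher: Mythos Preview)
Your argument for the lower bound \eqref{HKlb} is correct and essentially identical to the paper's: both decompose the bridge probability at time $s$, bound the denominator $\P_a(X_\tau=b,\,X[0,\tau]\subset B)\asymp m^{-d}$ (since $a,b\in B^\circ$), and apply Lemma~\ref{L:stayinball_bridge} with reversibility to each numerator factor to extract $r/m$ twice.

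For the upper bound \eqref{HKub} your reduction to $\P_x(X_t\in A,\,X[0,t]\subset B)$ is also the paper's starting point, but your one-dimensional argument for this last quantity has a genuine gap. The claimed containment
\[
\{X_t\in A,\ X[0,t]\subset B\}\ \subset\ \{Y_t\in[0,\,2r+C\log t],\ Y[0,t]\ge 0\}\quad\text{(up to KMT error)}
\]
is false: the annulus $A$ is a global object in $B$, and a point $y\in A$ far from the tangency point $x'$ has normal projection $\dist(y,\pi)\approx \dist(y,\partial B)+|y-x'|^2/(2m)$, which for $|y-x'|\asymp m$ is of order $m$, not $O(r)$. Even if you first localize to $|X_t-x|\le \sqrt t\,(\log t)^{10}$ (handling the complement by Chernoff), the curvature correction to the projection is of order $t(\log t)^{20}/m$, which for $t$ close to $\tau/4\asymp m^2$ is again of order $m$. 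In that regime your reflection bound only yields $\lesssim (r+\log t)\cdot (t(\log t)^{C}/m)^2\,t^{-3/2}\asymp r\sqrt t\,(\log t)^{C}/m^2$, which is far larger than $r^2/t$ when $r$ is small. The underlying reason is that the half-space projection captures only the factor $r/\sqrt t$ coming from the \emph{starting} point being near $\partial B$; it loses the second factor $r/\sqrt t$ coming from the \emph{endpoint} $X_t\in A$ also being near $\partial B$.

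The paper recovers exactly this missing factor by applying Lemma~\ref{L:stayinball_bridge} \emph{pointwise}: for each $y\in A$,
\[
\P_x(X_t=y,\,X[0,t]\subset B)\ \lesssim\ t^{-d/2}\,\frac{\dist(x,\partial B)}{\sqrt t}\,\frac{\dist(y,\partial B)}{\sqrt t}\ \asymp\ \frac{r^2}{t}\,t^{-d/2},
\]
then sums over the $\lesssim t^{d/2}(\log t)^{10d}$ points $y\in A$ with $|y-x|\le\sqrt t\,(\log t)^{10}$ and disposes of the remaining $y$ by a Chernoff bound. This is both simpler and avoids the curvature issue entirely, since Lemma~\ref{L:stayinball_bridge} already encodes the correct two-sided boundary behavior in the ball.
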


\begin{proof}
Observe that, uniformly in $a, b \in B^\circ$,
$$\P_a(X_\tau = b, X[ 0 ,\tau] \subset B) \asymp \P_a( X_\tau = b ) \asymp \frac1{m^d}$$
since a Brownian bridge from one point in a ball to another point in a ball has positive probability to stay in that ball throughout.
Thus using Lemma \ref{L:stayinball_bridge} and reversibility (and noting that the parity of $b - x$ is the same as $\tau -s$ under our assumptions),
\begin{align*}
\P^{a \to b; \tau} (X_s = x) &\asymp \frac{\P_a (X_s = x; X[0, s] \subset B) \times \P_x (X_{\tau - s} = b ; X[ 0, \tau - s] \subset B)}{1/m^d}\\
& \gtrsim \frac{\P_x (X_s = a; X[0,s] \in B) \times \tfrac{1}{m^d} \tfrac{r}{m}}{1/m^d}\\
& \gtrsim \frac{\tfrac{1}{m^d}{ \tfrac{r}{m} \times  \tfrac{1}{m^d} \tfrac{r}{m}}}{1/m^d} = \tfrac{1}{m^d} (\tfrac{r}{m})^2
\end{align*}
which proves \eqref{HKlb}.

  For the upper bound \eqref{HKub}, observe that by the Markov property, if $\tau' = \tau -s $,
  $$
  \P^{a \to b; \tau}(X_{s+t} \in A  |X_s=x) = \P^{x \to b; \tau'} (X_t \in A ) = \frac{\P_x( X_t \in A ; X[0, \tau'] \subset B ; X_{\tau'} = b)}{\P_x ( X[0, \tau'] \subset B; X_{\tau'} = b)}.
  $$
  Consider first the denominator. Observe that by Lemma \ref{L:stayinball_bridge}, since $b \in B^\circ$ and $\tau'\asymp m^2$,
  $$
  \P_x ( X[0, \tau'] \subset B; X_{\tau'} = b) \asymp \frac{r}{m} \frac1{m^d}.
  $$

  Now consider the numerator and suppose we condition on $X_t = y \in A$.
%  that
%  $$
%  \P^x( X_t = y ; X[0, \tau'] \subset B ; X_{\tau'} = b)\le p_t(x,y) \P^y( X([0, \tau' - t] \subset B; X_{\tau' - t} = b).
%  $$
  Since $s \in [\tau/4, \tau/2]$ we have $\tau' = \tau - s \in [\tau/2, 3\tau/4]$. Hence since we also assume that $t \le \tau/4$ we deduce that $\tau' - t \in [\tau/4, \tau/2]$, and hence $\tau' - t \asymp \tau' \asymp \tau$. From this, it is not hard to see that
  %and the fact that $ \dist (x, \partial B) \asymp \dist (y, \partial B)$ it is not hard to check that
 $$
 \P_y( X([0, \tau' - t] \subset B; X_{\tau' - t} = b) \asymp \frac{r}{m}\frac1{m^d}.
 $$
 Consequently the numerator satisfies
 \begin{align*}
 \P_x( X_t \in A ; X[0, \tau'] \subset B ; X_{\tau'} = b) &\lesssim \frac{r}{m}\frac1{m^d} \P_x(X_t \in A ; X[0,t] \in B)\lesssim  \frac{r}{m} \frac1{m^d} \sum_{ y \in A} \P_x( X_t = y, X[0, t] \in B).
 \end{align*}
 Now, if $| y - x | \le t^{1/2} ( \log t )^{10}$ then we use Lemma \ref{L:stayinball} to obtain that
 $$
 \P_x( X_t = y, X[0, t] \in B) \lesssim \frac{r^2}{t} t^{ - d/2},
 $$
and since $\#\{ y : | y - x | \le t^{1/2} ( \log t )^{10} \} \lesssim t^{d/2} ( \log t )^{10d}$, the contribution to the sum from such points is at most
$$
 \sum_{ y \in A: | y - x| \le ( \log t )^{10}} \P_x( X_t = y, X[0, t] \in B) \lesssim  \frac{r^2}{t} ( \log t )^{10}.
$$
Now if $ | y - x | \le t^{1/2} ( \log t )^{10}$, then
$$
\P_x( X_t = y, X[0, t] \in B) \le \P_x (X_t = y )
$$
and so summing over all such $y$, the contribution is at most
$$
\P_x (|X_t - x | \ge t^{1/2} (\log t )^{10}) \le \exp( - ( \log t)^{20}/2) = o(1/t) \lesssim r^2/t
$$
using elementary Chernoff large deviation bounds for sums of i.i.d. random variables. Hence
 $$
 \sum_{ y \in A} \P_x( X_t = y, X[0, t] \in B) \lesssim \frac{r^2( \log t )^{10d}}{t}
 $$
 and so the numerator satisfies
 $$
 \P_x( X_t \in A ; X[0, \tau'] \subset B ; X_{\tau'} = b) \lesssim \frac{r}{m} \frac1{m^d} \frac{r^2( \log t )^{10d}}{t}.
 $$
 Combining with the bound on the denominator, we deduce
 $$
   \P^{a \to b; \tau}(X_{s+t} \in A  |X_s=x) \le \frac{r^2( \log t )^{10d}}{t}
 $$
 as desired.
\end{proof}

\subsection{End of proof of Theorem \ref{T:filledball}}
\label{S:moment}

We now start the proof of Lemma \ref{L:badmiss}. Decompose the ball $B = B(0,m)$ into dyadic annuli $$
A_j = \{ y \in B :{\dist (y, \partial B)} \in [2^j; 2^{j+1})\}
; \quad 0 \le j \le \log_2(m) $$
Given our point configuration $\cX$ of $k$ disjoint points in $B$, let $\cX_j = \cX \cap A_j$; let $j$ be such that $|\cX_j|$ is maximal. Then note that
$$
|\cX_j| \gtrsim \frac{k}{\log m}.
$$
We will show that any bridge of duration $\tau = m^2$ has a probability bounded below uniformly in its endpoints $a,b$ to visit $\cX_j$. Given a bridge $X$ of duration $\tau$, let $$L (\cX_j) = \int_{\tau/4}^{\tau/2} 1_{X_s \in \cX_j} ds $$
denote its local time spent in $\cX_j$ during its second quarter.

\begin{lemma}
  \label{L:exchit}
  Uniformly over $a, b \in B^\circ$,
  $$
  \P^{a \to b; \tau} ( L ( \cX_j) >0) \gtrsim \frac{k}{ (\log m)^{2 + 10d} m^d}.
  $$
\end{lemma}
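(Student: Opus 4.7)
The plan is to apply the Paley--Zygmund (second moment) inequality to the local time $L(\cX_j) = \int_{\tau/4}^{\tau/2} 1_{X_s \in \cX_j}\,ds$. Since $\{L(\cX_j)>0\}$ is precisely the event that the bridge visits $\cX_j$ during its second quarter, it will suffice to show $\E^{a \to b; \tau}[L(\cX_j)] \gtrsim r^2 k / (m^d \log m)$ and $\E^{a \to b; \tau}[L(\cX_j)^2] \lesssim \E^{a \to b; \tau}[L(\cX_j)] \cdot r^2 (\log m)^{10d+1}$, where $r = 2^j$ is the distance scale of $A_j$ to $\partial B$. The two ingredients are precisely the heat kernel bounds of Lemma~\ref{L:HKexc}.

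For the first moment, I would write
\[
\E^{a \to b; \tau}[L(\cX_j)] \;=\; \sum_{s = \lceil \tau/4 \rceil}^{\lfloor \tau/2\rfloor}\;\sum_{x \in \cX_j} \P^{a \to b; \tau}(X_s = x).
\]
By \eqref{HKlb}, for every time $s \in [\tau/4, \tau/2]$ each point $x \in \cX_j$ with the correct parity $s \equiv x-a \pmod 2$ contributes at least a constant times $m^{-d}(r/m)^2$, and at least half of the points of $\cX_j$ have the correct parity at each step. Summing over $s$ (there are $\asymp \tau = m^2$ terms) and using $|\cX_j| \gtrsim k/\log m$ gives
\[
\E^{a \to b; \tau}[L(\cX_j)] \;\gtrsim\; m^2 \cdot \tfrac{k}{\log m} \cdot \tfrac{r^2}{m^{d+2}} \;=\; \tfrac{k r^2}{m^d \log m}.
\]

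For the second moment, I would use the elementary symmetrisation
\[
L(\cX_j)^2 \;\le\; 2\int_{\tau/4}^{\tau/2} 1_{X_u \in \cX_j}\Bigl(\int_u^{\tau/2} 1_{X_s \in \cX_j}\,ds\Bigr) du,
\]
take expectations, and then apply the Markov property of the bridge at time $u$ (which reduces the inner integral to a sum of bridge transition probabilities from $x \in \cX_j$ to $A_j$). Using the inclusion $\cX_j \subset A_j$ and the bound \eqref{HKub}, the inner conditional expectation is dominated by
\[
\int_0^{\tau/4} \min\!\Bigl(1,\; \tfrac{r^2 (\log t)^{10d}}{t}\Bigr) dt \;\lesssim\; r^2 + \int_{r^2}^{\tau/4} \tfrac{r^2 (\log t)^{10d}}{t}\,dt \;\lesssim\; r^2 (\log m)^{10d+1},
\]
uniformly over the conditioning point $x \in \cX_j$. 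Hence
\[
\E^{a \to b; \tau}[L(\cX_j)^2] \;\lesssim\; \E^{a \to b; \tau}[L(\cX_j)] \cdot r^2 (\log m)^{10d+1}.
\]

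Combining, Paley--Zygmund yields
\[
\P^{a\to b;\tau}(L(\cX_j) > 0) \;\ge\; \frac{\bigl(\E^{a\to b;\tau}[L(\cX_j)]\bigr)^2}{\E^{a\to b;\tau}[L(\cX_j)^2]} \;\gtrsim\; \frac{\E^{a\to b;\tau}[L(\cX_j)]}{r^2 (\log m)^{10d+1}} \;\gtrsim\; \frac{k}{m^d (\log m)^{10d+2}},
\]
which is the claim (with $10d+2 \le 10d+2$, of the stated form $2+10d$). The main point of difficulty I anticipate is bookkeeping the truncation $t \le r^2$ in the second moment and justifying the use of \eqref{HKub} uniformly; otherwise the calculation is essentially dictated by Lemma~\ref{L:HKexc}. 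The cancellation of the $r^2$ factors between the first and second moment bounds is precisely what makes the estimate uniform in $j$ (i.e.\ in the annulus containing most of the mass of $\cX$).
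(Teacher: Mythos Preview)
Your proof is correct and follows essentially the same route as the paper's: a first/second moment argument powered by the two heat kernel bounds of Lemma~\ref{L:HKexc}. The only cosmetic difference is that the paper uses the identity $\P(X>0)=\E(X)/\E(X\mid X>0)$ and bounds the conditional expectation via the strong Markov property at the first hitting time of $\cX_j$, whereas you use Paley--Zygmund and the simple Markov property at a deterministic time; the resulting integrals and the cancellation of $r^2$ are identical. One tiny quibble: the claim that ``at least half of the points of $\cX_j$ have the correct parity at each step'' need not hold (all of $\cX_j$ could share a single parity class), but summing over $s$ first shows each point is active for $\asymp \tau/8$ steps, which gives the same first moment lower bound.
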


\begin{proof}
  We use the trivial identity for nonnegative random variables:
  $$
  \P( X>0) = \frac{ \E(X)}{\E(X|X>0)}
  $$
  Hence we need a lower bound on $\E^{a \to b; \tau} ( L ( \cX_j) )$ and an upper bound on $\E^{a \to b; \tau} ( L ( \cX_j) | L(\cX_j) >0 )$.

  We start by the lower bound. Using \eqref{HKlb},
  \begin{align*}
  \E^{a \to b; \tau} ( L ( \cX_j) ) & \gtrsim |\cX_j| (\tau/4) \frac1{m^d} ( \frac{r}{m})^2\\
  & \gtrsim \frac{k}{\log m} \frac{1}{m^{d-2}} ( \frac{r}{m})^2
  \end{align*}
  where $r = 2^j$.
  On the other hand, for the upper bound we can apply the Markov property at the first hitting of $\cX_j$ and \eqref{HKub} to deduce that
  \begin{align*}
   \E^{a \to b; \tau} ( L ( \cX_j) | L(\cX_j) >0 ) & \le \sup_{s \in [\tau/4,\tau/2]}\sup_{x \in \cX_{j}} \sum_{y \in \cX_j}\int_0^{ \tau/4}\P^{a \to b ; \tau}( X_{s+t} = y | X_s=x) dt\\
   & \lesssim \sup_{s \in [\tau/4,\tau/2]} \sup_{x \in \cX_{j}} \int_0^{ \tau/4} \P^{a \to b; \tau} (X_{s+t} \in A_j | X_s = x) dt\\
   & \lesssim 1+ \int_2^{ \tau / 4} \frac{r^2 ( \log  t)^{10 d}}{t} dt \lesssim  r^2 ( \log m)^{1+ 10 d}.
%   & \lesssim r^2
  \end{align*}
 Taking the quotient of these two terms, we deduce
  $$
   \P^{a \to b; \tau} ( L ( \cX_j) >0) \gtrsim \frac{k}{ (\log m)^{2+ 10 d} m^d}
  $$
  as desired.
\end{proof}

We are now able to complete the proof of Lemma \ref{L:badmiss}.

\begin{proof}{Proof of Lemma \ref{L:badmiss}.}
On $\cG_{br}$ there are at least $c n^{2- 2 \kappa} m^{d-2}$ bridges of length $m^2$, by definition of this event, where $c$ depends only on the dimension. When we condition on the endpoints of the bridges they are independent of each other and of anything else under $\P^*$. Hence, using Lemma \ref{L:exchit}, and the inequality $1- x \le e^{-x}$ valid for all $x \in \R$,
\begin{align*}
\P^* ( \cG_{br}; \cB_{\cX}) &\le (1-\tfrac{k}{ (\log m)^{2+ 10d} m^d})^{c n^{2- 2\kappa} m^{d-2}}\\
& \le \exp ( - c \tfrac{k}{m^d (\log m)^{2+ 10d}} n^{2 - 2\kappa} m^{d-2} )\\
& \le \exp ( - c \tfrac{n^{2 - 2 \kappa}}{m^2 (\log m)^{2+ 10d}} k).
\end{align*}
Since on this event $|R_N \cap B| = |B|- k$, Lemma \ref{L:badmiss} follows.
\end{proof}

As explained this also finishes the proof of Theorem \ref{T:filledball}.

\appendix
\section{Functional inequalities on the rescaled lattice}
Let $x\in\Rd$ and $r>0$. The cubic box
centered at~$x$ of side length~$r$ is
$$\Lambda(x,r) = \big\{\,y=(y_1,\cdots,y_d)\in\R^d: -r/2<y_i-x_i\leq r/2,
\,1\leq i\leq d\,\big\}\,.$$
Let $n\geq 1$.
We denote by $\Znd$ the lattice $\Zd$ rescaled by a factor $n$:
$$\Znd\,=\,\frac{1}{n}\Z^d\,.$$
Let $x\in\Znd$ and $r$ a rational number
such that $nr$ is an odd integer.
We define the discrete box $\La_n(x,r)$ as
$$\La_n(x,r)\,=\,\La(x,r)\cap\Znd\,.$$
We rewrite first the general discrete Poincar\'e--Sobolev inequality proved by
Bes\-se\-mou\-lin--Chatard, Chainais--Hillairet and Filbet
(see theorem~$3$ of \cite{BCHF})
in the particular case
of a box and a cubic mesh and for the exponent
$$2^*\,=\,\frac{2d}{d-2}\,.$$
\begin{theorem}
\label{PS}
[Discrete Poincar\'e--Sobolev inequality]
Let $f$ be a function from $\La_n(x,r)$ to $\R$.
%For $1\leq p<+\infty$, there exists a constat $c=c(x,r,d,p)$ which depends on $x,r,d,p$
%such that
%||f||_{0,2^*,\La(x,r)\,\leq\,c
There exists a constant $c_{PS}=c_{PS}(x,r,d)$ which depends on $x,r,d$ such that
$$\displaylines{
\Big(\sum_{y\in\Lambda_n(x,r)}\frac{1}{n^d}|f(y)|^{2^*}\Big)^{\frac{1}{2^*}}
\,\leq\,\hfill\cr
c_{PS}\Big(\sum_{y\in\Lambda_n(x,r)}\frac{1}{n^d}|f(y)|^{2}\Big)^{\frac{1}{2}}
+c_{PS}\Big(
\sum_{\tatop{y,z\in\Lambda_n(x,r)}{|y-z|=1/n}}
\frac{1}{n^{d-2}}\big(f(y)-f(z)\big)^{2}\Big)^{\frac{1}{2}}\,.
}$$
\end{theorem}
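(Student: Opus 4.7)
The plan is to reduce this rescaled--lattice statement to the standard discrete Poincar\'e--Sobolev inequality on the unit--mesh lattice $\Z^d$ given by \cite[Theorem 3]{BCHF}, via a simple change of scale. Set $N=nr$, which is a positive integer by hypothesis, and associate to $f\colon\La_n(x,r)\to\R$ the rescaled function $g\colon\La(nx,N)\cap\Z^d\to\R$ defined by $g(z)=f(z/n)$. Under the bijection $z=ny$ the three sums appearing in the theorem are (up to the prefactors $1/n^d$, $1/n^d$ and $1/n^{d-2}$) exactly the unit--mesh $\ell^{2^*}$, $\ell^2$ and Dirichlet--energy sums for $g$, so it suffices to quote \cite{BCHF} in the right form and then rescale.

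The form I would use is
$$\Big(\sum_z|g(z)|^{2^*}\Big)^{1/2^*}\,\le\, C_d\Big(\sum_{|z-z'|=1}(g(z)-g(z'))^2\Big)^{1/2}\,+\,\frac{C_d}{N}\Big(\sum_z|g(z)|^2\Big)^{1/2},$$
where $C_d$ depends on the dimension only. This is the discrete counterpart of the continuous inequality $\|u\|_{L^{2^*}(\La(x,r))}\le C_d(\|\nabla u\|_{L^2}+r^{-1}\|u\|_{L^2})$ on a cube of side $r$, and extracting the precise $1/N$ scaling of the Poincar\'e term is the only substantive point where care is required. If \cite[Theorem 3]{BCHF} is not stated in exactly this form, an equivalent derivation uses the homogeneous discrete Sobolev inequality $\|g-\bar g\|_{\ell^{2^*}}\le C_d\|\nabla g\|_{\ell^2}$ (with $\bar g$ the mean of $g$ over the discrete box, which is scale invariant), combined with the elementary estimate $\|\bar g\|_{\ell^{2^*}}=|\bar g|N^{d/2^*}\le\|g\|_{\ell^2}N^{d/2^*-d/2}=\|g\|_{\ell^2}/N$ obtained from Cauchy--Schwarz applied to $|\bar g|$.

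With this inequality in hand the rest is routine bookkeeping. Multiplying through by $n^{-d/2^*}$ and using the two algebraic identities $d/2-d/2^*=1$ and $(d-2)/2-d/2^*=0$, the left--hand side becomes $\big(n^{-d}\sum_y|f(y)|^{2^*}\big)^{1/2^*}$; the Dirichlet term is scale--invariant and produces the desired factor $n^{-(d-2)}$; and the Poincar\'e term picks up a factor $n$ from the $\ell^2$ renormalisation, which is exactly compensated by the prefactor $1/N=1/(nr)$, leaving $(C_d/r)\big(n^{-d}\sum_y|f(y)|^2\big)^{1/2}$. One then reads off $c_{PS}=C_d\max(1,1/r)$, a constant depending only on $r$ and $d$. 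The fact that $c_{PS}$ is independent of $n$ is the true content of the statement: refining the mesh does not degrade the constant, precisely because the $1/N$ scaling of the Poincar\'e term in the underlying unit--mesh inequality exactly compensates the factor $n$ gained from the $\ell^2$ renormalisation.
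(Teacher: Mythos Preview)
The paper does not actually prove this theorem: the sentence immediately preceding it says ``We rewrite first the general discrete Poincar\'e--Sobolev inequality proved by Bessemoulin--Chatard, Chainais--Hillairet and Filbet (see theorem~3 of \cite{BCHF}) in the particular case of a box and a cubic mesh and for the exponent $2^*=2d/(d-2)$.'' In other words, the paper treats the statement as a direct specialisation of \cite[Theorem~3]{BCHF}, which is formulated for general admissible meshes and therefore applies verbatim to the cubic mesh $\Z^d_n$ on the box $\La(x,r)$, with a constant independent of the mesh parameter~$n$.

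Your approach is correct but does more than the paper: you rescale to the unit lattice, invoke a unit--mesh version of the inequality on a box of side $N=nr$ (with the explicit $1/N$ in front of the $\ell^2$ term), and rescale back. The scaling bookkeeping is right --- in particular the identities $d/2-d/2^*=1$ and $(d-2)/2-d/2^*=0$ are exactly what makes the Dirichlet term scale--free and produces the factor $1/n$ on the $\ell^2$ term, which is then absorbed by $1/N=1/(nr)$. Your alternative derivation of the unit--mesh inequality via $\|g-\bar g\|_{2^*}\le C_d\|\nabla g\|_2$ together with $\|\bar g\|_{2^*}\le N^{-1}\|g\|_2$ is also fine, provided one has the homogeneous discrete Sobolev--Poincar\'e inequality on the discrete cube with a dimension--only constant (this is itself a consequence of \cite{BCHF}). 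Either way you end up with $c_{PS}$ depending only on $d$ and $r$, consistent with the paper's later remark that translation invariance removes the apparent $x$--dependence.
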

We rewrite now the discrete Poincar\'e--Wirtinger inequality proved by
Bes\-se\-mou\-lin--Chatard, Chainais--Hillairet and Filbet
(see theorem~$5$ of \cite{BCHF})
in the particular case
of a box and a cubic mesh and for the exponent~$2$.
\begin{theorem}
\label{PW}
[Discrete Poincar\'e--Wirtinger inequality]
Let $f$ be a function from $\La_n(x,r)$ to $\R$.
%For $1\leq p<+\infty$, there exists a constat $c=c(x,r,d,p)$ which depends on $x,r,d,p$
%such that
%||f||_{0,2^*,\La(x,r)\,\leq\,c
There exists a constant $c_{PW}=c_{PW}(x,r,d)$ which depends on $x,r,d$ such that
$$\displaylines{
\Big(\sum_{y\in\Lambda_n(x,r)}\frac{1}{n^d}\big(f(y)-\baf\big)^{2}\Big)^{\frac{1}{2}}
\,\leq\,
c_{PW}\Big(
\sum_{\tatop{y,z\in\Lambda_n(x,r)}{|y-z|=1/n}}
\frac{1}{n^{d-2}}\big(f(y)-f(z)\big)^{2}\Big)^{\frac{1}{2}}\,,
}$$
where $$\baf\,=\,
\frac{1}{(rn)^d}
\sum_{y\in\Lambda_n(x,r)}
f(y)\,.$$
\end{theorem}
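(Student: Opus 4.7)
The plan is to prove the inequality by the canonical path method (Diaconis--Stroock), which reduces the problem to a combinatorial count of paths through each edge. Setting $N = rn$, which is an odd integer by hypothesis, the set $\La_n(x,r)$ consists of $N^d$ points. Let $\mu$ denote the uniform probability measure on $\La_n(x,r)$; then $\baf = \E_\mu[f]$ and
$$\Big(\sum_{y\in\La_n(x,r)}\tfrac{1}{n^d}(f(y)-\baf)^2\Big)^{1/2} = r^{d/2}\,\sqrt{\var_\mu(f)}\,.$$
Since $\var_\mu$ is an $L^2$ projection orthogonal to constants, I would start from the identity
$$\var_\mu(f)\,=\,\frac{1}{2N^{2d}}\sum_{y,z\in\La_n(x,r)}(f(y)-f(z))^2\,.$$

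Next, for every ordered pair $(y,z)$, I would fix the canonical path $\gamma_{y,z}$ from $y$ to $z$ that adjusts the coordinates one at a time in a prescribed order (say from $1$ to $d$). This path uses at most $dN$ edges of length $1/n$. By Cauchy--Schwarz, writing $\nabla_e f = f(u)-f(v)$ for $e = \{u,v\}$,
$$(f(y)-f(z))^2 \,\leq\, |\gamma_{y,z}|\,\sum_{e\in\gamma_{y,z}}(\nabla_e f)^2 \,\leq\, dN\sum_{e\in\gamma_{y,z}}(\nabla_e f)^2\,.$$
Swapping the order of summation, the key step is the counting estimate
$$\#\big\{(y,z)\in\La_n(x,r)^2:e\in\gamma_{y,z}\big\}\,\leq\,N^{d+1}\,$$
for every edge $e$. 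This follows by inspection of the canonical path: if $e$ lies in the $i$th coordinate direction at position $(a_1,\dots,a_d)$, then $y_{i+1},\dots,y_d$ and $z_1,\dots,z_{i-1}$ are forced, while the other $d-1$ free coordinates and the pair $(y_i,z_i)$ contribute at most $N^{i-1}\cdot N^{d-i}\cdot N^2 = N^{d+1}$ possibilities.

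Combining these steps yields
$$\sum_{y,z}(f(y)-f(z))^2\,\leq\, dN^{d+2}\sum_e(\nabla_e f)^2\,,$$
so $\var_\mu(f)\leq \tfrac{d}{2N^{d-2}}\sum_e(\nabla_e f)^2$. Translating back, and noting that $\sum_e(\nabla_e f)^2$ is half the sum appearing on the right-hand side of Theorem~\ref{PW} (which runs over ordered pairs),
$$\sum_{y\in\La_n(x,r)}\tfrac{1}{n^d}(f(y)-\baf)^2 \,=\, r^d \var_\mu(f) \,\leq\, \frac{dr^2}{4}\cdot\sum_{\tatop{y,z\in\La_n(x,r)}{|y-z|=1/n}}\tfrac{1}{n^{d-2}}(f(y)-f(z))^2\,,$$
proving the inequality with $c_{PW}=r\sqrt{d}/2$. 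The main technical obstacle is not really an obstacle: it is simply bookkeeping in the path-counting step and in reconciling the $n^{-d}$ and $n^{-(d-2)}$ weights with the counting measure on the discrete box. An alternative route would go via a tensor-product reduction to the one-dimensional case and the elementary $1$D inequality $\sum_{i}(a_i-\bar a)^2\leq (N^2/4)\sum_i(a_{i+1}-a_i)^2$, iterating the Efron--Stein decomposition across the $d$ coordinates; but the canonical path approach above is the most direct and transparently yields an explicit constant.
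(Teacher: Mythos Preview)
Your proof is correct. The canonical-path (Diaconis--Stroock) argument works exactly as you describe: the variance identity, the Cauchy--Schwarz step along each path, and the edge-congestion bound $\#\{(y,z):e\in\gamma_{y,z}\}\le N^{d+1}$ are all valid, and the final bookkeeping with the $n^{-d}$ and $n^{-(d-2)}$ weights is accurate, yielding $c_{PW}=r\sqrt{d}/2$.

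Comparison with the paper: the paper does \emph{not} prove Theorem~\ref{PW}. It simply quotes the inequality from Bessemoulin--Chatard, Chainais--Hillairet and Filbet (their Theorem~5), specialised to a cubic box and mesh with exponent~$2$. After stating it, the paper separately checks by a rescaling argument that $c_{PW}(0,r,d)=r\,c_{PW}(0,1,d)$, which your explicit constant $r\sqrt{d}/2$ confirms directly. So your route is genuinely different: you supply a short, self-contained, elementary proof where the paper defers to an external reference. The advantage of your approach is that it yields an explicit constant and makes the dependence on $r$ transparent without any additional scaling step; the paper's choice keeps the appendix shorter by outsourcing the analysis.
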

In general, the constants
$c_{PS}$,
$c_{PW}$
depend on both $x\in\Znd$ and $r$. However, the lattices $\Znd$ being invariant
under a translation by an element of $\Znd$, these constants are the same for all points
$x\in\Znd$. From now onwards, we suppose that $x=0$. Let us examine the dependence
of the constants
$c_{PS}$,
$c_{PW}$ with respect to~$r$.
Let $f$ be a function defined on $\La_n(x,r)$ with values in $\R$ and let us set
$$\forall x\in\La_{nr}(0,1)\qquad g(x)\,=\,f(rx)\,.$$
We apply the inequality stated in theorem~\ref{PW} to $g$:
$$\displaylines{
\Big(\sum_{y\in\Lambda_{nr}(0,1)}\frac{1}{(nr)^d}\big(g(y)-\bag\big)^{2}\Big)^{\frac{1}{2}}
\,\leq\,%\hfill\cr
c_{PW}(0,1,d)\Big(
\sum_{\tatop{y,z\in\Lambda_{nr}(0,1)}{|y-z|=1/(nr)}}
\frac{1}{(nr)^{d-2}}\big(g(y)-g(z)\big)^{2}\Big)^{\frac{1}{2}}\,,
}$$
and we rewrite everything in terms of the function~$f$:
$$\bag\,=\,
\frac{1}{(nr)^d}
\sum_{y\in\Lambda_{nr}(0,1)}
f(ry)\,=\,
\frac{1}{(nr)^d}
\sum_{y\in\Lambda_{n}(0,r)}
f(y)\,=\,\baf\,,
$$
$$\sum_{y\in\Lambda_{nr}(0,1)}\big(g(y)-\bag\big)^{2}\,=\,
\sum_{y\in\Lambda_{n}(0,r)}\big(f(y)-\baf\big)^{2}\,,$$
$$\sum_{\tatop{y,z\in\Lambda_{nr}(0,1)}{|y-z|=1/(nr)}}
\big(g(y)-g(z)\big)^{2}\,=\,
\sum_{\tatop{y,z\in\Lambda_{n}(0,r)}{|y-z|=1/n}}
\big(f(y)-f(z)\big)^{2}\,.
$$
We obtain the following inequality for the function~$f$:
$$\displaylines{
\Big(\sum_{y\in\Lambda_{n}(0,r)}\frac{1}{n^d}\big(f(y)-\baf\big)^{2}\Big)^{\frac{1}{2}}
\,\leq\,%\hfill\cr
c_{PW}(0,1,d)\,r\,\Big(
\sum_{\tatop{y,z\in\Lambda_{n}(0,r)}{|y-z|=1/n}}
\frac{1}{n^{d-2}}\big(f(y)-f(z)\big)^{2}\Big)^{\frac{1}{2}}\,.
}$$
We conclude that
$c_{PW}(0,r,d)=
c_{PW}(0,1,d)r$.
%Therefore we have
%$c_{PW}(x,r,d)=
%c_{PW}(0,1,d)r$.
\section{Inequalities on the original lattice}
We shall adopt a slightly different viewpoint to apply these inequalities. Instead
of rescaling the lattice by a factor~$n$, we will consider functions defined on the
lattice $\Zd$ but on boxes of side~$n$.
We shall deduce the relevant inequalities
from the previous ones by a simple change of variables $y\to ny$.
More precisely, let $x\in\Zd$, $n\geq 1$ and
let $f$ be a function from $\La(x,n)\cap\Zd$ to $\R$.
Let
$\La_n\big({x}/{n},1\big)$ be the box
$$\La_n\big({x}/{n},1\big)\,=\,\La(x/n,1)\cap\Znd\,.$$
We define a function
$f_n$ on
$\La_n\big({x}/{n},1\big)$
by setting
$$\forall y
\in\La_n\big({x}/{n},1\big)
\qquad f_n(y)\,=\,f(ny)\,.$$
We apply then the inequalities stated in theorems~\ref{PS}, \ref{PW} to the function~$f_n$
and we rewrite everything in terms of~$f$.
We first introduce some notation before stating the inequalities.
Let $f$ be a function defined on a subset~$D$
of $\Zd$ with values in~$\R$.
For $p\geq 1$, we define its $p$--norm over $D$
$$||f||_{p,D}\,=\,
\Big(\sum_{y\in D} |f(y)|^p\Big)^{\frac{1}{p}}$$
and its Dirichlet energy over $D$
$$\cE(f,D)\,=\,
\frac{1}{2d}
\sum_{\tatop{y,z\in D}{|y-z|=1}}
\big(f(y)-f(z)\big)^2\,.$$
We recall that the exponent $2^*$ is equal to $2^*=2d/(d-2)$.
The Poincar\'e--Sobolev inequality stated in theorem~\ref{PS}
yields the following inequality in a box of side~$n$.
\begin{corollary}
\label{UPS}
Let $x\in\Zd$, $n\geq 1$ and
let $f$ be a function from $\La(x,n)\cap\Zd$ to $\R$.
%For $1\leq p<+\infty$, there exists a constat $c=c(x,r,d,p)$ which depends on $x,r,d,p$
%such that
%||f||_{0,2^*,\La(x,r)\,\leq\,c
There exists a constant $c_{PS}$ which depends on the dimension $d$ only such that
$$||f||_{2^*,\La(x,n)}
\,\leq\,c_{PS}\Big(\frac{1}{n}
||f||_{2,\La(x,n)}+\sqrt{2d\,\cE(f,\La(x,n))}\Big)\,.$$
%{\bf check}
\end{corollary}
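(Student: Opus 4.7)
The plan is to derive this by simple rescaling from Theorem~\ref{PS}, following the change of variables already sketched in the paragraph immediately preceding the corollary. Given $f:\La(x,n)\cap\Zd\to\R$, I would first define $f_n$ on $\La_n(x/n,1)=\La(x/n,1)\cap\Znd$ by $f_n(y)=f(ny)$. Points $y\in\La_n(x/n,1)$ are in bijection with points $ny\in\La(x,n)\cap\Zd$, and pairs of neighbours at distance $1/n$ on the rescaled lattice correspond exactly to pairs of nearest-neighbours at distance $1$ on $\Zd$.

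Next I would apply Theorem~\ref{PS} to $f_n$ with parameters $(x/n,1,d)$, and use the translation invariance of $\Znd$ observed in the paper to replace the a priori constant $c_{PS}(x/n,1,d)$ by $c_{PS}(0,1,d)$, a constant depending only on the dimension. Rewriting each of the three sums in the inequality in terms of $f$ via the substitution $z=ny$ yields
\begin{align*}
\sum_{y\in\La_n(x/n,1)}\tfrac{1}{n^d}|f_n(y)|^{2^*}&=\tfrac{1}{n^d}\,||f||_{2^*,\La(x,n)}^{2^*}\,,\\
\sum_{y\in\La_n(x/n,1)}\tfrac{1}{n^d}|f_n(y)|^{2}&=\tfrac{1}{n^d}\,||f||_{2,\La(x,n)}^{2}\,,\\
\sum_{\genfrac{}{}{0pt}{1}{y,z\in\La_n(x/n,1)}{|y-z|=1/n}}\tfrac{1}{n^{d-2}}\big(f_n(y)-f_n(z)\big)^2&=\tfrac{1}{n^{d-2}}\cdot 2d\,\cE(f,\La(x,n))\,.
\end{align*}

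Finally, raising the first expression to the power $1/2^*$, taking square roots in the other two, and multiplying through by $n^{d/2^*}$, I would use the identity $2^*=2d/(d-2)$, which gives $d/2^*=(d-2)/2$, so that the prefactors on the right simplify as $n^{d/2^*-d/2}=n^{-1}$ and $n^{d/2^*-(d-2)/2}=1$. This produces exactly
$$||f||_{2^*,\La(x,n)}\,\leq\,c_{PS}\Big(\tfrac{1}{n}||f||_{2,\La(x,n)}+\sqrt{2d\,\cE(f,\La(x,n))}\Big)\,,$$
with $c_{PS}=c_{PS}(0,1,d)$ depending only on~$d$, as claimed. No step presents a serious obstacle: the only thing to be careful about is bookkeeping the powers of $n$ and checking that translation invariance really does make the constant independent of $x$ (which follows because the lattice $\Znd$ and the nearest-neighbour graph structure are invariant under translation by elements of $\Znd$, and in particular by $x/n$ when $x\in\Zd$).
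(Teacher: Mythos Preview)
Your proposal is correct and follows exactly the approach the paper outlines in the paragraph preceding the corollary: define $f_n(y)=f(ny)$ on $\La_n(x/n,1)$, apply Theorem~\ref{PS} with constant $c_{PS}(0,1,d)$ (using translation invariance), and rewrite everything in terms of $f$ via the change of variables $y\to ny$. The paper does not spell out the computation of the powers of $n$, so your explicit bookkeeping with $d/2^*=(d-2)/2$ simply fills in the details the authors left to the reader.
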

%$$\displaylines{
%\Big(\sum_{y\in\Lambda(x,n)\cap\Zd}|f(y)|^{2^*}\Big)^{\frac{1}{2^*}}
%\,\leq\,\hfill\cr
%c_{PS}\Big(\sum_{y\in\Lambda(x,n)\cap\Zd}
%|f(y)|^{2}\Big)^{\frac{1}{2}}
%+c_{PS}\Big(
%\sum_{\tatop{y,z\in\Lambda_n(x,r)}{|y-z|=1/n}}
%\frac{1}{n^{d-2}}\big(f(y)-f(z)\big)^{2}\Big)^{\frac{1}{2}}\,.
%}$$
The Poincar\'e--Wirtinger inequality stated in theorem~\ref{PW}
yields the following inequality in a box of side~$n$.
\begin{corollary}
\label{UPW}
Let $x\in\Zd$, $n\geq 1$ and
let $f$ be a function from $\La(x,n)\cap\Zd$ to $\R$.
%For $1\leq p<+\infty$, there exists a constat $c=c(x,r,d,p)$ which depends on $x,r,d,p$
%such that
%||f||_{0,2^*,\La(x,r)\,\leq\,c
There exists a constant $c_{PW}$ which depends on the dimension $d$ only such that
$$||f-\baf||_{2,\La(x,n)}
\,\leq\,n\,c_{PW}
\sqrt{2d\,\cE(f,\La(x,n))}\,,$$
%{\bf Check}
where $$\baf\,=\,
\frac{1}{n^d}
\sum_{y\in\Lambda(x,n)}
f(y)\,.$$
\end{corollary}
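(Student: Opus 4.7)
The plan is to reduce Corollary \ref{UPW} directly to Theorem \ref{PW} on the rescaled lattice, exactly parallel to the change of variables already carried out in the text preceding the statement. The proof is therefore essentially a bookkeeping exercise; there is no serious obstacle.

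First I would, given $f:\La(x,n)\cap\Zd\to\R$, define the rescaled function $f_n$ on $\La_n(x/n,1)=\La(x/n,1)\cap\Znd$ by setting
$$\forall y\in\La_n(x/n,1)\qquad f_n(y)\,=\,f(ny)\,.$$
Since $y\mapsto ny$ is a bijection between $\La_n(x/n,1)$ and $\La(x,n)\cap\Zd$, the two empirical means coincide:
$$\overline{f_n}\,=\,\frac{1}{n^d}\sum_{y\in\La_n(x/n,1)}f_n(y)\,=\,\frac{1}{n^d}\sum_{y'\in\La(x,n)\cap\Zd}f(y')\,=\,\baf\,.$$

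Next I would apply Theorem \ref{PW} to $f_n$ on the unit box $\La_n(x/n,1)$. The translation invariance of the lattice $\Znd$ under shifts by $\Znd$ (noted in the text after Theorem~\ref{PW}) guarantees that the constant is the same as for the box centered at $0$, so the resulting constant $c_{PW}:=c_{PW}(0,1,d)$ depends only on~$d$. This yields
$$\Big(\sum_{y\in\La_n(x/n,1)}\frac{1}{n^d}\big(f_n(y)-\overline{f_n}\big)^{2}\Big)^{1/2}\,\leq\,c_{PW}\Big(\sum_{\tatop{y,z\in\La_n(x/n,1)}{|y-z|=1/n}}\frac{1}{n^{d-2}}\big(f_n(y)-f_n(z)\big)^{2}\Big)^{1/2}\,.$$

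Finally I would translate both sides back via $y'=ny$, $z'=nz$, noting that $|y-z|=1/n$ in $\Znd$ is equivalent to $|y'-z'|=1$ in $\Zd$. The left-hand side becomes $n^{-d/2}\,||f-\baf||_{2,\La(x,n)}$, while the sum on the right becomes $(2d/n^{d-2})\,\cE(f,\La(x,n))$ by the definition~\eqref{defE} of the Dirichlet energy. Multiplying through by $n^{d/2}$ gives
$$||f-\baf||_{2,\La(x,n)}\,\leq\,n\,c_{PW}\,\sqrt{2d\,\cE(f,\La(x,n))}\,,$$
which is exactly the claimed inequality. The only point requiring care is keeping track of the powers of~$n$ in the two weighted sums, which is why both factors $n^d$ (on the left) and $n^{d-2}$ (on the right) together produce the factor $n$ on the right of the final bound.
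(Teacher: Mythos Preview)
Your proposal is correct and follows exactly the approach indicated in the paper: define $f_n(y)=f(ny)$ on $\La_n(x/n,1)$, apply Theorem~\ref{PW} with the dimension-only constant $c_{PW}(0,1,d)$ via translation invariance, and unwind the change of variables. Your bookkeeping of the powers of $n$ (yielding $n^{-d/2}$ on the left and $n^{-(d-2)/2}\sqrt{2d}$ on the right, hence the final factor $n$) is accurate.
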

Finally, if we send $n$ to $\infty$ in the inequality of corollary~\ref{UPS},
we get the following result.
\begin{corollary}
\label{TPS}
Let $f$ be a function defined on $\Zd$ with values in $\R$ having finite support.
%For $1\leq p<+\infty$, there exists a constat $c=c(x,r,d,p)$ which depends on $x,r,d,p$
%such that
%||f||_{0,2^*,\La(x,r)\,\leq\,c
There exists a constant $c_{PS}$ which depends on the dimension $d$ only such that
$$||f||_{2^*}
\,\leq\,c_{PS}
\sqrt{2d\,\cE(f)}\,.$$
\end{corollary}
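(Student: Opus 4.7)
The plan is to deduce Corollary \ref{TPS} directly from Corollary \ref{UPS} by an exhaustion argument, sending $n\to\infty$ so that the $\ell^2$ term in the right hand side of the scaled inequality becomes negligible.

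First I would fix a function $f:\Zd\to\R$ with finite support and observe that, since $\supp f$ is a bounded set containing the origin (up to translation, which does not affect any of the three quantities involved), there exists an integer $n_0\geq 1$ such that $\supp f\subset \La(0,n_0)\cap\Zd$. For every $n\geq n_0$, the box $\La(0,n)\cap\Zd$ contains not only $\supp f$ but also every nearest-neighbour edge incident to $\supp f$, so
\begin{equation*}
\|f\|_{2^*,\La(0,n)}=\|f\|_{2^*},\quad \|f\|_{2,\La(0,n)}=\|f\|_2,\quad \cE(f,\La(0,n))=\cE(f).
\end{equation*}

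Next I would apply Corollary \ref{UPS} to $f$ on the box $\La(0,n)$, obtaining
\begin{equation*}
\|f\|_{2^*}\,\leq\,c_{PS}\Big(\tfrac{1}{n}\|f\|_{2}+\sqrt{2d\,\cE(f)}\Big),
\end{equation*}
where the crucial point is that the constant $c_{PS}$ provided by Corollary \ref{UPS} depends only on the dimension $d$ and not on $n$. Since $\|f\|_2$ is a fixed finite quantity, the term $\tfrac{1}{n}\|f\|_2$ tends to $0$ as $n\to\infty$. Letting $n\to\infty$ yields the desired inequality $\|f\|_{2^*}\leq c_{PS}\sqrt{2d\,\cE(f)}$.

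There is essentially no obstacle here: the whole argument is a passage to the limit, and the only substantive point (uniformity of $c_{PS}$ in $n$) is already contained in the statement of Corollary \ref{UPS}, whose derivation in the appendix showed via a change of variables $y\mapsto ny$ that the Poincar\'e–Sobolev constant on a cubic box in the rescaled lattice depends only on the dimension for the critical exponent $2^*=2d/(d-2)$. If one wanted to avoid the appeal to Corollary \ref{UPS} altogether, the same conclusion could alternatively be obtained by rescaling and invoking the continuous Sobolev inequality on the linear interpolation $\ft$ of $f$ constructed in Section~\ref{lini}, using \eqref{ftildef2} together with the fact that $\ft$ has compact support; but the exhaustion approach above is cleaner and entirely elementary.
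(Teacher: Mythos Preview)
Your proof is correct and is exactly the argument the paper indicates: the statement is obtained from Corollary~\ref{UPS} by letting $n\to\infty$ so that the $\tfrac{1}{n}\|f\|_2$ term disappears. The only content is the uniformity of $c_{PS}$ in $n$, which you correctly identify as already contained in Corollary~\ref{UPS}.
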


\noindent
{\bf Acknowledgements.} We warmly thank the Referees for their precise reading and their numerous comments which helped
to improve the paper.
\bibliographystyle{plain}
\bibliography{bobe}

\begin{thebibliography}{10}

\bibitem{Antal}
Peter Antal.
\newblock Enlargement of obstacles for the simple random walk.
\newblock {\em The Annals of Probability}, pages 1061--1101, 1995.

\bibitem{BY}
Nathanael Berestycki and Ariel Yadin.
\newblock Condensation of random walks and the wulff crystal.
\newblock {\em Annales de l'Institut Henri Poincar\'e (B): probability and
  statistics}, to appear. arXiv preprint arXiv:1305.0139, 2013.

\bibitem{BCHF}
Marianne Bessemoulin-Chatard, Claire Chainais-Hillairet, and Francis Filbet.
\newblock On discrete functional inequalities for some finite volume schemes.
\newblock {\em IMA J. Numer. Anal.}, 35(3):1125--1149, 2015.

\bibitem{BP2}
Marek Biskup and Eviatar~B Procaccia.
\newblock Shapes of drums with lowest base frequency under non-isotropic
  perimeter constraints.
\newblock {\em arXiv preprint arXiv:1603.03871}, 2016.

\bibitem{BP1}
Marek Biskup and Eviatar~B Procaccia.
\newblock Eigenvalue versus perimeter in a shape theorem for self-interacting
  random walks.
\newblock {\em The Annals of Applied Probability}, 28(1):340--377, 2018.

\bibitem{BO}
Erwin Bolthausen.
\newblock Localization of a two-dimensional random walk with an attractive path
  interaction.
\newblock {\em Ann. Probab.}, 22(2):875--918, 1994.

\bibitem{BLGV}
Lorenzo Brasco, Guido De~Philippis, and Bozhidar Velichkov.
\newblock Faber\--{K}rahn ine\-qua\-li\-ties in sharp quantitative form.
\newblock {\em Duke Math. J.}, 164(9):1777--1831, 2015.

\bibitem{DFSX}
Jian Ding, Ryoki Fukushima, Rongfeng Sun, and Changji Xu.
\newblock Geometry of the random walk range conditioned on survival among
  {B}ernoulli obstacles.
\newblock {\em arXiv preprint arXiv:1806.08319}, 2018.

\bibitem{DV1}
M.~D. Donsker and S.~R.~S. Varadhan.
\newblock Asymptotic evaluation of certain {M}arkov process expectations for
  large time. {I}. {II}.
\newblock {\em Comm. Pure Appl. Math.}, 28:1--47; ibid. 28 (1975), 279--301,
  1975.

\bibitem{DVRA}
M.~D. Donsker and S.~R.~S. Varadhan.
\newblock On the number of distinct sites visited by a random walk.
\newblock {\em Comm. Pure Appl. Math.}, 32(6):721--747, 1979.

\bibitem{EL}
Richard~S. Ellis.
\newblock {\em Entropy, large deviations, and statistical mechanics}.
\newblock Classics in Mathematics. Springer-Verlag, Berlin, 2006.
\newblock Reprint of the 1985 original.

\bibitem{FA}
K.~Fan.
\newblock Minimax theorems.
\newblock {\em Proc. Nat. Acad. Sci.}, 39:42--47, 1953.

\bibitem{FMP}
Nicola Fusco, Francesco Maggi, and Aldo Pratelli.
\newblock Stability estimates for certain {F}aber-{K}rahn, isocapacitary and
  {C}heeger inequalities.
\newblock {\em Ann. Sc. Norm. Super. Pisa Cl. Sci. (5)}, 8(1):51--71, 2009.

\bibitem{LL}
Gregory~F Lawler and Vlada Limic.
\newblock {\em Random walk: a modern introduction}, volume 123.
\newblock Cambridge University Press, 2010.

\bibitem{MV}
Chiranjib Mukherjee and SRS Varadhan.
\newblock Brownian occupation measures, compactness and large deviations.
\newblock {\em The Annals of Probability}, 44(6):3934--3964, 2016.

\bibitem{Povel}
Tobias Povel.
\newblock Confinement of {B}rownian motion among {P}oissonian obstacles in
  $\mathbb{R}^d, d\ge 3$.
\newblock {\em Probability theory and related fields}, 114(2):177--205, 1999.

\bibitem{RogersWilliams}
Chris~G Rogers and David Williams.
\newblock {\em Diffusions, Markov processes and martingales: Volume 2, It{\^o}
  calculus}, volume~2.
\newblock Cambridge university press, 2000.

\bibitem{Sznitman}
Alain-Sol Sznitman.
\newblock On the confinement property of two-dimensional {B}rownian motion
  among {P}oissonian obstacles.
\newblock {\em Communications on pure and applied mathematics},
  44(8-9):1137--1170, 1991.

\end{thebibliography}
 \thispagestyle{empty}
%\vfill\eject
%\tableofcontents

\end{document}